\numberwithin{equation}{section}
\newcommand{\bomega}{{\boldsymbol{\omega}}}
\theoremstyle{plain}
\newtheorem{theorem}{Theorem}[section]
\newtheorem{proposition}[theorem]{Proposition}
\newtheorem{corollary}[theorem]{Corollary}
\theoremstyle{definition}
\newtheorem{definition}[theorem]{Definition}
\newtheorem{remark}[theorem]{Remark}
\newtheorem{example}[theorem]{Example}
\newcommand{\quotient}[2]{{\raisebox{.2em}{$#1$}\left/\raisebox{-.2em}{$#2$}\right.}}
\newcommand\restr[2]{{
  \left.\kern-\nulldelimiterspace #1 \right|_{#2} 
}}
\newcommand{\R}{\mathbb{R}}
\newcommand{\N}{\mathbb{N}}
\renewcommand{\d}{\mathrm{d}}
\newcommand{\Cinfty}{\mathscr{C}^\infty}
\newcommand{\T}{\mathrm{T}}
\newcommand{\cT}{\mathrm{T}^\ast}
\newcommand{\Id}{\mathrm{Id}}
\newcommand*{\inn}[1]{\iota_{#1}}
\newcommand{\Lie}{\mathscr{L}}
\newcommand{\X}{\mathfrak{X}}
\newcommand{\D}{\mathcal{D}}
\newcommand{\V}{\mathcal{V}}
\newcommand{\bfX}{\mathbf{X}}
\newcommand{\qeddiamond}{\hfill $\diamond$}
\newcommand{\parder}[2]{\frac{\partial #1}{\partial #2}}
\newcommand{\dparder}[2]{\dfrac{\partial #1}{\partial #2}}
\newcommand{\tparder}[2]{\partial #1/\partial #2}
\newcommand{\boplus}{{\textstyle\bigoplus}}
\DeclareMathOperator{\rk}{rk}
\DeclareMathOperator{\cork}{co-rk}
\DeclareMathAlphabet{\mathpzc}{OT1}{pzc}{m}{it}
\def\d{\mathrm{d}}
\DeclareMathOperator{\pr}{pr}
\DeclareMathOperator{\corank}{co-rk}
\newcommand\finish{\xqed{ }}
\newcommand\xqed[1]{%
  \leavevmode\unskip\penalty9999 \hbox{}\nobreak\hfill
  \quad\hbox{#1}}
\begin{document}


\vspace{5em}

{\huge\sffamily\raggedright Foundations on $k$-contact geometry
}
\vspace{2em}

{\large\raggedright
    \today
}

\vspace{2em}

{\Large\raggedright\sffamily
    Javier de Lucas\footnote{Corresponding author.}
}\vspace{1mm}\newline
{\raggedright    
    Department of Mathematical Methods in Physics, University of Warsaw\\
    ul. Pasteura 5, 02-093, Warszawa, Poland\\
    e-mail: \href{mailto:javier.de.lucas@fuw.edu.pl}{javier.de.lucas@fuw.edu.pl} --- orcid: \href{https://orcid.org/0000-0001-8643-144X}{0000-0001-8643-144X}
}

\bigskip

{\Large\raggedright\sffamily
    Xavier Rivas
}\vspace{1mm}\newline
{\raggedright
    Department of Computer Engineering and Mathematics, Universitat Rovira i Virgili\\
    Avinguda Països Catalans 26, 43007,  Tarragona, Spain\\
    e-mail: \href{mailto:xavier.rivas@urv.cat}{xavier.rivas@urv.cat} --- orcid: \href{https://orcid.org/0000-0002-4175-5157}{0000-0002-4175-5157}
}

\bigskip

{\Large\raggedright\sffamily
    Tomasz Sobczak
}\vspace{1mm}\newline
{\raggedright    
    Department of Mathematical Methods in Physics, University of Warsaw\\
    ul. Pasteura 5, 02-093, Warszawa, Poland\\
    e-mail: \href{mailto:t.sobczak2@student.uw.edu.pl }{t.sobczak2@uw.edu.pl} --- orcid: \href{https://orcid.org/0009-0002-9577-0456}{0009-0002-9577-0456}
}

\vspace{2em}

{\large\bf\raggedright
    Abstract
}\vspace{1mm}\newline
{\raggedright
$k$-Contact geometry is a generalisation of contact geometry to analyse field theories. We develop an approach to $k$-contact geometry based on distributions that are distributionally maximally non-integrable and admit, locally, $k$ commuting supplementary Lie symmetries: the $k$-contact distributions. We related $k$-contact distributions with  Engel, Goursat and other distributions, which have mathematical and physical interest. We give necessary topological conditions for the existence of globally defined Lie symmetries, $k$-contact Lie groups are defined and studied, and we study and propose a $k$-contact Weinstein conjecture for co-oriented $k$-contact manifolds. Polarisations for $k$-contact distributions are introduced and it is shown that a polarised $k$-contact distribution is locally diffeomorphic to the Cartan distribution of the first-order jet bundle over a fibre bundle of order $k$. We relate $k$-contact manifolds to presymplectic and $k$-symplectic manifolds on fibre bundles of larger dimension and define types of submanifolds in $k$-contact geometry. We study Hamilton--De Donder--Weyl equations in Lie groups for the first time. A theory of $k$-contact Hamiltonian vector fields is developed, and we describe characteristics of Lie symmetries for first-order partial differential equations in a $k$-contact Hamiltonian manner. We use our techniques to analyse Hamilton--Jacobi and Dirac equations. Other potential applications of $k$-contact distributions to non-holonomic and control systems are briefly described.}
\bigskip

{\large\bf\raggedright
    Keywords:} 
contact distribution, Hamiltonian vector field, $k$-contact distribution, $k$-contact form, polarisation, Reeb vector field.
\medskip

{\large\bf\raggedright
MSC2020 codes:
}
53C15, 53C12, 58A30 (Primary), 53C10, 53D10, 58J60  	 (Secondary).
\medskip







\noindent {\bf Authors' contributions:} All authors contributed to the study conception and design. The manuscript was written  and revised by all authors. All authors read and approved the final version.
\medskip

\noindent {\bf Competing Interests:}  The authors have no competing interests to declare.

\newpage

{\setcounter{tocdepth}{2}
\def\baselinestretch{1}
\small
\def\addvspace#1{\vskip 1pt}
\parskip 0pt plus 0.1mm
\tableofcontents
}

\pagestyle{fancy}

\fancyhead[L]{Foundations on $k$-contact geometry}    
\fancyhead[C]{}                  
\fancyhead[R]{J. de Lucas, X. Rivas and T. Sobczak}       

\fancyfoot[L]{}     
\fancyfoot[C]{\thepage}                  
\fancyfoot[R]{}            

\renewcommand{\headrulewidth}{0.1pt}  
\renewcommand{\footrulewidth}{0pt}    

\renewcommand{\headrule}{%
    \vspace{3pt}                
    \hrule width\headwidth height 0.4pt 
    \vspace{0pt}                
}

\setlength{\headsep}{30pt}  

\section{Introduction}

Contact geometry was initiated by Sophus Lie in his study of  differential equations via contact transformations \cite{Gei_01,Kay_23,LS_96}. Classical contact geometry is rooted in the contact distribution notion, namely a maximally non-integrable distribution \cite{AM_78,Etn_06,Gei_01,Gei_08}. In particular, Cartan distributions for first-order jet manifolds over a fibre bundle or rank one are contact distributions. Mathematically, contact geometry leads to the analysis of many interesting topological and differential geometric problems.  For example, numerous classification problems exist concerning different types of three-dimensional contact manifolds, the examination of transverse and Legendrian knots, the exploration of the Reeb vector field dynamics and the Weinstein conjecture on their closed trajectories in compact manifolds, Floer homology methods, Marsden--Weinstein reductions, the theory of Hamiltonian contact systems, and so on \cite{Alb_89,BH_92,CF_09,CF_11,LL_19,EH_01,Gei_06,GG_23,Thu_97,Wei_79,Wil_02}.

Contact geometry has garnered consistent and robust interest since the close of the 20th century \cite{Gei_08}. Moreover, there has been a recent boost in the study of contact dynamics in relation to the study of dissipative mechanical systems \cite{BCT_17,BLMP_20,LL_19,GGMRR_20a,Kho_13,LLMR_21}. Contact geometry has even been investigated to understand models for the visual cortex of the brain \cite{Hof_89,LM_23}.

Contact distributions are frequently studied via a differential one-form whose kernel is equal, at least locally, to the contact distribution: a so-called {\it contact form}. Nowadays, much attention is paid to contact distributions that are the kernel of a globally defined contact form, namely one defined on the whole manifold, giving rise to the so-called {\it co-oriented contact manifolds} \cite{BCT_17,LL_19} or {\it strict contact manifolds} \cite{HW_18}. When a contact distribution is equal to the kernel of a globally defined contact form, it is possible to define very relevant associated notions, like {\it Reeb vector fields}, named after the French mathematician George Reeb and appearing in the study of periodic orbits \cite{Wei_79}. Even when an associated global contact form exists, certain properties of contact distributions, like their Marsden--Weinstein reductions, can be more appropriately described using contact distributions \cite{GG_22,Wil_02}. For instance, this is due to the fact that the symmetries of a contact distribution need not be symmetries of an associated contact form.

$k$-Contact geometry recently appeared as a generalisation of contact geometry to deal with field theories \cite{GGMRR_20,GGMRR_21,Riv_22,Riv_23}. The initial idea was to study a generalisation of the contact form notion to investigate classical field theories. This gave rise to the study of Hamilton--De Donder--Weyl equations via $k$-vector fields and a series of  $k$ differential one-forms giving rise to the referred to as {\it $k$-contact structures}. These notions were used to study several physical fields equations, Darboux theorems, and other related physical and mathematical topics \cite{GGMRR_20,GGMRR_21,RSS_24}.  

The main aim of this work is to show that $k$-contact geometry allows one to extend many research problems and questions appearing in contact geometry to a richer realm. 
Although a few of these extensions have appeared in the earlier literature, only the present paper comprehensively illustrates the vast potential of $k$-contact geometry while refining the previous formalism more effectively. Moreover, we provide new approaches and techniques that significantly change the previously used ideas. Many questions explored in this work for the first time include, as specific cases, many open problems in the contact realm. Furthermore, we show here that $k$-contact geometry opens new research fields of study. 

This paper begins with an overview of the fundamentals of 
$k$-contact geometry and its related concepts, enabling us to present a comprehensive reestablishment of the field in a single work. We present a new approach based on {\it $k$-contact forms}, i.e. differential one-forms taking values in $\mathbb{R}^k$ defined on an open subset $U$ of a manifold $M$, let us say $\bm\eta$, whose kernel is a non-zero regular distribution of corank $k$ and satisfying that $\ker \bm \eta\oplus \ker \d\bm \eta=\T U$. It will be shown that $k$-contact forms give rise to a slightly more intrinsic and cleared formalism than $k$-contact structures \cite{GGMRR_20,GGMRR_21,Riv_22,Riv_23}. A manifold endowed with a globally defined $k$-contact form is called a {\it co-oriented $k$-contact manifold}. Note that a one-contact form is just a contact form, and a co-oriented $k$-contact manifold is just a co-oriented contact one. Moreover, our definition of a $k$-contact form solves a minor problem with previous definitions of $k$-contact structures and the way of retrieving contact geometry as a particular case \cite{Riv_23}.

One of the main goals of this paper is to provide a distributional approach to $k$-contact geometry generalising some of the ideas from contact geometry.  In other words, instead of focusing on $k$-contact structures as in the previous literature \cite{GGMRR_20,GGMRR_21,Riv_22,Riv_23}, we investigate the properties of the kernels of $k$-contact forms. This approach was missing in the previous literature on $k$-contact geometry and will be a key to developing some of our methods. Regular distributions on a manifold that are locally described by the kernel of a $k$-contact form are called {\it $k$-contact distributions}. A $k$-contact manifold is a manifold equipped with a $k$-contact distribution. In the case of co-oriented $k$-contact manifolds, which admit a globally defined $k$-contact form $\bm \eta$, the distribution $\ker \d\bm\eta$, the so-called {\it Reeb distribution}, is integrable of rank $k$ and it admits a privileged basis of $k$ linearly commuting and globally defined Lie symmetries of the distribution $\ker \bm\eta$. The elements of such a basis are the referred to as  
{\it Reeb vector fields} of $\bm \eta$.

We prove that $k$-contact distributions amount to maximally non-integrable distributions admitting, on a neighbourhood of every point, a family of $k$ commuting Lie symmetries spanning a supplementary distribution to the $k$-contact distribution. Our definition of maximally non-integrability is in the distributional sense given, for instance, by L.\,Vitagliano in \cite{Vit_15}, which is based on intrinsic properties of the vector fields taking values in the distribution, and it recovers the usual meaning in contact geometry for regular non-zero  distributions of corank one. It is worth noting that maximally non-integrability is not enough to ensure that a regular distribution is locally the kernel of a $k$-contact form for $k>1$. This is illustrated by Counterexample \ref{thm:exis-max-nonintegr}, the use of Lie flags \cite{PR_01b}, and the analysis of the Lie symmetries of regular distributions. 

$k$-Contact manifolds are much more general than contact manifolds. For example, $k$-contact manifolds are not necessarily odd-dimensional, and co-oriented $k$-contact manifolds do not require orientability, whereas co-oriented contact manifolds do. Moreover, given a contact distribution $\mathcal{D}$ on a manifold $M$, any one-form $\eta$ on $U$ such that $\ker \eta=\mathcal{D}|_U$ is a contact form, and its multiplication by a non-vanishing function gives rise to a new contact one-form for the same contact distribution. In the $k$-contact realm, this does not happen, and a $k$-contact distribution given locally by the kernel of a one-form, $\bm\zeta$, taking values in $\mathbb{R}^k$ does not need to be a $k$-contact form (see Example \ref{ex:DR-different-rank}). Additionally, we provide techniques to describe the properties of $\bm \zeta$, which is shown to be very practical.

Many examples of $k$-contact distributions are given and some of their properties are detailed. This work shows that every first-order jet manifold $J^1(M,E)$ of a bundle $E$ over $M$ of rank $k$ is naturally a $k$-contact manifold associated with the $k$-contact distribution given by the Cartan distribution. We show that Cartan distributions on a first-order jet manifold are not a canonical example of $k$-contact distribution, as most  $k$-contact manifolds are not diffeomorphic to a first-order jet manifold, even locally. Moreover, it is shown that the Lie brackets of vector fields taking values in a $k$-contact distribution do not need to span the tangent bundle to the manifold, as happens for contact distributions, but they span a distribution that is strictly larger than the $k$-contact distribution at each point (see Proposition \ref{prop:ker_eta}). Moreover, examples of $k$-contact distributions appearing in non-holonomic systems, control theory, and the theory of Engel and Goursat distributions are given \cite{PR_98,PR_01,PN_05}. Indeed, it seems to us that there is a quite vast field of application of $k$-contact distributions in these fields, which becomes clear when one uses $k$-contact distributions in $k$-contact geometry. 

There are several topological restrictions for the existence of co-oriented $k$-contact manifolds. In particular, we give some necessary conditions for a manifold to admit a globally defined $k$-contact form by means of Euler characteristics, the Hairy Ball theorem \cite{EG_79}, the Radon--Hurwitz numbers \cite{Ada_62}, etc. Nevertheless, this is just a brief introduction to the analysis of topological properties of co-oriented $k$-contact manifolds, which seems to be a new rich field of study.

Next, co-oriented $k$-contact compact manifolds are investigated. It is proven that two-contact four-dimensional manifolds are the lowest dimensional case of $k$-contact manifolds that are not contact manifolds. In this respect, the analysis of two-contact four-dimensional manifolds is the natural analogue in $k$-contact geometry of the study of three-dimensional compact manifolds \cite{Cal_11}. In particular, a two-contact four-dimensional compact manifold is studied in this work (see Example \ref{Ex:U2}).

We study the integral submanifolds of Reeb distributions for co-oriented $k$-contact compact manifolds paying special attention to two-contact and three--contact compact manifolds like in Examples \ref{Ex:SU3}, \ref{Ex:SU4}, and \ref{Ex:T*R^{4}}. In all these cases, the Reeb distribution has rank two or three and admits some integral submanifolds that are diffeomorphic to a two-dimensional or three-dimensional torus, respectively. This suggests us to discuss the existence of integral submanifolds of the Reeb distribution diffeomorphic to $k$-dimensional tori for every co-oriented $k$-contact compact manifold. For one-contact manifolds, this question retrieves the Weinstein conjecture for compact co-oriented contact manifolds \cite{Tau_07,Wei_79}, which appears here as a particular case. Nevertheless, one counterexample showing that the Reeb distribution on a compact co-oriented two-contact manifold of dimension four does not admit any leaf homeomorphic to a two-dimensional torus is given. This shows that the Weinstein conjecture cannot be straightforwardly extended to the $k$-contact realm for $k>1$. Instead, we give a $k$-contact Weinstein conjecture that is equivalent to the contact Weinstein conjecture.

Next, we define the polarisation notion for $k$-contact distributions, which generalises the polarisation notion for $k$-contact structures in \cite{GGMRR_20}. A $k$-contact manifold may not have Darboux-like coordinates \cite{GGMRR_20}. But the existence of a polarisation ensures the existence of a Darboux theorem for a $k$-contact manifold. We show that every $k$-contact manifold admitting a polarisation is locally diffeomorphic to a first-order jet manifold $J^1(M,E)$, where $E\rightarrow M$ is a fibre bundle of rank $k$. This generalises to fibre bundles of arbitrary rank the typical result appearing in contact geometry, and shows that $k$-contact manifolds with a polarisation are, locally, Cartan distributions for first-order jet manifolds for fibre bundles of rank $k$.  We also provide some types of Darboux-like theorems for general $k$-contact manifolds, which are significantly different from classical ones.

Next, it is proved that every co-oriented $k$-contact $m$-dimensional manifold gives rise to a $k$-symplectic homogeneous $(m+1)$-dimensional total manifold of an $\mathbb{R}_\times$-principal bundle with certain additional properties (see Theorem \ref{thm:k-symplectic-bundle}). This is called a $k$-symplectic cover of the co-oriented $k$-contact manifold.  This generalises the approach given in \cite{GGMRR_20,GGMRR_21} for standard contact manifolds to the $k$-contact realm. Nevertheless, it is remarkable that a $k$-symplectic cover does not need to be well-defined globally for $k$-contact manifolds with $k>1$ that are not co-orientable. On the other hand, we study properties for $k$-symplectic homogeneous manifolds that allow us to understand them as $k$-symplectic covers.  Moreover, we define isotropic and Legendrian submanifolds for $k$-contact manifolds and their relations to isotropic and Legendrian submanifolds in $k$-symplectic covers.

Co-oriented $k$-contact manifolds are relevant cases of $k$-contact manifolds since, among other reasons, the admit a Hamiltonian theory for $k$-vector fields that can be used to analyse first-order partial differential equations. In this respect, after reviewing the standard theory of $k$-contact Hamiltonian $k$-vector fields, it is shown that every such a $k$-contact $k$-vector field gives rise to $k$-symplectic Hamiltonian $k$-vector fields in the $k$-symplectic cover of the co-oriented $k$-contact manifold. This is interesting as a link between the study of such $k$-contact Hamiltonian $k$-vector fields and $k$-symplectic geometry, which has been much studied. It is worth noting that we review the theory of Hamilton--De Donder--Weyl equations and we provide results for the case of not co-oriented $k$-contact manifolds, which has not been almost at all studied in the previous literature. In particular, we study the Hamilton--De Donder--Weyl equations on $k$-contact Lie groups, which are defined and studied in this work for the first time.

Next, we devise a new $k$-contact Hamiltonian vector field notion on co-oriented $k$-contact manifolds and its main features are analysed. This gives rise to the study of a space of $k$-contact Hamiltonian $k$-functions, which can be endowed with a natural Lie bracket. Every $k$-contact manifold is related to a presymplectic manifold of larger dimension, a so-called {\it presymplectic cover}, giving a new approach to the study of $k$-contact geometry via presymplectic manifolds. Moreover, every $k$-contact Hamiltonian vector field is related to a presymplectic Hamiltonian vector field relative to its presymplectic cover. The extended presymplectic Hamiltonian vector field is unique under very general conditions. In particular, they are always unique when the $k$-contact manifold is polarised.  
Our theory on $k$-contact Hamiltonian vector fields is used to study Lie symmetries of first-order partial differential equations and their characteristics \cite{Olv_93}, which are recovered as our newly defined $k$-contact Hamiltonian $k$-functions. Applications of our ideas to the characteristics of Lie symmetries for a Hamilton--Jacobi equation and a Dirac equation are presented. In particular, the Lie symmetries of Dirac equations are described as eight-contact Hamiltonian vector fields on a first-order jet manifold $J^1(\mathbb{R}^4,\mathbb{R}^{4+8})$, which is endowed with an eight-contact form. Our approach can be much further employed to study first-order differential equations and their Lie symmetries \cite{Olv_93}, but just a brief $k$-contact reinterpretation of a known result is here given in Proposition \ref{Prop:Inv}. 

Finally, a brief description of other manners of relating $k$-contact manifolds to $k$-symplectic manifolds are given. Although the approach has certain utility, it rather shows that it is in general preferable to study $k$-contact manifolds with $k$-contact geometry.

The structure of the paper goes as follows. Section \ref{Sec:BasNot} recalls some basic notions on generalised subbundles, contact manifolds, $k$-vector fields, and $k$-symplectic manifolds. Section \ref{Sec:kConMan} describes a new approach to $k$-contact geometry based on $k$-contact distributions. We characterise geometrically $k$-contact distributions in terms of an associated differential two-form taking values in $\mathbb{R}^k$  and a series of $k$ commuting Lie symmetries of the distribution spanning, at least locally, a supplementary. Many examples of $k$-contact distributions with applications are given. Section \ref{Sec:Compact} analyses types of co-oriented $k$-contact compact manifolds and the existence of integral submanifolds of the Reeb distribution diffeomorphic to tori  of type $\mathbb{T}^k$. The relation of our results to a generalisation of the Weinstein conjecture to $k$-contact geometry is analysed. Next, Section \ref{sec:polarised-k-contact-distributions} defines polarisations for $k$-contact manifolds, their relation to Darboux theorems and first-order jet manifolds. $k$-Symplectic covers and diverse types of submanifolds of $k$-contact manifolds are introduced in Section \ref{Sec:kConSub}. Section \ref{Sec:Dynamics} studies $k$-contact Hamiltonian vector fields and $k$-contact vector fields, which is further used to study field equations and to recover the theory of characteristics for Lie symmetries of first-order partial differential equations as a $k$-contact Hamiltonian formalism. Moreover, applications to relevant first-order partial differential equations are given. Section \ref{Sec:kSym} describes another possible way of relating $k$-contact manifolds with $k$-symplectic manifolds. Finally, the conclusions of our work and prospects of future research are described in Section \ref{Sec:Conclusions}.  It is worth stressing the amplitude and relevance of the potential theoretical extensions and applications of the $k$-contact formalism and related topics described in this work ranging from a new Weinstein conjecture, $k$-contact Lie groups, Lie symmetry analysis of first-order partial differential equations, Morse theory for $k$-contact manifolds, etc.

\section{Basic notions}
\label{Sec:BasNot}

This section provides an overview of the fundamental concepts that will be used in subsequent sections of the work. In particular, we will review the basics on generalised subbundles \cite{Lew_23}, contact geometry \cite{BH_16,Gei_08,Kho_13}, $k$-vector fields and their integral sections \cite{LSV_15} and $k$-symplectic geometry \cite{Awa_92,LSV_15,LV_13}. Next, a brief review on $k$-contact geometry using the here defined $k$-contact forms will be provided, and some new examples of $k$-contact forms are to be shown. Relevantly, we solve a minor technical problem about the original definition of a $k$-contact structure \cite{GGMRR_20,Riv_23} so as to recover properly contact geometry as a particular instance. 

First, let us fix some notation and basic notions to be used. Hereafter, $\{e_1,\ldots,e_k\}$ stands for a basis of $\mathbb{R}^k$. Natural numbers start at one. Manifolds are deemed to be finite-dimensional, non-zero dimensional, smooth, Hausdorff, and connected, unless otherwise stated. Moreover, $M$ is hereafter assumed to be an $m$-dimensional manifold. In the absence of other specifications, all mathematical entities are considered to be smooth.

Every differential form on a manifold $M$ taking values in $\mathbb{R}^k$ is represented by a Greek bold letter and $\Omega^p(M,\R^k)$ stands for the space of differential $p$-forms on $M$ taking values in $\R^k$. In addition, $\Omega^p(M)$ represents the space of differential $p$-forms on $M$. We will denote by $\X(M)$ the space of vector fields over $M$.

Moreover, if ${\bm\zeta}\in\Omega^p(M,\R^k)$, then ${\bm \zeta} = \sum_{\alpha=1}^k\zeta^\alpha\otimes e_\alpha$ for $k$ unique differential $p$-forms $\zeta^1,\ldots,\zeta^k$ on $M$. In what follows, the components of every ${\bm\zeta}$ are denoted by $\zeta^1,\ldots,\zeta^k\in\Omega^p(M)$. The \textit{inner contraction} of $\bm\zeta$ with a vector field $X$ on $M$ is
$$
    \inn{X} \bm\zeta = \langle \bm\zeta,X\rangle =  \sum_{\alpha=1}^k(\inn{X}\zeta^\alpha)\otimes e_\alpha\in\Omega^{p-1}(M,\R^k)\,,
$$
where $ \inn{X}\zeta^\alpha  $ stands for the inner contraction of $\zeta^\alpha\in \Omega^p(M)$ with the vector field $X$ for $\alpha=1,\ldots,k$. Let us define
$$
    \ker{\bm \zeta}_x = \left\{v_x\in \T_xM \mid \inn{v_x}\bm\zeta_x = \sum_{\alpha=1}^k(\inn{v_x}\zeta^\alpha_x)\otimes e_\alpha=0\right\}\,,\qquad \forall x\in M\,.
$$
We say that $\bm\zeta$ is {\it non-degenerate} if $\ker \bm\zeta=0$.

\subsection{Generalised subbundles}

Let us review the theory of generalised subbundles and other related notions of interest for our purposes \cite{Lew_23}.

\begin{definition}
    Let $E\rightarrow M$ be a vector bundle over $M$. Then, 
    \begin{itemize}
        \item A \textit{generalised subbundle} of $E\to M$ is a subset $D\subset E$ such that $D_x = D\cap E_x$ is a vector subspace of the fibre $E_x$ for every $x\in M$. We call {\it rank} of $D$ at $x\in M$ the dimension of the subspace $D_x\subset E_x$.
        \item A generalised subbundle $D$ is \textit{smooth} if it is locally spanned by a family of smooth sections of $E\rightarrow M$ taking values in $D$, i.e. if, for every $x\in M$, there exists an open neighbourhood $U$ of $x$ and a family of sections $e_1, \ldots, e_r$ defined on $U$ taking values on $D$ such that $D_{x'}=\langle e_1(x'), \ldots, e_r(x')\rangle$ for every $x'\in U$. It is worth noting that $e_1(x'),\ldots,e_r(x')$ do not need to be linearly independent at every $x'$ in $U$. 
        \item A generalised subbundle $D$ is \textit{regular} if it is smooth and has constant rank.
       \item A generalised subbundle $D$ is {\it parallelizable} if it admits a global basis of sections  taking values in $D$.
    \end{itemize}
\qeddiamond
\end{definition}

If $D$ is a regular generalised subbundle of constant rank $k$, we write $\rk D = k$. If $D$ has constant corank $p$, we denote $\cork D = p$. Finally, $\Gamma(D)$ stands for the space of sections of $E\rightarrow M$ taking values in $D$. We write $D|_U$ for the restriction of a generalised subbundle $D$ on $M$ to an open subset $U$ of $M$.

The {\it annihilator} of a generalised subbundle $D$ of the vector bundle $E\to M$ is the generalised subbundle $D^\circ=\bigsqcup_{x\in M} D^\circ_x$ of the dual vector bundle $E^{*} \rightarrow M$, where $D^\circ_x\subset E_x^*$ is the annihilator of $D_x$ at $x\in M$, and $\bigsqcup$ stands for the disjoint union of subsets. If $D$ is smooth and not regular, then $D^\circ$ is not smooth.  Using the usual identification $E^{\ast\ast} = E$ for a vector bundle $E$ over $M$ of finite rank, it follows that $(D^\circ)^\circ = D$.

A generalised subbundle of $\T M\to M$ is hereafter called a {\it distribution}, while a generalised subbundle of $\cT M\to M$ is called a {\it codistribution}. It is worth noting that our terminology may differ from the one in the  literature, where distributions are generally assumed to be regular, while distributions that are not regular are called {\it generalised distributions} or {\it Stefan--Sussmann distributions} \cite{Lav_18}. Nevertheless, we have simplified our terminology and skipped the term `generalised' whenever this does not lead to any misunderstanding. Distributions are hereafter denoted by calligraphic capital letters.

Given two distributions $\mathcal{D},\mathcal{D}'$ on $M$, we write $[\mathcal{D},\mathcal{D}']$ for the distribution on $M$ spanned by the Lie brackets of vector fields of $\Gamma(\mathcal{D})$  with vector fields of $\Gamma(\mathcal{D}')$. Given a vector field $X$ on $M$, we write $[X,\mathcal{D}]$ for the distribution generated by the Lie brackets of $X$ with elements of $\Gamma(\mathcal{D})$.

\begin{definition}
    Given a regular distribution $\mathcal{D}$ on $M$ of corank $k$, an {\it associated one-form} is a locally defined differential one-form on $M$ taking values in $\mathbb{R}^k$, i.e. an element $\bm\zeta\in \Omega^1(U,\mathbb{R}^k)$ for an open subset $U\subset M$, such that $\restr{\mathcal{D}}{U} = \ker\bm\zeta$. We say that all associated one-forms related to $\mathcal{D}$ on the same $U$ are {\it compatible}.
\qeddiamond\end{definition}

Every differential one-form $\zeta\in\Omega^1(M)$ spans a codistribution $C\subset \cT M$ such that $C_x= \langle\zeta_x\rangle$ for every $ x\in M$. Moreover, $C$ has rank one at every point $x\in M$ where $\zeta$ does not vanish. The annihilator of $C$ is the distribution $\ker \zeta\subset\T M$, namely $\ker\zeta = C^\circ$. The distribution $\ker\zeta$ has corank one at every point where $\zeta$ does not vanish and corank zero otherwise.

\subsection{Contact manifolds}

Let us provide a brief introduction to contact geometry (see \cite{BH_16,Gei_08,Kho_13} for details). Let us recall that a distribution $\mathcal{D}$ of corank one on a $(2n+1)$-dimensional manifold $M$, with $n\in \mathbb{N}$, can locally be, on an open neighbourhood $U$ of each point $x\in M$, described as the kernel of a differential one-form $\eta\in\Omega^1(U)$. Then, $\mathcal{D}$ is  {\it maximally non-integrable} if every associated form $\eta\in \Omega^1(U)$ is such that  $\eta\wedge(\d\eta)^n$ is a volume form on $U$\footnote{See \cite{Ada_21,CCF_24,Vit_15} for other, not always equivalent, definitions of maximally non-integrable distributions.}. In such a case, $\eta$ is called a \textit{contact form on $U$}. It is worth noting that the term {\it completely non-integrable} is used instead of {\it maximally non-integrable} in part of the literature \cite{BE_15}. 

A \textit{contact manifold} is a pair $(M,\mathcal{D})$ such that $\dim M=2n+1$ and $\mathcal{D}$ is a corank one maximally non-integrable distribution on $M$. We call $\mathcal{D}$ a \textit{contact distribution} on $M$. 

A {\it co-orientable contact manifold} is a pair $(M,\eta)$, where $\eta$ is a differential one-form on $M$ such that $(M,\ker\eta)$ is a contact manifold. Then, $\eta$ is called a {\it contact form}. To simplify the terminology, co-oriented contact manifolds are generally called contact manifolds in most modern literature on contact geometry and contact Hamiltonian systems \cite{BCT_17,LL_19,GGMRR_20a}, but this simplification will be avoided in our work, which frequently deals with general contact manifolds. If not otherwise stated, $(M,\eta)$ hereafter stands for a co-oriented contact manifold.

It is worth stressing that if $\eta$ is a contact form on $M$ associated with a contact distribution $\mathcal{D}$, then $f\eta$ is also a contact form on $M$ associated with $\mathcal{D}$ for every non-vanishing function $f\in\Cinfty(M)$. Moreover, a differential one-form $\eta$ on $M$ is such that $\eta\wedge(\d\eta)^n$ is a volume form on $M$ if, and only if, $\ker \eta$ is not zero and $\eta$ induces a decomposition $\T M = \ker\eta\oplus\ker\d\eta$.

A co-oriented contact manifold $(M,\eta)$ determines a unique vector field $R\in\X(M)$, called the {\it Reeb vector field}, such that $\iota_R\d\eta = 0$ and $\iota_R\eta = 1$. Then, $\Lie_R\eta = 0$ and, therefore, $\Lie_R\d\eta = 0$. Reeb vector fields on compact manifolds have garnered significant attention regarding the admission of closed integral curves and the so-called {\it Weinstein conjecture} \cite{ACH_19,CH_13,Tau_07,Wei_79}.

\begin{theorem}[Contact Darboux theorem \cite{AM_78,LM_87}]
    Given a co-oriented contact manifold $(M,\eta)$ such that $\dim M=2n+1$, there exist local coordinates $\{q^i, p_i, s\}$  with $i = 1,\dotsc,n$ around every point $x\in M$, called {\it Darboux coordinates},  such that
    $$ \eta = \d s - \sum_{i=1}^np_i\d q^i\,. $$
    In these coordinates, the Reeb vector field reads $R = \tparder{}{s}$.
\end{theorem}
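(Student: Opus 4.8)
The plan is to prove the contact Darboux theorem by combining the standard existence of a Reeb vector field with an inductive flow-box type argument. First I would recall that the Reeb vector field $R$ is the unique vector field satisfying $\iota_R\eta = 1$ and $\iota_R\d\eta = 0$, and fix a point $x\in M$. Since $R$ is nonvanishing at $x$ (because $\iota_R\eta=1$), I would straighten it out: choose a local hypersurface $\Sigma$ transverse to $R$ through $x$ with a coordinate $s$ along the flow of $R$, so that $R = \partial/\partial s$ and $\restr{\eta}{\Sigma}$ restricts to a one-form on the $2n$-dimensional slice. Because $\Lie_R\d\eta = 0$ and $\iota_R\d\eta = 0$, the two-form $\d\eta$ is basic with respect to the flow of $R$, i.e.\ it descends to $\Sigma$ and is $s$-independent. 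The condition that $\eta\wedge(\d\eta)^n$ is a volume form then forces $\restr{(\d\eta)}{\Sigma}$ to be a symplectic form $\omega$ on the slice.

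Next I would apply the ordinary symplectic Darboux theorem to $(\Sigma,\omega)$ to obtain coordinates $\{q^i,p_i\}$ on $\Sigma$ with $\omega = \sum_{i=1}^n \d p_i\wedge \d q^i$; extending these to be $s$-independent via the flow of $R$ gives coordinates $\{q^i,p_i,s\}$ on a neighbourhood of $x$. In these coordinates $\d\eta = \sum_{i=1}^n \d p_i\wedge\d q^i = \d\!\left(-\sum_{i=1}^n p_i\,\d q^i\right)$, so $\eta + \sum_{i=1}^n p_i\,\d q^i$ is closed. The Poincar\'e lemma then yields a function $g$ with $\eta = \d g - \sum_{i=1}^n p_i\,\d q^i$ locally. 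The remaining task is to check that $g$ can be taken as the coordinate $s$ up to an allowed change of coordinates: evaluating $\iota_R\eta = 1$ gives $\partial g/\partial s = 1$, and $\iota_R\d\eta = 0$ is automatic, so after the substitution $s\mapsto g$ (which is a valid coordinate change since $\partial g/\partial s\ne 0$ and leaves the $q^i,p_i$ untouched) one obtains exactly $\eta = \d s - \sum_{i=1}^n p_i\,\d q^i$, with $R = \partial/\partial s$ preserved.

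I would anticipate that the main obstacle is bookkeeping around the flow-straightening: one must verify carefully that the symplectic Darboux coordinates built on the transverse slice can be propagated along the Reeb flow so that the resulting $\d\eta$ has no $\d s$-components and is genuinely $s$-independent, which is where $\Lie_R\d\eta=0$ and $\iota_R\d\eta=0$ do the essential work. An alternative, perhaps cleaner, route avoids choosing a transverse slice and instead runs a Moser-type deformation argument directly on the one-form: one compares the given $\eta$ with the model $\eta_0 = \d s - \sum_i p_i\,\d q^i$ after matching $\eta$, $\d\eta$, and the Reeb vector field to first order at $x$, and then constructs a local diffeomorphism via the flow of a time-dependent vector field interpolating between the two contact forms. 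The delicate point in either approach is ensuring the normalisation $\iota_R\eta=1$ is preserved throughout, so that the final coordinate genuinely realises $R=\partial/\partial s$ rather than merely $R\propto\partial/\partial s$.
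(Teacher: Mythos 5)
The paper does not actually prove this statement: it is quoted as a classical result with references to the literature, so there is no in-paper proof to compare against. Your sketch is the standard and correct argument: straightening the Reeb vector field, observing that $\iota_R\d\eta=0$ together with $\Lie_R\d\eta=0$ makes $\d\eta$ basic over a transverse slice where it is symplectic by the volume-form condition, invoking the symplectic Darboux theorem, and then producing the primitive $g$ with $\partial g/\partial s=1$ so that $s\mapsto g$ is an admissible change of coordinates preserving $R=\tparder{}{s}$. The final verification that $R=\tparder{}{g}$ in the new chart (because $Rq^i=Rp_i=0$ and $Rg=1$) is exactly the point you flag, and it goes through.

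One trivial slip: with your normalisation $\eta=\d g-\sum_{i=1}^n p_i\,\d q^i$ one has $\d\eta=-\sum_{i=1}^n\d p_i\wedge\d q^i=\sum_{i=1}^n\d q^i\wedge\d p_i$, so the symplectic Darboux coordinates on the slice should be chosen with $\omega=\sum_{i=1}^n\d q^i\wedge\d p_i$ (or the roles of $q^i$ and $p_i$ swapped); this is a matter of sign convention and does not affect the argument. The Moser-path alternative you mention is equally standard and would also work.
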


Since every contact distribution is, locally, the kernel of a contact form, the above theorem also holds on the domain of definition of each associated contact form.

\begin{example}[Canonical co-oriented contact manifold]\label{ex:2.2}
    Consider the product manifold $M = \cT Q\times\R$, where $Q$ is any manifold. The cotangent bundle $\cT Q$ admits an adapted coordinate system $\{q^i,p_i\}$ and $\R$ has a natural coordinate $s$. These coordinates can be pulled-back to $M$ to give rise to a coordinate system $\{q^i, p_i, s\}$ on $\cT Q\times \R$. Then, $\eta_Q = \d s - \theta$, where $\theta$ is the pull-back of the Liouville one-form $\theta_\circ\in\Omega^1(\cT Q)$ relative to the canonical projection $\cT Q\times \R\to\cT Q$, is a contact form on $M$. In the local coordinates $\{q^i,p_i,s\}$ on $M$, one has that
    $$
        \eta_Q = \d s - \sum_{i=1}^n p_i\d q^i\,,\qquad R=\parder{}{s}\,.
    $$
    The coordinates $\{q^i,p_i,s\}$ are Darboux coordinates on $M$. Note that $\theta_\circ$, and thus $\eta_Q$, are independent of the chosen adapted coordinates $\{q^i,p_i\}$.
\end{example}

Example \ref{ex:2.2} is a particular case of {\it contactification} of an exact symplectic manifold. Consider an {\it exact symplectic manifold} $(N,\d\theta_N)$, namely a symplectic manifold $(N,\omega)$ whose symplectic form, $\omega$, is exact, i.e. $\omega = \d\theta_N$ for a differential one-form $\theta_N\in \Omega^1(N)$. Then, the product manifold $M = N\times\R$ is a contact manifold with the contact form $\eta = \d s + \theta$, where the variable $s$ stands for the canonical coordinate in $\R$ understood as a variable in $M$ in the natural manner and $\theta$ is the pull-back to $M$ of $\theta_N$ on $N$ via the projection $M=N\times\R\to N$.

Let $(M,\eta)$ be a co-oriented contact manifold. There exists a vector bundle isomorphism $\flat\colon\T M\to\cT M$ given by
$$
    \flat(v) = \inn{v}(\d\eta)_x + (\inn{v}\eta_x)\eta_x\,,\qquad \forall v\in \T_x M,\quad\forall x\in M\,. 
$$
This isomorphism can be extended to a $\Cinfty(M)$-module isomorphism $\flat:\X(M)\to\Omega^1(M)$ in the natural way. It is usual to denote both isomorphisms, of vector bundles and of $\Cinfty(M)$-modules, by $\flat$ as this does not lead to any misunderstanding. Meanwhile, the inverse of $\flat$ is denoted by $\sharp = \flat^{-1}$. Taking into account this last definition, $R = \sharp\eta$. It can be checked that a differential one-form $\eta$ on $M$ is a contact form if, and only if, its associated $\flat$ morphism is a vector bundle isomorphism.

A {\it contact Hamiltonian system} \cite{Bra_17,LL_19,GGMRR_20a,GG_22} is a triple $(M,\eta,h)$, where $(M,\eta)$ is a co-oriented contact manifold and $h\in\Cinfty(M)$. Given a contact Hamiltonian system $(M,\eta,h)$, there exists a unique vector field $X_h\in\X(M)$, called the {\it contact Hamiltonian vector field} of $h$, satisfying any of the following equivalent conditions:
\begin{enumerate}[(1)]
    \item $\iota_{X_h}\d\eta = \d h - (R h)\eta$ and $\iota_{X_h}\eta = -h$,
    \item $\ \Lie_{X_h}\eta = -(R h)\eta$ and $ \iota_{X_h}\eta = -h$,
    \item $\ \flat(X_h) = \d h - (R h + h)\eta$.
\end{enumerate}
A vector field $X\in\X(M)$ is said to be \textit{Hamiltonian} relative to a contact form $\eta$ if it is the contact Hamiltonian vector field of a function $h\in\Cinfty(M)$. It can be proved that $h$ is unique and it can be called the \textit{Hamiltonian function} of $X$. Moreover, every $h$ determines a unique contact Hamiltonian vector field $X_h$. 
Unlike the case of symplectic mechanics,  a Hamiltonian function $h$ may not be preserved along the integral curves of its contact Hamiltonian vector field $X_h$ (see \cite{AMR_88,GGMRR_20a} for details). More precisely,
$$
    X_h h = -(R h)h\,.
$$
A function $f\in\Cinfty(M)$ such that $X_h f = -(Rh)f$ is called a \textit{dissipated quantity} \cite{GGMRR_20a,GLR_23}. In Darboux coordinates, the contact Hamiltonian vector field $X_h$ reads
\begin{equation}\label{Eq:HamCoor}
    X_h = \sum_{i=1}^{n}\parder{h}{p_i}\parder{}{q^i} - \sum_{i=1}^{n}\left( \parder{h}{q^i} +  p_i\parder{h}{s} \right)\parder{}{p_i} + \left( \sum_{i=1}^{n}p_i\parder{h}{p_i} - h \right)\parder{}{s}\,.
\end{equation}
Its integral curves, let us say $\gamma(t) = (q^i(t), p_i(t), s(t))$, satisfy the system of ordinary differential equations
$$
    \frac{\d q^i}{\d t} = \parder{h}{p_i}\,,\qquad
    \frac{\d p_i}{\d t} = - \left( \parder{h}{q^i} + p_i\parder{h}{s} \right)\,,\qquad
    \frac{\d s}{\d t} = \sum_{j=1}^{n}p_j\parder{h}{p_j} - h\,,\qquad i = 1,\dotsc,n\,.
$$


\subsection{\texorpdfstring{$k$}--Vector fields and its integral sections}

$k$-Vector fields are of great use in the geometric study of systems of partial differential equations \cite{LSV_15,RRSV_11}. Consider the Whitney sum \footnote{From now on, the subindex $M$ in the Whitney sum will be skipped.}
\begin{equation}\label{eq:Whitney-sum}
    \boplus^k\T M := \T M\oplus_M\overset{(k)}{\dotsb}\oplus_M \T M\,,
\end{equation}
with the natural projections
$$
    \tau^\alpha\colon\boplus^k\T M\to\T M\,,\qquad \tau^k_M\colon\boplus^k\T M\to M\,, \qquad \alpha=1,\ldots,k\,,
$$
where $\tau^\alpha$ denotes the projection onto the $\alpha$-th component of the Whitney sum \eqref{eq:Whitney-sum} and $\tau^k_M$ is the projection onto the base manifold $M$.

\begin{definition}
    A {\it $k$-vector field} on $M$ is a section $\bfX\colon M\to\boplus^k\T M$ of the projection $\tau^k_M$. The space of $k$-vector fields on $M$ is denoted by $\X^k(M)$. 
\qeddiamond\end{definition}

Taking into account the diagram below,
\vspace{-3mm}
$$
    \begin{tikzcd}[row sep=huge, column sep=huge]
        & \boplus^k\T M \arrow[d, "\tau^\alpha"]\\
        M \arrow[r, "X_\alpha"] \arrow[ur, "\bfX"] & \T M
    \end{tikzcd}
    \vspace{2mm}
$$
a $k$-vector field $\bfX\in\X^k(M)$ amounts to a family vector fields $X_1,\dotsc,X_k\in\X(M)$ given by $X_\alpha = \tau^\alpha\circ\bfX$ with $\alpha=1,\ldots,k$. With this in mind, one can denote $\bfX = (X_1,\dotsc, X_k)$. A $k$-vector field $\bfX$ induces a decomposable contravariant totally skew-symmetric tensor field, $X_1\wedge\dotsb\wedge X_k$, which is a decomposable section of the bundle $\bigsqcup_{x\in M}\bigwedge^k\T_{x} M=\bigwedge^k\T M\to M$.

\begin{definition}\label{dfn:first-prolongation-k-tangent-bundle}
    Given a mapping $\phi\colon U\subset\R^k\to M$, its {\it first prolongation} to $\bigoplus^k\T M$ is the map $\phi'\colon U\subset\R^k\to\bigoplus^k\T M$ 
    defined as follows
    $$
        \phi'(t) = \left( \phi(t); \T_t\phi\left( \parder{}{t^1}\bigg\vert_t \right),\dotsc,\T_t\phi\left( \parder{}{t^k}\bigg\vert_t \right) \right) \equiv (\phi(t); \phi'_{1}(t),\ldots,\phi'_{k}(t))\,, \qquad t\in\R^k\,,
    $$
    where $t=(t^1,\dotsc,t^k)$ and $\{t^1,\ldots,t^k\}$ are the canonical coordinates on $\R^k$.
\qeddiamond\end{definition}

As for integral curves of vector fields, one can define integral sections of a $k$-vector field in the following manner.

\begin{definition}
    Let $\bfX = (X_1,\dotsc,X_k)\in\X^k(M)$ be a $k$-vector field. An {\it integral section} of $\bfX$ is a map $\phi\colon U\subset\R^k\to M$ such that $\phi' = \bfX\circ\phi\,$, namely $\T\phi \left(\parder{}{t^\alpha}\right) = X_\alpha\circ\phi$ for $\alpha=1,\ldots,k$. A $k$-vector field $\bfX\in\X^k(M)$ is said to be {\it integrable} if every point of $M$ lies in the image of an integral section of $\bfX$.
\qeddiamond\end{definition}

Let $\bfX = (X_1,\ldots, X_k)$ be a $k$-vector field on $M$ with local expression $ X_\alpha = \sum_{i=1}^{m}X_\alpha^i\parder{}{x^i}\,$ for $\alpha=1,\ldots ,k$.
Then, $\phi\colon U\subset\R^k\to M$, with local expression $\phi(t) = (\phi^i(t))$, is an integral section of $\bfX$ if, and only if, its components satisfy the following system of PDEs
\begin{equation}\label{eq:SysPDEs}
	\parder{\phi^i}{t^\alpha} = X_\alpha^i\circ\phi\,,\qquad i=1,\ldots,m\,,\qquad \alpha=1,\ldots,k\,.
\end{equation}

Then, $\bfX$ is integrable if, and only if, $[X_\alpha,X_\beta] = 0$ for $\alpha,\beta=1,\ldots,k$. These are necessary and sufficient conditions for the integrability of the system of PDEs \eqref{eq:SysPDEs} (see \cite{Lee_12,Olv_93} for details).

Every $k$-vector field $\bfX = (X_1,\dotsc,X_k)$ on $M$ defines a distribution $\mathcal{D}^\bfX\subset\T M$ given by $\mathcal{D}^\bfX_x = \mathcal{D}^\bfX \cap\T_xM = \langle X_1(x),\dotsc,X_k(x)\rangle$. However, the notion of an integral submanifold of the $k$-vector field $\bfX$ is stronger than the notion of an integral section of the distribution $\mathcal{D}^\bfX$. The distribution $\mathcal{D}^\bfX$ is integrable if, and only if, $[X_\alpha,X_\beta] = \sum_{\gamma=1}^{k}f_{\alpha\beta}^\gamma X_\gamma$, with $\alpha,\beta=1,\ldots,k$, for certain functions $f_{\alpha\beta}^\gamma\in \Cinfty(M)$ with $\alpha,\beta,\gamma=1,\ldots,k$, and $\mathcal{D}^{\bfX}$ is invariant relative to the one-parameter group of diffeomorphisms of any vector field taking values in $\mathcal{D}^{\bfX}$ \cite{Lav_18,Ste_74,Sus_73}. On the other hand, the $k$-vector field $\bm X$ is integrable if, and only if, $X_1,\ldots, X_k$ commute with each other.

\subsection{\texorpdfstring{$k$}{}-Symplectic manifolds}

This section briefly reviews the theory of $k$-symplectic manifolds and introduces the notions to be employed hereafter (see \cite{Awa_92,LSV_15,LV_13,RRSV_11} for further details).

\begin{definition}
    A {\it $k$-symplectic form} on a manifold $P$ is a closed non-degenerate $\R^k$-valued differential two-form $\bm\omega = \sum^k_{\alpha=1}\omega^\alpha\otimes e_\alpha$ on $P$. The pair $(P,\boldsymbol{\omega})$ is called a {\it $k$-symplectic manifold}. If the two-form $\boldsymbol{\omega}$ is exact, namely $\boldsymbol{\omega} = \d\boldsymbol{\theta}$ for some $\boldsymbol{\theta} \in\Omega^1(P,\R^k)$, then $\bm\omega$ is said to be an {\it exact $k$-symplectic form}. The pair $(P,\d\bm\theta)$ is called an \textit{exact $k$-symplectic manifold.} Additionally, let $P$ be $ n(k+1)$ dimensional for $n\in \mathbb{N}$. A \textit{polarisation} on $P$ is an integrable distribution $\mathcal{V}$ of rank $nk$ on $P$ such that
    $$ \restr{\bm\omega}{\mathcal{V}\times \mathcal{V}} = 0\,. $$
    We say that $(P,\bm\omega,\mathcal{V})$ is a \textit{polarised $k$-symplectic manifold}.
\qeddiamond\end{definition}

Note that $(P,\bm \omega)$ is a $k$-symplectic manifold if, and only if, $\bm \omega$ is closed and 
$$
    \ker\bm\omega = \ker\left(\sum^k_{\alpha=1}\omega^\alpha\otimes e_\alpha\right) = \bigcap^k_{\alpha=1}\ker \omega^\alpha=0\,.
$$

\begin{example}[Canonical $k$-symplectic manifold]\label{Ex:kCotan}
    Consider a vector bundle of the form $\pi^k_M\colon\boplus^k\cT M\rightarrow M$ and the projection $\pi^{\alpha}\colon\boplus^k\cT M\rightarrow\cT M$ onto the $\alpha$-th factor of $\boplus^k\cT M$. Then, one can consider the differential $k$-form $\bm\omega_M = \sum_{\alpha=1}^k\omega_M^\alpha \otimes e_\alpha$, where $\omega^{\alpha}_{M}=\pi^{\alpha *}\omega_{M}$ for the canonical symplectic form $\omega_{M}$ on $\cT M$ and $\alpha=1,\ldots,k$, which is closed and non-degenerate.
    
\end{example}

\begin{theorem}[Darboux theorem for polarised $k$-symplectic manifolds \cite{Awa_92,GLRR_24}]
    Let $(P,\bm\omega,\mathcal{V})$ be a polarised $k$-symplectic manifold. Then, for every point of $x\in P$, there exists local coordinates around $x$, let us say, $(q^i,p_i^\alpha)$, such that
\begin{equation}\label{Eq:DarkSym} \bm\omega = \sum_{\alpha=1}^k\omega^\alpha\otimes e_\alpha = \sum_{\alpha=1}^k\sum_{i=1}^n(\d q^i\wedge \d p_i^\alpha)\otimes e_\alpha\ ,\qquad \mathcal{V} = \left\langle \parder{}{p_i^\alpha} \right\rangle
    \end{equation}
\end{theorem}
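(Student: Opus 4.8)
The plan is to organise everything around the polarisation $\mathcal{V}$ and to imitate the classical symplectic Darboux argument, but carrying all $k$ components of $\bm\omega$ simultaneously. First I would apply the Frobenius theorem to the integrable rank-$nk$ distribution $\mathcal{V}$ to obtain, on a neighbourhood of $x$, coordinates $(x^1,\dots,x^n,y^1,\dots,y^{nk})$ in which $\mathcal{V}=\langle\partial/\partial y^1,\dots,\partial/\partial y^{nk}\rangle$ and the leaves of $\mathcal{V}$ are the level sets $L_c=\{x=c\}$. The hypothesis $\omega^\alpha|_{\mathcal{V}\times\mathcal{V}}=0$ then says that each $\omega^\alpha$ carries no $\d y^A\wedge\d y^B$ term, equivalently that the pullback $\omega^\alpha|_{L_c}$ vanishes on every leaf.

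Next I would exploit closedness. By the Poincar\'e lemma, $\omega^\alpha=\d\theta^\alpha$ near $x$. Since $\omega^\alpha|_{L_c}=0$, the restriction $\theta^\alpha|_{L_c}$ is closed, hence exact, on each leaf; a parametrised Poincar\'e lemma yields a function $K^\alpha$, smooth in all variables, with $\theta^\alpha|_{L_c}=\d(K^\alpha|_{L_c})$ for every $c$. Replacing $\theta^\alpha$ by $\theta^\alpha-\d K^\alpha$, I may assume $\theta^\alpha|_{L_c}=0$ for all leaves, i.e. $\theta^\alpha=\sum_{i=1}^n f_i^\alpha\,\d x^i$ for functions $f_i^\alpha$. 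Setting $q^i:=x^i$ and $p_i^\alpha:=-f_i^\alpha$ then gives, with no further manipulation,
\[
  \omega^\alpha=\d\theta^\alpha=\sum_{i=1}^n \d f_i^\alpha\wedge\d x^i=\sum_{i=1}^n \d q^i\wedge\d p_i^\alpha ,
\]
and, because $\mathcal{V}=\ker(\d x^1,\dots,\d x^n)=\ker(\d q^1,\dots,\d q^n)$, one automatically obtains $\mathcal{V}=\langle\partial/\partial p_i^\alpha\rangle$ as soon as the $(q^i,p_i^\alpha)$ are known to be coordinates.

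The step I expect to be the main obstacle is exactly this last point: showing that the $n(k+1)$ functions $(q^i,p_i^\alpha)$ form a coordinate chart near $x$, i.e. that their differentials are independent. Since $\d q^i=\d x^i$ and, modulo $\langle\d x^1,\dots,\d x^n\rangle$, one has $\d p_i^\alpha\equiv-\sum_A(\partial f_i^\alpha/\partial y^A)\,\d y^A$, this reduces to invertibility of the $(nk)\times(nk)$ matrix $M_{(i\alpha),A}=\partial f_i^\alpha/\partial y^A$ at $x$. Crucially, one cannot invoke non-degeneracy of any single $\omega^\alpha$, since for $k>1$ these are genuinely degenerate (each has a kernel of rank $n(k-1)$); instead one must use the collective non-degeneracy $\bigcap_\alpha\ker\omega^\alpha=0$ recorded in the excerpt. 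Contracting the identity $\omega^\alpha=\sum_i\d q^i\wedge\d p_i^\alpha$ with a vertical vector $v=\sum_A d^A\,\partial/\partial y^A$ gives $\iota_v\omega^\alpha=\sum_i\big(\sum_A M_{(i\alpha),A}\,d^A\big)\d x^i$, so $Md=0$ forces $v\in\bigcap_\alpha\ker\omega^\alpha=\{0\}$ and hence $d=0$; thus $M$ is injective and, being square, invertible. This closes the proof. The only remaining care is technical: the smooth dependence of $K^\alpha$ on the transverse parameters $c$ in the parametrised Poincar\'e lemma, and arranging the chart to be centred at $x$.
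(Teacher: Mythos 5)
The paper does not prove this theorem: it is imported verbatim from the references \cite{Awa_92,GLRR_24}, so there is no internal proof to compare against. Your argument is correct and complete: the Frobenius chart for $\mathcal{V}$, the leafwise (parametrised) Poincar\'e lemma reducing $\bm\theta$ to the form $\sum_i f_i^\alpha\,\d x^i\otimes e_\alpha$, and the use of the \emph{collective} non-degeneracy $\bigcap_\alpha\ker\omega^\alpha=0$ to invert the square matrix $M_{(i\alpha),A}=\partial f_i^\alpha/\partial y^A$ are exactly the right ingredients, and your observation that no single $\omega^\alpha$ can be used for this last step is the key point where the $k>1$ case differs from the symplectic one. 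This is essentially the standard proof found in the cited literature; the only detail worth writing out in full is the smooth dependence of the leafwise primitive $K^\alpha$ on the transverse parameters, which follows from the explicit homotopy-operator formula on a product chart $U\times V$ with $V$ a ball.
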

\begin{definition} Given a polarised $k$-symplectic manifold $(P,\bm\omega,\mathcal{V})$, the coordinates bringing $\bm \omega$ into the form \eqref{Eq:DarkSym} are called \textit{Darboux} or \textit{canonical coordinates} of the $k$-symplectic manifold.
\qeddiamond\end{definition}

\begin{definition} Given a $k$-symplectic manifold $(P,\bm\omega)$, a $k$-vector field $\bfX = (X_1,\ldots,X_{k})$ is {\it Hamiltonian} if there exists $h\in\Cinfty(P)$ such that
$$
    \sum_{\alpha=1}^k\inn{X_\alpha}\omega^\alpha = \d h\,.
$$
The function $h\in \Cinfty(P)$ is called the {\it Hamiltonian function} of ${\bm X}$. 
\qeddiamond\end{definition}

\begin{definition}
Let $V\subset\T_xP$ be a linear subspace. We define the \textit{$k$-symplectic orthogonal of $V$} as
$$
    V^{\perp_{\bm\omega}} = \{ w\in\T_xP\mid \bm\omega(w,v) = 0\,,\ \forall v\in V \}\,.
$$
Using this notion of orthogonality, we say that
\begin{itemize}
    \item $V$ is \textit{isotropic} if $V\subset V^{\perp_{\bm\omega}}$,
    \item $V$ is \textit{Lagrangian} if it is isotropic and there exists an isotropic subspace $W\subset\T_xP$ such that $\T_xP = V\oplus W$.
\end{itemize}
\qeddiamond\end{definition}
Unlike Lagrangian subspaces in symplectic geometry, there is no fixed dimension for Lagrangian subspaces in the $k$-symplectic setting \cite{Can_01,LSV_15}. Note that previous point-wise notions of orthogonality, isotropy and Legendrianity can be extended to tangent bundles to submanifolds by considering the tangent space at every point of a given submanifold to be isotropic or Legendrian, respectively.

\subsection{A review on \texorpdfstring{$k$}--contact geometry}

This section reviews $k$-contact geometry through the here newly defined $k$-contact forms, offering interesting new examples. We also locally describe first-order jet bundles in relation to some adapted $k$-contact forms. While such $k$-contact forms are not globally defined, their kernels are. This will be of utmost relevance in further parts of this work. Moreover, we highlight several facts about $k$-contact geometry that significantly differ from the contact ($k=1$) case. For the sake of clearness, we recall that $k$ is assumed to be a natural number, namely $k\geq 1$.

\begin{definition}\label{dfn:k-contact-manifold}
    A \textit{$k$-contact form on an open subset $U\subset M$} is a differential one-form on $U$ taking values in $\mathbb{R}^k$, let us say $\bm\eta\in\Omega^1(U,\mathbb{R}^k)$, such that
    \begin{enumerate}[(1)]
        \item $\ker \bm\eta\subset\T U$ is a regular non-zero distribution of corank $k$,
        \item $\ker \d\bm\eta\subset\T U$ is a regular distribution of rank $k$,
        \item $\ker \bm\eta\cap\ker \d\bm\eta  = 0$.
    \end{enumerate}
    If the $k$-contact form $\bm \eta$ is defined on $M$, the pair $(M,\bm\eta)$ is called a \textit{co-oriented $k$-contact manifold} and $\ker\d\bm\eta$ is called the {\it Reeb distribution} of $(M,\bm\eta)$.  If, in addition, $\dim M = n+nk+k$ for some $n,k\in\N$ and there exists an integrable distribution $\mathcal{V}$ contained in $\ker \bm\eta$ with $\rk\mathcal{V} = nk$, we say that $(M,\bm\eta,\mathcal{V})$ is a \textit{polarised co-oriented $k$-contact manifold}. We call the distribution $\mathcal{V}$ a \textit{polarisation} of $(M,\bm\eta)$.
\qeddiamond\end{definition}

The components of a $k$-contact form were called a {\it $k$-contact structure} in the previous literature \cite{GGMRR_20,GGMRR_21}, and this was the fundamental notion in $k$-contact geometry so far.

\begin{remark} The condition $\ker\bm \eta\neq 0$ is assumed so as to avoid  non-interesting examples and retrieve standard contact geometry in the case $k=1$. In fact, if we consider the definition of a $k$-contact form and skip the condition $\ker \bm\eta\neq 0$, then one can consider $\eta$ to be any non-vanishing one-form on  $M=\mathbb{R}$. Thus, $\d\eta=0$ and $\ker \d\eta=\T \mathbb{R}$.  Hence, all conditions (1), (2), and (3) are satisfied. Nevertheless, we want a one-contact form to be a contact form and a non-vanishing $\eta$ on $\mathbb{R}$ is not a contact form as its kernel is not a contact distribution. The condition $\ker \bm \eta\neq 0$ has been added to the initial definition in \cite{GGMRR_20} to solve the previous problem of not retrieving contact manifolds for $k=1$ on manifolds of dimension one.
\end{remark}
\begin{remark}\label{Rem:CompositePolarised}
    Note that if $(M, \bm\eta)$ admits a polarisation $\mathcal{V}$, then $\dim M=n+ nk + k$ for some $n,k\in\N$. One may wonder when $\dim\,M$  can be written in this form. Observe that  
    $$
        n + nk + k = (n+1)(k+1) - 1\,,
    $$
    so $\dim M + 1$ must be a composite number. In other words, if $(M,\bm\eta,\mathcal{V})$ is a polarised $k$-contact manifold, $\dim M + 1$ cannot be a prime number.
\end{remark}

If $\rk \ker\d\bm\eta = k$, with $\ker \bm \eta\neq 0$, and $\cork\ker\bm\eta=k$, i.e. conditions (1) and (2) in Definition \ref{dfn:k-contact-manifold} hold, then condition (3) amounts to each one of the following equivalent equalities on $U$:
\begin{enumerate}
    \item[(3$''$)] $\T U = \ker \bm\eta\oplus\ker \d\bm\eta$,
    \item[(3$'''$)] $\cT U = (\ker \bm \eta)^\circ\oplus(\ker \d\bm \eta)^\circ$.
\end{enumerate}

\begin{remark}
    A one-contact form is a one-form $\eta$ satisfying Definition \ref{dfn:k-contact-manifold}. This implies that: (2) means that $\ker\d\eta$ has rank one and (1) implies that $\ker\eta$ has co-rank one and $\ker\eta\neq 0$. Since $(\ker\d \eta)^\circ$ has even rank, and taking (3$'''$), we obtain that $\dim M$ is odd. Since $\ker\eta$ is different from zero and it has corank one, then the manifold has, at least, dimension three. This retrieves that $\eta\wedge(\d \eta)^n$, for some $n\in \mathbb{N}$, is a volume form on $U$, which amounts to the notion of a {\it co-oriented contact manifold} when $U=M$ (see \cite{BCT_17,LL_19,GGMRR_20a}).
\end{remark}

\begin{theorem}[Reeb vector fields \cite{GGMRR_20}]\label{thm:k-contact-Reeb}
    Let $\left(M,\bm\eta = \sum_{\alpha=1}^k\eta^\alpha\otimes e_\alpha\right)$ be a co-oriented $k$-contact manifold. There exists a unique family of vector fields $R_1,\dotsc, R_k\in\X(M)$, such that
    \begin{equation*}\label{eq:k-contact-Reeb}
    \inn{R_\alpha}\eta^\beta = \delta_\alpha^\beta, \qquad \inn{R_\alpha}\d\eta^\beta = 0\,,    
    \end{equation*}
    for $\alpha,\beta = 1,\dotsc,k$. The vector fields $R_1,\dotsc,R_k$ commute between themselves, i.e.
    $$
        [R_\alpha,R_\beta] = 0\,,\qquad\alpha,\beta = 1,\dotsc,k\,.
    $$
    In addition, $\ker \d\bm\eta$ is spanned by those vector fields, namely $\ker \d\bm\eta = \langle R_1,\dotsc,R_k \rangle$. 
\end{theorem}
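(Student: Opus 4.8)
The plan is to construct the Reeb vector fields from the pointwise algebraic structure of the decomposition $\T M = \ker\bm\eta\oplus\ker\d\bm\eta$ guaranteed by condition (3$''$), and then to extract commutativity purely from Cartan calculus. First I would observe that a vector field $R$ satisfies $\inn{R}\d\eta^\beta = 0$ for all $\beta$ if and only if it is a section of the Reeb distribution $\ker\d\bm\eta$. Hence the defining conditions force each $R_\alpha$ to lie in this rank-$k$ distribution, and the remaining condition $\inn{R_\alpha}\eta^\beta = \delta_\alpha^\beta$ is merely a normalisation inside it.

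To realise that normalisation, I would consider at each $x\in M$ the linear map $\Phi_x\colon(\ker\d\bm\eta)_x\to\R^k$ defined by $\Phi_x(v) = \sum_{\beta=1}^k(\inn{v}\eta^\beta)e_\beta$. If $\Phi_x(v) = 0$, then $v\in\ker\bm\eta\cap\ker\d\bm\eta$, which is $0$ by condition (3); thus $\Phi_x$ is injective, and since both its source and target have dimension $k$, it is an isomorphism. Setting $\restr{R_\alpha}{x} := \Phi_x^{-1}(e_\alpha)$ yields the unique vectors with the required properties, and smoothness follows because $\ker\d\bm\eta$ is a regular (hence smooth) subbundle and $\bm\eta$ is smooth, so $\Phi$ is a smooth bundle isomorphism with smooth inverse.

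For commutativity, the key preliminary computation is $\Lie_{R_\alpha}\eta^\beta = \inn{R_\alpha}\d\eta^\beta + \d(\inn{R_\alpha}\eta^\beta) = 0 + \d\delta_\alpha^\beta = 0$ by Cartan's formula, and consequently $\Lie_{R_\alpha}\d\eta^\beta = \d\Lie_{R_\alpha}\eta^\beta = 0$. I would then show $[R_\alpha,R_\beta] = 0$ by placing it in both summands of the decomposition. From the identity $\Lie_{R_\alpha}(\inn{R_\beta}\eta^\gamma) = \inn{[R_\alpha,R_\beta]}\eta^\gamma + \inn{R_\beta}\Lie_{R_\alpha}\eta^\gamma$, the left-hand side equals $\Lie_{R_\alpha}\delta_\beta^\gamma = 0$ and the last term vanishes, giving $\inn{[R_\alpha,R_\beta]}\eta^\gamma = 0$ for every $\gamma$, so $[R_\alpha,R_\beta]\in\ker\bm\eta$. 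The same identity applied to $\inn{R_\beta}\d\eta^\gamma = 0$ yields $\inn{[R_\alpha,R_\beta]}\d\eta^\gamma = 0$, so $[R_\alpha,R_\beta]\in\ker\d\bm\eta$; by condition (3) the intersection is trivial and hence $[R_\alpha,R_\beta] = 0$.

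Finally, since each $R_\alpha\in\ker\d\bm\eta$ and the $R_\alpha$ are pointwise linearly independent (their images $\Phi(R_\alpha) = e_\alpha$ are), they span a rank-$k$ subdistribution of the rank-$k$ distribution $\ker\d\bm\eta$, whence $\ker\d\bm\eta = \langle R_1,\dotsc,R_k\rangle$. The only genuinely delicate point is the isomorphism claim for $\Phi$, which is where the transversality condition (3) does all the work: it rules out a nonzero vector annihilated by all the $\eta^\beta$ yet lying in the Reeb distribution. Once that is secured, both the normalisation and the bracket argument reduce to routine applications of Cartan's magic formula and the commutator identity $[\Lie_{R_\alpha},\inn{R_\beta}] = \inn{[R_\alpha,R_\beta]}$.
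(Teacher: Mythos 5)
Your proof is correct and complete. The paper itself does not reproduce a proof of this theorem — it is stated with a citation to the original reference \cite{GGMRR_20} — so there is nothing internal to compare against, but your argument is exactly the standard one: the pointwise map $\Phi_x(v)=\sum_\beta(\inn{v}\eta^\beta)e_\beta$ on $(\ker\d\bm\eta)_x$ is injective by condition (3) and hence an isomorphism of rank-$k$ bundles, giving existence, uniqueness and smoothness at once; Cartan's formula gives $\Lie_{R_\alpha}\bm\eta=0$ and $\Lie_{R_\alpha}\d\bm\eta=0$; and the identity $[\Lie_{R_\alpha},\inn{R_\beta}]=\inn{[R_\alpha,R_\beta]}$ places $[R_\alpha,R_\beta]$ in $\ker\bm\eta\cap\ker\d\bm\eta=0$. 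All steps are sound and the use of the transversality condition is correctly identified as the crux.
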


Note that $\ker \d\bm\eta$ is integrable because it is the intersection of the kernels of closed forms and has constant rank by assumption.

\begin{definition} Given a $k$-contact manifold $(M,\bm\eta)$, we call \textit{Reeb $k$-vector field} of $(M,\bm\eta)$ the integrable $k$-vector field ${\bf R}=(R_1,\ldots,R_k)$ on $M$ described in Theorem \ref{thm:k-contact-Reeb}. 
The vector fields $R_1,\dotsc,R_k\in\X(M)$ are the {\it Reeb vector fields} of the $k$-contact manifold $(M,\bm\eta)$.
\qeddiamond\end{definition}

\begin{corollary}
    The Reeb vector fields for a $k$-contact form $\bm \eta$ on $M$ are the unique vector fields $R_1,\ldots,R_k$ on $M$ such that
    $$
        \inn{R_\alpha}\eta^\beta = \delta_\alpha^\beta\,,\qquad \Lie_{R_\alpha}\eta^\beta=0\,,\qquad \alpha,\beta=1,\ldots,k\,.
    $$
    Moreover, $\ker \bm\eta$ is invariant under the one-parameter groups of diffeomorphisms of the Reeb vector fields.
\end{corollary}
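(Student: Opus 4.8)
The plan is to read off the corollary from Theorem~\ref{thm:k-contact-Reeb} by trading the condition $\inn{R_\alpha}\d\eta^\beta = 0$ for the Lie-derivative condition via Cartan's magic formula $\Lie_X = \inn{X}\circ\d + \d\circ\inn{X}$. First I would prove that the two defining systems are equivalent. If $R_1,\ldots,R_k$ are the Reeb vector fields supplied by Theorem~\ref{thm:k-contact-Reeb}, then $\inn{R_\alpha}\eta^\beta=\delta_\alpha^\beta$ and $\inn{R_\alpha}\d\eta^\beta=0$; since each $\delta_\alpha^\beta$ is constant we have $\d(\inn{R_\alpha}\eta^\beta)=0$, whence
$$
    \Lie_{R_\alpha}\eta^\beta = \inn{R_\alpha}\d\eta^\beta + \d(\inn{R_\alpha}\eta^\beta) = 0\,.
$$
Conversely, if vector fields $R_1,\ldots,R_k$ satisfy $\inn{R_\alpha}\eta^\beta=\delta_\alpha^\beta$ and $\Lie_{R_\alpha}\eta^\beta=0$, the same formula gives $\inn{R_\alpha}\d\eta^\beta = \Lie_{R_\alpha}\eta^\beta - \d(\inn{R_\alpha}\eta^\beta) = 0$, so they satisfy the hypotheses of Theorem~\ref{thm:k-contact-Reeb}. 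By the uniqueness already established there, such vector fields exist, are unique, and coincide with the Reeb vector fields; this proves the first assertion.

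For the invariance claim I would argue through the flows. Having shown $\Lie_{R_\alpha}\eta^\beta=0$ for every $\beta$, the flow $\varphi_t^\alpha$ of each $R_\alpha$ preserves each component one-form, i.e. $(\varphi_t^\alpha)^\ast\eta^\beta=\eta^\beta$ wherever it is defined. Then, for any $v_x\in(\ker\bm\eta)_x$, naturality of the pullback yields
$$
    \eta^\beta_{\varphi_t^\alpha(x)}\big((\varphi_t^\alpha)_\ast v_x\big) = \big((\varphi_t^\alpha)^\ast\eta^\beta\big)_x(v_x) = \eta^\beta_x(v_x) = 0\,,\qquad \beta=1,\ldots,k\,,
$$
so $(\varphi_t^\alpha)_\ast v_x\in\ker\bm\eta$; since $\ker\bm\eta=\bigcap_{\beta=1}^k\ker\eta^\beta$, this is exactly invariance of $\ker\bm\eta$ under the one-parameter group of $R_\alpha$. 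An equivalent infinitesimal route is to check, for $Y\in\Gamma(\ker\bm\eta)$, that $\inn{[R_\alpha,Y]}\eta^\beta=\Lie_{R_\alpha}(\inn{Y}\eta^\beta)-\inn{Y}\Lie_{R_\alpha}\eta^\beta=0$, so that $[R_\alpha,\ker\bm\eta]\subset\ker\bm\eta$ and, $\ker\bm\eta$ being regular, the flow preserves it.

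There is no serious obstacle in this argument; it is entirely formal once Cartan's formula is invoked. The single point that deserves attention is that the passage between the $\d\eta^\beta$-condition and the Lie-derivative condition is genuinely a two-way equivalence, and this relies precisely on the constancy of $\delta_\alpha^\beta$, which annihilates the exact term $\d(\inn{R_\alpha}\eta^\beta)$ in Cartan's identity; were the right-hand sides of the contraction conditions non-constant functions, the two systems would differ.
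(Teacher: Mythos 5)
Your proof is correct and follows the route the paper intends: the paper states this corollary without a written proof as an immediate consequence of Theorem~\ref{thm:k-contact-Reeb} via Cartan's formula, and the two ingredients you use (the equivalence $\inn{R_\alpha}\d\eta^\beta=0 \Leftrightarrow \Lie_{R_\alpha}\eta^\beta=0$ given constancy of $\delta_\alpha^\beta$, and the flow/bracket argument for invariance of $\ker\bm\eta$) are exactly the computations the paper carries out in Corollary~\ref{Cor:SymkCon} and Proposition~\ref{prop:derived-distributions}. No gaps.
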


It is worth stressing that if $\bm \eta$ is a $k$-contact form on $U\subset M$, then it admits also Reeb vector fields on $U$.

    

\begin{example}\label{ex:canonical-k-contact-structure}
    The manifold $M = (\boplus^k\cT Q)\times\R^k$ has a natural $k$-contact form
    $$ 
        \bm\eta_{Q}= \sum_{\alpha=1}^k(\d z^\alpha - \theta^\alpha)\otimes e_\alpha\,,
    $$
    where $\{z^1,\dotsc,z^k\}$ are the pull-back to $M$ of standard linear coordinates in $\R^k$ and each $\theta^\alpha$ is the pull-back of the Liouville one-form $\theta$ of the cotangent manifold $\cT Q$ with respect to the projection $M\to\cT Q$ onto the $\alpha$-th component of $\boplus^k\cT Q$. Note also that $M$ admits a natural projection onto $Q\times \mathbb{R}^k$ and a related vertical distribution $\mathcal{V}$ of rank $k\cdot\dim Q$ contained in $\ker \bm \eta_{Q}$. Hence, $\left((\boplus^k\cT Q)\times\R^k,\bm\eta_Q,\mathcal{V}\right)$ is a polarised co-oriented $k$-contact manifold.

    Local coordinates $\{q^1,\ldots,q^n\}$ on $Q$ induce natural coordinates $\{q^i,p_i^\alpha\}$, for a fixed value of $\alpha$, on the $\alpha$-th component of $\boplus^k\cT Q$ and $\{q^i,p_i^\alpha,z^\alpha\}$, with $\alpha=1,\ldots,k$, on $M$. Then,
    $$
        \bm\eta_{Q} = \sum_{\alpha=1}^k\left(\d z^\alpha - \sum_{i=1}^np_i^\alpha\d q^i\right)\otimes e_\alpha\,,
    $$
    and
    $$
        \ker\bm\eta_{Q} = \left\langle \parder{}{p_i^\alpha}\,,\ \parder{}{q^i} + \sum_{\alpha=1}^kp_i^\alpha\parder{}{z^\alpha} \right\rangle\,.
    $$
    Hence, $\d\bm\eta_{Q} = \sum_{i=1}^n\sum_{\alpha=1}^k(\d q^i\wedge\d p_i^\alpha)\otimes e_\alpha$, the Reeb vector fields are $ R_\alpha = \tparder{}{z^\alpha}$ for $\alpha=1,\ldots,k$, and
    $$
        \ker \d\bm\eta_{Q} = \left\langle\parder{}{z^1},\dotsc,\parder{}{z^k}\right\rangle\,.
    $$ 
    \finish
\end{example}

\begin{example}[Contactification of an exact $k$-symplectic manifold]\label{ex:contactification-k-symplectic-manifold}
    Let $(P,\bm \omega = \d\bm\theta )$ be an exact $k$-symplectic manifold and consider the product manifold $M = P\times\R^k$. Let $\{z^1,\ldots,z^k\}$ be the pull-back to $M$ of some Cartesian coordinates in $\R^k$ and denote  by $\theta_M^\alpha$ the pull-back of $\theta^\alpha$ to the product manifold $M$. Consider the $\mathbb{R}^k$-valued one-form ${\bm \eta} = \sum_{\alpha=1}^k(\d z^\alpha + \theta_M^\alpha)\otimes e_\alpha \in\Omega^1(M,\mathbb{R}^k)$.  Then, $(M,\bm\eta)$ is a co-oriented $k$-contact manifold, because $\ker \bm \eta\neq 0$ has corank $k$, while $\d{\bm \eta} = \d{\bm \theta}_M$ and $\ker \d{\bm \eta} = \langle\tparder{}{z^1},\dotsc,\tparder{}{z^k}\rangle$ has rank $k$ since $\bm\omega$ is non-degenerate. It follows that $\bm\eta$ is a globally defined $k$-contact form.
    
    Note that the so-called canonical $k$-contact form $\bm\eta_Q$ described in Example \ref{ex:canonical-k-contact-structure} is just the contactification of the $k$-symplectic manifold $(P = \bigoplus^k\cT Q,\bm\omega_Q)$ described in Example \ref{Ex:kCotan}.
    \finish
\end{example}

\begin{example}\label{Ex:1JetkCon}
    Consider the first-order jet  bundle $J^1=J^1(M,E)$ of a fibre bundle $E\rightarrow M$ of rank $k$ with adapted coordinates $\{x^i,y^\alpha,y_i^\alpha\}$. Its Cartan distribution
    $$
        \mathcal{C} = \left\langle \frac{\partial}{\partial x^i}+\sum_{\alpha=1}^ky^\alpha_i\frac{\partial}{\partial y^\alpha},\frac{\partial}{\partial y^\alpha_i}\right\rangle
    $$ 
    has rank $m(k+1)$ and it is globally defined \cite{Olv_93,Sau_89}. Note that $[\mathcal{C},\mathcal{C}] = \T J^1$.

    On the open subset $U$ of $J^1$ where our local adapted coordinates are defined, there exists a $k$-contact form
    $$
        \bm \eta=\sum_{\alpha=1}^k\left(\d y^\alpha-\sum_{i=1}^my^\alpha_i\d x^i\right)\otimes e_\alpha\,,
    $$
    such that $\ker \bm \eta=\mathcal{C}|_U$. Moreover,
    $$
       \left\langle \frac{\partial}{\partial y^\alpha}\right\rangle=\ker \d\bm\eta\,.
    $$
    This example is quite interesting due to the fact that, given another set of adapted coordinates $\{\bar x^i,\bar y^\alpha,\bar y^\alpha_i\}$ on $\bar U$ to $J^1$, the $\R^k$-valued differential one-form
    $$
        \bar{\bm\eta}=\sum_{\alpha=1}^k\left(\d \bar{y}^\alpha-\sum_{i=1}^m\bar {y}^\alpha_i\d \bar{x}^i\right)\otimes e_\alpha
    $$
    is different from $\bm\eta$, but 
    $$
        \ker\bm\eta|_{U\cap \bar{U}}=\ker \bar{\bm\eta}|_{U\cap \bar{U}} = \mathcal{C}|_{U\cap \bar{U}}\,,\qquad \left\langle \frac{\partial}{\partial y^\alpha}\right\rangle = \ker \d\bm \eta\neq \ker \d\bar{\bm\eta}=\left\langle \frac{\partial}{\partial \bar y^\alpha}\right\rangle\,,
    $$ 
    and $\ker \bar{\bm\eta}\oplus \ker \d\bar{\bm\eta}=\T \bar{U}$. In fact, $\bar y^\alpha_i=d^\alpha_i(y_x,y,x)$, $\bar y^\alpha=f^\alpha(y,x)$ and $\bar x^i=g^i(x)$, where $y_x$ represents the variables induced in $J^1$ by some adapted coordinates in $E$ while $d^\alpha_i, f^\alpha, g^i$ are the functions giving the local change of variables. This means that 
    $$
        \frac{\partial}{\partial \bar y^\alpha}=\sum_{\beta=1}^k\frac{\partial y^\beta}{\partial \bar y^\alpha }\frac{\partial}{\partial y^\beta}+\sum_{i=1}^m\sum_{\beta=1}^k\frac{\partial y_i^\beta}{\partial \bar y^\alpha }\frac{\partial}{\partial y^\beta_i}\,,\qquad \alpha=1,\ldots,r\,,\qquad  \Longrightarrow \qquad \left\langle \frac{\partial}{\partial y^\alpha}\right\rangle\neq \left\langle\frac{\partial}{\partial \bar y^\alpha}\right\rangle\,. 
    $$
    Consequently, locally defined $k$-contact forms associated with adapted coordinates to $J^1$ and their differentials do not need to be globally defined, but all $k$-contact forms share the same kernel given by the Cartan distribution.

    To illustrate the above results, let us analyse a very simple but illustrative example given by a fibre bundle $(x,y)\in \mathbb{R}^2\mapsto x\in \mathbb{R}$. Let us study the first-order jet bundle $J^1(\mathbb{R},\mathbb{R}^2)$ with induced variables $\{x,y,\dot y\}$. The new bundle variables $(\bar x, \bar y) \in \mathbb{R}^2\mapsto 
  \bar x \in \mathbb{R}$ given by
    $$ \bar x = x\,,\qquad \bar y=(1+x^2)y\,, $$
    lead to a new adapted coordinate system on $J^1(\mathbb{R},\mathbb{R}^2)$ of the form
    $$ \bar x=x\,,\qquad\bar y = (1+x^2)y\,,\qquad\dot {\bar y}=(1+x^2)\dot y+2xy\,.$$
    Hence,
    $$
    \begin{gathered}
        \parder{}{x} = \parder{}{\bar x} + 2\frac{\bar x\bar y}{1+\bar{x}^2}\parder{}{\bar y} + \frac{2}{1+\bar x^2}\left(\bar{y} + \bar x\dot{\bar y}-\frac{2\bar x^2\bar y}{1+\bar x^2}\right)\parder{}{\dot{\bar y}}\,,\\\frac{\partial}{\partial y}=(1+\bar{x}^2)\frac{\partial}{\partial \bar y}+2\bar x\frac{\partial}{\partial \dot {\bar y}}\,,\qquad \frac{\partial}{\partial \dot y}=(1+\bar{x}^2)\frac{\partial}{\partial \dot {\bar y}}\,.
    \end{gathered}$$
    As stated before, $\langle \partial/\partial y\rangle\neq \langle \partial/\partial \bar y\rangle$ at a generic point, while $\bar \eta = (1+x^{2})\eta$. 
    \finish
\end{example}
For a contact form $\eta$ on $M$, one has that $[\ker \eta,\ker \eta]=\T M$. For $k$-contact forms, only the following result can be ensured. 
\begin{proposition}
\label{prop:ker_eta}
    Given a $k$-contact form $\bm\eta$ on $M$, one has that $\ker \d\bm\eta$ is an integrable distribution and $[\ker \bm\eta,\ker \bm\eta]_x\varsupsetneq\ker\bm\eta_x$ at every point $x\in M$.    
\end{proposition}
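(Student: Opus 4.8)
The statement splits into two claims: the integrability of $\ker\d\bm\eta$, and the strict inclusion $[\ker\bm\eta,\ker\bm\eta]_x\varsupsetneq\ker\bm\eta_x$ at every $x\in M$. The plan is to reduce everything to the standard Cartan identity
$$
    \d\eta^\alpha(X,Y)=X(\eta^\alpha(Y))-Y(\eta^\alpha(X))-\eta^\alpha([X,Y])\,,
$$
which, for sections $X,Y\in\Gamma(\ker\bm\eta)$, collapses to $\eta^\alpha([X,Y])=-\d\eta^\alpha(X,Y)$ for $\alpha=1,\dots,k$. This turns the distributional question into a pointwise question about the restriction of $\d\bm\eta$ to $\ker\bm\eta$.

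For integrability I would invoke Theorem \ref{thm:k-contact-Reeb}: the Reeb vector fields $R_1,\dots,R_k$ span $\ker\d\bm\eta$ and satisfy $[R_\alpha,R_\beta]=0$, so $\ker\d\bm\eta$ is a regular distribution (rank $k$ by condition (2) of Definition \ref{dfn:k-contact-manifold}) spanned by commuting, hence involutive, vector fields, and Frobenius applies. Alternatively, and without the Reeb fields, one checks that each $\d\eta^\alpha$ is closed, so if $X,Y\in\Gamma(\ker\d\bm\eta)\subset\Gamma(\ker\d\eta^\alpha)$, the vanishing of $\d(\d\eta^\alpha)(X,Y,\cdot)$ forces $[X,Y]\in\ker\d\eta^\alpha$ for every $\alpha$; intersecting over $\alpha$ gives $[X,Y]\in\Gamma(\ker\d\bm\eta)$, and involutivity with constant rank yields integrability.

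For the inclusion $\ker\bm\eta_x\subseteq[\ker\bm\eta,\ker\bm\eta]_x$ I would use that $\ker\bm\eta$ is a nonzero regular distribution (condition (1)). Given $u\in\ker\bm\eta_x$, pick $X\in\Gamma(\ker\bm\eta)$ with $X(x)=u$ and any $Y\in\Gamma(\ker\bm\eta)$ with $Y(x)\neq0$. From $[fX,Y]=f[X,Y]-(Yf)X$, with $fX,X,Y\in\Gamma(\ker\bm\eta)$, one gets $(Yf)X=f[X,Y]-[fX,Y]\in\Gamma([\ker\bm\eta,\ker\bm\eta])$; choosing $f$ with $(Yf)(x)\neq0$ (possible since $Y(x)\neq0$) shows $u\in[\ker\bm\eta,\ker\bm\eta]_x$.

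The core of the proposition is the strictness, which by the reduction above amounts to exhibiting, at each $x$, sections $X,Y\in\Gamma(\ker\bm\eta)$ with $\eta^\alpha([X,Y])(x)=-\d\eta^\alpha(X,Y)(x)\neq0$ for some $\alpha$, i.e. to showing that $\d\bm\eta$ does not vanish identically on $\ker\bm\eta_x\times\ker\bm\eta_x$. I expect this to be the main obstacle, and I would argue by contradiction. Suppose $\d\eta^\alpha_x(u,w)=0$ for all $u,w\in\ker\bm\eta_x$ and all $\alpha$. Using the splitting $\T_xM=\ker\bm\eta_x\oplus\ker\d\bm\eta_x$ from condition (3), any $w\in\T_xM$ writes as $w_1+w_2$ with $w_1\in\ker\bm\eta_x$, $w_2\in\ker\d\bm\eta_x$; then $\d\eta^\alpha_x(u,w)=\d\eta^\alpha_x(u,w_1)+\d\eta^\alpha_x(u,w_2)=0$ for every $\alpha$, the first term by hypothesis and the second since $w_2\in\ker\d\bm\eta_x\subseteq\ker\d\eta^\alpha_x$. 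Hence $u\in\bigcap_\alpha\ker\d\eta^\alpha_x=\ker\d\bm\eta_x$, so $u\in\ker\bm\eta_x\cap\ker\d\bm\eta_x=0$; this forces $\ker\bm\eta_x=0$, contradicting $\ker\bm\eta\neq0$ (condition (1)). Therefore some bracket $[X,Y](x)$ escapes $\ker\bm\eta_x$, and combined with the inclusion above we conclude $[\ker\bm\eta,\ker\bm\eta]_x\varsupsetneq\ker\bm\eta_x$.
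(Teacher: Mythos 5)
Your proof is correct and follows essentially the same route as the paper's: integrability of $\ker\d\bm\eta$ via closedness of the components and constant rank, the inclusion $\ker\bm\eta_x\subseteq[\ker\bm\eta,\ker\bm\eta]_x$ via a bracket with a function multiple, and strictness by combining Cartan's formula with the splitting $\T_xM=\ker\bm\eta_x\oplus\ker\d\bm\eta_x$ to show that a $\d\bm\eta$-degenerate vector of $\ker\bm\eta_x$ would have to lie in $\ker\bm\eta_x\cap\ker\d\bm\eta_x=0$, contradicting $\ker\bm\eta\neq 0$. The paper phrases the contradiction as ``assume $[\ker\bm\eta,\ker\bm\eta]_x=\ker\bm\eta_x$'' while you isolate the non-degeneracy of $\restr{\d\bm\eta}{\ker\bm\eta\times\ker\bm\eta}$ as the key fact, but the content is identical.
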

\begin{proof}
    Since $\d\bm\eta$ has closed components and $\ker\d\bm\eta$ has constant rank because $\bm \eta$ is a $k$-contact form, then $\ker\d\bm\eta$ is a regular distribution given by the intersection of the integrable distributions $\ker\d\eta^\alpha$ for $\alpha=1,\ldots,k$. To prove the second statement of this proposition, let us use reduction to absurd. Note that $[\ker \bm\eta,\ker\bm\eta]\supset \ker\bm \eta$ because for every vector field $X$ taking values in $\ker\bm\eta$ different from zero at $x\in M$, there exists a function $f$ such that $(Xf)(x)=1$ and $[X,fX]_x=X_x$. Assume that $[ \ker\bm\eta,\ker \bm\eta]_x=\ker \bm \eta_x$ at a point $x\in M$. Choose a basis $\{X_1,\ldots, X_s\}$ of $\ker \bm \eta$ around $x\in M$. Then,
    $$
        \d\bm \eta_{x}(X_i,X_j) = (X_i\inn{X_j}\bm\eta)_{x}-(X_j\inn{X_i}\bm\eta)_{x} - \inn{[X_i,X_j]_x}\bm \eta_{x}=-\inn{[X_i,X_j]_x}\bm\eta_x\,,\qquad i,j=1,\ldots,s\,.
    $$
   Since $[\ker\bm \eta,\ker\bm \eta]_x=\ker \bm \eta_x$ by assumption, one has $\d \bm \eta_x(X_i(x),X_j(x))=0$ for $i,j=1,\ldots,s$.  
    Since the Reeb vector fields take values in $\ker\d\bm\eta_{x}$, then $\d\bm\eta_{x}(X_i(x),R_\alpha(x))=0$ for $\alpha=1,\ldots,k$ and $i=1,\ldots,s$. Then, $X_1,\ldots,X_s$ take values in $\ker \d\bm\eta_{x}$ at $x$. Hence, $\ker\bm\eta_x\cap \ker \d\bm\eta_x\neq 0$,  which is a contradiction. Then, $[\ker \bm\eta,\ker \bm\eta]_x\varsupsetneq \ker\bm\eta_x$
\end{proof}

In some particular cases, one has $[\ker\bm\eta,\ker\bm\eta] = \T M$ for a $k$-contact form $\bm\eta$, but it is not needed in general as shown in the following example. Notwithstanding,  $[\ker\bm\eta,\ker\bm\eta] = \T M$ is satisfied for $k$-contact manifolds with  a polarisation, to be defined (in a new distributional manner) in Section \ref{sec:polarised-k-contact-distributions}.

\begin{example}
    Let us provide a $k$-contact form on  $M$ such that $[\ker \bm\eta,\ker \bm\eta]\neq \T M$. Consider $\mathbb{R}^2_\times\times\mathbb{R}^2$, where $\R_\times = (0,\infty)$, and a global coordinate system $\{x,p,z_1,z_2\}$ in $\mathbb{R}^2_\times\times \mathbb{R}^2$, namely $x,p\in \mathbb{R}_\times,z_1,z_2\in \mathbb{R}$. If 
    $$
        \bm\eta = (\d z_1 - p\,\d x)\otimes e_1+(\d z_2 - x\,\d p\,)\otimes e_2\,,
    $$
    it follows that
    $$
        \ker\bm\eta = \langle X_1 = x\partial_{z_2} + \partial_p\,,\ X_2 = p\partial_{z_1} + \partial_x \rangle\,,\qquad \d\bm\eta=\d x\wedge \d p \otimes e_1-\d x\wedge \d p\otimes e_2\,.
    $$
    $$
        \ker \d\bm\eta=\langle \partial_{z_{1}}, \partial_{z_{2}} \rangle.
    $$
    Then, $\ker \bm\eta\cap \ker \d\bm\eta=0$ with $\ker\bm \eta\neq 0$, $\corank \ker\bm \eta=2$, $\rank \ker\d\bm\eta=2$, and 
    $$
        [\ker\bm\eta,\ker \bm\eta]=\ker \bm\eta\oplus\langle \partial_{z_1}-\partial_{z_2}\rangle\varsubsetneq \T (\mathbb{R}_\times^2\times \mathbb{R}^2)\,.
    $$
    Note that $[X_1,X_2]$ does not take values in $\ker\bm\eta$ at any point of $M$, but $\bm \eta$ is a two-contact form.
    \finish
\end{example}


In general, there are no Darboux coordinates for $k$-contact manifolds without a polarisation \cite{GGMRR_20}. The following result suggests a notion of $k$-contact Darboux coordinates for co-oriented $k$-contact manifolds with a polarisation.

\begin{theorem}\label{Th:PolCokcontaDar} Consider a polarised co-oriented $k$-contact manifold $(M,\bm\eta,\mathcal{V})$. Then, there exists around every $x\in M$ a local coordinate system $\{x^i,y^\alpha,y^\alpha_i\}$ such that
$$
\bm \eta=\sum_{\alpha=1}^k\left(\d y^\alpha-\sum_{i=1}^ny^\alpha_i\d x^i\right)\otimes e_\alpha\,,\qquad R_\beta=\frac{\partial}{\partial y^\beta}\,,\qquad \mathcal{V}=\left\langle \frac{\partial}{\partial y^\alpha_i}\right\rangle,\qquad \beta=1,\ldots,k\,.
$$
\end{theorem}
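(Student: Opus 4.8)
The plan is to reduce the statement to the Darboux theorem for polarised $k$-symplectic manifolds by passing to the local leaf space of the Reeb foliation. Write $\mathcal{R}:=\ker\d\bm\eta=\langle R_1,\dots,R_k\rangle$, which by Theorem \ref{thm:k-contact-Reeb} is integrable of rank $k$ with a commuting frame of Reeb vector fields, and set $\mathcal{W}:=\mathcal{V}\oplus\mathcal{R}$. Since $\mathcal{V}\subset\ker\bm\eta$ and $\ker\bm\eta\cap\ker\d\bm\eta=0$, this sum is direct with $\rk\mathcal{W}=nk+k=(n+1)k$, and $\mathcal{W}\cap\ker\bm\eta=\mathcal{V}$. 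I first record that $\mathcal{V}$ is automatically isotropic for $\d\bm\eta$: for $V,V'\in\Gamma(\mathcal{V})$ one has $\d\eta^\beta(V,V')=-\inn{[V,V']}\eta^\beta=0$, because $\mathcal{V}$ is integrable and contained in $\ker\bm\eta$.

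The crucial step, and the one I expect to be the main obstacle, is to prove that $\mathcal{W}$ is integrable, equivalently that the Reeb flows preserve $\mathcal{V}$, i.e. $[R_\alpha,\Gamma(\mathcal{V})]\subset\Gamma(\mathcal{V})$. The Reeb fields already preserve $\ker\bm\eta$ (corollary to Theorem \ref{thm:k-contact-Reeb}), so $[R_\alpha,\Gamma(\mathcal{V})]\subset\Gamma(\ker\bm\eta)$; since $\mathcal{W}\cap\ker\bm\eta=\mathcal{V}$, integrability of $\mathcal{W}$ is exactly the assertion that this bracket lands in $\mathcal{V}$. Using $\Lie_{R_\alpha}\d\eta^\beta=0$ together with the isotropy above gives $\d\eta^\beta([R_\alpha,V],V')=-\d\eta^\beta(V,[R_\alpha,V'])$ for $V,V'\in\Gamma(\mathcal{V})$, which constrains but does not by itself annihilate the normal component of $[R_\alpha,V]$. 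This is where the real content lies: a merely integrable isotropic $\mathcal{V}$ of rank $nk$ need not be Reeb invariant (already for $k=1$ a Legendrian line field tilted in the Reeb direction fails this), so this step must genuinely use the polarisation hypothesis in the precise, distributional sense of Section \ref{sec:polarised-k-contact-distributions}. I would isolate Reeb invariance of $\mathcal{V}$ as the key lemma and prove it from that refined notion.

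Granting that $\mathcal{W}$ is integrable, I would carry out the reduction. Locally the Reeb foliation is realised by a submersion $\pi\colon M\to P$ with $\dim P=n+nk=n(k+1)$. Because $\inn{R_\alpha}\d\bm\eta=0$ and $\Lie_{R_\alpha}\d\bm\eta=0$, the form $\d\bm\eta$ is $\pi$-basic and descends to a closed $\R^k$-valued two-form $\bm\omega$ whose pullback kernel is precisely $\mathcal{R}$, so $\ker\bm\omega=0$ and $(P,\bm\omega)$ is $k$-symplectic. Reeb invariance of $\mathcal{V}$ makes $\bar{\mathcal{V}}:=\pi_*\mathcal{V}$ a well-defined integrable distribution on $P$ of rank $nk$, and the isotropy of $\mathcal{V}$ descends to $\restr{\bm\omega}{\bar{\mathcal{V}}\times\bar{\mathcal{V}}}=0$; thus $(P,\bm\omega,\bar{\mathcal{V}})$ is a polarised $k$-symplectic manifold (the dimension $n(k+1)$ matches the hypothesis of the corresponding Darboux theorem). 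That theorem supplies coordinates $(x^i,y^\alpha_i)$ on $P$ with $\bm\omega=\sum_{\alpha,i}(\d x^i\wedge\d y^\alpha_i)\otimes e_\alpha$ and $\bar{\mathcal{V}}=\langle\partial/\partial y^\alpha_i\rangle$. Pulling these back gives Reeb-basic functions $x^i,y^\alpha_i$ on $M$, and straightening the commuting frame $R_1,\dots,R_k$ produces $k$ transverse functions $y^1,\dots,y^k$ with $R_\beta y^\alpha=\delta^\alpha_\beta$, completing $\{x^i,y^\alpha,y^\alpha_i\}$ to a chart in which $R_\beta=\partial/\partial y^\beta$.

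It remains to bring $\bm\eta$ into normal form. Set $\eta^\alpha_0:=\d y^\alpha-\sum_i y^\alpha_i\,\d x^i$. Then $\d\eta^\alpha_0=\sum_i\d x^i\wedge\d y^\alpha_i=\pi^{*}\omega^\alpha=\d\eta^\alpha$ and $\inn{R_\beta}\eta^\alpha_0=\delta^\alpha_\beta=\inn{R_\beta}\eta^\alpha$, so each $\eta^\alpha-\eta^\alpha_0$ is a closed one-form annihilating $\mathcal{R}$, hence $\pi$-basic, and therefore locally exact: $\eta^\alpha-\eta^\alpha_0=-\d f^\alpha$ with $f^\alpha$ a function of $(x^i,y^\alpha_i)$ alone. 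Replacing $y^\alpha\mapsto y^\alpha-f^\alpha$ preserves $R_\beta=\partial/\partial y^\beta$ (as $f^\alpha$ is Reeb-basic) and yields $\bm\eta=\sum_\alpha\bigl(\d y^\alpha-\sum_i y^\alpha_i\,\d x^i\bigr)\otimes e_\alpha$. Finally, the same shift removes any residual Reeb tilt of $\mathcal{V}$: writing a local frame of $\mathcal{V}$ as $\partial/\partial y^\alpha_i+\sum_\gamma c^\gamma_{\alpha i}R_\gamma$, the condition $\mathcal{V}\subset\ker\bm\eta$ forces $c^\beta_{\alpha i}=-\inn{\partial/\partial y^\alpha_i}\eta^\beta=\partial f^\beta/\partial y^\alpha_i$, which is exactly the term absorbed by the coordinate change, so that $\mathcal{V}=\langle\partial/\partial y^\alpha_i\rangle$ in the final chart. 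This completes the plan.
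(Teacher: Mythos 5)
Your overall strategy --- quotient locally by the Reeb foliation, apply the Darboux theorem for polarised $k$-symplectic manifolds on the leaf space, then lift the chart back and absorb the closed basic difference $\eta^\alpha-\eta^\alpha_0$ into a shift of $y^\alpha$ --- is the natural route, and everything downstream of your "key lemma" is correct and cleanly executed (the isotropy of $\mathcal{V}$, the descent of $\d\bm\eta$, the straightening of the commuting Reeb frame, and the final absorption of the Reeb tilt of $\mathcal{V}$ all check out). Note that the paper itself does not prove Theorem \ref{Th:PolCokcontaDar}: it is stated as a review item imported from \cite{GGMRR_20}, and Theorem \ref{Th:DarkMan} is later reduced to it, so there is no in-paper argument to compare against line by line.

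The genuine gap is exactly the step you flag and then defer: the Reeb-invariance of $\mathcal{V}$, i.e.\ $[R_\alpha,\Gamma(\mathcal{V})]\subset\Gamma(\mathcal{V})$. This is not optional --- in the claimed chart $R_\beta=\partial/\partial y^\beta$ commutes with $\partial/\partial y^\alpha_i$, so Reeb-invariance of $\mathcal{V}$ is a necessary consequence of the conclusion --- but it is not supplied by the hypotheses as stated. The definition of a polarisation in Definition \ref{dfn:k-contact-manifold}, and equally the "distributional" one in Section \ref{sec:polarised-k-contact-distributions} to which you appeal, require only that $\mathcal{V}$ be an integrable rank-$nk$ subbundle of $\ker\bm\eta$; there is no refined clause there that yields Reeb-invariance, so the lemma you hope to extract from it does not exist in the paper. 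Worse, your own parenthetical remark is a genuine counterexample to the lemma under these hypotheses: on $\mathbb{R}^3$ with $\eta=\d z-p\,\d q$, $R=\partial_z$, the line field $\mathcal{V}=\langle \partial_p+z(\partial_q+p\,\partial_z)\rangle$ is integrable, has rank $nk=1$, and lies in $\ker\eta$, yet $[R,\partial_p+z(\partial_q+p\,\partial_z)]=\partial_q+p\,\partial_z\notin\mathcal{V}$, so no chart of the asserted form can exist for this triple. Consequently the argument cannot be completed as written: one must either strengthen the polarisation hypothesis (e.g.\ require $[\ker\d\bm\eta,\mathcal{V}]\subset\mathcal{V}$, equivalently integrability of $\mathcal{V}\oplus\ker\d\bm\eta$, as is implicit in the source \cite{GGMRR_20}) or produce a genuinely different argument; your proposal does neither, so the proof is incomplete at its self-identified critical step.
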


\begin{definition} The local coordinates $\{x^i,y_i^\alpha,y^\alpha\}$ in Theorem \ref{Th:PolCokcontaDar} are referred to as \textit{$k$-contact Darboux coordinates}.
\qeddiamond\end{definition}

For simplicity, we will call $k$-contact Darboux coordinates simply Darboux coordinates, as this will not lead to any misunderstanding. The same applies to Darboux coordinates for different types of structures, whose precise meaning can be deduced from context.

\begin{example}\label{exa:first-jets}
    Let $J^{1}(M,E)$ be the first-order jet manifold  of the fibre bundle $\pi:E\rightarrow M$ with $ \dim E=k+m$ and $\dim M=m$. Let $\{x^{i},y^{\alpha},y^{\alpha}_ i\},$ with  $i=1,\ldots,m$ and $\alpha= 1,\ldots,k
$, be an adapted chart on an open neighbourhood $U$ of a point in $J^1(M,E)$. Recall that one may define a $k$-contact form $\bm \eta \in \Omega^{1}(U,\mathbb{R}^{k})$ for  $U\subset J^1(M,E)$ given by
\[
\bm \eta=\sum_{\
\alpha=1}^k\left(\d y^{\alpha} - \sum^{m}_{i=1} y^{\alpha }_i\d x^{i}\right)
\otimes e_\alpha.
\]
In fact, 
$$
\begin{gathered}
 \d\bm\eta=\sum_{
\alpha=1}^k\left(\sum^{m}_{i =1} \d x^{i} \wedge \d y^{\alpha}_i\right)\otimes e_\alpha,\qquad 
\ker \bm \eta=\left<\partial_{x^{i}}+\sum^{k}_{\alpha=1}y^\alpha_{i}
\partial_{y^{\alpha}}\,,\ \partial_{y^\alpha_{i}} \right> \,,\\
\ker\d\bm \eta=\left\langle\frac{\partial}{\partial y^{\alpha}}\right\rangle .
\end{gathered}
$$
If one considers the fibre bundle projection $j^1{\rm pr}_M\colon J^1(M,E)\rightarrow M$, it follows that 
$$
\ker \T j^1{\rm pr}_M\cap \ker \bm\eta=\left\langle \frac{\partial}{\partial y^\alpha_i}\right\rangle,
$$
which is globally defined because $\ker \bm \eta$ is the Cartan distribution on $U$ and $\ker \T j^1{\rm pr}_M$ is geometrically defined. Note that the adapted coordinates to $J^1(M,E)$ are indeed Darboux coordinates for the $k$-contact form $\bm \eta$, and $\mathcal{V} = \ker \T j^1{\rm pr}_M\cap \ker \bm\eta$ is the polarisation. 

\finish
\end{example}

Let us provide a last result concerning how to write $k$-contact forms for a general co-oriented $k$-contact manifold \cite[Proposition 3.6]{GGMRR_20}.

\begin{proposition}\label{Prop:GeneralDarbouxForm}

Let $(M,\bm \eta)$ be a co-oriented $k$-contact manifold. There exist local coordinates $z^\alpha,x^i$, with $\alpha=1,\ldots,k$ and $i=k+1,\ldots,m$ such that 
$$
R_\beta=\frac{\partial}{\partial z^\beta}\,,\qquad \bm \eta=\sum_{\alpha=1}^k\bigg(\d z^\alpha-\sum_{i=k+1}^mf^\alpha_i\d x^i\bigg)\otimes e_\alpha\,,\qquad \beta=1,\ldots,k\,,
$$
for certain functions $f^\alpha_i$, with $\alpha=1,\ldots,k$ and $i=k+1,\ldots,m$, that depend only on $x^{k+1},\ldots,x^m$.
\end{proposition}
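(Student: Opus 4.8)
The plan is to straighten the Reeb vector fields into coordinate vector fields and then read off the form of $\bm\eta$ directly from its two defining first-order properties. By Theorem~\ref{thm:k-contact-Reeb}, the Reeb vector fields $R_1,\ldots,R_k$ commute with one another and span the rank-$k$ distribution $\ker\d\bm\eta$; in particular they are pointwise linearly independent, since $\inn{R_\alpha}\eta^\beta=\delta_\alpha^\beta$ forces any vanishing combination $\sum_\alpha c^\alpha R_\alpha=0$ to have all $c^\alpha=0$. First I would invoke the simultaneous flow-box (canonical-form) theorem for a commuting, pointwise linearly independent frame: around any point $x\in M$ there exist local coordinates $\{z^1,\ldots,z^k,x^{k+1},\ldots,x^m\}$ such that $R_\beta=\partial/\partial z^\beta$ for $\beta=1,\ldots,k$. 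This already produces the asserted normal form for the Reeb fields, and the dimension count works out since there are $k$ coordinates $z^\beta$ and $m-k$ coordinates $x^i$.

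Next I would expand each component one-form in these coordinates as $\eta^\beta=\sum_{\gamma=1}^k a^\beta_\gamma\,\d z^\gamma+\sum_{i=k+1}^m b^\beta_i\,\d x^i$ for smooth functions $a^\beta_\gamma,b^\beta_i$. Evaluating the condition $\inn{R_\alpha}\eta^\beta=\delta_\alpha^\beta$ from the corollary to Theorem~\ref{thm:k-contact-Reeb}, with $R_\alpha=\partial/\partial z^\alpha$, immediately yields $a^\beta_\alpha=\delta_\alpha^\beta$, so that $\eta^\beta=\d z^\beta+\sum_{i=k+1}^m b^\beta_i\,\d x^i$.

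It then remains to control the dependence of the coefficients $b^\beta_i$, and here I would use the second property $\Lie_{R_\alpha}\eta^\beta=0$ from the same corollary. Since $\Lie_{\partial/\partial z^\alpha}$ annihilates each $\d z^\gamma$ and each $\d x^i$, one computes $\Lie_{\partial/\partial z^\alpha}\eta^\beta=\sum_{i=k+1}^m(\partial b^\beta_i/\partial z^\alpha)\,\d x^i$, which vanishes precisely when $\partial b^\beta_i/\partial z^\alpha=0$ for all $\alpha=1,\ldots,k$. Hence each $b^\beta_i$ is a function of $x^{k+1},\ldots,x^m$ alone. Setting $f^\beta_i:=-b^\beta_i$ delivers $\bm\eta=\sum_{\beta=1}^k\bigl(\d z^\beta-\sum_{i=k+1}^m f^\beta_i\,\d x^i\bigr)\otimes e_\beta$ with the $f^\beta_i$ independent of the $z$-coordinates, exactly as claimed.

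The computation is essentially mechanical once the Reeb fields are rectified, so the only genuine input is the simultaneous rectification of the commuting frame $R_1,\ldots,R_k$; this is the step I would regard as the crux, and it is precisely where the commutativity $[R_\alpha,R_\beta]=0$ established in Theorem~\ref{thm:k-contact-Reeb} is indispensable. Everything after it is linear algebra in a fixed chart together with the two pointwise conditions $\inn{R_\alpha}\eta^\beta=\delta_\alpha^\beta$ and $\Lie_{R_\alpha}\eta^\beta=0$.
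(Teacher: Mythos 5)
Your proof is correct and complete: the simultaneous rectification of the commuting, pointwise independent Reeb frame, followed by reading off the coefficients from $\inn{R_\alpha}\eta^\beta=\delta_\alpha^\beta$ and $\Lie_{R_\alpha}\eta^\beta=0$, is exactly the standard argument for this normal form. The paper itself states this proposition without proof, citing it from the earlier literature, and your argument is the expected one behind that citation; the only inputs you rely on (commutativity and the two contraction identities) are precisely those supplied by Theorem~\ref{thm:k-contact-Reeb} and its corollary.
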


\subsection{Field theories on co-oriented \texorpdfstring{$k$}{}-contact manifolds}
\label{Sec:kContHam}

Nowadays, the only approach to $k$-contact Hamiltonian dynamics is constructed exclusively for co-oriented $k$-contact manifolds \cite{GGMRR_20}. Let us review this topic.

\begin{definition}
    Given  $h\in \Cinfty(M)$ on a co-oriented $k$-contact manifold $(M,\bm\eta)$, called the \textit{Hamiltonian function}, the $k$-vector fields $\bfX^c_{h} = (X_\alpha)\in\X^k(M)$ satisfying the equations
    \begin{equation}\label{eq:contact-Ham-equations-trivial-1}
        \sum_{\alpha=1}^k\inn{X_\alpha}\d\eta^\alpha = \d h - \sum_{\alpha=1}^k(R_\alpha h)\eta^\alpha\,,\qquad \sum_{\alpha=1}^k\inn{X_\alpha}\eta^\alpha = -h\,,
    \end{equation}
     are called \textit{$k$-contact Hamiltonian $k$-vector fields}. Equations \eqref{eq:contact-Ham-equations-trivial-1} can be rewritten as
    \begin{equation}\label{eq:contact-Ham-equations-trivial-2}
        \sum_{\alpha=1}^k\Lie_{X_\alpha}\eta^\alpha = -\sum_{\alpha=1}^k(R_\alpha h)\eta^\alpha\,,\qquad \sum_{\alpha=1}^k\inn{X_\alpha}\eta^\alpha = -h\,.    
    \end{equation}
    We call $(M,\bm\eta,h)$ a \textit{$k$-contact Hamiltonian system}.
\qeddiamond\end{definition}

Note that in the co-oriented $k$-contact case, unlike in co-oriented contact manifolds, the components of a $k$-contact Hamiltonian $k$-vector field do not preserve the distribution $\ker \bm\eta$.

Let us consider a polarised co-oriented $k$-contact manifold of dimension $n+nk+k$. Taking Darboux coordinates $\{q^i,p_i^\alpha,z^\alpha\}$, the $k$-vector field $\bfX_{h}^c = (X_\alpha)$ has a local expression
$$ X_\alpha = \sum_{i=1}^n(X_\alpha)^i\parder{}{q^i} + \sum_{i=1}^n\sum_{\beta=1}^k(X_\alpha)^\beta_i\parder{}{p_i^\beta} + \sum_{\beta=1}^k(X_\alpha)^\beta\parder{}{z^\beta}\,, \qquad \alpha=1,\ldots,k\,,$$
where
\begin{equation}\label{eq:k-contact-HDW-fields-Darboux-coordinates}
    (X_\beta)^i = \parder{h}{p_i^\beta}\,,\qquad \sum_{\alpha=1}^k(X_\alpha)^\alpha_i = -\left( \parder{h}{q^i} + \sum_{\alpha=1}^kp_i^\alpha\parder{h}{s^\alpha} \right)\,,\qquad \sum_{\alpha=1}^k(X_\alpha)^\alpha = \sum_{j=1}^n\sum_{\alpha=1}^kp_j^\alpha\parder{h}{p_j^\alpha} - h\,,
\end{equation}
for $\beta=1,\ldots,k$ and $i=1,\ldots,n$. 
It is worth noting that each particular $h$ may have different $k$-contact Hamiltonian vector fields. In particular, there may exist $k$-contact Hamiltonian vector fields with a zero Hamiltonian function. We call them {\it$k$-contact gauge vector fields}.

Consider an integral section $\psi:\R^k\to M$ of the $k$-vector field $\bfX_{h}^c$, namely $\psi' = \bfX_{h}^c\circ\psi$, with local expression $\psi(t) = (q^i(t),p_i^\alpha(t),z^\alpha(t))$ where $t\in\R^k$. Then, $\psi$ satisfies the system of partial differential equations
\begin{equation}\label{eq:k-contact-HDW-Darboux-coordinates}
\begin{gathered}
    \parder{q^i}{t^\beta} = \parder{h}{p_i^\beta}\circ\psi\,,\qquad
\sum_{\alpha=1}^k\parder{p^\alpha_i}{t^\alpha} = -\left( \parder{h}{q^i} + \sum_{\alpha=1}^kp_i^\alpha\parder{h}{s^\alpha} \right)\circ\psi\,,\\
    \sum_{\alpha=1}^k\parder{s^\alpha}{t^\alpha} = \left( \sum_{\alpha=1}^k\sum_{j=1}^np_j^\alpha\parder{h}{p_j^\alpha} - h \right)\circ\psi\,,
\end{gathered}
\end{equation}
for $\beta=1,\ldots,k$ and $i=1,\ldots,n$. 
The equations \eqref{eq:k-contact-HDW-fields-Darboux-coordinates} and \eqref{eq:k-contact-HDW-Darboux-coordinates} are called \textit{$k$-contact Hamilton--De Donder--Weyl equations} or \textit{Herglotz--Hamilton--De Donder--Weyl equations} for $k$-vector fields and its integral sections, respectively \cite{GGMRR_20}.

In some very particular cases, the integral curves of a $k$-contact Hamiltonian $k$-vector field have been studied \cite{GGMRR_20}.

\section{A new approach: \texorpdfstring{$k$}{}-contact distributions}\label{Sec:kConMan}

Let us study the properties of a type of distribution that is locally given as the kernel of a $k$-contact form. This allows us to generalise the objects studied so far in $k$-contact geometry. Moreover, we will find that many properties analysed in $k$-contact geometry can be described without $k$-contact forms, sometimes even in a simpler manner via $k$-contact distributions or other differential one-forms taking values in $\mathbb{R}^k$. Nevertheless, let us give for the time being one of the most important definitions of this paper, which is a generalisation to $k$-contact geometry of the celebrated contact distributions. 

\begin{definition}\label{def:k-contact-distribution}
    A {\it $k$-contact distribution} on $M$ is a distribution $\mathcal{D}\subset\T M$ such that, for each point $x\in M$, there is an open neighbourhood $U\ni x$ and a $k$-contact form $\bm\eta$ on $U$ such that $\restr{\mathcal{D}}{U} = \ker\bm\eta$. We say that $(M,\mathcal{D})$ is a {\it $k$-contact manifold}.
\qeddiamond\end{definition}

Since a one-contact form is a contact form, a one-contact manifold $(M,\mathcal{D})$ is a contact manifold. This means that every associated contact form $\eta$, i.e. $\restr{\mathcal{D}}{U} = \ker\eta$ on an open subset $U\subset M$, satisfies that $\eta\wedge (\d\eta)^n$ is a volume form for a unique $n\in\mathbb{N}$. As a consequence, it is said that $\mathcal{D}$ is maximally non-integrable.

This work aims to analyse $k$-contact distributions by means of the properties of the distribution itself, without relying on a given $k$-contact form or other geometric structures needlessly. To start with, let us study the relation of $k$-contact distributions with the maximal non-integrability notion defined via distributions \cite{Vit_15}. Shortly, it will be shown that this definition is equivalent to the maximally non-integrability notion for corank one distributions defined by their associated forms.

\begin{definition}
    Let $\mathcal{D}$ be a regular distribution on $M$ and let $\pi\colon\T M\rightarrow \T M/\mathcal{D}$ be the natural vector bundle projection. Then, $\mathcal{D}$  is {\it maximally non-integrable} in a {\it distributional sense} if $\mathcal{D}\neq 0$ and the vector bundle morphism $\rho\colon \mathcal{D}\times_M \mathcal{D}\rightarrow \T M/\mathcal{D}$ over $M$ given by
    $$
        \rho(v,v')=\pi([X,X']_x)\,,\qquad \forall v,v'\in \mathcal{D}_x\,,\qquad \forall x\in M\,,
    $$
    where $X,X'$ are vector fields taking values in $\mathcal{D}$ locally defined around $x$ such that $X_x=v$ and $X'_x=v'$, is non-degenerate.
\qeddiamond\end{definition}

To show that $\rho$ is well defined and it does not depend on the vector fields $X,X'$ chosen to extend $v,v'$, note that the vector bundle projection $\pi\colon\T M\rightarrow \T M/\mathcal{D}$ over $M$ can be locally described on a neighbourhood $U$ of each point $x\in M$, via an $\bm\zeta\in \Omega^1(U,\R^k)$ such that $\ker \bm\zeta = \mathcal{D}|_U$ and $\corank \mathcal{D}=k$. More specifically, there exists a trivialisation $\T U/\mathcal{D}|_U\simeq U\times \mathbb{R}^k$ and $\rho$ is locally given by $\rho(v,v')=\bm \zeta_x ([X,X']_x)$ for some $\bm\zeta\in \Omega^1(U,\R^k)$ such that $\ker \bm\zeta=\mathcal{D}|_U$. Hence,
\begin{equation}\label{eq:rho}
    \rho(v,v') = \bm\zeta_x([X,X']_x) = X_x\inn{X' }\bm\zeta  - X'_x\inn{X }
    \bm\zeta -\d\bm\zeta_x(X_x,X'_x) = -\d\bm\zeta_x(v,v')\,, \qquad \forall v,v'\in \mathcal{D}_x\,,
\end{equation}           
which is a well-defined skew-symmetric tensor field object regardless of the vector fields $X,X'$ taking values in $\mathcal{D}|_U$ with $X_{x}=v$ and $X'_{x}=v'$ used to define it. Note also that indeed any $\bm\zeta\in \Omega^1(U,\R^k)$ such that $\ker \bm\zeta=\mathcal{D}|_U$ gives rise a trivialisation $\T U/\mathcal{D}|_U\simeq U\times \mathbb{R}^k$ and $\rho$ can be described  as $\rho(v,v')=-\d \bm\zeta_x(v,v')$ for every $v,v'\in \mathcal{D}_x$ and $x\in U$.

The notion of maximally non-integrability for corank one distributions is equivalent to their maximally non-integrability in a distributional sense, as illustrated by the following proposition.

\begin{proposition} Let $\mathcal{D}$ be a distribution of corank one. Then, $\mathcal{D}$ is 
 maximally non-integrable in a distributional sense if, and only if, $\mathcal{D}$ is maximally non-integrable in the contact sense.
\end{proposition}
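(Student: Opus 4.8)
The plan is to reduce both notions of maximal non-integrability, at each point, to a single pointwise linear-algebra condition on $\d\eta$, where $\eta$ is any local associated one-form. First I would fix a point $x\in M$ and, using that $\mathcal{D}$ has corank one, choose an open neighbourhood $U\ni x$ and a one-form $\eta\in\Omega^1(U)$ with $\ker\eta=\mathcal{D}|_U$; this is possible precisely because a regular corank one distribution is locally the kernel of a non-vanishing one-form. Setting $k=1$ in \eqref{eq:rho}, the morphism $\rho$ is, under the trivialisation $\T U/\mathcal{D}|_U\simeq U\times\R$ induced by $\eta$, given by $\rho(v,v')=-\d\eta_x(v,v')$ for $v,v'\in\mathcal{D}_x$. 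Hence $\rho$ is non-degenerate at $x$ if, and only if, the skew-symmetric bilinear form $\d\eta_x$ restricted to $\mathcal{D}_x\times\mathcal{D}_x$ has trivial radical.

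Second, I would invoke the fact recalled in the contact subsection: $\eta\wedge(\d\eta)^n$ is a volume form on $U$ if, and only if, $\ker\eta\neq 0$ and $\T U=\ker\eta\oplus\ker\d\eta$. Thus the contact condition is encoded pointwise by $\ker\eta_x\cap\ker\d\eta_x=0$ together with $\dim\ker\d\eta_x=1$, and I would prove that this is equivalent to non-degeneracy of $\d\eta_x$ on $\mathcal{D}_x$. For the forward direction, if $\d\eta_x|_{\mathcal{D}_x}$ is non-degenerate, then $\dim\mathcal{D}_x=\dim M-1$ is even, so $\dim M=2n+1$ is odd; any $w\in\ker\eta_x\cap\ker\d\eta_x$ lies in the radical of $\d\eta_x|_{\mathcal{D}_x}$ and hence vanishes, giving $\ker\eta_x\cap\ker\d\eta_x=0$, while a skew form on the odd-dimensional space $\T_xM$ forces $\dim\ker\d\eta_x\geq 1$; combining this with the dimension count yields $\dim\ker\d\eta_x=1$ and $\T_xM=\ker\eta_x\oplus\ker\d\eta_x$. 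For the converse, assuming $\T_xM=\ker\eta_x\oplus\ker\d\eta_x$, any $v\in\mathcal{D}_x$ with $\d\eta_x(v,v')=0$ for all $v'\in\mathcal{D}_x$ in fact satisfies $\d\eta_x(v,\cdot)=0$ on all of $\T_xM$ (split an arbitrary $u=u_1+u_2$ with $u_1\in\ker\eta_x$ and $u_2\in\ker\d\eta_x$, and use skew-symmetry on the second summand), so $v\in\ker\eta_x\cap\ker\d\eta_x=0$.

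Since both conditions are equivalent, at each $x$, to non-degeneracy of $\d\eta_x|_{\mathcal{D}_x}$, the two notions coincide, and the pointwise equivalence globalises immediately. I would also note independence of the chosen associated one-form: replacing $\eta$ by $f\eta$ with $f$ non-vanishing gives $\d(f\eta)|_{\ker\eta}=f\,\d\eta|_{\ker\eta}$, which rescales the relevant form by a nonzero factor and hence preserves non-degeneracy.

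The main obstacle is bookkeeping rather than depth: one must keep track of the fact that the distributional condition concerns non-degeneracy of the $\T M/\mathcal{D}$-valued map $\rho$, and not of $\d\eta$ on all of $\T M$ (which is never non-degenerate in odd dimension), and that non-degeneracy of $\rho$ (with $\mathcal{D}\neq 0$) already forces $\dim M$ to be odd, so that the exponent $n$ with $\dim M=2n+1$ is well defined and the contact sense applies. Everything else is the standard identification of $\eta\wedge(\d\eta)^n\neq 0$ with non-degeneracy of $\d\eta$ on $\ker\eta$.
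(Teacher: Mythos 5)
Your proof is correct and follows essentially the same route as the paper's: both reduce the question to non-degeneracy of $\d\eta_x$ on $\mathcal{D}_x$ via the local representation $\rho(v,v')=-\d\eta_x(v,v')$, use the parity of a skew form together with the corank-one dimension count to get $\dim M$ odd and $\rk\ker\d\eta=1$ in the forward direction, and use the splitting $\T_xM=\ker\eta_x\oplus\ker\d\eta_x$ (the paper phrases this via the Reeb vector field, which spans $\ker\d\eta$) for the converse. Your explicit check that replacing $\eta$ by $f\eta$ preserves non-degeneracy on $\ker\eta$ is a welcome extra detail, since the contact-sense definition quantifies over all associated one-forms.
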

\begin{proof}
    By definition of  maximal non-integrability in a distributional sense, $\rho\colon \mathcal{D}\times_M \mathcal{D}\rightarrow \T M/\mathcal{D}$ has a trivial kernel and $\mathcal{D}\neq 0$. Since $\T M/\mathcal{D}$ has rank one, $\rho_x$ is a skew-symmetric bilinear form $\rho_x\colon \mathcal{D}_x\times \mathcal{D}_x\rightarrow \T_xM/\mathcal{D}_x$, where $\dim \T_xM/\mathcal{D}_x=1$, and $\mathcal{D}$ has to have even rank different from zero because $\mathcal{D}\neq \{0\}$. Hence, $M$ is odd-dimensional and of dimension bigger than one. Let us define $\eta \in \Omega^{1}(U)$ on an open $U\subset M$ such that  $\ker \eta= \restr{\mathcal{D}}{U}$. This $\eta$ gives rise to a local trivialisation $\T U/\mathcal{D}|_U\simeq U\times \mathbb{R}$ and a representation of $\rho_x(v,v')=-\d \eta_x(v,v')$ for $x\in U$. Then, $\d\eta$ is a closed differential two-form on an odd-dimensional manifold of dimension at least three, and $\ker \d\eta$ is a regular distribution of odd rank. Moreover, $\ker \d\eta$ has zero intersection with $\ker\eta$ (otherwise $\rho$ would have non-trivial kernel), which implies that $\ker \d\eta$ has rank one. Thus, $\ker\d\eta\oplus \ker\eta = \T U$ and $\eta$ is a contact form. Hence, $\mathcal{D}$ is maximally non-integrable in a contact sense. 
    
    The converse follows from the fact that a contact form $\eta$ allows us to define a local representation of $\rho$ and $\ker \eta \neq 0$. Note that if $v_x\in \ker \rho_x$, then $-\d\eta(v_x,v'_x)=0$ for every $v'_x\in \mathcal{D}_x$. Meanwhile, $-\d\eta(v_x,R_x)=0$ for the value of the Reeb vector field $R$ of $\eta$ at $x$. Since $R_x\notin \ker \eta_x$, it follows that $v_x\in \ker \d\eta_x\cap \ker \eta_x$. Since $\eta$ is a contact form, it follows that $v_x=0$. 
\end{proof}

Since a distribution of corank one is maximally non-integrable in a distributional sense if, and only if, is maximally non-integrable in the contact sense, we can drop the term ``in a distributional sense" to simplify the terminology without leading to any misunderstanding. Therefore, maximally non-integrable will mean maximally non-integrable in the distributional sense.

Note that the map $\rho$ can be surjective for a distribution that is not a contact one. For instance, consider the distribution on $\mathbb{R}^5$, with global coordinates $\{x_1,\ldots,x_5\}$, given by the kernel of the differential one-form $\zeta=\d x_1-x_2\d x_3$. It follows that  $\zeta$ is not a contact form since $\zeta\wedge (\d\zeta)^2=0$. Since every one-form associated with a contact distribution is a contact form, it follows that $\ker \zeta$ is not a contact distribution. Notwithstanding, 
$$
\ker\zeta=\left\langle X_1 = x_2\frac{\partial}{\partial x_1}+\frac{\partial}{\partial x_3}\,,\quad X_2=\frac{\partial}{\partial x_2}\,,\quad X_3=\frac{\partial}{\partial x_4}\,,\quad X_4=\frac{\partial}{\partial x_5}\right\rangle\,. 
$$
Hence, $[X_1,X_2]=-\partial/\partial x_1$, which does not take values in $\ker \zeta$ at any point of $M$. Thus, $[\ker\zeta,\ker\zeta]=\T M$ and $\rho$ is a surjective vector bundle morphism over $\mathbb{R}^{5}$, but $\ker \zeta$ is not a contact distribution.

\begin{example}\label{Ex:CartanDis}
    Consider again the first-order jet bundle $J^1=J^1(M,E)$ where $E\rightarrow M$ is a fibre bundle of rank $k$ with local adapted variables $\{x^i,y^\alpha,y_i^\alpha\}$ on an open $U\subset J^1$. The associated Cartan distribution,
    $$
        \mathcal{C}=\left\langle \frac{\partial}{\partial x^i}+\sum_{\alpha=1}^k y^\alpha_i\frac{\partial}{\partial y^\alpha},\frac{\partial}{\partial y^\alpha_i}\right\rangle\,,
    $$ 
    has rank $m(k+1)$. Define the local basis of vector fields on $U\subset J^1$ of the form 
    $$
        I_\alpha = -\frac{\partial}{\partial y^\alpha}\,,\qquad X_i = \frac{\partial}{\partial x^i} + \sum_{\beta=1}^ky_i^\beta\frac{\partial}{\partial y^\beta}\,,\qquad P_\alpha^i = \frac{\partial}{\partial y^\alpha_i}\,,\qquad \alpha=1,\ldots,k\,,\quad i=1,\ldots,m\,.
    $$
    Then, the only non-vanishing commutation relations are given by
    $$
        [X_i,P^j_\alpha] = \delta^j_i I_\alpha\,,\qquad \alpha=1,\ldots,k\,,\qquad i,j=1,\ldots,m\,.
    $$
    Consequently, $[\mathcal{C},\mathcal{C}]=\T J^1$, but we already know that this condition is not enough to ensure that a distribution on $J^1$ is a $k$-contact distribution. Indeed, this is not a sufficient condition even for contact manifolds. Let us define the distribution $\mathcal{I} = \langle I_\alpha\rangle$. Then, $[\mathcal{I},\mathcal{C}|_U] = \T U$ and $[\mathcal{I},\mathcal{I}]=\mathcal{I}$. By Example \ref{Ex:1JetkCon}, the distribution $\mathcal{I}$ is not globally defined in general. Meanwhile, $\rho\colon \mathcal{C}\times_{J^1}\mathcal{C}\to\T J^1/\mathcal{C}$ has trivial kernel, $\mathcal{C}\neq 0$, and $\mathcal{C}$ is maximally non-integrable. In fact, one can locally define $\bm \eta=\sum_{\beta=1}^k(\d y^\beta-\sum_{i=1}^m y^\beta_i\d x^i)\otimes e_\beta$ on $U$ and
    $$
\rho(X,Y)=-\iota_Y\iota_X\d \bm \eta=-\iota_Y\iota_{X}\sum_{\beta=1}^k\left(\sum_{i=1}^m\d x^i\wedge \d y^\beta_i\right)\otimes e_\beta
    $$for arbitrary vector fields taking values in $\mathcal{C}|_U$.  Hence, $\ker \rho^\beta|_U=\langle \partial/\partial y^\alpha_i\rangle_{\alpha\neq \beta}$ for $\beta = 1,\ldots,k$ and $\ker \rho|_U=\bigcap_{\alpha=1}^k\ker \rho^\beta|_U =0$.
    \finish
\end{example}

Note that \eqref{eq:rho} shows that 
\begin{equation}\label{eq:max-non-int-result}
    \restr{\ker\rho}{U} = \restr{\ker \d\bm \zeta}{\mathcal{D}\times_U \mathcal{D}}\,.
\end{equation}
In other words, $\mathcal{D}$ is maximally non-integrable if, and only if, $\bm\zeta$ is such that $\d\bm\zeta$ is non-degenerate when restricted to $\mathcal{D}$. This condition is invariant relative to the particular $\bm\zeta$ used to describe $\mathcal{D}$, which does not need to be a $k$-contact form. Hence, one has the following result.

\begin{proposition}
    A regular distribution $\mathcal{D}$ on $M$ is maximally non-integrable if, and only if, for every $x\in M$ there exists an $\bm\zeta\in\Omega^1(U,\mathbb{R}^k)$, where $U$ is an open neighbourhood of  $x$, associated  with $\mathcal{D}$ such that $\d\bm\zeta$ is non-degenerate when restricted to $\restr{\mathcal{D}}{U}$.
\end{proposition}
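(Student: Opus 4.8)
The plan is to derive the proposition directly from the pointwise identity \eqref{eq:rho} together with its global reformulation \eqref{eq:max-non-int-result}, exploiting that the representation of $\rho$ as minus the restriction of an exterior differential does not depend on the particular associated one-form chosen.

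First I would record that, since $\mathcal{D}$ is a regular distribution of corank $k$, every point $x\in M$ admits a neighbourhood $U$ on which $\T U/\mathcal{D}|_U$ trivialises, so that there is always at least one associated one-form $\bm\zeta\in\Omega^1(U,\mathbb{R}^k)$ with $\ker\bm\zeta=\mathcal{D}|_U$. For any such $\bm\zeta$, the computation \eqref{eq:rho} gives $\rho(v,v')=-\d\bm\zeta_x(v,v')$ for all $v,v'\in\mathcal{D}_x$ and all $x\in U$, independently of the vector-field extensions of $v,v'$ and, crucially, of the particular $\bm\zeta$ representing $\mathcal{D}$. Hence at each $x\in U$ the bilinear map $\rho_x\colon\mathcal{D}_x\times\mathcal{D}_x\to\T_xM/\mathcal{D}_x$ is non-degenerate precisely when $\d\bm\zeta_x$ restricted to $\mathcal{D}_x\times\mathcal{D}_x$ is non-degenerate; this is exactly the content of \eqref{eq:max-non-int-result}.

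For the forward implication, I would assume $\mathcal{D}$ maximally non-integrable, i.e. $\rho$ non-degenerate at every point of $M$. Fixing $x\in M$, choosing any neighbourhood $U$ as above and any associated $\bm\zeta$, the identity above shows that $\d\bm\zeta_{x'}$ is non-degenerate on $\mathcal{D}_{x'}$ for every $x'\in U$, so $\d\bm\zeta$ is non-degenerate when restricted to $\mathcal{D}|_U$, which produces the required $\bm\zeta$. For the converse, suppose that for every $x\in M$ there is a neighbourhood $U$ and an associated $\bm\zeta$ with $\d\bm\zeta$ non-degenerate on $\mathcal{D}|_U$. Then, again by \eqref{eq:rho}, $\rho_{x'}$ is non-degenerate for every $x'\in U$, in particular at $x$; since $x$ was arbitrary, $\rho$ is non-degenerate on all of $M$ and $\mathcal{D}$ is maximally non-integrable.

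There is essentially no analytic obstacle here: the statement is a reformulation of the definition of $\rho$ through \eqref{eq:rho}. The only point that requires attention is the logical bookkeeping between the existential quantifier in the statement (\emph{there exists} an associated $\bm\zeta$) and the global non-degeneracy of $\rho$. The forward direction in fact delivers the stronger conclusion that \emph{every} associated one-form works, whereas the converse consumes only a single one; both become immediate once one invokes the fact, already established in the discussion preceding \eqref{eq:rho}, that the representation $\rho=-\d\bm\zeta|_{\mathcal{D}}$ is independent of the chosen $\bm\zeta$.
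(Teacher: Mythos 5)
Your proposal is correct and follows essentially the same route as the paper, which derives the proposition directly from the identity $\rho(v,v')=-\d\bm\zeta_x(v,v')$ and its reformulation as $\ker\rho|_U=\ker\d\bm\zeta|_{\mathcal{D}\times_U\mathcal{D}}$, together with the independence of this representation from the chosen associated one-form. Your added remark that the forward direction actually yields non-degeneracy for \emph{every} associated $\bm\zeta$ is a correct and slightly sharper observation than what the paper records explicitly.
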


Let us show that every $k$-contact distribution is maximally non-integrable.

\begin{proposition}\label{Prop:MaxNonkCon}
    If $(M,\bm\eta)$ is a co-oriented $k$-contact manifold, then $\d\bm\eta$ is non-degenerate when restricted to $\ker \bm \eta$. In other words, $\ker\bm\eta$ is maximally non-integrable.
\end{proposition}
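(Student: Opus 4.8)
The plan is to verify non-degeneracy pointwise, combining the identity \eqref{eq:rho} — which expresses $\rho$ as minus the restriction of $\d\bm\eta$ to $\ker\bm\eta$ — with the transversality splitting $\T_xM=\ker\bm\eta_x\oplus\ker\d\bm\eta_x$ furnished by condition (3) of Definition~\ref{dfn:k-contact-manifold} (equivalently (3$''$), which applies since conditions (1) and (2) hold for a co-oriented $k$-contact manifold). By \eqref{eq:rho} and \eqref{eq:max-non-int-result}, saying that $\d\bm\eta$ is non-degenerate on $\ker\bm\eta$ is exactly saying that $\rho$ has trivial kernel, so it suffices to work with the skew form $\d\bm\eta_x$ restricted to $\ker\bm\eta_x$.

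First I would fix a point $x\in M$ and take $v\in\ker\bm\eta_x$ lying in the kernel of $\d\bm\eta_x|_{\ker\bm\eta_x}$, i.e. such that $\d\bm\eta_x(v,w)=0$ for every $w\in\ker\bm\eta_x$; the goal is to deduce $v=0$. The key step is to upgrade this to $\iota_v\d\bm\eta_x=0$ on all of $\T_xM$. Using the decomposition $\T_xM=\ker\bm\eta_x\oplus\ker\d\bm\eta_x$, I would write an arbitrary $w\in\T_xM$ as $w=w_1+w_2$ with $w_1\in\ker\bm\eta_x$ and $w_2\in\ker\d\bm\eta_x$. Then $\d\bm\eta_x(v,w_1)=0$ by the choice of $v$, while $\d\bm\eta_x(v,w_2)=-\d\bm\eta_x(w_2,v)=0$ because $w_2\in\ker\d\bm\eta_x$ means $\iota_{w_2}\d\bm\eta_x=0$ (equivalently $\ker\d\bm\eta_x=\langle R_1,\dotsc,R_k\rangle_x$ with $\iota_{R_\alpha}\d\bm\eta=0$, by Theorem~\ref{thm:k-contact-Reeb}). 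Hence $\d\bm\eta_x(v,w)=0$ for every $w\in\T_xM$, so $v\in\ker\d\bm\eta_x$.

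Finally, since $v\in\ker\bm\eta_x$ as well, condition (3) of Definition~\ref{dfn:k-contact-manifold} gives $v\in\ker\bm\eta_x\cap\ker\d\bm\eta_x=0$, whence $v=0$. This shows $\rho_x$ is non-degenerate, and as the argument holds at every $x\in M$, the distribution $\ker\bm\eta$ is maximally non-integrable.

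The argument is purely pointwise linear algebra, so I do not expect a genuine obstacle; the only point requiring care is the bookkeeping with skew-symmetry that makes the $\ker\d\bm\eta$-component $w_2$ drop out, which is precisely what allows the defining transversality condition (3) to be brought to bear.
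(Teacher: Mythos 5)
Your proof is correct and follows essentially the same route as the paper's: both show that a vector $v\in\ker\bm\eta_x$ annihilated by $\d\bm\eta_x$ on $\ker\bm\eta_x$ also pairs to zero with $\ker\d\bm\eta_x$ (you via the splitting $\T_xM=\ker\bm\eta_x\oplus\ker\d\bm\eta_x$, the paper via the Reeb vector fields spanning $\ker\d\bm\eta$), forcing $v\in\ker\bm\eta_x\cap\ker\d\bm\eta_x=0$. The only cosmetic difference is that the paper frames this as a proof by contradiction while you argue directly.
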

\begin{proof}
    Let us proceed by reducing to absurd. If $\d\bm \eta$ is degenerate when restricted to $\ker \bm \eta$ at a certain $x\in M$, then there exists a non-zero tangent vector $v\in \ker\bm\eta_x$ such that $\d\bm\eta_x(v,w) = 0$ for every tangent vector $w\in \ker \bm\eta_x$. One gets that $\d\bm\eta_x(v,R_x) = 0$ for each Reeb vector field $R$ at $x$. Since $\bm\eta$ has $k$ Reeb vector fields spanning $\ker\d\bm\eta$, these vector fields, along with a basis of $\ker\bm \eta$, give rise at $x$ to a basis of $\T_xM$. It follows that $0\neq v\in \ker \d\bm\eta_x\cap\ker\bm\eta_x$ and $\bm \eta$ is not a $k$-contact form. This is a contradiction and thus $\d\bm\eta$ must be non-degenerate when restricted to $\ker\bm\eta$. In view of \eqref{eq:max-non-int-result}, it follows that $\ker \bm\eta$ is maximally non-integrable.
\end{proof}
As every $k$-contact distribution is on a neighbourhood $U$ of every point $x \in M$ of the form $(U,\restr{\mathcal{D}}{U}=\ker\bm \eta)$, it follows that it is maximally non-integrable.
\begin{corollary}
    Every $k$-contact distribution is maximally non-integrable.
\end{corollary}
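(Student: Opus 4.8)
The plan is to reduce the statement to its local, co-oriented counterpart, Proposition \ref{Prop:MaxNonkCon}, by exploiting that maximal non-integrability is a fibrewise condition. First I would recall that a regular distribution $\mathcal{D}$ is maximally non-integrable precisely when the vector bundle morphism $\rho\colon\mathcal{D}\times_M\mathcal{D}\to\T M/\mathcal{D}$ is non-degenerate, and that non-degeneracy of $\rho$ means $\ker\rho_x = 0$ for every $x\in M$. This is a genuinely pointwise requirement, so it suffices to verify it on a neighbourhood of each point.

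Next, let $\mathcal{D}$ be a $k$-contact distribution and fix an arbitrary $x\in M$. By Definition \ref{def:k-contact-distribution}, there is an open neighbourhood $U\ni x$ and a $k$-contact form $\bm\eta$ on $U$ with $\restr{\mathcal{D}}{U}=\ker\bm\eta$, so that $(U,\bm\eta)$ is a co-oriented $k$-contact manifold. By \eqref{eq:rho} the morphism $\rho$ is intrinsic to $\mathcal{D}$, and on $U$ it is represented by $\rho(v,v')=-\d\bm\eta_x(v,v')$; hence $\restr{\ker\rho}{U}=\restr{\ker\d\bm\eta}{\mathcal{D}\times_U\mathcal{D}}$ as recorded in \eqref{eq:max-non-int-result}.

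Applying Proposition \ref{Prop:MaxNonkCon} to the co-oriented $k$-contact manifold $(U,\bm\eta)$ shows that $\d\bm\eta$ is non-degenerate on $\ker\bm\eta$, i.e. $\restr{\ker\rho}{U}=0$ and in particular $\ker\rho_x=0$. Since $x\in M$ was arbitrary, $\ker\rho_x=0$ for every $x\in M$, so $\rho$ is non-degenerate and $\mathcal{D}$ is maximally non-integrable. No real obstacle arises here: the entire content is that maximal non-integrability is local and that the local model is governed by Proposition \ref{Prop:MaxNonkCon}. The only point deserving a moment's care is that $\rho$ does not depend on the local one-form chosen to represent $\mathcal{D}$ --- but this independence is exactly what \eqref{eq:rho} establishes, so the locally computed kernels patch consistently to the global $\ker\rho$.
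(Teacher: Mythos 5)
Your proposal is correct and follows exactly the same route as the paper: the paper's one-line argument preceding the corollary reduces to Proposition \ref{Prop:MaxNonkCon} via the local description $\restr{\mathcal{D}}{U}=\ker\bm\eta$, using that $\rho$ is intrinsic to $\mathcal{D}$ and that non-degeneracy is a pointwise condition. You have merely spelled out the details that the paper leaves implicit.
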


\subsection{\texorpdfstring{$k$}{k}-Contact distributions and their Lie symmetries}

To prove that a maximally non-integrable distribution does not need to be a $k$-contact distribution, we will require to introduce some previous technical background. This will lead to study the Lie symmetries of $k$-contact distributions and the determination of a characterisation for $k$-contact distributions based on the properties of the distribution itself. In addition, this allows us to related $k$-contact geometry with the study of relevant types of distributions \cite{PR_01}.

\begin{definition}
    A {\it Lie symmetry} of a distribution $\mathcal{D}$ on $M$ is a vector field $X$ on $M$ such that $[X,\mathcal{D}]\subset \mathcal{D}$.
\qeddiamond\end{definition}

Since a $k$-contact distribution is locally the kernel of a $k$-contact form, which admits Reeb vector fields, one has the following immediate result.

\begin{corollary}\label{Cor:SymkCon}
    Every $k$-contact distribution $\mathcal{D}$ on $M$ admits, on an open neighbourhood $U$ of each point $x\in M$, an integrable $k$-vector field $\mathbf{S} = (S_1,\ldots,S_k)\in\X^k(U)$, whose components are Lie symmetries of $\mathcal{D}$ such that
\begin{equation}\label{eq:ConSymkCon}
        \restr{\mathcal{D}}{U}\oplus\langle S_1,\ldots,S_k\rangle = \T U\,.
    \end{equation}
\end{corollary}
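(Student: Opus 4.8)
The plan is to read off the symmetries directly from the Reeb vector fields provided by Theorem \ref{thm:k-contact-Reeb}. Since $\mathcal{D}$ is a $k$-contact distribution, Definition \ref{def:k-contact-distribution} gives, for each $x\in M$, an open neighbourhood $U\ni x$ and a $k$-contact form $\bm\eta\in\Omega^1(U,\mathbb{R}^k)$ with $\restr{\mathcal{D}}{U}=\ker\bm\eta$. As remarked after the definition of Reeb vector fields, a $k$-contact form on $U$ admits Reeb vector fields on $U$; thus Theorem \ref{thm:k-contact-Reeb} applies verbatim to $\bm\eta$ on $U$ and furnishes vector fields $R_1,\ldots,R_k\in\X(U)$ satisfying $\inn{R_\alpha}\eta^\beta=\delta_\alpha^\beta$, $\inn{R_\alpha}\d\eta^\beta=0$, and $[R_\alpha,R_\beta]=0$ for $\alpha,\beta=1,\ldots,k$. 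I would set $\mathbf{S}=(S_1,\ldots,S_k):=(R_1,\ldots,R_k)$. The commutation relations immediately give that $\mathbf{S}$ is an integrable $k$-vector field.

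Three things then need to be checked. First, that each $S_\alpha=R_\alpha$ is a Lie symmetry of $\mathcal{D}$, i.e. $[R_\alpha,\ker\bm\eta]\subset\ker\bm\eta$ on $U$. This is precisely the content of the Corollary following Theorem \ref{thm:k-contact-Reeb}, which records both $\Lie_{R_\alpha}\eta^\beta=0$ and the invariance of $\ker\bm\eta$ under the flows of the Reeb vector fields; equivalently, for any $X\in\Gamma(\ker\bm\eta)$ one computes $\inn{[R_\alpha,X]}\eta^\beta=R_\alpha(\inn{X}\eta^\beta)-(\Lie_{R_\alpha}\eta^\beta)(X)=0$, since $\inn{X}\eta^\beta=0$ and $\Lie_{R_\alpha}\eta^\beta=0$, so $[R_\alpha,X]$ again takes values in $\ker\bm\eta$. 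Second, that $\langle R_1,\ldots,R_k\rangle=\ker\d\bm\eta$: this is the final assertion of Theorem \ref{thm:k-contact-Reeb}, and in particular it shows the $R_\alpha$ are pointwise linearly independent since $\ker\d\bm\eta$ is regular of rank $k$. Third, the direct sum decomposition \eqref{eq:ConSymkCon}: since $\langle S_1,\ldots,S_k\rangle=\ker\d\bm\eta$ and $\restr{\mathcal{D}}{U}=\ker\bm\eta$, the claim $\restr{\mathcal{D}}{U}\oplus\langle S_1,\ldots,S_k\rangle=\T U$ is exactly condition (3$''$) equivalent to the defining property $\ker\bm\eta\cap\ker\d\bm\eta=0$ of a $k$-contact form, together with the rank count $\cork\ker\bm\eta=k=\rk\ker\d\bm\eta$.

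I do not expect any genuine obstacle here: the corollary is an essentially immediate packaging of Theorem \ref{thm:k-contact-Reeb} and its own corollary, transported from a global $k$-contact form to a local one on $U$. The only point requiring a word of care is the logical quantifier structure, namely that the neighbourhood $U$ and hence the $k$-vector field $\mathbf{S}$ depend on the chosen point $x$; this is already built into the statement (``on an open neighbourhood $U$ of each point $x$''), so no gluing or globalisation is needed. The upshot is that the symmetries $S_\alpha$ can simply be taken to be the Reeb vector fields of any local $k$-contact form representing $\mathcal{D}$, and all three required properties are inherited directly from the cited results.
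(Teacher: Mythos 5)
Your proposal is correct and follows essentially the same route as the paper: take the Reeb vector fields of a local $k$-contact form representing $\mathcal{D}$, verify they are Lie symmetries via the identity $\inn{[R_\alpha,Y]}\bm\eta=(\Lie_{R_\alpha}\inn{Y}-\inn{Y}\Lie_{R_\alpha})\bm\eta=0$, and obtain the splitting from $\ker\bm\eta\oplus\ker\d\bm\eta=\T U$. No gaps.
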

\begin{proof}
     Since $\mathcal{D}$ is a $k$-contact distribution, there exists for every $x\in M$ an open neighbourhood $U$ of $x$ and a $k$-contact form $\bm \eta\in \Omega^1(U,\mathbb{R}^k)$ such that $\restr{\mathcal{D}}{U}=\ker\bm\eta$. Moreover, there exists a Reeb $k$-vector field $\bm R=(R_{1},\ldots,R_{k})$ and  $\ker\d\bm\eta = \langle R_{1},\ldots,R_{k}\rangle$. Since $\ker \bm \eta\oplus \ker \bm \eta=\T U$, one has that  $\restr{\mathcal{D}}{U}\oplus\langle R_{1},\ldots,R_{k}\rangle =\T U$. 

    One can verify that every Reeb vector field $R_{1},\ldots, R_{k}$ is a Lie symmetry of $\mathcal{D}$. Indeed, 
    $$
    \inn{[R_{\alpha},Y]}\bm\eta=(\Lie_{R_{\alpha}}\inn{Y}-\inn{Y}\Lie_{R_{\alpha}})\bm\eta=0\,, \qquad \alpha=1,\ldots,k\,,
    $$
    for every $Y \in \Gamma(\restr{\mathcal{D}}{U})=\Gamma(\ker\bm\eta)$. Hence, $[R_\alpha,Y]\in\Gamma(\ker\bm \eta)$ and each $R_\alpha$ is a Lie symmetry of $\mathcal{D}$. 
\end{proof}

It is immediate that there exist distributions that have $k$ commuting Lie symmetries but are not $k$-contact distributions (because they are involutive, for instance). We will show that maximal non-integrability is not a sufficient condition for a regular distribution to become a $k$-contact distribution by giving a counterexample. In fact, there are maximally non-integrable distributions that do not admit $k$ commuting Lie symmetries satisfying \eqref{eq:ConSymkCon}. Finding Lie symmetries for a general distribution depends on theorems concerning the existence of solutions for systems of partial differential equations, which are not generally available \cite{Olv_93}. Hence, we define instead the Lie flag of a distribution as follows \cite{PR_01}. This will be enough for our purposes.

\begin{definition}
    Let $\mathcal{D}\subset\T M$ be a distribution on $M$. Its {\it Lie flag} is a series of distributions $\mathcal{D},\mathcal{D}^{1)},\mathcal{D}^{2)},\dotsc$on $M$, where $\mathcal{D}^{\ell)}$ is the  distribution generated by the Lie brackets between the vector fields taking values in $\mathcal{D}$ and $\mathcal{D}^{\ell-1)}$, namely
    \begin{equation}\label{eq:derived}
       \mathcal{D}^{\ell)}=[\mathcal{D},\mathcal{D}^{\ell-1)}] \,,\qquad \ell\in\N\,,
    \end{equation}
    where we denote $\mathcal{D}^{0)}=\mathcal{D}$. We call {\it small growth function} of the Lie flag of $\mathcal{D}$ the vector function 
    $$\mathcal{G}_\mathcal{D}(x) = (\dim \mathcal{D}_x,\dim \mathcal{D}^{1)}_x,\ldots)\,,\qquad \forall x\in M\,.
    $$
\qeddiamond\end{definition}

\begin{remark}
    Since $\mathcal{D}$ is smooth by assumption, all the distributions $\mathcal{D}^{\ell)}$ are smooth. Note that each $\mathcal{D}^{\ell)}$ contains $\mathcal{D}^{\ell-1)}$. Let us prove it. Around a certain point $x\in M$, the distribution $\mathcal{D}$ is locally generated by a family of vector fields $X_1,\dotsc,X_s$. Moreover, we can assume that $X_1,\dotsc,X_p$ are linearly independent at $x$ with $p\leq s$. Consider some generators $Y_1,\dotsc,Y_{t}$ of $\mathcal{D}^{\ell-1)}$ so that the values of $Y_1,\dotsc,Y_r$ at $x$ form a basis of $\mathcal{D}^{\ell-1)}_x$, where $r\leq t$. Then, for every $X_i$, with $i=1,\dotsc,p$, there exists a function $f_{i}$ defined around $x$ such that $(X_if_{i})(x)=1$ and $f_{i}(x)=0$. Then,
    $$
        [X_i,f_{i}Y_j]_x= (Y_j)_x\,,\qquad i=1,\ldots,p\,,\qquad j=1,\ldots,r\,,
    $$
    and $\mathcal{D}_x^{\ell-1)}\subset \mathcal{D}^{\ell)}_x$ for every $x\in M$, so that $\mathcal{D}^{\ell-1)}\subset \mathcal{D}^{\ell)}$. Thus, for every $x\in M$, the sequence $\mathcal{G}_\mathcal{D}(x)$ is (not strictly) increasing and stabilises since it is upper-bounded by $\dim M$.
\end{remark}

Every distribution $\mathcal{D}$ of rank two such that $[\mathcal{D},\mathcal{D}]$ has rank three is maximally non-integrable. Moreover, every distribution $\mathcal{D}$ of rank three such that $[\mathcal{D},\mathcal{D}]$ has rank five is maximally non-integrable. Such distributions have many applications in control theory, non-holonomic systems, Riemann geometry, et cetera \cite{PR_98,PR_01,Ra_23}.

\begin{proposition}\label{prop:derived-distributions}
    Let us define $\mathcal{D}=\ker \bm\eta$ for a $k$-contact form $\bm\eta$, and let $R_1,\ldots,R_k$ be the associated Reeb vector fields. Then, the one-parameter groups of diffeomorphisms of the Reeb vector fields leave invariant the rank of every $\mathcal{D}^{\ell)}$ for $\ell\in\N\cup\{0\}$. 
\end{proposition}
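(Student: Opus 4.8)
The plan is to show that the local flow $\phi_t$ of each Reeb vector field $R_\alpha$ carries each subspace $\mathcal{D}^{\ell)}_x$ onto $\mathcal{D}^{\ell)}_{\phi_t(x)}$; since $\T_x\phi_t$ is a linear isomorphism, this at once gives $\dim \mathcal{D}^{\ell)}_x = \dim \mathcal{D}^{\ell)}_{\phi_t(x)}$, which is precisely the asserted rank invariance. I would prove the stronger statement $\T_x\phi_t(\mathcal{D}^{\ell)}_x) = \mathcal{D}^{\ell)}_{\phi_t(x)}$ by induction on $\ell$, relying only on the naturality of the Lie bracket under diffeomorphisms, $\psi_*[A,B] = [\psi_*A,\psi_*B]$. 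Arguing pointwise through this naturality is what lets one sidestep any delicate module-theoretic issue arising from the possible non-regularity of $\mathcal{D}^{\ell)}$ for $\ell\geq 1$.

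For the base case $\ell=0$, recall that by Theorem \ref{thm:k-contact-Reeb} and Cartan's formula $\Lie_{R_\alpha}\eta^\beta = \inn{R_\alpha}\d\eta^\beta + \d\,\inn{R_\alpha}\eta^\beta = 0$, whence $\phi_t^\ast\bm\eta = \bm\eta$ and so $\phi_t$ preserves $\ker\bm\eta = \mathcal{D}$. Equivalently, $\mathcal{D}$ is invariant under the Reeb flows, consistently with the fact that each $R_\alpha$ is a Lie symmetry of $\mathcal{D}$ (Corollary \ref{Cor:SymkCon}). This yields $\T_x\phi_t(\mathcal{D}_x) = \mathcal{D}_{\phi_t(x)}$ for all $x$ and all admissible $t$.

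For the inductive step, assume $\T_x\phi_t(\mathcal{D}^{\ell-1)}_x) = \mathcal{D}^{\ell-1)}_{\phi_t(x)}$ for every $x$ and every admissible $t$ (including negative ones). By \eqref{eq:derived}, $\mathcal{D}^{\ell)}_x$ is the linear span of the values $[X,Y]_x$ with $X\in\Gamma(\mathcal{D})$ and $Y\in\Gamma(\mathcal{D}^{\ell-1)})$ defined near $x$. For such a generator, naturality gives $\T_x\phi_t([X,Y]_x) = [(\phi_t)_\ast X,(\phi_t)_\ast Y]_{\phi_t(x)}$. The base case shows $(\phi_t)_\ast X$ is a local section of $\mathcal{D}$ near $\phi_t(x)$, and the inductive hypothesis shows $(\phi_t)_\ast Y$ is a local section of $\mathcal{D}^{\ell-1)}$; hence their bracket is a local section of $[\mathcal{D},\mathcal{D}^{\ell-1)}] = \mathcal{D}^{\ell)}$, so its value at $\phi_t(x)$ lies in $\mathcal{D}^{\ell)}_{\phi_t(x)}$. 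By linearity $\T_x\phi_t(\mathcal{D}^{\ell)}_x)\subseteq\mathcal{D}^{\ell)}_{\phi_t(x)}$. Applying the identical inclusion with base point $\phi_t(x)$ and parameter $-t$ gives $\T_{\phi_t(x)}\phi_{-t}(\mathcal{D}^{\ell)}_{\phi_t(x)})\subseteq\mathcal{D}^{\ell)}_x$, and composing with $\T_x\phi_t = (\T_{\phi_t(x)}\phi_{-t})^{-1}$ produces the reverse inclusion. Thus the two subspaces coincide and the induction closes.

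The only genuinely delicate point is this inductive step, where one must avoid representing $\mathcal{D}^{\ell)}$ through a fixed family of smooth generators and argue instead pointwise via the bracket's naturality; this is what keeps the argument valid even when $\mathcal{D}^{\ell)}$ fails to be regular. Everything else — the base case and the passage from equality of subspaces to equality of dimensions — is a formal consequence of $\T_x\phi_t$ being a linear isomorphism together with $\Lie_{R_\alpha}\bm\eta = 0$.
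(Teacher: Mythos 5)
Your proposal is correct and follows essentially the same route as the paper's proof: both establish the base case from $\Lie_{R_\alpha}\bm\eta=0$, and both handle the inductive step via the naturality of the Lie bracket under the Reeb flow, identifying generators of $\mathcal{D}^{\ell)}$ at $\Phi^R_t(x)$ with pushforwards of brackets of sections of $\mathcal{D}$ and $\mathcal{D}^{\ell-1)}$. The only cosmetic difference is that you obtain the equality of subspaces as two inclusions (forward flow and inverse flow), whereas the paper writes the generators at $\Phi^R_t(x)$ directly as pushforwards from $x$; the content is the same.
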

\begin{proof} 
    Let $\Phi_t^R$ denote an element of the one-parameter group of diffeomorphism of a Reeb vector field $R\in\{R_1,\dotsc,R_k\}$ and $x\in M$. Note that $\bm\eta$ is invariant relative to its Reeb vector fields. In particular, $\Phi_t^{R*}\bm \eta_{\Phi^R_t(x)}=\bm\eta_x$ and, for every $v_x\in \mathcal{D}_x$, one has that
    $$
            \langle\bm \eta_{\Phi^R_t(x)},\T_x\Phi_t^{R}(v_x)\rangle = \langle\bm\eta_{x},v_x\rangle = 0 \quad\Longrightarrow \quad \T_x\Phi^R_{t}\mathcal{D}_x = \mathcal{D}_{\Phi^R_t(x)}\,,\qquad \forall t\in \mathbb{R}\,, \quad \forall x\in M\,.
    $$
    Hence, $\mathcal{D}$ is invariant relative to the one-parameter group of diffeomorphisms of $R$. Meanwhile, it follows that
    $$ 
        \mathcal{D}^{\ell)}_{\Phi_t^R(x)} = \left\langle [X,Y]_{\Phi^R_t(x)} \mid X\in \Gamma(\mathcal{D})\,,\ Y\in \Gamma\left(\mathcal{D}^{\ell-1)}\right)\right\rangle\,,
    $$
    but by the induction hypothesis and elementary differential geometric facts
    $$
        [X,Y]_{\Phi^R_t(x)} = [\Phi^R_{t\ast}X',\Phi^R_{t\ast}Y']_{\Phi^R_t(x)} = \T_x\Phi^R_t [X',Y']_x\,,\qquad \forall x\in M\,,\quad\forall y\in \mathbb{R}\,,
    $$
    where $X' = \Phi^R_{-t\,\ast}X$ and $Y' = \Phi^R_{-t\,\ast}Y$ are vector fields taking values in $\mathcal{D}$ and $\mathcal{D}^{\ell-1)}$, respectively. Hence,
    $$ 
        \mathcal{D}^{\ell)}_{\Phi_t^R(x)} = \left\langle \T_x\Phi^R_t [X',Y']_x \mid X'\in \Gamma(\mathcal{D})\,,\ Y'\in \Gamma(\mathcal{D}^{\ell-1)})\right\rangle = \T_x\Phi^R_t\mathcal{D}^{\ell)}_x\,,
    $$
    for every $x\in M$ and $t\in\R$. 
\end{proof}

Proposition \ref{prop:derived-distributions} states that the one-parameter groups of diffeomorphisms of the Reeb vector fields of a $k$-contact distribution $\mathcal{D}$ leave invariant the subsets where $\mathcal{G}_\mathcal{D}(x)$ are constant. Since the Reeb vector fields commute between themselves and are linearly independent at each point, $\mathcal{G}_\mathcal{D}(x)$  must be constant along the integral submanifolds of the Reeb distribution. This gives a key to illustrate when a maximally non-integrable distribution is not a $k$-contact distribution.

\begin{theorem}
\label{thm:exis-max-nonintegr}
    There exist maximally non-integrable distributions that are not $k$-contact distributions.
\end{theorem}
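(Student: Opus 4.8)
The plan is to exhibit one explicit regular distribution that is maximally non-integrable everywhere but whose small growth function obstructs the existence of Reeb vector fields. The guiding principle is the consequence of Proposition~\ref{prop:derived-distributions}: if $\mathcal{D}$ were a $k$-contact distribution, then near each point it would equal $\ker\bm\eta$ for a $k$-contact form $\bm\eta$, whose Reeb vector fields $R_1,\dots,R_k$ (Theorem~\ref{thm:k-contact-Reeb}, Corollary~\ref{Cor:SymkCon}) span the rank-$k$ distribution $\ker\d\bm\eta$, which is transverse to $\mathcal{D}$ and along whose $k$-dimensional integral leaves the small growth function $\mathcal{G}_\mathcal{D}$ is constant. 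Hence every level set of $\mathcal{G}_\mathcal{D}$ must be saturated by $k$-dimensional submanifolds transverse to $\mathcal{D}$. From this I would extract a clean obstruction: if the locus $\Sigma$ where $\mathcal{G}_\mathcal{D}$ is non-generic is a hypersurface to which $\mathcal{D}$ is tangent (that is $\restr{\mathcal{D}}{\Sigma}\subset\T\Sigma$), then no transverse leaf can lie inside $\Sigma$, since a $k$-plane $V\subset\T_p\Sigma$ with $V\cap\mathcal{D}_p=0$ would force $\dim(V+\mathcal{D}_p)=\rk\mathcal{D}+k=\dim M>\dim\T_p\Sigma$. So it suffices to build a maximally non-integrable $\mathcal{D}$ whose non-generic growth locus is a hypersurface tangent to $\mathcal{D}$.

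The point to keep in mind is that maximal non-integrability keeps $\mathcal{D}$ far from tangent to hypersurfaces (a bracket-generating distribution is tangent to none), so the tangent hypersurface must be precisely the locus where bracket-generation degrades. I would realise this for $k=2$ on $M=\R^6$ with coordinates $\{x_1,x_2,x_3,x_4,z,w\}$ through the pair of one-forms
$$\zeta^1=\d z-x_2\,\d x_1-x_4\,\d x_3,\qquad \zeta^2=\d w-w\,x_2\,\d x_1,$$
and $\mathcal{D}=\ker\zeta^1\cap\ker\zeta^2$. One finds the regular rank-four frame $\mathcal{D}=\langle\,\partial_{x_1}+x_2\partial_z+w\,x_2\partial_w,\ \partial_{x_2},\ \partial_{x_3}+x_4\partial_z,\ \partial_{x_4}\,\rangle$. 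The role of the factor $w$ in $\zeta^2$ is that $\zeta^2=\d w-w\gamma$ with $\gamma=x_2\,\d x_1$ vanishes along $\Sigma=\{w=0\}$, so that $\restr{\mathcal{D}}{\Sigma}\subset\ker\d w=\T\Sigma$.

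I would then verify three routine facts. Writing $\rho^\alpha=-\d\zeta^\alpha|_\mathcal{D}$, the form $\rho^1=-(\d x_1\wedge\d x_2+\d x_3\wedge\d x_4)|_\mathcal{D}$ is symplectic on $\mathcal{D}$, so $\ker\rho^1=0$ and a fortiori $\ker\rho^1\cap\ker\rho^2=0$ at every point; by \eqref{eq:max-non-int-result} this means $\d\bm\zeta$ is non-degenerate on $\mathcal{D}$, i.e. $\mathcal{D}$ is maximally non-integrable everywhere. A short computation gives $\rho^2=-w\,(\d x_1\wedge\d x_2)|_\mathcal{D}$, so $\rho^1$ and $\rho^2$ are linearly independent on $\mathcal{D}$ exactly when $w\neq0$; thus $\mathcal{D}^{1)}=\T M$ off $\Sigma$ while $\rk\mathcal{D}^{1)}=5$ on $\Sigma$, whence $\mathcal{G}_\mathcal{D}$ is non-constant and its non-generic level set is exactly $\Sigma=\{w=0\}$. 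Finally the tangency $\restr{\mathcal{D}}{\Sigma}\subset\T\Sigma$ was already recorded. Combining these with the obstruction of the first paragraph: a Reeb leaf through a point of $\Sigma$ would be a two-dimensional submanifold contained in $\Sigma$ (constant $\mathcal{G}_\mathcal{D}$, whose value singles out $\Sigma$) and transverse to $\mathcal{D}$, which is impossible. Hence $\mathcal{D}$ is maximally non-integrable but is not a $2$-contact distribution.

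I expect the main obstacle to be exactly the design choice of the second paragraph: forcing the growth drop and the tangency to occur on the same hypersurface while keeping $\d\bm\zeta$ non-degenerate there. This is what the splitting resolves, with $\rho^1$ symplectic (controlling non-integrability, and independent of $w$) while $\rho^2$ is proportional to $\rho^1$ only on $\Sigma$ (controlling the growth jump) and moreover vanishes on $\Sigma$ (making $\mathcal{D}$ tangent to it); the remaining computations of the frame and of $\rho^1,\rho^2$ are routine. A closing remark would note that the construction extends to arbitrary $k\geq2$ and higher dimension by adjoining further symplectic blocks to $\rho^1$.
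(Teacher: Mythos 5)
Your proposal is correct (I checked the frame, the computation $\rho^1=-(\d x_1\wedge\d x_2+\d x_3\wedge\d x_4)|_{\mathcal{D}}$, $\rho^2=-w\,(\d x_1\wedge\d x_2)|_{\mathcal{D}}$, and the tangency of $\mathcal{D}$ to $\{w=0\}$, and they all hold), and it follows essentially the same route as the paper's proof: both hinge on Proposition \ref{prop:derived-distributions} to force the non-generic level set of the small growth function to be invariant under the hypothetical Reeb flow, and then derive a contradiction from the geometry of that locus. The only substantive difference is the worked example — you use a rank-four, corank-two distribution on $\mathbb{R}^6$ whose growth drops on a hypersurface to which $\mathcal{D}$ is tangent, yielding a clean dimension count $\rk\mathcal{D}+k>\dim\Sigma$, whereas the paper uses a rank-two distribution on $\mathbb{R}^4$ and obtains the contradiction by computing $\T S$ at particular points and exhibiting a forced nonzero intersection $\ker\d\bm\eta\cap\ker\bm\eta$.
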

\begin{proof}
    Consider $\mathbb{R}^4$ endowed with linear coordinates $\{x,y,z,t\}$ and the regular distribution given by $\mathcal{D}=\langle \partial_x,\partial_y+(x^3/3+z^2x+t^2)\partial_z+x\partial_t\rangle$. Then, 
    \vskip -.7cm
    \begin{align}
        \mathcal{D}^{1)} &= \langle \partial_x\,,\ \partial_y+(x^3/3+z^2x+t^2)\partial_z+x\partial_t\,,\ (x^2+z^2)\partial_z+\partial_t\rangle\,,\\
        \mathcal{D}^{2)} &= \langle \partial_x\,,\ \partial_y+(x^3/3+z^2x+t^2)\partial_z+x\partial_t\,,\ (x^2+z^2)\partial_z+\partial_t\,,\ 2x\partial_z\,,\ (2x^3z/3+t-zt^2)\partial_z\rangle\,.
    \end{align}
    \vskip -.3cm
    Evidently, $\mathcal{D}^{1)}$ has rank three everywhere, while $\mathcal{D}^{2)}$ has rank three when $x=0$ and $t(1-zt)=0$, and four elsewhere. The space $S\subset \mathbb{R}^4$ defined by $x=0$ and $t(1-zt)=0$ has to be invariant relative to the action of the Reeb vector fields if $\mathcal{D}$ is a two-contact distribution. Note that $S$ is a regular submanifold whose tangent space is given by the annihilator of the differential forms
    $$
        \theta^1=\d x\,,\qquad \theta^2=(1-2zt)\d t- t^2 \d z\,.
    $$
    Indeed, $\theta^1\wedge \theta^2$ does not vanish at any point of $\mathbb{R}^4$ and the subset  $x=0$ and $t(1-zt)=0$ is a regular submanifold. If $\mathcal{D}$ is a two-contact distribution, the Reeb vector fields of any associated $k$-contact form $\bm \eta\in \Omega^1(U,\mathbb{R}^2)$ must be tangent to $S$. In particular, at points where $x=0$ and $t=0$, one has that $\T S=\langle\theta^1,\theta^2\rangle^\circ=\langle\partial_y,\partial _z\rangle$. Note that the two Reeb vector fields must span such a subspace, i.e. $\ker \d \bm\eta=\langle \partial_y,\partial_z\rangle$ when $x=0$ and $t=0$. But  $\mathcal{D}=\langle \partial_x,\partial_y\rangle$ at such points of $S$, thus if $\mathcal{D}|_U=\ker \bm \eta$, then $\ker \d\bm\eta\cap \ker \bm \eta=\langle \partial_y\rangle$. This is a contradiction and $\mathcal{D}$ is not the kernel of any two-contact form. On the other hand, it is immediate that $\rho\colon \mathcal{D}\times_{\mathbb{R}^{4}}\mathcal{D} \rightarrow \T \mathbb{R}^{4}/\mathcal{D}$ is non-degenerate and $\mathcal{D}$ is maximally non-integrable. 
\end{proof}
Note that it is very difficult to find Lie symmetries of a general regular distribution. That is why the hint given by Proposition \ref{prop:derived-distributions} is so important to find a counterexample.

Previous results imply that a $k$-contact distribution  is a maximally non-integrable distribution admitting around each point $k$ commuting Lie symmetries giving a supplementary to the $k$-contact distribution. In what follows, we will see that the converse is also true and, in fact, the maximal non-integrability and the existence of $k$ commuting Lie symmetries spanning a supplementary distribution characterise $k$-contact distributions.

\begin{theorem}\label{Prop:LocalEqu} 
    A distribution $\mathcal{D}$ on $M$ is a $k$-contact distribution if, and only if, it is maximally non-integrable and, around an open neighbourhood $U$ of every $x\in M$, admits an integrable $k$-vector field ${\bf S} = (S_1,\ldots, S_k)$ of Lie symmetries of $\mathcal{D}|_U$ such that
    \begin{equation}\label{eq:Dec}
        \langle S_1,\ldots,S_k\rangle \oplus \mathcal{D}|_U = \T U\,.
    \end{equation}
\end{theorem}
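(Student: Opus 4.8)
The plan is to prove the two implications separately, observing that the forward direction is essentially already available. If $\mathcal{D}$ is a $k$-contact distribution, then it is maximally non-integrable by the corollary following Proposition \ref{Prop:MaxNonkCon}, while Corollary \ref{Cor:SymkCon} furnishes, on a neighbourhood $U$ of each point, an integrable $k$-vector field $\mathbf{S}=(S_1,\ldots,S_k)$ of Lie symmetries of $\mathcal{D}$ (the Reeb vector fields of a local $k$-contact form) satisfying the decomposition \eqref{eq:Dec}. Hence all the content lies in the converse.

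For the converse I would fix $x\in M$ and work on the neighbourhood $U$ carrying the commuting Lie symmetries $S_1,\ldots,S_k$ with $\langle S_1,\ldots,S_k\rangle\oplus\mathcal{D}|_U=\T U$. Since these symmetries span a supplementary distribution of rank $k$, the regular distribution $\mathcal{D}$ has corank $k$, and $\mathcal{D}\neq 0$ because maximal non-integrability requires it. The key construction is a candidate $k$-contact form $\bm\eta=\sum_{\alpha=1}^k\eta^\alpha\otimes e_\alpha\in\Omega^1(U,\mathbb{R}^k)$, defined by $\eta^\alpha(S_\beta)=\delta^\alpha_\beta$ and $\restr{\eta^\alpha}{\mathcal{D}}=0$; the direct-sum decomposition makes $\bm\eta$ well defined and unique, with $\ker\bm\eta=\mathcal{D}|_U$, so condition (1) of Definition \ref{dfn:k-contact-manifold} holds by construction.

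The main work, and the main obstacle, is to verify that $\ker\d\bm\eta=\langle S_1,\ldots,S_k\rangle$, which yields conditions (2) and (3) at once. I would first prove the inclusion $\langle S_1,\ldots,S_k\rangle\subset\ker\d\bm\eta$ using $\d\eta^\beta(X,Y)=X(\eta^\beta(Y))-Y(\eta^\beta(X))-\eta^\beta([X,Y])$ and testing $\iota_{S_\alpha}\d\eta^\beta$ against the two summands of the decomposition: against $S_\gamma$ all three terms vanish because $\eta^\beta(S_\gamma)=\delta^\beta_\gamma$ is constant and $[S_\alpha,S_\gamma]=0$ by integrability of $\mathbf{S}$; against $Y\in\Gamma(\mathcal{D})$ they vanish because $\eta^\beta(Y)=0$, $\eta^\beta(S_\alpha)$ is constant, and $[S_\alpha,Y]\in\Gamma(\mathcal{D})$ since $S_\alpha$ is a Lie symmetry of $\mathcal{D}$. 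For the reverse inclusion I would invoke maximal non-integrability through \eqref{eq:max-non-int-result}: if $w\in\ker\d\bm\eta_x\cap\mathcal{D}_x$, then $\d\bm\eta_x(w,v')=0$ for all $v'\in\mathcal{D}_x$, so $w\in\ker\rho_x=0$ by non-degeneracy of $\rho$. Decomposing an arbitrary $v\in\ker\d\bm\eta_x$ as $w+\sum_\alpha c^\alpha (S_\alpha)_x$ with $w\in\mathcal{D}_x$ and subtracting the $S_\alpha\in\ker\d\bm\eta$ already found, the residue $w$ lands in $\ker\d\bm\eta_x\cap\mathcal{D}_x=0$, whence $v\in\langle S_1,\ldots,S_k\rangle$.

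This establishes $\ker\d\bm\eta=\langle S_1,\ldots,S_k\rangle$, a regular distribution of rank $k$ (condition (2)); and since it is supplementary to $\mathcal{D}=\ker\bm\eta$, the intersection $\ker\bm\eta\cap\ker\d\bm\eta=0$ holds (condition (3)), which is also exactly the identity $\ker\d\bm\eta_x\cap\mathcal{D}_x=0$ shown above. Therefore $\bm\eta$ is a $k$-contact form on $U$ with $\ker\bm\eta=\mathcal{D}|_U$, and as $x$ is arbitrary, $\mathcal{D}$ is a $k$-contact distribution. I expect the delicate point to be precisely the reverse inclusion, i.e. ruling out that $\ker\d\bm\eta$ picks up extra directions inside $\mathcal{D}$: this is where non-degeneracy of $\rho$ is indispensable and where the hypothesis of maximal non-integrability cannot be dropped, in agreement with Counterexample \ref{thm:exis-max-nonintegr}.
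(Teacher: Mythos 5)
Your proposal is correct and follows essentially the same route as the paper's proof: the forward direction via Proposition \ref{Prop:MaxNonkCon} and Corollary \ref{Cor:SymkCon}, and the converse by building the dual one-forms $\eta^\alpha$ annihilating $\mathcal{D}$, checking $\iota_{S_\alpha}\d\bm\eta=0$ against both summands of the decomposition, and using maximal non-integrability (non-degeneracy of $\rho$ on $\mathcal{D}$) to exclude extra directions of $\ker\d\bm\eta$ inside $\mathcal{D}$. Your spelling-out of the reverse inclusion via the decomposition $v=w+\sum_\alpha c^\alpha (S_\alpha)_x$ is a slightly cleaner presentation of the paper's contradiction argument, but the content is identical.
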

\begin{proof}
    The direct part is a consequence of Proposition \ref{Prop:MaxNonkCon} and Corollary \ref{Cor:SymkCon}.
    
    Let us prove the converse part. Given the vector fields $S_1,\ldots,S_k$ on $U$, consider the differential one-forms $\eta^1,\ldots,\eta^k$ vanishing on $\mathcal{D}$ and dual to $S_1,\ldots,S_k$ on $U$. Such differential one-forms are unique and exist due to decomposition \eqref{eq:Dec}. Then, they give rise to $\bm\eta = \sum_{\alpha=1}^k\eta^\alpha\otimes e_\alpha\in \Omega^1(U,\mathbb{R}^k)$ and $\eta^1\wedge\dotsb\wedge \eta^k\neq 0$ on $U$. For a basis $X_{k+1},\ldots,X_m$ of $\ker \bm \eta$ around $x$, one has
    $$
        \d\bm\eta(S_\beta,S_\gamma) = 0\,,\qquad \d\bm\eta(S_\beta,X_\alpha) = 0\,,\qquad \alpha=k+1,\ldots m\,,\quad \beta,\gamma=1,\ldots,k\,.
    $$
    Therefore, the rank of $\ker\d\bm\eta$ is at least $k$, and $S_1,\ldots,S_k$ span a supplementary distribution to $\mathcal{D}$ around $x\in M$. Moreover, there exists no tangent vector $v'_{x'}\in \ker \d\bm \eta$ such that $v_{x'}\wedge S_1(x')\wedge\dotsb\wedge S_k(x')\neq 0$ for $x'\in U$, as otherwise there would be an element $0\neq v_{x'}\in \ker \bm \eta_{x'}\cap \ker\d\bm\eta_{x'}$. Such an element would belong to the kernel of $\d\bm\eta_{x'}$ restricted to $\ker \bm\eta_{x'}$ and, since $\mathcal{D}$ is maximally non-integrable, one has that $v_{x'}=0$. This is a contradiction, $\ker \d \bm \eta$ has rank $k$,  while $
    \rk 
    \mathcal{D}>0$, $\mathcal{D}$ has corank $k$, and $\ker \d \bm\eta \cap \ker \bm\eta=0$. Hence, $\bm \eta$ is a $k$-contact form and $\mathcal{D}$ is a $k$-contact distribution.
\end{proof}

 \begin{definition}
    Given a $k$-contact distribution $\mathcal{D}$, we call an associated integrable $k$-vector field $\bf S$ of Lie symmetries of $\mathcal{D}$ a {\it $k$-contact $k$-vector field of Lie symmetries}.
 \qeddiamond\end{definition}

In many cases, Theorem \ref{Prop:LocalEqu} is significantly simpler to apply for determining if a distribution is a $k$-contact distribution than verifying the existence of $k$-contact forms. For instance, consider a Lie group $G$ and a basis $X_1^L,\ldots,X_r^L$ of left-invariant vector fields on $G$ and a basis $X_1^R,\ldots,X_r^R$ of right-invariant vector fields. Let $\mathcal{D}$ be the distribution spanned by $X_1^L$ and $X_2^L$. Assume that $[X_1^L,X_2^L]\notin \mathcal{D}$ and $X_3^R,\ldots,X_r^R$ commute among themselves. Then,  $\mathcal{D} = \langle X_1^L,X_2^L\rangle$ is a $(r-2)$-contact distribution of rank two and Reeb vector fields $X_3^R,\ldots,X_r^R$.

\begin{example} An {\it Engel distribution} is a distribution of rank two on a four-dimensional manifold such that $[\mathcal{E},\mathcal{E}]$ and $[\mathcal{E},\mathcal{E}^{1)}]$ have rank three and four respectively  \cite{delPinoThesis,PV_20,PR_01}. Hence, $\mathcal{E}$ is maximally non-integrable. Apart from its mathematical interest, Engel distributions can also be found in the description of control theory systems \cite{HKMT_22}. Locally, an Engel distribution admits a coordinate system such that it is generated by the vector fields
$$
E_1=\frac{\partial}{\partial x^4},\qquad E_2=x^4\frac{\partial}{\partial x^3}+x^3\frac{\partial}{\partial x^2}+\frac{\partial}{\partial x^1}.
$$
It is immediate that $[\mathcal{E},\mathcal{E}]$ is spanned by  $E_1,E_2,\partial/\partial x^3$ giving rise to a distribution of rank three. This illustrates that $\mathcal{E}$ is maximally non-integrable. Moreover, one has the commuting Lie symmetries of $\mathcal{E}$  given by
$$
R_1=\frac{\partial}{\partial x^2},\qquad R_2=\frac{\partial}{\partial x^3}+x^1\frac{\partial}{\partial x^2}
$$
that span a supplementary distribution to $\mathcal{E}$. 
Hence, Engel distributions are two-contact distributions. In particular, the application of Theorem \ref{Prop:LocalEqu} shows that
$$
\bm \eta=(\d x^2-(x^3-x^4x^1)\d x^1-x^1\d x^3)\otimes e_1+(\d x^3-x^4\d x^1)\otimes e_2 
$$
is a two-contact form whose kernel is $\mathcal{E}$ and $R_1,R_2$ are associated Reeb vector fields.

Engel distributions are types of Goursat distributions \cite{PR_98,PR_01}. Goursat distributions have interesting mathematical properties and applications in control theory and non-holonomic systems \cite{PR_98,PR_01}. One can also prove that many Goursat distributions are $k$-contact distributions. For instance, consider the Goursat distribution $\mathcal{G}$ on $\mathbb{R}^5$ spanned by the vector fields
$$
G_1=\frac{\partial}{\partial x^5},\qquad G_2=x^5\frac{\partial}{\partial x^4}+x^4\frac{\partial}{\partial x^3}+x^3\frac{\partial}{\partial x^2}+\frac{\partial}{\partial x^1}.
$$
It is immediate that $\mathcal{G}$ is a maximally non-integrable distribution of rank two. 
Then, three commuting Lie symmetries are given by
$$
R_1=\frac{\partial}{\partial x^2}, \qquad R_2=x^1\frac{\partial}{\partial x^2}+\frac{\partial}{\partial x^3},\qquad R_3=\frac{1}{2}\left(x^1\right)^2\parder{}{x^2}+x^1\frac{\partial}{\partial x^3}+\frac{\partial}{\partial x^4}.
$$
They span a supplementary distribution to $\mathcal{G}$. Then, this gives rise to a three-contact distribution. Moreover, its associated three-contact form is
\begin{multline*}
\bm \eta=\left(\d x^2-(x^3-x^4x^1+\frac{1}{2}x^5(x^1)^2)\d x^1-x^1\d x^3+\frac{1}{2}(x^1)^2\d x^4\right)\otimes e_1\\+\left(\d x^3-x^1\d x^4+(x^1x^5-x^4)\d x^1\right)\otimes e_2 +(\d x^4-x^5\d x^1)\otimes e_3. 
\end{multline*}
These examples illustrate the relation of $k$-contact distributions with different types of relevant distributions. Many more examples can be found in \cite{PR_01}.
    
\end{example}

\begin{example} For an application  of two-contact distributions to control theory, one can consider the Engel distribution  $\mathcal{E}_c$ associated with the description of  a car  \cite{HKMT_22} on $\mathbb{R}^4$ with  global coordinates $p,q,x,y$ spanned by
$$
X_1=\frac{\partial}{\partial q} ,\qquad X_2=-\sin q\frac{\partial}{\partial p}+\ell\cos q\left(\cos p\frac{\partial}{\partial x}+\sin p\frac{\partial}{\partial y}\right),
$$
for a certain positive constant $\ell$. In fact,
$$
[X_1,X_2]=-\cos q\frac{\partial}{\partial p}-\ell \sin q\left(\cos p\frac{\partial}{\partial x}+\sin p\frac{\partial}{\partial y}\right),
$$
which does not take values in $\mathcal{E}_c$ at any point. In other words, $\mathcal{E}_c$ is maximally non-integrable.

The Lie algebra of Lie symmetries for $\mathcal{E}_c$ that respect the decomposition $\mathcal{E}_c=\langle X_1\rangle\oplus\langle X_2\rangle$ can be found in \cite{HKMT_22} and is isomorphic to $\mathfrak{so}(3,2)$. In particular, one has the Lie symmetries  given by
$$
R_1=\frac{\partial}{\partial x},\qquad R_2=\frac{\partial}{\partial y},
$$
and
$$
R_1'=x\frac{\partial}{\partial y}-y\frac{\partial}{\partial x}+\frac{\partial}{\partial p},\qquad R_2'=\ell \left(\sin p\frac{\partial}{\partial x}-\cos p\frac{\partial}{\partial y}\right)+\sin^2q\frac{\partial}{\partial q}.
$$
Note that $[R_1,R_2]=0$ and $[R'_1,R'_2]=0$, while $R_1,R_2$ or $R'_1,R'_2$ span two supplementary distributions to $\mathcal{E}_c$ for $q\notin \pi \mathbb{Z}$ and close to points with $q\in \pi\mathbb{Z}$, respectively. Additionally, $[R'_2,R_1]=[R'_2,R_2]=0$, but $[R'_1,R_i]\neq 0$ for $i=1,2$. This means that a $k$-contact distribution may have more than $k$ commuting Lie symmetries. 

But to have a $k$-contact distribution, one has to have on an neighbourhood of every point $k$ linearly independent Lie symmetries giving rise to a local supplementary of the $k$-contact distribution. Let us illustrate this fact for $\mathcal{E}_c$. For $q\notin \pi \mathbb{Z}$, the vector fields $R_1,R_2$ are Lie symmetries of $\mathcal{E}_c$ and they span a supplementary distribution to $\mathcal{E}_c$. At points with $q\in \pi \mathbb{Z}$, one has that $X_2$ is a linear combination of $R_1,R_2$. Nevertheless, if $q\in \pi\mathbb{Z}$, one has that $R_1',R_2'$ are commuting Lie symmetries of $\mathcal{E}_c$ locally spanning a supplementary to $\mathcal{E}_c$ . This perfectly illustrates the local character of Lie symmetries of $k$-contact distributions and some of their general properties, while analysing a practical   example with applications in control theory.
\end{example}

It is worth noting that the $k$-contact distribution notion has lots of applications and it is related to a numerous of problems related to relativity, mechanical  systems, et cetera \cite{Ra_23}. 
\subsection{Topological properties of co-oriented \texorpdfstring{$k$}{k}-contact manifolds}

Let us study some facts concerning the $k$-contact distributions admitting a description via a globally defined $k$-contact form. In particular, we study topological properties of $k$-contact manifolds admitting a globally defined $k$-contact form and other related concepts.

First, let us give an example of a $k$-contact distribution that is not the kernel of some globally defined $k$-contact form.  

\begin{example}
    Consider the manifold $\R^2\times\mathbb{M}$, where $\mathbb{M}$ is the open Möbius band\footnote{We use the classical definition of the Möbius band as $\mathbb{M} = \quotient{[0,2\pi]\times(-1,1)}{\sim}$, where $\sim$ stands for the equivalence relation with $(0,\rho)\sim (2\pi,-\rho)$ for every $\rho\in (-1,1)$.}, with coordinates $(x,y,\phi,\rho)$ on an open subset $U$ for $\phi\in(0,2\pi)$ and $\rho\in(-1,1)$. Consider the regular distribution
    $$
        \restr{\mathcal{D}}{U} = \langle\partial_x,x\partial_\phi + \partial_y\rangle\,,
    $$
    which can be globally extended in a unique smooth  manner to $\mathbb{R}^2\times \mathbb{M}$. Let us prove that $\mathcal{D}$ is not the kernel of a globally defined two-contact form by reductio ad absurdum. If $\mathcal{D}$ is the kernel of a globally defined two-contact form, then one has two globally defined Reeb vector fields that, along with the given basis of $\mathcal{D}$, give rise to a global basis of vector fields on $\mathbb{R}^2\times \mathbb{M}$. But a manifold is orientable if, and only if, the first Stiefel--Whitney class of its tangent bundle is equal to zero \cite{Hus_94}. In fact, a vector bundle admits a global basis of sections if, and only if, its first Stiefel--Whitney class is zero. The Whitney product theorem states that if $E$ and $E'$ are vector bundles over $B$ and $E$ is trivial, then the first Stiefel--Whitney class of $E\oplus E'$ over $B$ is the class of $E'$. Hence, since the vector bundle $\T\mathbb{R}^2\times \mathbb{M}\rightarrow \mathbb{R}^2\times \mathbb{M}$ is trivial, then the first Stiefel--Whitney class of $\T (\mathbb{R}^2\times \mathbb{M})$ is the first Stiefel--Whitney class of $\mathbb{R}^2\times \T\mathbb{M}\rightarrow \mathbb{R}^2\times\mathbb{M}$, which is not zero because $\mathbb{M}$ is not orientable. Hence, $\mathbb{R}^2\times \mathbb{M}$ is not orientable and we have reached a contradiction. Hence, $\mathcal{D}$ is not the kernel of any globally defined two-contact form.
    \finish
\end{example}

Since a $k$-contact distribution gives rise, only locally, to a $k$-contact form, then $k$-contact distributions are more general than $k$-contact forms. The reason is that the existence of a globally defined $k$-contact form assumes that $\mathcal{D}$ admits a globally defined family of Reeb vector fields, while Definition \ref{def:k-contact-distribution} does not. Although we already defined a co-oriented $k$-contact manifold as a pair $(M,\bm\eta)$, where $\bm \eta$ is a $k$-contact form on $M$, it is also convenient to introduce the following related definition.

\begin{definition}
    A $k$-contact manifold $(M,\mathcal{D})$ is {\it co-orientable} if there exists a $k$-contact form $\bm \eta\in \Omega^1(M,\mathbb{R}^k)$ such that $\mathcal{D}=\ker \bm \eta$.
\qeddiamond\end{definition}

There are numerous scenarios where it can be assumed that a $k$-contact distribution is not co-oriented. In particular, if $(M,\mathcal{D})$ is co-oriented, it admits $k$ linearly independent vector fields taking values in $\ker \d\bm \eta$, and $\ker \d\bm \eta$ is a vector bundle over $M$ whose first Stiefel--Whitney class is zero. More specifically, the Hairy Ball Theorem states that an even-dimensional sphere, $S^{2n}$, cannot have a globally defined non-vanishing vector field. This indicates that globally defined Reeb vector fields do not exist on $S^{2n}$, making any potential $k$-contact distribution on $S^{2n}$ not co-oriented. Note that if a manifold has a globally defined non-vanishing vector field, then its Euler characteristic is zero. The projective plane has no globally defined non-vanishing vector fields because has Euler characteristic equal to one. Since the Euler characteristic of the topological sum of two manifolds is the sum of their Euler characteristics minus 2, then the connected sum of $n$ tori $\mathbb{T}^{2}$ has global non-vanishing vector fields only for $n=1$. Other notable topological constraints also prevent the existence of such fields on $S^{2n-1}$. For instance, the Radon--Hurwitz numbers show that $
\rho(2n)-1$ determines the maximum number of globally defined, linearly independent vector fields on $S^{2n-1}$ \cite{Ada_62}. Specifically, the initial values of $\rho(2n)$, for $n\in\mathbb{N}$, are as follows:
$$ 2, 4, 2, 8, 2, 4, 2, 9, 2, 4, 2, 8, 2, 4, 2, 10, \ldots $$
These values indicate that $S^5$ can only have one globally defined non-vanishing vector field. In other words, no globally defined $k$-contact form exists on $S^5$ for $k>1$.

\begin{example}
Let us describe another $k$-contact distribution on a manifold $M$ that is not the kernel of a globally defined $k$-contact form. As a globally defined $k$-contact form induces $k$ globally defined Reeb vector fields, we describe a $k$-contact distribution that does not admit globally defined Reeb vector fields. In particular, let us describe a one-contact distribution on the first-order jet bundle $J^1(S^1,\mathbb{M})$ that is not co-oriented.

Consider now a non-trivial bundle given by the projection of a M\"obius open strip onto $S^1$, namely ${\rm pr}_{S^1}:\mathbb{M}\rightarrow S^1$,  its first-order jet bundle $j^1{\rm pr}_{S^1}:J^1(S^1,\mathbb{M})\rightarrow S^{1}$, and the projection $({\rm pr}_M)^1_0:J^1(S^1,\mathbb{M})\rightarrow \mathbb{M}$. Recall that $\mathbb{M}$ can be represented by the product $[0,2\pi]\times(-1,1)\subset \mathbb{R}^2$ with the identification $(0,\rho)\sim(2\pi,-\rho)$ for every $\rho\in (-1,1)$. Consider an atlas for $\mathbb{M}$ given by a coordinate system induced by a local coordinate $x$ on $(0,2\pi)\subset S^1$ and $y$ on their fibres of $\mathbb{M}$ pull-backed both to $U=(0,2\pi)\times(-1,1)$, and a second local coordinate system defined again by the pull-back to $\mathbb{M}$ of $\bar{y}=-y$ on $\bar{U}=(\pi,3\pi)|
_{\text{mod}\,2\pi}$ in $S^{1}$. Then, $\dot{\bar y}=-\dot{y}$ on the intersection of domains. The Cartan distribution is spanned by 
$$
\restr{\mathcal{C}}{U} = \left\langle \frac{\partial}{\partial x}+\dot y\frac{\partial}{\partial y},\frac{\partial}{\partial\dot y}\right\rangle,\qquad \restr{\mathcal{C}}{\bar{U}} = \left\langle \frac{\partial}{\partial x}+\dot {\bar{y}}\frac{\partial}{\partial \bar{y}},\frac{\partial}{\partial\dot {\bar{y}}}\right\rangle
$$
where $Y=\partial_x+\dot y\partial_y$ can be extended globally to $J^1(S^1,\mathbb{M})$. Consider the embedding of $\mathbb{M}$ into $J^1(S^1,\mathbb{M})$ given by $\jmath:(x,y)\in \mathbb{M}\hookrightarrow (x,y,0)\in J^1(S^1,\mathbb{M})$ and $\T_{(x,y,0)}({\rm pr}_{S^1})^1_0:\T_{(x,y,0)}J^1(S^1,\mathbb{M})\rightarrow \T_{(x,y)}\mathbb{M}$. Suppose that you have a globally defined Reeb vector field $R$ on $J^1(S^1,\mathbb{M})$. In particular, it is defined at points of $\jmath(\mathbb{M})$, and the projection of $R$ at points of $\jmath(\mathbb{M})$ onto $\mathbb{M}$ via the mappings $\T_{(x,y,0)}({\rm pr}_{S^1})^1_0$  is well-defined. Since $\restr{(\langle R\rangle \oplus \mathcal{C})}{\jmath(\mathbb{M})} = \restr{\T J^1(S^1,\mathbb{M})}{\jmath(\mathbb{M})}$, one has that $\T_{(x,y,0)}({\rm pr}_{S^1})^1_0(\langle R\rangle \oplus \mathcal{C}) = \T_{(x,y)}\mathbb{M}$. In fact,  $\T_{(x,y,0)}({\rm pr}_{S^1})^1_0(Y)$ and $\T_{(x,y,0)}({\rm pr}_{S^1})^1_0(R)$, for every $(x,y)\in \mathbb{M}$, give rise to two globally defined vector fields on $\mathbb{M}$ that span $\T \mathbb{M}$. This is a contradiction as $\mathbb{M}$ is not orientable.
\finish
\end{example}

Let us remark that every $k$-contact distribution on a manifold $M$ is the kernel of a globally defined differential one-form taking values in a vector bundle of rank $k$ over the manifold.

\begin{proposition}Every $k$-contact distribution $\mathcal{D}$ on a manifold $M$ is the kernel of a globally defined differential one-form $\bm \eta_E$ taking values in a vector bundle $E\rightarrow M$ of rank $k$ relative to the differential of forms on $M$ taking values in $E\rightarrow M$.    
\end{proposition}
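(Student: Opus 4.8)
The plan is to let $E$ be the quotient bundle $\T M/\mathcal{D}$ and to take for $\bm\eta_E$ the tautological projection, so that the kernel condition is immediate and the genuine content lies in matching $\bm\eta_E$ with the local $k$-contact forms and with the exterior differential of $E$-valued forms. First I would note that, since $\mathcal{D}$ is a regular distribution of corank $k$, the quotient $E:=\T M/\mathcal{D}$ is a bona fide vector bundle over $M$ of rank $k$, and the canonical projection $\pi\colon\T M\rightarrow E$ is a vector bundle morphism over $\id_M$. Using $\Hom(\T M,E)\simeq \T^\ast M\otimes E$, this $\pi$ is exactly a globally defined $E$-valued one-form $\bm\eta_E\in\Omega^1(M,E)$, and by construction $\ker\bm\eta_E=\ker\pi=\mathcal{D}$. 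In the co-oriented case $\T M=\ker\bm\eta\oplus\ker\d\bm\eta$ yields $E\simeq\ker\d\bm\eta$, so $E$ is globally isomorphic to the Reeb distribution; the force of the statement is that $E$ remains available even when no global Reeb frame, i.e. no global trivialisation of $E$, exists.

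Next I would verify local compatibility. On an open set $U$ with $\mathcal{D}|_U=\ker\bm\eta$ for a $k$-contact form $\bm\eta\in\Omega^1(U,\R^k)$, the discussion around \eqref{eq:rho} shows that $\bm\eta$ induces a trivialisation $E|_U=\T U/\mathcal{D}|_U\simeq U\times\R^k$, and under this identification $\bm\eta_E|_U$ is carried precisely to $\bm\eta$. Hence, in every such local trivialisation of $E$, the global form $\bm\eta_E$ is nothing but a defining $k$-contact form of $\mathcal{D}$, which is what lets us regard $\bm\eta_E$ as the global avatar of the locally defined $k$-contact forms.

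Finally, to give meaning to ``relative to the differential of $E$-valued forms'' I would fix any linear connection $\nabla$ on $E$ --- one exists by a partition of unity argument --- and use the associated exterior covariant derivative $\d^\nabla\colon\Omega^1(M,E)\rightarrow\Omega^2(M,E)$. The main obstacle, which I would stress, is that $\d^\nabla\bm\eta_E$ genuinely depends on $\nabla$; the resolution is that its restriction to $\mathcal{D}\times_M\mathcal{D}$ does not. Indeed, for $X,X'\in\Gamma(\mathcal{D})$ one has $\bm\eta_E(X)=\bm\eta_E(X')=0$, so $\d^\nabla\bm\eta_E(X,X')=-\bm\eta_E([X,X'])=-\pi([X,X'])=-\rho(X,X')$, recovering the connection-free morphism $\rho$ of \eqref{eq:rho}. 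Thus the maximal non-integrability of $\mathcal{D}$ is equivalent to the non-degeneracy of $\d^\nabla\bm\eta_E$ on $\ker\bm\eta_E$, and $\bm\eta_E$ plays globally and intrinsically the role of an $E$-valued $k$-contact form.
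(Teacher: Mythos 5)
Your proof is correct, and it reaches the same object as the paper by a more intrinsic route. The paper builds $E$ by a cocycle construction: it covers $M$ by opens $U_\alpha$ with local $k$-contact forms $\bm\eta_\alpha$, observes that on overlaps there are unique maps $\Psi_{\alpha\beta}\colon U_\alpha\cap U_\beta\to GL(\mathbb{R}^k)$ with $\Psi_{\alpha\beta}^*\bm\eta_\alpha=\bm\eta_\beta$ (since the components of each $\bm\eta_\alpha$ span $\mathcal{D}^\circ$), and takes these as transition functions, so that the local forms glue into a global section of $\cT M\otimes_M E$. Your choice $E=\T M/\mathcal{D}$ with $\bm\eta_E$ the tautological projection produces a canonically isomorphic bundle — each $\bm\eta_\alpha$ descends to a trivialisation of $\T U_\alpha/\mathcal{D}|_{U_\alpha}$, and the induced transition functions are exactly the $\Psi_{\alpha\beta}$ — but it avoids checking the cocycle condition and makes the kernel statement tautological, which is a genuine simplification. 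Your final paragraph goes beyond the paper's proof in a useful direction: the paper only remarks afterwards that the lack of a canonical connection on $E$ means there is no canonical differential of $E$-valued forms, and treats this as a defect of the construction; you point out that for any choice of $\nabla$ the restriction of $\d^\nabla\bm\eta_E$ to $\mathcal{D}\times_M\mathcal{D}$ is connection-independent and recovers $-\rho$ from \eqref{eq:rho}, so the maximal non-integrability is still intrinsically encoded. That observation is consistent with the paper's definition of $\rho$ and sharpens its closing remark rather than contradicting it.
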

\begin{proof} Consider a  cover of $M$ by open connected subsets $U_\alpha$ where $\ker \bm \eta_\alpha=\mathcal{D}$ for some $k$-contact form $\bm\eta_\alpha$. Then, on every intersection $U_\alpha\cap U_\beta\neq \emptyset$, there exists a unique $GL(\mathbb{R}^k)$-valued maps $\Psi_{\alpha\beta}:U_\alpha\cap U_\beta\rightarrow GL(\mathbb{R}^k)$ such that $\Psi_{\alpha\beta}^*\bm\eta_\alpha=\bm\eta_\beta$ because the components of $\bm \eta_\alpha$ and $\bm \eta_\beta$ span a basis of $\mathcal{D}^\circ$. This induces a vector bundle $E\rightarrow M$ of rank $k$ with transition mappings $\Psi_{\alpha\beta}$. Hence, $\bm \eta_E$ such that ${\bm \eta}_E|_{U_\alpha}={\bm \eta}_\alpha$ becomes a globally defined one form ${\bm\eta}_E:M\rightarrow \Lambda^k\cT M\otimes_M E$ whose kernel is $\mathcal{D}$.
\end{proof}
The problem of $\bm \eta_E$  is that there exists no canonical manner to define a connection on $E\rightarrow M$, which in turn implies that there is no canonical differential for differential forms taking values in $E$. This makes this approach problematic, as the differential of $k$-contact forms is a key element in $k$-contact geometry.
\subsection{Representing \texorpdfstring{$k$}{}-contact distributions}

A $k$-contact distribution admits an integrable $k$-vector field of Lie symmetries. The determination of its existence for a certain regular distribution may be complicated, and determining conditions necessary and sufficient for its existence would be very useful. In particular, if a distribution $\mathcal{D}$ is given by the kernel of a $k$-contact form, the existence and form of an integrable $k$-vector field of Lie symmetries of $\mathcal{D}$, namely the Reeb $k$-vector field of the $k$-contact form, is immediate. In other cases, it is interesting to write $\restr{\mathcal{D}}{U}=\ker \bm \zeta$ for some $\bm \zeta\in \Omega^1(U,\mathbb{R}^k)$ and study whether there exists an integrable $k$-vector field of Lie symmetries via $\bm \zeta$ or whether $\mathcal{D}$ may or not be a $k$-contact distribution. Recall that two differential one-forms $\bm\zeta,\bar{\bm\zeta}\in \Omega^1(U,\mathbb{R}^k)$ are {\it compatible} if $\ker \bm\zeta=\ker \bar{\bm \zeta}$ is a regular distribution.

Let us hereafter determine whether an $\bm\zeta\in \Omega^1(U, \mathbb{R}^k)$ associated with a regular distribution is such that the distribution admits an associated $k$-contact  form $\bm {\eta}\in \Omega^1(U,\mathbb{R}^k)$. To illustrate one of the difficulties to determine the existence of $\bm \eta$, we provide the following examples showing that two compatible $\bm\zeta,\bm{\bar \zeta}\in \Omega^1(U,\mathbb{R}^k)$  may satisfy that $\ker \d\bm\zeta\neq \ker \d\bm {\bar\zeta}$.

\begin{example}\label{ex:DR-different-rank}
    Consider $\R^4$ with Cartesian coordinates $\{x,y,z,p\}$ and $\bm\eta= (\d x - y\,\d p)\otimes e_1+(\d z - p\,\d y)\otimes e_2$. Then, 
    $$ \ker \bm\eta = \left\langle y\parder{}{x} + \parder{}{p},p\parder{}{z} + \parder{}{y}\right\rangle\,,\qquad \ker \d\bm\eta = \left\langle\parder{}{x},\parder{}{z}\right\rangle\,$$
  and $\ker \d\bm\eta\cap \ker \bm\eta=0$. If follows that $\bm\eta$ is a two-contact form. Consider now ${\bm\zeta} =e^z\bm\eta$. In this case, $\ker\bm\eta=\ker {\bm\zeta}$, but $\d{\bm\zeta}=\d{\zeta}^1\otimes e_1+\d \zeta^2\otimes e_2$ has zero kernel since $\d(e^z \eta^1)$ is a symplectic form on $\R^4$. Thus, $\ker \d{\bm\zeta} = 0$ and ${\bm\zeta}$ is not a two-contact form. This example shows that $k$-contact geometry is much richer than contact geometry, where the multiplication of a contact form by a non-vanishing function gives rise to a new compatible contact form. In other words, every differential one-form associated with a contact distribution is a contact form.
    \finish
\end{example}
\begin{example}Recall Example \ref{Ex:1JetkCon} concerning $J^1(M,E)$ and its Cartan distribution. In that example, we got that, given adapted coordinates $\{x^i,y^\alpha,y^\alpha_i\}$, $\{\bar x^i,\bar y^\alpha,\bar y^\alpha_i\}$ on an open subset $U\subset J^1(M, E)$, one has that the $k$-contact forms associated with both coordinate systems satisfy
$$
\ker \bm\eta=\ker \bm{\bar \eta}\,,
$$
on $U$. This illustrates that
 $\bm\eta$ and $\bm{\bar \eta}$ are compatible. Quite remarkably, we also had $\ker \d{\bm\eta}\neq \ker \d{ \bar{\bm\eta}}$ in general.
 \finish
\end{example}
To study the problems of this section, it will be convenient to analyse the following mathematical object.

\begin{definition}
    Every $\bm \zeta\in \Omega^1(U,\mathbb{R}^k)$ associated with a regular distribution gives rise to a non-vanishing differential $k$-form $\Omega_{\bm\zeta} = \zeta^1\wedge\dotsb\wedge \zeta^k$ on $U$. A {\it conformal Lie symmetry of $\Omega_{\bm \zeta}$} is a vector field $X$ on $U$ such that $\Lie_X\Omega_{\bm \zeta}=f_X\Omega_{\bm\zeta}$ for some function $f_X\in\Cinfty(U)$.
\qeddiamond\end{definition}
The following results are immediate, but very practical. 

\begin{proposition}
Let $(M,\bm\eta)$ be a $k$-contact manifold and let $\Omega_{\bm\eta}$ be its associated differential $k$-form, then the Reeb vector fields are Lie symmetries of $\Omega_{\bm\eta}$. The space of conformal symmetries of $\Omega_{\bm \zeta}$, where $\zeta$ is compatible with $\bm \eta$, is a Lie algebra. 
\end{proposition}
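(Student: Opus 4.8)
The plan is to handle the two assertions separately, both by direct computation with the Lie derivative.

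For the first claim, I would recall from the Corollary following Theorem \ref{thm:k-contact-Reeb} that the Reeb vector fields satisfy $\Lie_{R_\alpha}\eta^\beta = 0$ for all $\alpha,\beta = 1,\ldots,k$. Since the Lie derivative is a derivation of the exterior algebra, it obeys the Leibniz rule over wedge products, so that
$$
\Lie_{R_\alpha}\Omega_{\bm\eta} = \Lie_{R_\alpha}(\eta^1\wedge\dotsb\wedge\eta^k) = \sum_{\beta=1}^k \eta^1\wedge\dotsb\wedge(\Lie_{R_\alpha}\eta^\beta)\wedge\dotsb\wedge\eta^k = 0\,.
$$
Hence each $R_\alpha$ is a (strict) Lie symmetry of $\Omega_{\bm\eta}$, which settles the first part.

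For the second claim, I would first note that $\Omega_{\bm\zeta} = \zeta^1\wedge\dotsb\wedge\zeta^k$ is nowhere vanishing, because the components $\zeta^1,\ldots,\zeta^k$ are pointwise linearly independent (they span the rank-$k$ annihilator of the regular distribution $\ker\bm\zeta$). Consequently, whenever $\Lie_X\Omega_{\bm\zeta} = f_X\Omega_{\bm\zeta}$, the conformal factor $f_X\in\Cinfty(U)$ is uniquely determined by $X$. The set of conformal symmetries is an $\R$-vector space: for constants $a,b\in\R$ the $\R$-linearity of the Lie derivative in its vector-field argument gives $\Lie_{aX+bY}\Omega_{\bm\zeta} = (af_X + bf_Y)\Omega_{\bm\zeta}$. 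To show closure under the Lie bracket, I would use the identity $\Lie_{[X,Y]} = \Lie_X\Lie_Y - \Lie_Y\Lie_X$ together with the Leibniz rule $\Lie_X(g\,\omega) = (Xg)\omega + g\,\Lie_X\omega$:
$$
\Lie_{[X,Y]}\Omega_{\bm\zeta} = \Lie_X(f_Y\Omega_{\bm\zeta}) - \Lie_Y(f_X\Omega_{\bm\zeta}) = (Xf_Y)\Omega_{\bm\zeta} + f_Xf_Y\Omega_{\bm\zeta} - (Yf_X)\Omega_{\bm\zeta} - f_Yf_X\Omega_{\bm\zeta}\,.
$$
The symmetric product terms $f_Xf_Y\Omega_{\bm\zeta}$ cancel, leaving $\Lie_{[X,Y]}\Omega_{\bm\zeta} = (Xf_Y - Yf_X)\Omega_{\bm\zeta}$, so $[X,Y]$ is again a conformal symmetry with factor $f_{[X,Y]} = Xf_Y - Yf_X$.

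There is no substantial obstacle here; the argument is a short verification. The only points requiring a little care are the observation that $\Omega_{\bm\zeta}$ is non-vanishing, so that the conformal factors are well-defined (which is what makes $f_{[X,Y]}$ meaningful), and the bookkeeping in the bracket computation, where the two $\Cinfty$-Leibniz cross-terms must be seen to cancel exactly. One should also stress that this is only an $\R$-vector space and not a $\Cinfty(U)$-module, since $\Lie_{gX}\Omega_{\bm\zeta} = g\,\Lie_X\Omega_{\bm\zeta} + \d g\wedge\inn{X}\Omega_{\bm\zeta}$ carries an extra term that is not in general proportional to $\Omega_{\bm\zeta}$.
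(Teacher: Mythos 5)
Your proposal is correct and follows essentially the same route as the paper: the first claim is proved by the identical Leibniz-rule computation $\Lie_{R_\alpha}\Omega_{\bm\eta}=\sum_\beta\eta^1\wedge\dotsb\wedge(\Lie_{R_\alpha}\eta^\beta)\wedge\dotsb\wedge\eta^k=0$, and the second claim, which the paper dismisses as immediate, is exactly the standard bracket verification $\Lie_{[X,Y]}\Omega_{\bm\zeta}=(Xf_Y-Yf_X)\Omega_{\bm\zeta}$ that you carry out. Your added remarks on the well-definedness of the conformal factor and on the structure being an $\R$-Lie algebra rather than a $\Cinfty$-module are accurate but not needed for the statement.
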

\begin{proof} It is immediate that the space conformal Lie symmetries of $\Omega_{\bm \eta}$ is a Lie algebra.
 On the other hand,
    $$ \Lie_{R_\alpha}\Omega_{\bm\eta} = \Lie_{R_\alpha}(\eta^1\wedge\dotsb\wedge\eta^k) = (\Lie_{R_\alpha}\eta^1)\wedge\eta^2\wedge\dotsb\wedge\eta^k + \dotsb + \eta^1\wedge\dotsb\wedge(\Lie_{R_\alpha}\eta^k) = 0\,, $$
    for $\alpha=1,\ldots,k$. Hence, the Reeb vector fields are Lie symmetries of $\Omega_{\bm\eta}$.
    \end{proof}
\begin{proposition}
\label{prop:conf-Lie-sym}
    One has that $\bm \zeta,\bm {\bar\zeta}\in \Omega^1(U,\mathbb{R}^k)$ with regular kernels are compatible if, and only if, $\Omega_{\bm\zeta}$ and $\Omega_{\bm{\bar\zeta}}$ are proportional. Moreover, $\ker \bm\zeta=\ker \Omega_{\bm\zeta}$ and a vector field $X$ on $U$ is a Lie symmetry of $\ker\bm\zeta$ if, and only if, $X$ is a conformal Lie symmetry of $\Omega_{\bm\zeta}$. 
\end{proposition}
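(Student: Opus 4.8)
The plan is to dispatch the three assertions in turn, after isolating an elementary multilinear fact on which all of them rest. Since $\bm\zeta$ is associated with a regular distribution of corank $k$, its components $\zeta^1,\dotsc,\zeta^k$ constitute a local frame of the annihilator $(\ker\bm\zeta)^\circ$ and are therefore pointwise linearly independent; this is precisely what makes $\Omega_{\bm\zeta}=\zeta^1\wedge\dotsb\wedge\zeta^k$ a non-vanishing decomposable $k$-form. Completing $\zeta^1,\dotsc,\zeta^k$ to a local coframe $\zeta^1,\dotsc,\zeta^m$, the $(k-1)$-fold products $\zeta^1\wedge\dotsb\widehat{\zeta^\alpha}\dotsb\wedge\zeta^k$ for $\alpha=1,\dotsc,k$ sit inside a basis of $(k-1)$-forms and are hence linearly independent. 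I would use this to establish the key lemma: for any $k$-form $\Theta$ on $U$ one has $\Theta=f\,\Omega_{\bm\zeta}$ for some $f\in\Cinfty(U)$ if and only if $\iota_Y\Theta=0$ for every $Y\in\Gamma(\ker\bm\zeta)$. Indeed, expanding $\Theta$ in the coframe and contracting with the vectors dual to $\zeta^{k+1},\dotsc,\zeta^m$ (which span $\ker\bm\zeta$) annihilates every term except the multiple of $\Omega_{\bm\zeta}$.

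Part~2, namely $\ker\bm\zeta=\ker\Omega_{\bm\zeta}$, then follows immediately from the interior-product identity $\iota_v\Omega_{\bm\zeta}=\sum_{\alpha}(-1)^{\alpha-1}(\iota_v\zeta^\alpha)\,\zeta^1\wedge\dotsb\widehat{\zeta^\alpha}\dotsb\wedge\zeta^k$, which vanishes precisely when all $\iota_v\zeta^\alpha=0$, by the linear independence of the $(k-1)$-fold wedges.

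For Part~1, I first note that since both $\Omega_{\bm\zeta}$ and $\Omega_{\bar{\bm\zeta}}$ are non-vanishing, proportionality forces a nowhere-zero factor. For the implication ($\Leftarrow$), proportionality of $\Omega_{\bm\zeta}$ and $\Omega_{\bar{\bm\zeta}}$ gives equality of their kernels, whence $\ker\bm\zeta=\ker\bar{\bm\zeta}$ by Part~2, i.e.\ compatibility. For ($\Rightarrow$), compatibility means $\zeta^1,\dotsc,\zeta^k$ and $\bar\zeta^1,\dotsc,\bar\zeta^k$ are two local frames of the same rank-$k$ annihilator $\mathcal{D}^\circ$, so $\bar\zeta^\alpha=\sum_\beta A^\alpha_\beta\zeta^\beta$ for a smooth $GL(\mathbb{R}^k)$-valued matrix $A$, and then $\Omega_{\bar{\bm\zeta}}=\det(A)\,\Omega_{\bm\zeta}$ with $\det A$ nowhere zero.

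Part~3 rests on the single Cartan identity $\iota_Y\Lie_X\Omega_{\bm\zeta}=\Lie_X(\iota_Y\Omega_{\bm\zeta})-\iota_{[X,Y]}\Omega_{\bm\zeta}$. Taking $Y\in\Gamma(\ker\bm\zeta)=\Gamma(\ker\Omega_{\bm\zeta})$ kills the middle term, giving $\iota_Y\Lie_X\Omega_{\bm\zeta}=-\iota_{[X,Y]}\Omega_{\bm\zeta}$. If $X$ is a Lie symmetry then $[X,Y]\in\Gamma(\ker\bm\zeta)$, so the right-hand side vanishes for all such $Y$, and the lemma yields $\Lie_X\Omega_{\bm\zeta}=f_X\Omega_{\bm\zeta}$; conversely, if $\Lie_X\Omega_{\bm\zeta}=f_X\Omega_{\bm\zeta}$ then $\iota_Y\Lie_X\Omega_{\bm\zeta}=f_X\,\iota_Y\Omega_{\bm\zeta}=0$, so $\iota_{[X,Y]}\Omega_{\bm\zeta}=0$, i.e.\ $[X,Y]\in\ker\Omega_{\bm\zeta}=\ker\bm\zeta$ for every $Y\in\Gamma(\ker\bm\zeta)$, which is exactly the Lie-symmetry condition. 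I do not expect a serious obstacle: the content is multilinear algebra together with this standard identity. The one point requiring care is the proportionality lemma, which must be argued pointwise and uses the linear independence of the $\zeta^\alpha$ (equivalently, the non-vanishing of $\Omega_{\bm\zeta}$); this is where the regularity of $\ker\bm\zeta$ is genuinely needed, and it is also what makes the change-of-frame matrix $A$ in Part~1 smooth and invertible.
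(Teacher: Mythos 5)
Your proof is correct and follows essentially the same route as the paper's: the heart of both arguments is the identity $\iota_{[X,Y]}\Omega_{\bm\zeta}=(\Lie_X\iota_Y-\iota_Y\Lie_X)\Omega_{\bm\zeta}$ combined with the pointwise linear independence of $\zeta^1,\dotsc,\zeta^k$. The only (cosmetic) difference is that the paper declares the first two assertions trivial and, for the forward implication, expands $\iota_Y\Lie_X(\zeta^1\wedge\dotsb\wedge\zeta^k)$ term by term to conclude $\Lie_X\zeta^\alpha=\sum_{\beta=1}^k f^\alpha_\beta\zeta^\beta$ for each component, whereas you package the same multilinear algebra into a lemma characterising multiples of $\Omega_{\bm\zeta}$ and apply it directly to $\Lie_X\Omega_{\bm\zeta}$.
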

\begin{proof}
    The only non-trivial part of the proof of the statement of this proposition is that a Lie symmetry of $\ker\bm\zeta$ amounts to a conformal Lie symmetry of $\Omega_{\bm \zeta}$. Let us prove this using that, if $\bm \zeta=\sum_{\alpha=1}^k\zeta^\alpha\otimes e_\alpha$, one has that  $\ker \bm\zeta=\ker \Omega_{\bm \zeta}=\ker \left(\zeta^1\wedge\dotsb\wedge \zeta^k\right)$. On the other hand, if $Y\in \Gamma(\ker \bm \zeta)$ and $X$ is a Lie symmetry of $\ker \bm \zeta$, then $[X,Y]$ takes values in $\ker \bm \zeta$. Hence,
    \[
        0 = \inn{[X,Y]}\zeta^1\wedge\dotsb\wedge \zeta^k = (\Lie_X\inn{Y} - \inn{Y}\Lie_X)\zeta^1\wedge\dotsb\wedge \zeta^k = -(\inn{Y}\Lie_X)(\zeta^1\wedge\dotsb\wedge \zeta^k)\,.
    \]
    Thus,
    \begin{equation}\label{eq:Explanation}
        0=(\iota_Y\Lie_X\zeta^1)\zeta^2\wedge\dotsb\wedge \zeta^k+\dotsb+(-1)^{k-1}\zeta^1\wedge\dotsb\wedge\zeta^{k-1} (\inn{Y}\Lie_X\zeta^k)\,.
    \end{equation}
    Since $\zeta^1\wedge \dotsb\wedge \zeta^k$ is not vanishing, one obtains that $\inn{Y}\Lie_{X}\zeta^{\alpha}=0$ for every $Y \in \Gamma(\ker \zeta)$ for $\alpha=1,\ldots,k$, and $\Lie_{X}\zeta^\alpha=\sum_{\beta=1}^kf^\alpha_\beta\zeta^\beta$ for some functions $f_\beta^\alpha\in\Cinfty(U)$ with $\alpha,\beta = 1,\ldots,k$. Hence, $X$ is a conformal Lie symmetry of $\Omega_{\bm\zeta}$. 

    The converse follows immediately from the equality
    $$
        \inn{[X,Y]}\Omega_{\bm\zeta} = -\inn{Y}\Lie_X\Omega_{\bm\zeta} = -f_X\inn{Y}\Omega_{\bm\zeta} = 0\,
    $$
    for $Y\in \Gamma(\ker \bm \eta)$ and $X$ being a conformal Lie symmetry of $\Omega_{\bm \zeta}$. 
\end{proof}
It should be stressed that Proposition \ref{prop:conf-Lie-sym} implies that 
 $X$ is a conformal  Lie symmetry of $\Omega_{\bm \eta}$, if, and only if, $\Lie_{X}\eta^\alpha=\sum_{\beta=1}^kf^\alpha_\beta\eta^\beta$ with $\alpha=1,\ldots,k$ for certain functions $f^\alpha_\beta\in \Cinfty(U)$ with $\alpha,\beta=1,\ldots,k$.
\begin{proposition}\label{Prop:Conkcontact}
    Let $\bm\zeta\in \Omega^1(M,\mathbb{R}^k)$ have a regular kernel different from zero and suppose that $\restr{\d\bm\zeta}{\ker \bm \zeta\times_U \ker \bm \zeta}$ is non-degenerate. Assume that $S_1,\ldots,S_k$ are conformal commuting Lie symmetries of $\Omega_{\bm \zeta}$ with $S_1\wedge\dotsb\wedge S_k\neq 0$ such that $\langle S_1,\dotsc,S_k\rangle$ is a supplementary to $\ker\bm\zeta$. Then, $\bm\zeta$ admits a compatible $k$-contact form ${\bm\eta}$.     
\end{proposition}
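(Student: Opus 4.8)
The plan is to reduce the statement to the converse direction of Theorem \ref{Prop:LocalEqu}, after translating the hypotheses on $\Omega_{\bm\zeta}$ into statements about the distribution $\mathcal{D}=\ker\bm\zeta$ itself, and then to exhibit a compatible $k$-contact form by the explicit construction of dual one-forms. First I would note that the assumed non-degeneracy of $\restr{\d\bm\zeta}{\ker\bm\zeta\times_M\ker\bm\zeta}$ is, by \eqref{eq:max-non-int-result}, precisely the condition that $\mathcal{D}$ be maximally non-integrable; since $\ker\bm\zeta\neq0$, the distribution $\mathcal{D}$ is maximally non-integrable. Next, Proposition \ref{prop:conf-Lie-sym} turns each conformal Lie symmetry $S_\alpha$ of $\Omega_{\bm\zeta}$ into a Lie symmetry of $\ker\bm\zeta=\ker\Omega_{\bm\zeta}=\mathcal{D}$. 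As the $S_\alpha$ commute and $S_1\wedge\dotsb\wedge S_k\neq0$, the $k$-vector field $\mathbf{S}=(S_1,\ldots,S_k)$ is integrable and $\langle S_1,\ldots,S_k\rangle$ is a rank-$k$ distribution supplementary to $\mathcal{D}$; in particular $\mathcal{D}$ has corank $k$.

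I would then construct $\bm\eta$ directly from the decomposition $\T M=\mathcal{D}\oplus\langle S_1,\ldots,S_k\rangle$: let $\eta^1,\ldots,\eta^k$ be the one-forms vanishing on $\mathcal{D}$ and satisfying $\inn{S_\beta}\eta^\alpha=\delta_\beta^\alpha$, and set $\bm\eta=\sum_{\alpha=1}^k\eta^\alpha\otimes e_\alpha$. Because both the frame $S_1,\ldots,S_k$ and the decomposition are globally defined on $M$, so is $\bm\eta$, and by construction $\ker\bm\eta=\mathcal{D}=\ker\bm\zeta$; hence $\bm\eta$ is automatically compatible with $\bm\zeta$. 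What remains is to verify conditions (1)--(3) of Definition \ref{dfn:k-contact-manifold}: condition (1) is immediate since $\ker\bm\eta=\mathcal{D}\neq0$ is regular of corank $k$.

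To handle (2) and (3), I would evaluate $\d\bm\eta$ on the local frame $\{S_1,\ldots,S_k,X_{k+1},\ldots,X_m\}$, where $X_{k+1},\ldots,X_m$ is a local basis of $\mathcal{D}$. The intrinsic formula for $\d$ gives $\d\eta^\delta(S_\beta,S_\gamma)=0$ (the contractions $\inn{S_\gamma}\eta^\delta$ are constant and $[S_\beta,S_\gamma]=0$) and $\d\eta^\delta(S_\beta,X_\alpha)=0$ (here $\inn{X_\alpha}\eta^\delta=0$, $\inn{S_\beta}\eta^\delta$ is constant, and $[S_\beta,X_\alpha]\in\Gamma(\mathcal{D})=\Gamma(\ker\bm\eta)$ because $S_\beta$ is a Lie symmetry of $\mathcal{D}$). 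Thus $\inn{S_\beta}\d\bm\eta=0$, so $\langle S_1,\ldots,S_k\rangle\subseteq\ker\d\bm\eta$ and $\rk\ker\d\bm\eta\geq k$. For the intersection condition, I would use that the morphism $\rho$ of \eqref{eq:rho} is intrinsic to $\mathcal{D}$, so the non-degeneracy of $\restr{\d\bm\zeta}{\mathcal{D}\times\mathcal{D}}$ transfers to $\restr{\d\bm\eta}{\mathcal{D}\times\mathcal{D}}$; any $0\neq v\in\ker\bm\eta_x\cap\ker\d\bm\eta_x$ would then lie in $\mathcal{D}_x$ and be $\d\bm\eta_x$-orthogonal to all of $\mathcal{D}_x$, a contradiction. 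Hence $\ker\bm\eta\cap\ker\d\bm\eta=0$, and since $\mathcal{D}$ has corank $k$ this forces $\ker\d\bm\eta=\langle S_1,\ldots,S_k\rangle$ of rank exactly $k$, giving (2) and (3). Therefore $\bm\eta$ is a $k$-contact form compatible with $\bm\zeta$.

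The only genuinely delicate point I anticipate is the transfer of non-degeneracy from $\d\bm\zeta$ to $\d\bm\eta$ on $\mathcal{D}$: one must be explicit that maximal non-integrability is a property of $\mathcal{D}$ alone, read off through the intrinsic $\rho$ of \eqref{eq:rho}, so that it can be evaluated using whichever associated form is convenient. Everything else is routine bookkeeping on the adapted frame $\{S_\alpha,X_i\}$ and amounts to repeating, now globally, the computation already carried out in the proof of Theorem \ref{Prop:LocalEqu}.
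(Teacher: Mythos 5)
Your proposal is correct and follows essentially the same route as the paper: translate the conformal symmetries into Lie symmetries of $\ker\bm\zeta$, build $\bm\eta$ as the dual coframe vanishing on $\ker\bm\zeta$, show the $S_\alpha$ lie in $\ker\d\bm\eta$, and use the intrinsic non-degeneracy of $\rho$ on $\ker\bm\zeta$ to force $\ker\bm\eta\cap\ker\d\bm\eta=0$ and $\rk\ker\d\bm\eta=k$. Your version merely spells out a few steps the paper leaves implicit (the frame computation of $\d\eta^\delta$ and the transfer of non-degeneracy from $\d\bm\zeta$ to $\d\bm\eta$), which is harmless.
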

\begin{proof}
    If $S_1,\ldots,S_k$ are conformal Lie symmetries of $\Omega_{\bm \zeta}$, then each $\Lie_{S_\beta}\zeta^\alpha$ is a linear combination of $\zeta^1,\ldots,\zeta^k$ with some coefficient functions. In consequence, $[S_\beta, \ker \bm \zeta]\subset \ker \bm\zeta$.

    Consider now the unique one-forms ${\eta}^1,\ldots,{\eta}^k$ that are dual to $S_1,\ldots, S_k$ and vanish on $\ker \bm\zeta$. They exist because $S_1\wedge\dotsb\wedge S_k$ is not vanishing and they in fact span a distribution that is supplementary to $\ker \bm \zeta$. One has that $\restr{\d\bm\zeta}{\ker\bm\eta \times \ker\bm\eta}$ is non-degenerate.  If one works out the differentials $\d {\eta}^1,\ldots,\d {\eta}^k$, it turns out that $S_1,\ldots, S_k$ take values in the kernels of all them. Thus, $\d {{\bm\eta}}$ has rank at least $k$. Moreover, $\ker \bm \zeta$ is maximally non-integrable and $\ker \bm \zeta=\ker\bm \eta$, which implies that $\ker \d \bm \eta$ has rank $k$ and $\ker \d\bm\eta\cap \ker \bm \eta=0$. Hence, ${\bm \eta}$ is a $k$-contact form compatible with $\bm \zeta$.
\end{proof}

\begin{proposition}\label{Prop:Necessary conditionskform}
An ${\bm\zeta}\in \Omega^1(U,\mathbb{R}^k)$ associated with a regular distribution $\mathcal{D}\neq 0$ of corank $k$ is compatible with a $k$-contact form if, and only if,  $\ker  {\bm\zeta}\cap \ker \d {\bm \zeta}=0$, while $\restr{\ker \d {\bm\zeta}}{\ker \bm \zeta\times \ker  {\bm \zeta}}=0$, and $\Omega_{ {\bm \zeta}}$  has $k$ commuting conformal Lie symmetries spanning a supplementary to $\ker\bm\zeta$. 
\end{proposition}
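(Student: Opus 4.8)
The plan is to read this statement not as a fresh computation but as a synthesis of results already established, so my strategy is to split the biconditional into necessity and sufficiency and feed each half to the appropriate earlier proposition. Throughout I write $\mathcal{D}=\ker\bm\zeta$ and keep in mind that, by \eqref{eq:max-non-int-result}, the condition $\restr{\ker\d\bm\zeta}{\ker\bm\zeta\times\ker\bm\zeta}=0$ is precisely the statement that $\mathcal{D}$ is maximally non-integrable; since maximal non-integrability is a property of $\mathcal{D}$ alone, it is independent of which associated one-form is used to express it, and this intrinsic character is what lets me move between $\bm\zeta$ and any compatible $k$-contact form.

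For necessity I would assume $\bm\zeta$ is compatible with a $k$-contact form $\bm\eta$, so $\ker\bm\eta=\ker\bm\zeta=\mathcal{D}$. Proposition \ref{Prop:MaxNonkCon} shows $\mathcal{D}$ is maximally non-integrable, which via \eqref{eq:max-non-int-result} gives the condition $\restr{\ker\d\bm\zeta}{\ker\bm\zeta\times\ker\bm\zeta}=0$. The condition $\ker\bm\zeta\cap\ker\d\bm\zeta=0$ then follows at once: any $v$ in this intersection lies in $\ker\bm\zeta$ and satisfies $\d\bm\zeta(v,w)=0$ for all $w$, in particular for $w\in\ker\bm\zeta$, so non-degeneracy of $\d\bm\zeta$ on $\ker\bm\zeta$ forces $v=0$. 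For the symmetries, I would take the Reeb vector fields $R_1,\dotsc,R_k$ of $\bm\eta$ from Theorem \ref{thm:k-contact-Reeb}: they commute, they span $\ker\d\bm\eta$, which is supplementary to $\ker\bm\eta=\ker\bm\zeta$, and they are Lie symmetries of $\ker\bm\zeta$ by Corollary \ref{Cor:SymkCon}. Proposition \ref{prop:conf-Lie-sym} then reinterprets them as commuting conformal Lie symmetries of $\Omega_{\bm\zeta}$ spanning a supplementary to $\ker\bm\zeta$, which is the third condition.

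For sufficiency I would simply invoke Proposition \ref{Prop:Conkcontact}. Its hypotheses are exactly the data at hand: $\mathcal{D}\neq 0$ regular of corank $k$, the non-degeneracy of $\restr{\d\bm\zeta}{\ker\bm\zeta\times\ker\bm\zeta}$ (which is the stated condition $\restr{\ker\d\bm\zeta}{\ker\bm\zeta\times\ker\bm\zeta}=0$), and $k$ commuting conformal Lie symmetries $S_1,\dotsc,S_k$ of $\Omega_{\bm\zeta}$ with $S_1\wedge\dotsb\wedge S_k\neq 0$ spanning a supplementary to $\ker\bm\zeta$. Its conclusion is that $\bm\zeta$ admits a compatible $k$-contact form, which is what we want. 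I would remark explicitly that the listed condition $\ker\bm\zeta\cap\ker\d\bm\zeta=0$ is not used here as an independent hypothesis, since the argument of the previous paragraph shows it already follows from the non-degeneracy condition.

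The statement therefore involves no genuinely new argument, and the real content sits in the previously proved Proposition \ref{Prop:Conkcontact}. The two points requiring care are conceptual rather than computational: recognising that maximal non-integrability is intrinsic to $\mathcal{D}$ and so transfers between any two compatible one-forms, and keeping straight the dictionary of Proposition \ref{prop:conf-Lie-sym} between Lie symmetries of $\ker\bm\zeta$ and conformal Lie symmetries of $\Omega_{\bm\zeta}$. Consequently the main obstacle I anticipate is presentational, namely arranging the three conditions so that their mild redundancy—condition one being a consequence of condition two—is made transparent rather than appearing as an oversight.
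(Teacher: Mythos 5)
Your proposal is correct and follows essentially the same route as the paper's own proof: necessity via Proposition \ref{Prop:MaxNonkCon} together with the intrinsic character of maximal non-integrability expressed through \eqref{eq:max-non-int-result}, the Reeb vector fields reinterpreted as commuting conformal Lie symmetries of $\Omega_{\bm\zeta}$, and sufficiency delegated entirely to Proposition \ref{Prop:Conkcontact}. Your observation that the condition $\ker\bm\zeta\cap\ker\d\bm\zeta=0$ is implied by the non-degeneracy of $\d\bm\zeta$ on $\ker\bm\zeta$ matches the paper's own one-line justification of that condition.
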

\begin{proof}
    If $\ker \bm\eta=\ker {\bm \zeta}$ for a $k$-contact form $\bm\eta\in \Omega^1(U,\mathbb{R}^k)$, then $\ker \bm\eta=\mathcal{D}$ is maximally non-integrable. Moreover, $\bm\zeta$ gives rise to a trivialization $\T U/\restr{\ker\zeta}{U}$ and $\d\bm\zeta$ is also non-degenerate when restricted to $\ker\bm\zeta$. Moreover $\ker \bm\zeta\, \cap\, \ker \d\bm\zeta=0$, as otherwise $\ker\d\bm\zeta$ would be degenerated on $\ker\bm\zeta$. Since $\Omega_{\bm\zeta}$ is proportional to $\Omega_{\bm \eta}$, the Reeb vector fields of $\bm \eta$ are commuting conformal Lie symmetries of $\Omega_{\bm \zeta}$ spanning a supplementary to $\ker \bm \eta=\ker\bm\zeta$. The converse is given by Proposition \ref{Prop:Conkcontact}. 
\end{proof}

\begin{example} Let us suggest a simple method to obtain the Lie symmetries of a regular distribution $\mathcal{D}$ on a manifold $M$. Take a basis $X_1,\ldots,X_s$ of $\mathcal{D}$. A vector field $X$ on $M$ is a Lie symmetry of $\mathcal{D}$ if, and only if, $\Lie_XX_1\wedge\dotsb\wedge X_s=f_XX_1\wedge\dotsb\wedge X_s$ for a certain function $f_X$. This is in general simpler that verifying $\Lie_{X}\eta^{\alpha} = \sum_{\mu=1}^{k} f^{\alpha}_{\mu}\eta^{\mu}$ for $\alpha=1,\ldots,k$ and functions $f^{\alpha}_{\mu}\in\Cinfty(M)$ with  $\alpha, \mu = 1,\ldots,k$. Let us consider a particular example of a regular distribution $\mathcal{D}$ on $\mathbb{R}^4$ spanned by the vector fields
$$
X_1=\partial_x\,,\qquad X_2=\partial_y+x\partial_z+z\partial_t\,.
$$
Then,  $\bm \zeta\in \Omega^1(\mathbb{R}^4,\mathbb{R}^2)$ associated with $\mathcal{D}$, namely $\ker \bm\zeta=\mathcal{D}$, is given by
$$
\bm\zeta=(x\d y - \d z\,)\otimes e_1+(\d t -z\d y\,)\otimes e_2\,.
$$
This is not a two-contact form as $\d\bm\zeta=\d x\wedge \d y\otimes e_1+\d y\wedge \d z\otimes e_2$ and $\ker \d\bm\zeta$ has not rank two. Nevertheless, $\mathcal{D}$ is maximally non-integrable as $[X_1,X_2]=\partial_z$ does not take values in $\mathcal{D}$ at any point of $\mathbb{R}^4$ and $\mathcal{D}\neq 0$. 

To obtain the Lie symmetries of $\mathcal{D}$, we want to find $f_X\in \Cinfty(\mathbb{R}^4)$ such that 
$$
\Lie_X(X_1\wedge X_2)=\Lie_X[\partial_x\wedge (\partial_y+x\partial_z+z\partial_t)]=f_XX_1\wedge X_2\,.
$$

Taking into account the relations
$$
[X_1,X_2]=\partial_z=X_3\,,\qquad[X_3,X_2]=\partial_t=X_4\,,\qquad [X_4,X_\alpha]=0,\,\qquad [X_3,X_1]=0,
$$
for $\alpha=1,2,3,4$, 
one obtains a basis $X_1,X_2,X_3,X_4$ of the space of vector fields on $\mathbb{R}^4$. Using this fact, we write $X$ in the following form
$$
X=f_1X_1+f_2X_2+f_3[X_1,X_2]+f_4[[X_1,X_2],X_2]
$$
for certain uniquely defined functions $f_1,f_2,f_3,f_4\in \Cinfty(\mathbb{R}^4)$. Then,
\begin{align}
    \Lie_X (X_1\wedge X_2) &= [f_1 X_1+f_2X_2+f_3X_3+f_4X_4, X_1\wedge X_2] \\
    &= -(X_1f_1)X_1\wedge X_2+f_1X_1\wedge X_3 - f_2X_3\wedge X_2 -(X_2f_2)X_1\wedge X_2\\
    &- (X_1f_3)X_3\wedge X_2 - (X_2f_3)X_1\wedge X_3 \\
    &\quad + f_3X_1\wedge X_4 - (X_1f_4)X_4\wedge X_2 - (X_2f_4)X_1\wedge X_4=f_XX_1\wedge X_2\,.
\end{align}
Hence,
$$
f_1=X_2f_3\,,\qquad f_2=-X_1f_3\,,\qquad f_3=X_2f_4\,,\qquad  X_1f_4=0\,,
$$
and the coefficients of $X$ are of the form
$$
f_4=f_4(y,z,t)\,,\qquad f_3 =X_2f_4\,,\qquad f_2=-X_1X_2f_4\,,\qquad f_1=X_2^2f_4\,.
$$
and 
$$
    X=(X_2^2f_4)\partial_x-(X_1X_2f_4)\partial_y +(X_2f_4-xX_1X_2f_4)\partial_z+(f_4(y,z,t)-zX_1X_2f_4)\partial_t\,.
$$
In particular, if 
$f_4=t$, then $f_3=z$, $f_2=0$, $f_1=x$
and $X=
t\partial_t+z\partial_z+x\partial_x$. 
We can also choose $f_4=y$, and then $X=
\partial_z+y\partial_t\,.
$ Moreover, if $f_4=e^y$, then $X=e^y(\partial_x+\partial_z+\partial_t)$.  Note that both latter vector fields commute between themselves and span a distribution supplementary to $\mathcal{D}$ for $y\neq 1$, which implies that $\mathcal{D}$ is a $k$-contact distribution for points with $y\neq 1$. 
\finish
\end{example}




\section{Compact \texorpdfstring{$k$}{}-contact manifolds}\label{Sec:Compact}

This section analyses several examples of compact $k$-contact manifolds. We pay special attention to co-oriented and low-dimensional ones. Their analysis gives rise to the general study of integral submanifolds of Reeb distributions and an extension of the Weinstein conjecture on Reeb vector fields to the $k$-contact setting. It also suggests the analysis of a certain type of $k$-contact manifolds defined on Lie groups, hereafter called $k$-contact Lie groups. Additionally, the study of compact $k$-contact manifolds is relevant as a $k$-contact analogue of the analysis of compact contact manifolds.

\begin{example}\label{Ex:SU3}
    Let us provide a first example of compact $k$-contact manifold. Consider the matrix Lie group $\mathrm{SU}(3)$ of unitary $3\times 3$ complex matrices and its matrix Lie algebra $\mathfrak{su}(3)$ of skew-Hermitian $3\times 3$ matrices. One can define a basis of left-invariant vector fields $X_1^L,\ldots,X_8^L$ on $\mathrm{SU}(3)$ whose values at the identity in $\mathrm{SU}(3)$ are of the form $X_\alpha^L(\Id) = i\lambda_\alpha\in \T_\Id\mathrm{SU}(3)\simeq \mathfrak{su}(3)$ with $\alpha=1,\ldots,8$, for the {\it Gell-Mann matrices} \cite{Geo_82}:
\begin{equation}\label{gell-mann-matrices}
\begin{gathered}
    \lambda_1 = \begin{pmatrix}
        0 & 1 & 0 \\
        1 & 0 & 0 \\
        0 & 0 & 0
    \end{pmatrix}\,,\qquad
    \lambda_2 = \begin{pmatrix}
        0 & -i & 0 \\
        i & 0 & 0 \\
        0 & 0 & 0
    \end{pmatrix}\,,\qquad
    \lambda_3 = \begin{pmatrix}
        1 & 0 & 0 \\
        0 & -1 & 0 \\
        0 & 0 & 0
    \end{pmatrix}\,, \\
    \lambda_4 = \begin{pmatrix}
        0 & 0 & 1 \\
        0 & 0 & 0 \\
        1 & 0 & 0
    \end{pmatrix}\,,\qquad
    \lambda_5 = \begin{pmatrix}
        0 & 0 & -i \\
        0 & 0 & 0 \\
        i & 0 & 0
    \end{pmatrix}\,,\qquad
    \lambda_6 = \begin{pmatrix}
        0 & 0 & 0 \\
        0 & 0 & 1 \\
        0 & 1 & 0
    \end{pmatrix}\,,\\
    \lambda_7 = \begin{pmatrix}
        0 & 0 & 0 \\
        0 & 0 & -i \\
        0 & i & 0
    \end{pmatrix}\,,\qquad
    \lambda_8 = \frac{1}{\sqrt{3}}\begin{pmatrix}
        1 & 0 & 0 \\
        0 & 1 & 0 \\
        0 & 0 & -2
    \end{pmatrix}\,.
\end{gathered}
\end{equation}
Then,  
$$
    [X^L_\alpha,X^L_\beta]=\sum_{\gamma=1}^8 c_{\alpha \beta}\,^\gamma X^L_\gamma\,,\qquad \alpha,\beta=1,\dotsc,8\,,
$$
where the non-vanishing coefficients $c_{\alpha\beta}\,^\gamma$ are given by Table \ref{Tab:GellMann}.

\begin{table}[ht]
\begin{center}
\def\arraystretch{1.5}
\setlength{\tabcolsep}{20pt}
\begin{tabular}{|c|c||c|c|}
    \hline
    $\alpha\beta\gamma$ & $c_{\beta\alpha}\,\!^\gamma$ & $\alpha\beta\gamma$ & $c_{\beta\alpha}\,\!^\gamma$ \\
    \hline
    123 & $2$ & 345 & $1$ \\
    147 & $1$ & 367 & $-1$ \\
    156 & $-1$ & 458 & $ \sqrt{3}$ \\
    246 & $1$ & 678 & $ \sqrt{3}$ \\
    257 & $1$ & & \\
    \hline
\end{tabular}
\end{center}
\caption{Non-vanishing commutation relations for the basis matrices $\{i\lambda_1,\ldots,i\lambda_8\}$ in $\mathfrak{su}(3)$. Note that the structure constants are completely skew-symmetric in $\alpha,\beta,\gamma$, and only $c_{\beta\alpha}\,\!^\gamma$ for  $1\leq \alpha<
\beta<\gamma\leq 8$ are considered in this table.}
\label{Tab:GellMann}
\end{table}

Note that the dual one-forms to left-invariant vector fields are left-invariant differential one-forms. In fact, $\iota_{X^L_\alpha}\Lie_{X_\beta^R}\eta^L_\gamma=\Lie_{X^R_\beta}\iota_{X_\alpha^L}\eta^L_\gamma+\iota_{[X^L_\alpha,X^R_\beta]}\eta_\gamma^L=0$ for $\alpha,\beta,\gamma=1,\ldots,8$. Consider such a basis $\eta^1_L,\ldots,\eta^8_L$. Since one has that $\d \eta^\alpha_L(X^L_\beta,X^L_\gamma)=-\eta^\alpha_L([X^L_\beta,X^L_\gamma])$ with $\alpha,\beta,\gamma=1,\ldots,8$ for the dual  differential one-forms $\eta^1_L,\ldots,\eta^8_L$ to $X_1^L,\ldots,X_8^L$, it follows that
$$
\d \eta^{\alpha}_L=-\frac 12\sum_{\beta,\gamma=1}^8c_{\beta\gamma}\,\!^\alpha\eta_L^\beta\wedge \eta_L^\gamma\,,\qquad \alpha=1,\ldots,8\,.
$$
  
In particular,
$$
\d \eta_L^3=2\eta_L^1\wedge \eta_L^2+ \eta_L^4\wedge \eta_L^5- \eta_L^6\wedge \eta_L^7\,,\qquad \d \eta_L^8=  \sqrt{3}\left(\eta_L^4\wedge\eta^5_L+\eta_L^6\wedge\eta_L^7\right)\,.
$$
Thus, $\bm \eta=\eta^3_L\otimes e_1+\eta_L^8\otimes e_2$ is a two-contact form on $\mathrm{SU}(3)$. Moreover, the associated Reeb vector fields are $X^L_3,X^L_8$, which commute between themselves and whose integral curves passing through the identity give rise to a commutative Lie subgroup
$$
    \mathbb{T}^2= \left\{\begin{pmatrix}
        e^{i\theta_1} & 0 & 0 \\
        0 & e^{i\theta_2} & 0 \\
        0 & 0 & e^{-i(\theta_1+\theta_2)}
    \end{pmatrix}\ \Bigg\vert\  \theta_1,\theta_2\in\R\right\}
$$
diffeomorphic to a two-dimensional torus. Since $X_3^L,X_8^L$ are left-invariant, all integral submanifolds of the Reeb distribution $\ker\d\bm\eta=\langle X^L_3,X^L_8\rangle$ are of the form $A\mathbb{T}^2$ for an arbitrary element $A\in \mathrm{SU}(3)$. Consequently, all of them are diffeomorphic to a two-dimensional torus. 
\end{example}

\begin{example}\label{Ex:SU4}
    Let us show that Example \ref{Ex:SU3}  can be generalised to many other compact matrix Lie groups. In particular, the analysis of the Lie algebra, $\mathfrak{su}(4)$, of the $4\times 4$ unitary complex matrix Lie group $\mathrm{SU}(4)$ shows that a basis of $\mathfrak{su}(4)$ exists whose first eight elements $i\lambda_1,\ldots,i\lambda_8$ satisfy between themselves the same commutation relations given in Table \ref{Tab:GellMann}, while some additional non-vanishing commutation relations are established in Table \ref{Tab:SU4} involving $i\lambda_9,\ldots,i\lambda_{15}$.
\begin{table}[h!]
\begin{center}
\def\arraystretch{1.5}
\setlength{\tabcolsep}{20pt}
\begin{tabular}{|c|c||c|c|}
    \hline
    $\alpha\beta\gamma$ & $c_{\beta\alpha}\,\!^\gamma$ & $\alpha\beta\gamma$ & $c_{\beta\alpha}\,\!^\gamma$ \\
    \hline
    1 9 12 & $1$ & 6 11 14 & $1$ \\
    1 10 11 & $-1$ & 6 12 13 & $-1$ \\
    2 9 11 & $1$ & 7 11 13 & $1$ \\
    2 10 12 & $1$ & 7 12 14 & $1$ \\
    3 9 10 & $1$ & 8 9 10& $\sqrt{1/3}$\\
        3 11 12 & $-1$ & 8 11 12 & $\sqrt{1/3}$ \\
    4 9 14 & $1$ & 8 13 14 & $-\sqrt{4/3} $ \\
    4 10 13 & $-1$ & 9 10 15 & $\sqrt{8/3} $ \\
    5 9 13 & $1$ & 11 12 15 & $\sqrt{8/3} $ \\
    5 10 14 & $1$ & 13 14 15& $\sqrt{8/3}$ \\
    \hline
\end{tabular}
\end{center}
\caption{Second part of the commutation relations for $\mathfrak{su}(4)$ in the basis $\{i\lambda_1,\ldots,i\lambda_{15}\}$. The first part is as in Table \ref{Tab:GellMann}. Note that the structure constants are completely skew-symmetric in $\alpha,\beta,\gamma$, and only $c_{\beta\alpha}\,\!^\gamma$ for  $1\leq \alpha<
\beta<\gamma\leq 15$ are shown.}\label{Tab:SU4}
\end{table}

As in Example \ref{Ex:SU3}, define a basis of left-invariant vector fields $X_{1}^{L},\ldots,X_{15}^{L}$ on $\mathrm{SU}(4)$ such that $X^L_\alpha(\Id)=i\lambda_\alpha$ for $\alpha=1,\ldots,15$, where $\lambda_1,\ldots,\lambda_{15}$ can be found in \cite[pg. 88]{Pfe_03}. Then,  a basis $\eta^1_L,\ldots,\eta^{15}_L$  of  left-invariant one-forms on $\mathrm{SU}(4)$ dual to $X^L_1,\ldots,X^L_{15}$ shows that there exists a three-contact form $\bm \eta=\eta^3_L\otimes e_1+\eta^8_L\otimes e_2+\eta^{15}_L\otimes e_3$ whose Reeb distribution is spanned by $X^L_3,X^L_8,X^L_{15}$. In fact,  $X_3^L,X_8^L,X_{15}^L$ commute between themselves and
$$
    \d \eta_L^3=2\eta_L^1\wedge \eta_L^2+ \eta_L^4\wedge \eta_L^5- \eta_L^6\wedge \eta_L^7+ \eta^9_L\wedge \eta^{10}_L- \eta^{11}_L\wedge\eta^{12}_L\,,
$$
$$
    \d \eta_L^8 = \sqrt{3}\;(\eta_L^4\wedge\eta^5_L+\eta_L^6\wedge\eta_L^7)+\sqrt{1/3}\;\eta^9_L\wedge \eta^{10}_L+\sqrt{1/3}\;\eta^{11}_L\wedge \eta^{12}_L-\sqrt{4/3}\;\eta^{13}_L\wedge \eta^{14}_L\,,
$$
while 
$$
    \d \eta_L^{15}=\sqrt{8/3}\;\left(\eta^9_L\wedge \eta^{10}_L+\eta^{11}_L\wedge \eta^{12}_L+\eta^{13}_L\wedge \eta^{14}_L\right)\,.
$$
 The integral submanifolds of the Reeb distribution are diffeomorphic to a three-dimensional torus (see \cite{Pfe_03} for  further details on $\mathrm{SU}(4)$, its Lie algebra, and related facts)
 $$
    \mathbb{T}^3= \left\{\begin{pmatrix}
    e^{i\theta_1} & 0 & 0 &0\\
    0 & e^{i\theta_2} & 0 &0\\
    0 & 0 & e^{i\theta_3}&0\\
    0 & 0 & 0 &e^{-i(\theta_1+\theta_2+\theta_3)}
    \end{pmatrix}\ \Bigg\vert\  \theta_1,\theta_2,\theta_3\in\R\right\}\,.
$$
   
\end{example}
It should be noted that previous two examples could be extended to any ${\rm SU}(n)$ for $n>4$ in an analogous manner. This extension relies on choosing an analogue of the Gell-Mann matrices for every $\mathfrak{su}(n)$ and an associated basis $X_1^L,\ldots, X_r^L$ of left-invariant vector fields on ${\rm SU}(n)$. Then, one can set the Reeb distribution to be spanned by the left-invariant vector fields related to a basis of the Cartan subalgebra of $\mathfrak{su}(n)$, namely $X_{2^2-1}^L,X_{2^3-1}^L,\ldots,X^L_{2^n-1}$. Then,  $\bm \eta=\sum_{i=2}^n\eta^{2^i-1}_L$  gives rise to an $n$-contact form. Nevertheless, proving this requires the use of Lie algebra theory, e.g. root diagrams for Lie algebras, and a proper study will be postponed to a further work.  

\begin{example}\label{Ex:U2}
Consider the Lie group $\mathrm{U}(2)$ of unitary complex $2\times 2$ matrices. In this case, a basis of left-invariant vector fields is given by
$
    X_1^L,X_2^L,X_3^L,X_4^L\,, 
$ 
with non-vanishing commutation relations
$$
    [X_\alpha^L,X_\beta^L] = 2\sum_{\gamma=1}^3\epsilon_{\alpha\beta \gamma }X^L_\gamma \,,\qquad \alpha,\beta=1,\ldots,3\,,
$$
where $\epsilon_{\alpha\beta\gamma}$, with $\alpha,\beta,\gamma=1,\ldots,3$, is the Levi--Civita symbol. More in particular,
$$
X_1({\rm Id})=\left(\begin{array}{cc}
0&i\\i&0\end{array}\right),\quad 
X_2({\rm Id})=\left(\begin{array}{cc}
i&0\\0&-i\end{array}\right),\quad X_3({\rm Id})=\left(\begin{array}{cc}
0&1\\-1&0\end{array}\right),$$
$$X_4({\rm Id})=\left(\begin{array}{cc}
i&0\\0&i\end{array}\right).
$$
Then, $\eta_L^3$ and $\eta_L^4$ give rise to a two-contact form $\bm\eta=\eta_L^3\otimes e_1+\eta_L^4\otimes e_2$ because
$$
    \d\eta^3_L=-2\eta_L^1\wedge\eta_L^2\,,\qquad \d\eta^4_L=0\,.
$$
The Reeb vector fields are $X^L_3$ and $X^L_4$, which commute among themselves. Note that $X^L_3,X^L_4$ span a Reeb distribution with  integral compact submanifolds of the form 
$$
    A\left\{\begin{pmatrix}
        e^{i\theta_1} & 0 \\
        0 & e^{i\theta_2}
    \end{pmatrix} \Bigg\vert \,\theta_{1},\theta_{2} \in \mathbb{R}\right\}
$$
for each $A\in \mathrm{U}(2)$,
which are diffeomorphic to $\mathbb{T}^2$. It will be interesting in 
 following sections that $\d\eta_L^3\wedge \d\eta_L^4=0$.  
   
\end{example}

\begin{example}
\label{Ex:T*R^{4}}
    Let us analyse a last example of co-oriented $k$-contact compact manifold whose Reeb distribution admits closed and not closed integral submanifolds. Consider $\cT \mathbb{R}^4$ with global canonical coordinates $\{x^1,p_1,\ldots,x^4,p_4\}$ and the two-symplectic form 
    $$
        \bm \omega = \sum_{\alpha=1}^2\d x^\alpha\wedge \d p_\alpha\otimes e_1 + \sum_{\alpha=3}^4\d x^\alpha\wedge \d p_\alpha\otimes e_2\,.
    $$
    Moreover, $\bm\omega$ is exact as it admits an associated potential form 
    $$
        \bm \theta = \frac 12\sum_{\alpha=1}^2(-p_\alpha \d x^\alpha+x^\alpha\d p_\alpha)\otimes e_1 + \frac 12 \sum_{\alpha=3}^4(-p_\alpha \d x^\alpha+x^\alpha\d p_\alpha)\otimes e_2\,,
    $$
    namely $\bm\omega=\d\bm\theta$. Let us define the vector fields on $\cT \mathbb{R}^4$ of the form 
    $$
        X_1 = x^1\frac{\partial}{\partial p_1}-p_1\frac{\partial}{\partial x^1}+\sqrt{2}\left(x^2\frac{\partial}{\partial p_2}-p_2\frac{\partial}{\partial x^2}\right)\,,\qquad X_2 = x^3\frac{\partial}{\partial p_3}-p_3\frac{\partial}{\partial x^3}+x^4\frac{\partial}{\partial p_4}-p_4\frac{\partial}{\partial x^4}\,.
    $$
    Then, $[X_1,X_2]=0$  and  $X_1,X_2$ are Hamiltonian relative to $\bm\omega$, i.e. $\iota_{X_\alpha}\bm\omega=\sum_{\beta =1}^2\d h_\alpha ^\beta\otimes e_\beta$ for $\alpha=1,2$, with
    $$
    h_1^1=-(x^1)^2/2-(p_1)^2/2-(x^2)^2/\sqrt{2}-(p_2)^2/\sqrt{2}, \qquad h_1^2=0\,,
    $$
    $$h_2^1=0,\qquad h_2^2=-(x^3)^2/2-(p_3)^2/2-(x^4)^2/2-(p_4)^2/2\,.
    $$ 
    If we analyse 
    the compact submanifold $N\simeq S^3\times S^3\subset \cT \mathbb{R}^4$ given by the equations
    $$
        h_1^1=-1\,,\qquad h_2^2=-1\,,
    $$
    then, if $\jmath_N:N\hookrightarrow \cT\mathbb{R}^4$ is the standard embedding, one has that
    $$
        \bm\eta=\jmath_N^*\theta^1\otimes e_1+ \jmath_N^* \theta^2\otimes e_2 = \inn{N}^{*}\bm\theta
    $$
    is a two-contact form on $N$. In fact, $X_1,X_2$ are tangent to $N$, while $\d\bm \eta=\jmath_N^*\bm\omega$ and
    $$
        \inn{X_\alpha }\d\bm\eta=\sum_{\beta=1}^2\jmath_N^*\d h_\alpha^\beta\otimes e_\beta=0\,,\qquad  \jmath_N^*\iota_{X_\alpha}\theta^\beta=\delta_\alpha^\beta\,,\qquad \alpha,\beta=1,2\,.
    $$
    This means that $\ker \d\bm \eta=\langle X_1,X_2\rangle$, while $\bm\eta$ never vanishes and $\ker \bm \eta\neq 0$.  
    Moreover, $X_1,X_2$ become Reeb vector fields on $N$, which turns into a co-oriented two-contact four-dimensional compact manifold whose Reeb distribution admits non-closed and closed two-dimensional integral submanifolds. For instance, the integral submanifold 
    \begin{equation}\label{Eq:Dense}(\cos\theta_1,\sin\theta_1,2^{-1/4} \cos(\sqrt{2}\theta_1),2^{-1/4}\sin(\sqrt{2}\theta_1),\cos\theta_2,\sin\theta_2,\cos\theta_2,\sin\theta_2)\,,\qquad \forall \theta_1,\theta_2\in \mathbb{R}\,,
    \end{equation}
    is not closed, while
    \begin{equation}\label{Eq:Cyclic}  (\sqrt{2}\cos\theta_1,\sqrt{2}\sin\theta_1,0,0,\cos\theta_2,\sin\theta_2,\cos\theta_2,\sin\theta_2)\,,\qquad \forall \theta_1,\theta_2\in \mathbb{R}\,,
    \end{equation}
    is closed. In fact, the latter closed submanifold is diffeomorphic to a two-dimensional torus. To illustrate these facts, Figure \ref{Fig:Dense} shows the representation of the first three coordinates of \eqref{Eq:Dense}, and the last coordinates of \eqref{Eq:Cyclic}.

    \begin{figure}[ht]
\begin{center}\includegraphics[scale=0.5]{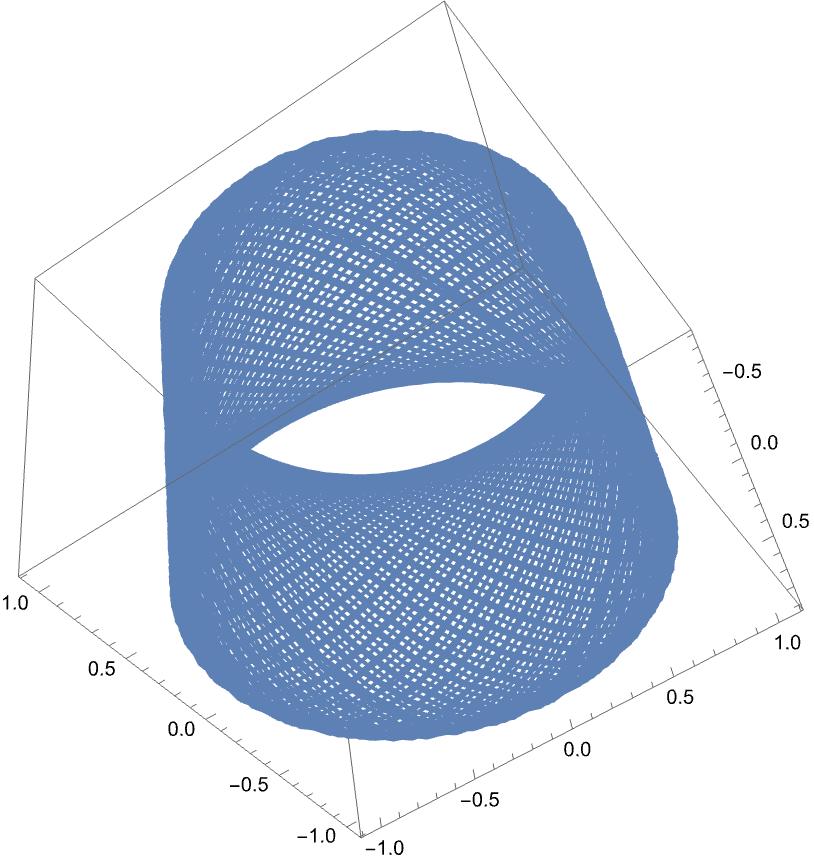}
$\qquad$\includegraphics[scale=0.5]{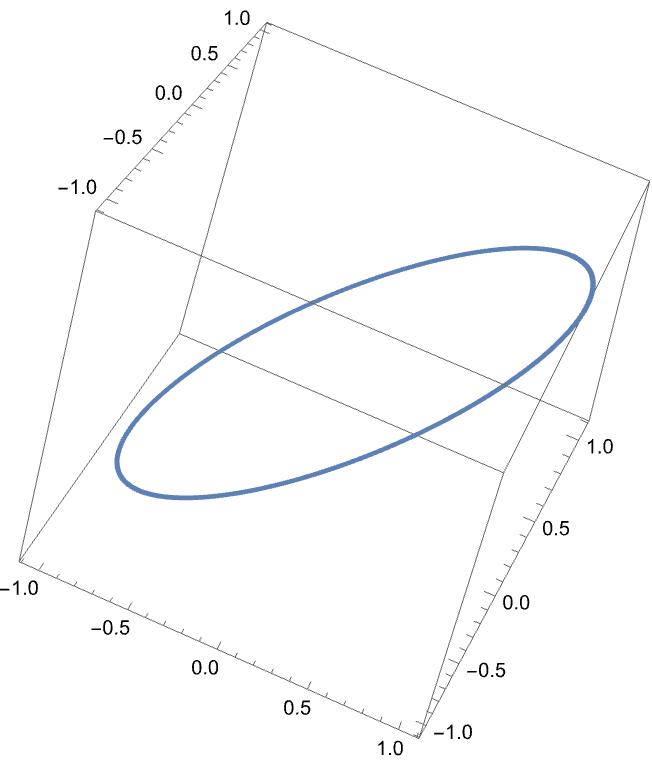}
\end{center}
    \caption{Representation of the first three coordinates in \eqref{Eq:Dense} in terms of $\theta_1\in[0,200\pi]$ and the last three coordinates of \eqref{Eq:Cyclic} for every $\theta_2\in \mathbb{R}$. }\label{Fig:Dense}
    \end{figure}
   
\end{example}

The previous examples raise the question of whether integral submanifolds of Reeb distributions for co-oriented $k$-contact compact manifolds can be homeomorphic to $k$-dimensional tori, extending the study of periodic orbits of Reeb vector fields in contact geometry to the $k$-contact setting. In particular, the following result can be proven. Recall that the connected sum of manifolds is the manifold formed by removing a ball inside each manifold and connecting together their borders in both manifolds \cite{Mas_91}.
\begin{theorem}
    Every closed connected integral submanifold of the Reeb distribution of a two-contact compact manifold is homeomorphic to a two-dimensional torus.
\end{theorem}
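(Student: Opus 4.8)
The plan is to observe that the two Reeb vector fields restrict to a global frame on $L$, which forces $L$ to be a parallelizable closed surface, and then to invoke the classification of surfaces.

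First I would restrict the Reeb data to $L$. Let $(M,\bm\eta)$ be the co-oriented two-contact compact manifold and let $L$ be a closed connected integral submanifold of its Reeb distribution $\ker\d\bm\eta$. By Theorem \ref{thm:k-contact-Reeb} there exist two commuting Reeb vector fields $R_1,R_2$ with $\ker\d\bm\eta=\langle R_1,R_2\rangle$, and these are linearly independent at every point of $M$ since they form a basis of the rank-two Reeb distribution. Because $L$ is an integral submanifold of this distribution, $\T_xL=\langle R_1(x),R_2(x)\rangle$ for every $x\in L$; hence $R_1,R_2$ are tangent to $L$ and their restrictions $R_1|_L,R_2|_L$ constitute a frame of $\T L$ that is linearly independent at every point. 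Thus $L$ is a closed connected two-dimensional manifold which is parallelizable.

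Next I would read off the topology. A global frame furnishes a consistent orientation of $L$ (declaring $R_1|_L\wedge R_2|_L$ positive), so $L$ is orientable; and since $L$ carries a nowhere-vanishing vector field, the Poincar\'e--Hopf theorem gives $\chi(L)=0$. By the classification of closed surfaces, the unique orientable closed connected surface with vanishing Euler characteristic is the two-dimensional torus, so $L$ is homeomorphic to $\mathbb{T}^2$. Alternatively, compactness of $L$ makes $R_1|_L,R_2|_L$ complete, and since $[R_1,R_2]=0$ their flows assemble into a smooth $\mathbb{R}^2$-action on $L$; this action is transitive by connectedness (the orbits are open) and locally free, so its isotropy group is a discrete cocompact subgroup of $\mathbb{R}^2$, i.e. a rank-two lattice $\Lambda$, whence $L\cong\mathbb{R}^2/\Lambda\cong\mathbb{T}^2$, even as a diffeomorphism.

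The main obstacle is excluding the Klein bottle, the only other closed surface with $\chi=0$. This is exactly where having two independent Reeb fields, rather than a single nowhere-vanishing vector field, is decisive: the Klein bottle admits a nowhere-vanishing vector field but, being non-orientable, is not parallelizable and therefore cannot support a global frame. Compactness of $M$ enters only through compactness of $L$, which guarantees the classification applies (and, in the alternative route, the completeness of the restricted Reeb fields).
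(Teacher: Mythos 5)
Your proposal is correct and its main line of argument is essentially the paper's: both use the two globally defined, pointwise-independent Reeb vector fields restricted to the leaf to conclude orientability and vanishing Euler characteristic, and then invoke the classification of closed surfaces to exclude everything except the torus (in particular the Klein bottle, ruled out by orientability). Your alternative argument via the transitive, locally free $\mathbb{R}^2$-action generated by the commuting complete Reeb fields is not in the paper and is actually stronger, since it identifies $L$ with $\mathbb{R}^2/\Lambda$ up to diffeomorphism rather than merely homeomorphism, at the modest cost of needing completeness and the standard classification of cocompact discrete subgroups of $\mathbb{R}^2$.
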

\begin{proof}
    Every closed submanifold of a compact manifold is compact.  Since we are studying two-contact manifolds, the Reeb distribution has rank two and its integral submanifolds are surfaces. Hence, we focus on compact surfaces. The topological classification of compact connected surfaces states that they are homeomorphic to a sphere, a connected sum of tori or a connected sum of projective spaces \cite{GX_13}. By the Poincaré–Hopf theorem, if a manifold admits a globally defined non-vanishing vector field, its Euler characteristic is equal to zero. The sphere and the projective plane have Euler characteristic two and one, respectively.  The torus has Euler characteristic zero (and it is immediate that its tangent bundle admits a global basis of vector fields). Moreover, the connected sum of $n$ tori has Euler characteristic $2-2n$, while the connected sum of $n$-projective planes has Euler characteristic $2-n$, but the connected sum of two projective planes, the only connected sum of projective planes with Euler characteristic equal to zero, is a Klein bottle, which is not orientable, but integral submanifolds of the Reeb distribution are orientable due to Reeb vector fields.  Therefore, a closed connected integral submanifold of the Reeb distribution of a two-contact compact manifold is homeomorphic to a two-dimensional torus. 
\end{proof}
We leave as an open problem the study of potential four- and five-dimensional  integral submanifolds for the Reeb distribution of a  $k$-contact compact manifold for $k>2$, which leads to important topological open problems about the classification of three-dimensional orientable compact manifolds, prime manifolds, and other topics \cite{Mil_03}. It is worth recalling that the lowest dimensional co-oriented $k$-contact compact manifold that is not a contact manifold appears for $k=2$ and $\dim M=4$.

In spite of previous results, the following counterexample shows that not every co-oriented $k$-contact  compact manifold has a Reeb leaf homeomorphic to a $k$-dimensional torus.

\begin{example}\label{Ex:CounterExample}
Let us consider the following counterexample showing that a co-oriented 
 $k$-contact compact manifold may not have Reeb leaves homeomorphic to a $\mathbb{T}^k$ torus for $k>1$. In particular, consider the two-contact distribution on the compact manifold $\mathbb{T}^2\times \mathbb{T}^2$ of the form
$$
\mathcal{D}=\left\langle X_1=\frac{\partial}{\partial \widetilde{\theta}_1}+\frac{\partial}{\partial \widetilde{\theta}_2},X_2=(1+\epsilon \cos \widetilde{\theta}_1)\frac{\partial}{\partial \theta_1}+(1+\epsilon \sin \widetilde{\theta}_1)\frac{\partial}{\partial \theta_2}+\frac{\partial}{\partial \widetilde{\theta}_2}\right\rangle
$$
and its commuting Lie symmetries
$$
R_1=\frac{\partial}{\partial \widetilde{\theta}_2},
\qquad R_2=\frac{\partial}{\partial \theta_1}+I\frac{\partial}{\partial \theta_2}
$$
for a certain $\epsilon$ and an irrational number $I$ such that $0<\epsilon<<I<<1$. Note that the above vector fields are well defined on $\mathbb{T}^2\times \mathbb{T}^2$, since their coefficients and the partial derivatives in terms of the chosen coordinates are well defined on $\mathbb{T}^2\times \mathbb{T}^2$. 

On the one hand, one has that $\mathcal{D}$ has rank two. Moreover, 
$$
X_1\wedge X_2\wedge R_1\wedge R_2=\left(1+\epsilon \sin\widetilde{\theta}_1-I(1+\epsilon \cos\widetilde{\theta}_1)\right)\frac{\partial}{\partial \theta_1}\wedge \frac{\partial}{\partial \theta_2}\wedge\frac{\partial}{\partial \widetilde{\theta}_1}\wedge\frac{\partial}{\partial \widetilde{\theta}_2}\neq 0,
$$
which is always different from zero due to the fact that $\epsilon,I$ are much smaller than one. Additionally, $[R_\alpha,X_\beta]=0$ for $\alpha,\beta=1,2$, which implies that $\mathcal{D}$ is invariant relative to the flows of $R_1,R_2$. Additionally,
$$
[X_1,X_2]=\epsilon \left(-\sin\widetilde{\theta}_1\frac{\partial}{\partial \theta_1}+\cos \widetilde{\theta}_1\frac{\partial}{\partial \theta_2}\right)\neq 0
$$
and it does not take values in $\mathcal{D}$ at any point. In other words, $\mathcal{D}$ is maximally non-integrable. Since $X_1,X_2,R_1,R_2$ are defined globally and linearly independent at every point, there exists a two-contact form $\bm\eta$, with $\ker\bm \eta=\mathcal{D}$, turning $(\mathbb{T}^2\times\mathbb{T}^2,\bm\eta)$ into a co-oriented two-contact compact manifold.

Let us now shows that the above two-contact compact manifold has no Reeb leaves homeomorphic to $\mathbb{T}^2$. In fact, the Reeb leaves of $R_1,R_2$ are given by the immersed submanifolds parametrised in the form
$$
\widetilde{\theta}_1=c_1,\quad \widetilde{\theta}_2=\lambda,\quad \theta_1=\mu,\quad \theta_2=I\mu+c_2 ,\qquad \forall \mu,\lambda \in\mathbb{R},
$$
modulo $2\pi$, for constants $c_1,c_2\in \mathbb{R}$. Note that the above immersed submanifolds are not closed and they are homeomorphic to $S^1\times\mathbb{R}$. Indeed, they are not proper submanifolds in $\mathbb{T}^2\times\mathbb{T}^2$. The problem is that the integral curves of $R_2$ are not closed and their projections via $\pi:(\widetilde{\theta}_1,\widetilde{\theta}_2,\theta_1,\theta_2)\in\mathbb{T}^2\times\mathbb{T}^2\mapsto (\theta_1,\theta_2)\in\mathbb{T}^2$ give rise to dense subsets of $\mathbb{T}^2$. 
\end{example}

Despite the above counterexample, one has the following $k$-contact Weinstein conjecture, which amounts to the standard Weinstein conjecture: {\it  every family of compact contact manifolds $(M_\alpha,\eta^\alpha)$, with $\alpha=1,\ldots,k$, gives rise to a $k$-contact manifold $(M=\prod_{\alpha=1}^kM_\alpha,{\bm \eta}=\sum_{\alpha=1}^k\eta^\alpha
\otimes e_\alpha)$ whose Reeb distribution admits, at least, one Reeb leaf that is homeomorphic to a torus  $\mathbb{T}^k$}. Note that we denoted by $\eta^\alpha$ the contact form on $M_\alpha$ and its natural pull-back to $M$. It is immediate that this conjecture retrieves the standard contact Weinstein one for $k=1$. On the other hand, if the Weinstein conjecture holds, the Reeb vector field of each $(M_\alpha,\eta^\alpha)$ has a closed orbit homeomorphic to $S^1$ and the Reeb leave obtained by the Cartesian product of all of such orbits is homeomorphic to $S^1\times\overset{(k)}{\dotsb}\times S^1$, which amounts to $\mathbb{T}^k$.

\section{Polarised \texorpdfstring{$k$}{}-contact distributions}
\label{sec:polarised-k-contact-distributions}

A co-oriented $k$-contact manifold may not have Darboux-like coordinates (see \cite{GGMRR_20}). To ensure the existence of a Darboux theorem for a co-oriented $k$-contact manifold, we require the existence of a polarisation (see \cite{GGMRR_20} where polarisations are defined for $k$-contact structures). In this respect, this section defines and analyses polarised $k$-contact manifolds via distributions, which allows for the existence of a Darboux-like theorem, and we prove that every polarised $k$-contact manifold is locally diffeomorphic to the first-order jet manifold related to a bundle $E\rightarrow M$ of rank $k$. 

As already said, to have Darboux coordinates, we may impose the existence of a polarisation for a $k$-contact form. For instance, one has the following Darboux theorem for co-oriented $k$-contact manifolds $(M,\bm \eta)$ admitting a polarisation for $\bm \eta$ (see \cite{GGMRR_20} and Theorem \ref{Th:PolCokcontaDar}).

First, let us show now an example of a co-oriented two-contact manifold without Darboux coordinates.

\begin{example}\label{ex:2-contact-R6}
    Consider the manifold $\R^6$ with linear global coordinates $\{x, y, p, q, z, t\}$. Then,  
    $$
        \bm \eta= (\d z - p\,\d x)\otimes e_1+( \d t  - q\,\d y)\otimes e_2\in\Omega^1(\R^6,\R^2)
    $$
   is a two-contact form. Let us verify it. In first place, $\eta^1\wedge\eta^2 $ is a non-vanishing differential two-form and $\ker \bm \eta \neq 0$ has rank four. Moreover,
    $$
        \d\eta^1 = \d x\wedge\d p\,,\qquad\d\eta^2 = \d y\wedge\d q\,, \qquad \ker \d\bm\eta = \left\langle\parder{}{z},\parder{}{t}\right\rangle\,,
    $$
    and $\ker\d\bm\eta$ has rank two. Moreover, 
    $$
        \ker \bm\eta = \left\langle \parder{}{x} + p\parder{}{z}\,,\  \parder{}{y}  + q\parder{}{t}\,,\ \parder{}{p}\,,\ \parder{}{q} \right\rangle\,
    $$
and 
$\ker \d\bm\eta \cap \ker \bm\eta=0$, which is the third condition in Definition \ref{dfn:k-contact-manifold}. The Reeb vector fields are
    $$
        R_1 = \parder{}{z}\,,\qquad R_2 = \parder{}{t}\,.
    $$
    Note that the coordinates in this example are not Darboux-like. In fact, in view of Remark \ref{Rem:CompositePolarised}, this co-oriented two-contact manifold cannot be written as a polarised $k$-contact manifold: its dimension, six, is not a composite number minus one.  
\end{example}

To give more general Darboux forms for $k$-contact manifolds, let us prove that the existence of local Reeb vector fields in a $k$-contact manifold induces, at least locally, a coframe basis with special properties. This fact is detailed in the following proposition.

\begin{proposition}\label{Prop:Coframe}
    A $k$-contact form ${\bm \eta}$ on $U\subset M$ leads to a local basis $\eta^1,\ldots,\eta^k,\Xi^{k+1},\ldots,\Xi^{m}$ of differential one-forms on an open neighbourhood $U'$ of each $x\in U$ such that 
    $$
        \Lie_{R_\alpha}\Xi^\beta=0\,,\qquad \iota_{R_\alpha}\Xi^\beta=0\,,\qquad \alpha=1,\ldots,k\,,\qquad \beta= k+1,\ldots,m\,.
    $$
    In consequence, every $k$-contact distribution $\mathcal{D}$ induces a local basis $\eta^1,\ldots,\eta^k,\Xi^{k+1},\ldots,\Xi^m$ of differential one-forms on each neighbourhood $U_x$ of any point $x\in M$ so that $\bm \eta$ is a  $k$-contact form related to $\mathcal{D}$ on $U_x$ and $\Xi^{k+1},\ldots,\Xi^m$ vanish on the Reeb vector fields of ${\bm \eta}$.
\end{proposition}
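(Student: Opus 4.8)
The plan is to straighten the Reeb vector fields into coordinate directions and then read off the desired coframe from the differentials of the transverse coordinates. First I would invoke Theorem \ref{thm:k-contact-Reeb}, which furnishes on $U$ the Reeb vector fields $R_1,\ldots,R_k$; these commute, $[R_\alpha,R_\beta]=0$, and are pointwise linearly independent, since they span the rank-$k$ distribution $\ker\d\bm\eta$. Being $k$ commuting, pointwise independent vector fields, they can be simultaneously rectified: the flow-box theorem for commuting vector fields yields, on a neighbourhood $U'$ of any $x\in U$, local coordinates $(z^1,\ldots,z^k,w^{k+1},\ldots,w^m)$ with $R_\alpha=\partial/\partial z^\alpha$ for $\alpha=1,\ldots,k$.

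Next I would set $\Xi^\beta := \d w^\beta$ for $\beta=k+1,\ldots,m$. The two required identities are then immediate from the fact that $w^\beta$ does not depend on the $z$-coordinates: $\inn{R_\alpha}\Xi^\beta = \partial w^\beta/\partial z^\alpha = 0$, while $\Lie_{R_\alpha}\Xi^\beta = \Lie_{R_\alpha}\d w^\beta = \d\!\left(\Lie_{R_\alpha} w^\beta\right) = \d\!\left(\partial w^\beta/\partial z^\alpha\right) = 0$, using that the Lie derivative commutes with $\d$.

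The step requiring the most care is verifying that $\eta^1,\ldots,\eta^k,\Xi^{k+1},\ldots,\Xi^m$ is genuinely a local coframe. For this I would use the cotangent decomposition (3$'''$) valid for a $k$-contact form, namely $\cT U' = (\ker\bm\eta)^\circ\oplus(\ker\d\bm\eta)^\circ$. On the one hand, since $\ker\bm\eta$ is regular of corank $k$, its components $\eta^1,\ldots,\eta^k$ are pointwise independent and span $(\ker\bm\eta)^\circ$. On the other hand, $\ker\d\bm\eta=\langle R_1,\ldots,R_k\rangle=\langle\partial_{z^1},\ldots,\partial_{z^k}\rangle$, so its annihilator is spanned exactly by the remaining members $\d w^{k+1},\ldots,\d w^m=\Xi^{k+1},\ldots,\Xi^m$ of the coordinate coframe. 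Concatenating bases of the two summands of the direct sum produces a basis of $\cT U'$, which is the asserted coframe, with the two displayed properties already established.

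Finally, for the statement about a $k$-contact distribution $\mathcal{D}$: by Definition \ref{def:k-contact-distribution}, each $x\in M$ has a neighbourhood $U_x$ on which $\restr{\mathcal{D}}{U_x}=\ker\bm\eta$ for some $k$-contact form $\bm\eta$. Applying the first part on $U_x$ then yields the coframe $\eta^1,\ldots,\eta^k,\Xi^{k+1},\ldots,\Xi^m$ with each $\Xi^\beta$ vanishing on the Reeb vector fields of $\bm\eta$, which is exactly the claim. I do not expect any serious obstacle beyond correctly invoking the simultaneous rectification of the Reeb fields and matching the spans $\langle\eta^\alpha\rangle$ and $\langle\Xi^\beta\rangle$ to the two annihilators in the cotangent decomposition.
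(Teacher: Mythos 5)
Your proof is correct, and it reaches the coframe by a more concrete route than the paper. The paper's own argument fixes a local foliation by $(m-k)$-dimensional submanifolds transversal to $\ker\d\bm\eta$, chooses arbitrary one-forms $\Xi^{k+1},\ldots,\Xi^m$ on one leaf vanishing on the Reeb vector fields there, and then transports them to a neighbourhood by imposing $\Lie_{R_\alpha}\Xi^\beta=0$, using $[R_\alpha,R_\beta]=0$ to guarantee that these transport equations have a (unique) common solution; the contraction condition is then propagated from the leaf by the same flows. You instead invoke the simultaneous rectification of the commuting, pointwise independent Reeb fields, set $\Xi^\beta=\d w^\beta$, and get both identities for free, with the coframe property following from the decomposition $\cT U'=(\ker\bm\eta)^\circ\oplus(\ker\d\bm\eta)^\circ$ exactly as you argue. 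The two constructions rest on the same underlying fact (consistency of transport along commuting flows, equivalently the flow-box theorem), but yours avoids any discussion of solvability of the equations $\Lie_{R_\alpha}\Xi^\beta=0$ and produces closed $\Xi^\beta$; the paper's version is more flexible in that it allows arbitrary, not necessarily closed, initial data on the transversal leaf, which is the generality actually used later (e.g.\ in Theorem \ref{Th:TradkCont} the $\Xi^\mu$ need not be exact, although there too existence is all that is required). Since the proposition only asserts existence of some coframe with the stated properties, your specialization is entirely sufficient.
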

\begin{proof}
    The proof follows from the fact that there exists, around every point $x$ in $M$, a local foliation of $M$ by $(m-k)$-dimensional submanifolds $\mathcal{S}_\mu$, with $\mu\in \mathbb{R}^k$, such that  $\T_x\mathcal{S}_\mu\oplus \ker(\d\bm\eta)_x=\T_x M$ at any   $x\in \mathcal{S}_{\mu}$ for every $\mu\in \mathbb{R}^k$. Taking locally defined linearly independent one-forms $\Xi^{k+1},\ldots,\Xi^m$ on a particular $\mathcal{S}_{\mu_0}$ that vanish on $R_1,\ldots,R_k$ on $\mathcal{S}_{\mu_0}$, one can extend them to an open subset $U\ni x$ of $M$ by requiring $\Lie_{R_\alpha}\Xi^\beta=0$ for $\alpha=1,\ldots,k$ and $\beta=k+1,\ldots,m$. The solution of these equations is unique as one fixes the value of $\Xi^{k+1},\ldots,\Xi^m$ on $\mathcal{S}_{\mu_0}$ and exists because the Reeb vector fields $R_1,\ldots,R_k$ commute among themselves. Since $\iota_{R_\alpha}\Xi^\beta=0$ for $\alpha=1,\ldots,k$
and $\beta=k+1,\ldots,m$ on $\mathcal{S}_{\mu_0}$ and $\Lie _{R_\gamma}\inn{R_\alpha}\Xi^\beta$ for $\gamma=1,\ldots,k$, one gets that $\iota_{R_\alpha}\Xi^\beta=0$ for $\alpha=1,\ldots,k$ and $\beta=k+1,\ldots,m$ on $U$.
    Since every $k$-contact distribution admits an associated, locally, $k$-contact form, the second part of our proposition follows.
\end{proof}

The coframe given by Proposition \ref{Prop:Coframe} allows for the following slight improvement of a result in \cite{GGMRR_20} characterising canonical expressions for $k$-contact forms.

\begin{theorem}\label{Th:TradkCont}
    For every $k$-contact form $\bm \eta$ on $M$, there exist, around every point of $M$, coordinates $z_1,\ldots,z_k$, Reeb-invariant one-forms $\Xi^{k+1},\ldots,\Xi^m$, namely $\Lie_{R_\alpha}\Xi^\beta=0$ and $\iota_{R_\alpha}\Xi^\beta=0$, and functions $f^\alpha_{\mu},g^\alpha_{\mu \nu},$ with $\alpha=1,\ldots,k$, and $\beta,\mu,\nu=k+1,\ldots,m$, such that   $$
        {\bm \eta}=\sum_{\alpha=1}^k\bigg(\d z^\alpha-\sum_{\mu=k+1}^{m}f^\alpha_{\mu}\Xi^\mu\bigg)\otimes e_\alpha\,,\qquad \d{\bm \eta}=\sum_{\alpha=1}^k\sum_{\mu,\nu=k+1}^mg^\alpha_{\mu\nu}\Xi^\mu\wedge\Xi^\nu\otimes e_\alpha\,.
    $$
\end{theorem}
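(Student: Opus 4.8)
The plan is to build the claimed expression directly out of the Reeb $k$-vector field together with the adapted coframe, so that every statement becomes a contraction identity. First I would fix a point of $M$ and invoke Proposition \ref{Prop:GeneralDarbouxForm} (equivalently, the simultaneous rectification theorem, which applies because the Reeb vector fields commute and are pointwise linearly independent) to obtain local coordinates $z^1,\dots,z^k$ with $R_\alpha=\partial/\partial z^\alpha$. Then I would apply Proposition \ref{Prop:Coframe} to complete $\eta^1,\dots,\eta^k$ to a local coframe $\{\eta^1,\dots,\eta^k,\Xi^{k+1},\dots,\Xi^m\}$ on a common neighbourhood $U'$ of the point, where $\iota_{R_\alpha}\Xi^\mu=0$ and $\Lie_{R_\alpha}\Xi^\mu=0$ for all $\alpha$ and all $\mu$. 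The key structural observation is that $\{R_1,\dots,R_k\}$ is precisely the part of the frame dual to $\{\eta^1,\dots,\eta^k\}$, since $\iota_{R_\alpha}\eta^\beta=\delta^\beta_\alpha$ and $\iota_{R_\alpha}\Xi^\mu=0$.

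For the first identity I would consider the one-form $\eta^\alpha-\d z^\alpha$. Contracting it with a Reeb field gives $\iota_{R_\beta}(\eta^\alpha-\d z^\alpha)=\delta^\alpha_\beta-\delta^\alpha_\beta=0$ for every $\beta$, using $\iota_{R_\beta}\d z^\alpha=\iota_{\partial/\partial z^\beta}\d z^\alpha=\delta^\alpha_\beta$. Because the $\eta^\beta$-components of any one-form in the coframe above are recovered by contraction with the dual fields $R_\beta$, the form $\eta^\alpha-\d z^\alpha$ has vanishing $\eta^\beta$-components and hence expands solely along the $\Xi^\mu$. Writing its coefficients as $-f^\alpha_\mu$ yields $\eta^\alpha=\d z^\alpha-\sum_{\mu=k+1}^m f^\alpha_\mu\,\Xi^\mu$, which is the first formula after tensoring with $e_\alpha$ and summing over $\alpha$.

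For the second identity I would use $\iota_{R_\beta}\d\eta^\alpha=0$ from Theorem \ref{thm:k-contact-Reeb}. Expanding $\d\eta^\alpha$ in the coframe as
$$\d\eta^\alpha=\frac{1}{2}\sum_{\beta,\gamma=1}^k a^\alpha_{\beta\gamma}\,\eta^\beta\wedge\eta^\gamma+\sum_{\beta=1}^k\sum_{\mu=k+1}^m b^\alpha_{\beta\mu}\,\eta^\beta\wedge\Xi^\mu+\sum_{\mu,\nu=k+1}^m g^\alpha_{\mu\nu}\,\Xi^\mu\wedge\Xi^\nu,$$
and contracting with $R_\delta$, the purely $\Xi$ block is annihilated while the first two blocks reproduce the $a$- and $b$-coefficients multiplying the linearly independent one-forms $\eta^\gamma$ and $\Xi^\mu$; imposing $\iota_{R_\delta}\d\eta^\alpha=0$ for every $\delta$ therefore forces $a^\alpha_{\beta\gamma}=0$ and $b^\alpha_{\beta\mu}=0$. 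Hence $\d\eta^\alpha=\sum_{\mu,\nu=k+1}^m g^\alpha_{\mu\nu}\,\Xi^\mu\wedge\Xi^\nu$, and tensoring with $e_\alpha$ and summing gives the second formula, with the same Reeb-invariant $\Xi^\mu$ as in the first.

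The main obstacle is organisational rather than deep: one must guarantee that the coordinates $z^\alpha$ produced by the rectification step and the Reeb-invariant coframe $\Xi^\mu$ produced by Proposition \ref{Prop:Coframe} are simultaneously valid on one common neighbourhood, and one must keep careful track of the coframe expansion of $\d\eta^\alpha$ so that contraction with the full family $R_1,\dots,R_k$ genuinely annihilates both the $\eta\wedge\eta$ and the mixed $\eta\wedge\Xi$ blocks. Once the coframe is fixed and each $R_\alpha$ is identified with its dual field $\partial/\partial z^\alpha$, the remainder reduces to reading off coefficients, and the Reeb-invariance $\Lie_{R_\alpha}\Xi^\mu=0$ is exactly what makes the $\Xi^\mu$ the natural replacement for the exact forms $\d x^i$ appearing in the weaker statement of Proposition \ref{Prop:GeneralDarbouxForm}.
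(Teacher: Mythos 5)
Your proposal is correct and follows essentially the same route as the paper: both rectify the Reeb vector fields via Proposition \ref{Prop:GeneralDarbouxForm}, invoke the adapted coframe of Proposition \ref{Prop:Coframe}, and read off the first identity from the fact that $\eta^\alpha-\d z^\alpha$ annihilates every $R_\beta$ and hence expands solely along the $\Xi^\mu$. The only (minor) divergence is in the second identity, where you contract the full coframe expansion of $\d\eta^\alpha$ with the Reeb fields directly, whereas the paper differentiates the first formula and uses that $\d f^\alpha_\mu$ and $\d\Xi^\mu$ vanish on Reeb vector fields; your version is marginally more direct but rests on the same observation.
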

\begin{proof}
It follows from  \cite[Proposition\,3.5]{GGMRR_20} or Proposition \ref{Prop:GeneralDarbouxForm} that there exists at a generic point of $M$ a  coordinate system $\{z^\alpha,x^i\}$ and functions $h^\alpha_i$, with $\alpha=1,\ldots,k$ and $i=k+1,\ldots,m$ so that ${\bm \eta}=\sum_{\alpha=1}^k(\d z^\alpha-\sum_{i=k+1}^mh^\alpha_i\d x^i)\otimes e_\alpha$ with $R_\alpha=\partial/\partial z^\alpha$ for $\alpha=1,\ldots,k$. 
    As $\Xi^{k+1},\ldots,\Xi^m$ vanish  on Reeb vector fields, it follows that $$\sum_{i=k+1}^mh^\alpha_i\d x^i=\sum_{\mu=k+1}^mf^\alpha_\mu\Xi^\mu$$ 
    for some functions $f^\alpha_{k+1},\ldots,f^\alpha_m$, with $\alpha=1,\ldots,k$, that are uniquely defined. The condition $\Lie_{R_\alpha}\bm\eta=0$ for $\alpha=1,\ldots,k$, implies that $R_\nu f^\alpha_\mu=0$ for $\nu,\alpha=1,\ldots,k$ and $\mu=k+1,\ldots,m$. Then, $\d f^\alpha_\mu$ vanishes on $R_1,\ldots,R_k$, which implies that  $\d f^\alpha_\mu=\sum_{\nu=k+1}^m h^\alpha_{\mu\nu}\Xi^\nu$ for some Reeb invariant functions $h^\alpha_{\mu\nu}$.  Similarly, $\d\Xi^\mu$ vanishes on Reeb vector fields, which gives the form of $\d{\bm \eta}$.  
\end{proof}

In view of Theorem \ref{Th:TradkCont} and the fact that locally every $k$-contact manifold is locally a co-oriented $k$-contact manifold, it makes sense to define polarisations in $k$-contact geometry independently of the chosen $k$-contact form as follows.

\begin{definition}
    A $k$-contact distribution $\mathcal{D}$ on a manifold $M$ admits a {\it polarisation} if  $\dim M=(n+1)(k+1)-1$ and there exists an integrable subbundle $\mathcal{V}$ of $\mathcal{D}$ of rank $nk$. We call $(M,\mathcal{D},\mathcal{V})$ a {\it polarised $k$-contact manifold}.
\end{definition}

Note that this notion of polarisation is compatible with the polarisations introduced in the co-oriented case in Definition \ref{dfn:k-contact-manifold}. In fact, if $\mathcal{D}$ is chosen to be $\ker \bm\eta=\mathcal{D}|_U$ for a local $k$-contact form $\bm\eta$ associated with $\mathcal{D}$, then the triple $(U,\bm\eta,\mathcal{V}|_U)$ becomes a polarised co-oriented $k$-contact manifold.

\begin{example}\label{Ex:1JetkCon2} Let us consider again the first-order jet bundle
    $J^1=J^1(M,E)$ related to a fibre bundle $E\rightarrow M$ of rank $k$ with adapted variables $\{x^i,y^\alpha,y^\alpha_i\}$. In the given local adapted coordinates, its Cartan distribution,
    $$
        \mathcal{C}=\left\langle \frac{\partial}{\partial x^i}+\sum_{\alpha=1}^ky^\alpha_i\frac{\partial}{\partial y^\alpha},\frac{\partial}{\partial y^\alpha_i}\right\rangle\,,
    $$ 
has rank $m(k+1)$ and we already proved in Example \ref{Ex:CartanDis} that $\mathcal{C}$ is a $k$-contact distribution. Moreover, $\dim J^1(M,E)=m+mk+k=(m+1)(k+1)-1$. If $\pi^1_0:J^1(M,E)\rightarrow E$ is the standard first-order jet bundle projection onto $E$, then 
$$
\mathcal{C}\cap \ker \T\pi^1_0=\left\langle \frac{\partial}{\partial y_i^\alpha}\right\rangle
$$
is a globally defined distribution of rank $mk$ that is integrable and contained in $\mathcal{C}$. In fact, it is worth  noting that the distributions spanned by 
$$
\left\langle \frac{\partial}{\partial y_i^\alpha}\right\rangle,\qquad \left\langle \frac{\partial}{\partial \bar y_i^\alpha}\right\rangle
$$
for two different adapted coordinate systems $\{x^i,y^\alpha,y^\alpha_i\}$ and $\{\bar{x}^i,\bar{y}^\alpha,\bar{y}^\alpha_i\}$ are the same. 
Hence, $J^1(M,E)$ is a polarised $k$-contact manifold relative to the distribution $\mathcal{C}\cap \T\pi^1_0$.

\end{example}

The previous example has shown that $J^1(M,E)$ is a natural instance of a polarised $k$-contact manifold. The following Darboux theorem, namely Theorem \ref{Th:DarkMan}, along with Theorem \ref{Th:DarbouxModel} will prove that it is indeed the canonical example, being every polarised $k$-contact manifold locally diffeomorphic to it.

\begin{theorem}\label{Th:DarkMan}(Darboux Theorem for polarised $k$-contact manifolds \cite{GGMRR_20})
    Consider a polarised co-oriented $k$-contact manifold $(M,\mathcal{D},\mathcal{V})$ of dimension $\dim M = n + nk + k$. Then, around every point of $M$, there exists an associated $k$-contact form $\bm\eta$ and local coordinates $\{x^i,y^\alpha,y_i^\alpha\}$, with $1\leq\alpha\leq k$ and $1\leq i\leq n$, such that
    \begin{equation}\label{eq:kContPol}
        {\bm \eta} = \sum_{\alpha=1}^k\bigg(\d y^\alpha - \sum_{i=1}^ny_i^\alpha\d x^i\bigg)\otimes e_\alpha\,, \qquad \ker \d\bm\eta = \left\langle R_\alpha = \parder{}{y^\alpha}\right\rangle\,,\qquad \V = \left\langle\parder{}{y_i^\alpha}\right\rangle\,.
    \end{equation}
    These coordinates are called \textit{Darboux coordinates} of the polarised $k$-contact manifold $(M,\mathcal{D},\mathcal{V})$.
\end{theorem}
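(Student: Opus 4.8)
The plan is to build the Darboux chart directly from the polarisation $\mathcal{V}$ and then to manufacture the adapted $k$-contact form by renormalising any compatible one, so that no a priori control of the Reeb distribution is needed. Since every $k$-contact manifold is locally co-oriented (Definition \ref{def:k-contact-distribution}), I may fix an open neighbourhood $U$ of the given point and an associated $k$-contact form $\bm\eta = \sum_\alpha \eta^\alpha\otimes e_\alpha$ with $\ker\bm\eta = \mathcal{D}|_U$. Here $\mathcal{D}^\circ = \langle\eta^1,\dots,\eta^k\rangle$ has rank $k$, $\mathcal{V}\subset\mathcal{D}$ is integrable of rank $nk$, and the annihilator $\mathcal{V}^\circ$ has rank $n+k$ and contains $\mathcal{D}^\circ$.

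First I would produce the base and fibre coordinates from $\mathcal{V}$. Since $\mathcal{V}$ is integrable, the Frobenius theorem gives, around the point, $n+k$ functionally independent first integrals of $\mathcal{V}$ whose differentials span $\mathcal{V}^\circ$. Because $\mathcal{D}^\circ$ is a rank-$k$ subbundle of the rank-$(n+k)$ bundle $\mathcal{V}^\circ$, I can relabel these first integrals as $x^1,\dots,x^n,y^1,\dots,y^k$ so that, on a possibly smaller neighbourhood, the covectors $\d x^1,\dots,\d x^n,\eta^1,\dots,\eta^k$ are pointwise linearly independent and span $\mathcal{V}^\circ$. Expanding in the coframe $\{\d x^i,\d y^\alpha\}$ of $\mathcal{V}^\circ$ gives $\eta^\alpha = \sum_i A^\alpha_i\,\d x^i + \sum_\beta B^\alpha_\beta\,\d y^\beta$ with $B=(B^\alpha_\beta)$ invertible (its degeneracy would produce a nonzero element of $\mathcal{D}^\circ\cap\langle\d x^i\rangle$, contradicting the transversality of the $\d x^i$ to $\mathcal{D}^\circ$). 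Replacing $\bm\eta$ by the compatible form $\bm\eta' = B^{-1}\bm\eta$ — which has the same kernel $\mathcal{D}$ since its components again span $\mathcal{D}^\circ$ — yields $\eta'^\alpha = \d y^\alpha - \sum_i y_i^\alpha\,\d x^i$, where $y_i^\alpha := -(B^{-1}A)^\alpha_i$.

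The crucial step, and the one I expect to be the main obstacle, is to check that $\{x^i,y^\alpha,y_i^\alpha\}$ is a genuine coordinate system; equivalently, that the $\d y_i^\alpha$ are independent modulo $\mathcal{V}^\circ$, i.e.\ that $\{\d y_i^\alpha|_{\mathcal{V}}\}$ are pointwise linearly independent. Here maximal non-integrability enters decisively: by \eqref{eq:rho}--\eqref{eq:max-non-int-result} the restriction $\d\bm\eta'|_{\mathcal{D}\times\mathcal{D}}$ represents $-\rho$ and is therefore non-degenerate, while $\mathcal{V}$ is $\d\bm\eta'$-isotropic, being integrable and contained in $\ker\bm\eta'$. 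From $\d\eta'^\alpha = -\sum_i \d y_i^\alpha\wedge\d x^i$ and $\d x^i|_{\mathcal{V}}=0$ one computes $\d\eta'^\alpha(V,H) = -\sum_i (\d y_i^\alpha)(V)\,(\d x^i)(H)$ for $V\in\mathcal{V}$ and $H\in\mathcal{D}$; so if some nonzero $V\in\mathcal{V}$ had $(\d y_i^\alpha)(V)=0$ for all $i,\alpha$, then $V$ would lie in the kernel of $\d\bm\eta'|_{\mathcal{D}}$, contradicting non-degeneracy. Hence $V\mapsto\big((\d y_i^\alpha)(V)\big)$ is injective on $\mathcal{V}$ and, by the rank count $nk=\rk\mathcal{V}$, an isomorphism, which gives the required independence and makes $\{x^i,y^\alpha,y_i^\alpha\}$ a chart.

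It then remains to read off the geometry in this chart, which is immediate. One has $\bm\eta' = \sum_\alpha(\d y^\alpha - \sum_i y_i^\alpha\,\d x^i)\otimes e_\alpha$ and $\d\bm\eta' = \sum_\alpha\sum_i(\d x^i\wedge\d y_i^\alpha)\otimes e_\alpha$, so $\ker\d\bm\eta' = \langle\partial_{y^\alpha}\rangle$ has rank $k$ and is complementary to $\mathcal{D} = \ker\bm\eta'$ (because $\iota_{\partial_{y^\alpha}}\eta'^\beta = \delta_\alpha^\beta$); thus $\bm\eta'$ is indeed a $k$-contact form compatible with $\mathcal{D}$, with Reeb vector fields $R_\alpha = \partial_{y^\alpha}$. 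Finally $\mathcal{V} = (\mathcal{V}^\circ)^\circ = \ker\langle\d x^i,\d y^\alpha\rangle = \langle\partial_{y_i^\alpha}\rangle$, and the three displayed identities of the theorem hold. I would stress that this direct construction is what sidesteps the genuine difficulty that the Reeb distribution $\ker\d\bm\eta$ of an arbitrary compatible form need not be compatible with $\mathcal{V}$ — it need not make $\mathcal{V}\oplus\ker\d\bm\eta$ integrable — whereas renormalising by $B^{-1}$ selects a compatible form whose Reeb directions are tangent to the $\mathcal{V}$-adapted fibration $\ker\langle\d x^i\rangle$.
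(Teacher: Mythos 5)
Your proof is correct, and it takes a genuinely different route from the paper's. The paper disposes of this theorem in one line: since $\mathcal{D}$ is locally the kernel of a $k$-contact form $\bm\eta$, the triple $(U,\bm\eta,\mathcal{V}|_U)$ is a polarised co-oriented $k$-contact manifold, and the statement is then delegated to Theorem \ref{Th:PolCokcontaDar}, which is itself imported from \cite{GGMRR_20} without proof. You instead prove the Darboux statement from scratch: Frobenius applied to $\mathcal{V}$ produces the $n+k$ base functions, the invertibility of $B$ (forced by $\mathcal{D}^\circ\cap\langle \d x^i\rangle=0$) lets you renormalise to the adapted compatible form $\bm\eta'=B^{-1}\bm\eta$, and maximal non-integrability — via the identification of $\d\bm\eta'|_{\mathcal{D}\times\mathcal{D}}$ with $-\rho$ in \eqref{eq:rho}--\eqref{eq:max-non-int-result} together with $\d x^i|_{\mathcal{V}}=0$ — forces the $\d y_i^\alpha$ to be independent on $\mathcal{V}$, which is exactly the rank count needed for $\{x^i,y^\alpha,y_i^\alpha\}$ to be a chart. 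What your approach buys is self-containedness within the distributional framework of the paper (it in effect also proves Theorem \ref{Th:PolCokcontaDar}), and it makes explicit the point the paper leaves implicit: an arbitrary compatible form need not have its Reeb distribution adapted to $\mathcal{V}$, and it is the renormalisation by $B^{-1}$ that selects one that does. The paper's version buys brevity at the cost of resting entirely on the external reference.
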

\begin{proof} This result follows from the fact that every polarised $k$-contact distribution $\mathcal{D}$ on $M$ is, around a generic point $x\in M$, the kernel of a $k$-contact form $\bm\eta$, which gives rise to a polarised co-oriented $k$-contact manifold $(U,\bm \eta,\mathcal{V}|_U)$ and the use of  Theorem \ref{Th:PolCokcontaDar}. 
\end{proof}

The theorem below allows us to consider the manifold introduced in Example \ref{ex:canonical-k-contact-structure} as the canonical model for polarised $k$-contact manifolds.

\begin{theorem}\label{Th:DarbouxModel}Every polarised $k$-contact manifold $(M,\mathcal{D},\mathcal{V})$ is locally diffeomorphic to a first-order jet manifold related to a fibration of order $k$.
\end{theorem}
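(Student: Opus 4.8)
The plan is to let the Darboux theorem do essentially all the work, reducing the statement to the observation that the canonical $k$-contact form, written in Darboux coordinates, coincides verbatim with the $k$-contact form carried by a trivial first-order jet bundle. No genuinely new computation is needed beyond identifying two coordinate descriptions.

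First I would fix a point $x\in M$ and apply Theorem \ref{Th:DarkMan}. Since $(M,\mathcal{D},\mathcal{V})$ is a polarised $k$-contact manifold, $\dim M=n+nk+k$ for some $n\in\N$, and there is an open neighbourhood $U\ni x$, a $k$-contact form $\bm\eta\in\Omega^1(U,\R^k)$ with $\ker\bm\eta=\restr{\mathcal{D}}{U}$, and Darboux coordinates $\{x^i,y^\alpha,y^\alpha_i\}$, with $1\le i\le n$ and $1\le\alpha\le k$, in which
$$
\bm\eta=\sum_{\alpha=1}^k\bigg(\d y^\alpha-\sum_{i=1}^ny^\alpha_i\d x^i\bigg)\otimes e_\alpha\,,\qquad \restr{\mathcal{V}}{U}=\left\langle\parder{}{y^\alpha_i}\right\rangle\,,\qquad \ker\d\bm\eta=\left\langle\parder{}{y^\alpha}\right\rangle\,.
$$
Shrinking $U$ if necessary, the map $\varphi=(x^i,y^\alpha,y^\alpha_i)\colon U\to W$ is a diffeomorphism onto an open subset $W\subset\R^{(n+1)(k+1)-1}$.

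Next I would exhibit the local model. Consider the trivial rank-$k$ fibre bundle $E=\R^n\times\R^k\to\R^n$ and its first-order jet manifold $J^1(\R^n,E)$, with global adapted coordinates $\{\bar x^i,\bar y^\alpha,\bar y^\alpha_i\}$. By Example \ref{Ex:1JetkCon2} this is a polarised $k$-contact manifold of dimension $(n+1)(k+1)-1$, whose $k$-contact distribution is the Cartan distribution $\mathcal{C}=\ker\bar{\bm\eta}$ with $\bar{\bm\eta}=\sum_{\alpha=1}^k(\d\bar y^\alpha-\sum_{i=1}^n\bar y^\alpha_i\,\d\bar x^i)\otimes e_\alpha$, and whose canonical polarisation is $\mathcal{C}\cap\ker\T\pi^1_0=\langle\partial/\partial\bar y^\alpha_i\rangle$. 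Since $E$ is trivial over $\R^n$, the adapted coordinates furnish a global chart identifying $J^1(\R^n,E)$ with $\R^{(n+1)(k+1)-1}$, and hence $W$ with an open subset of $J^1(\R^n,E)$. Composing $\varphi$ with this identification yields a diffeomorphism $\Phi\colon U\to\Phi(U)\subset J^1(\R^n,E)$ characterised by $\bar x^i\circ\Phi=x^i$, $\bar y^\alpha\circ\Phi=y^\alpha$ and $\bar y^\alpha_i\circ\Phi=y^\alpha_i$. Because $\bm\eta$ and $\bar{\bm\eta}$ have identical coordinate expressions, $\Phi^*\bar{\bm\eta}=\bm\eta$, whence $\T\Phi(\restr{\mathcal{D}}{U})=\mathcal{C}$; and since both $\restr{\mathcal{V}}{U}$ and the canonical polarisation are spanned by the respective $\partial/\partial y^\alpha_i$, one also obtains $\T\Phi(\restr{\mathcal{V}}{U})=\mathcal{C}\cap\ker\T\pi^1_0$. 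Thus $\Phi$ is the desired local diffeomorphism identifying the two polarised $k$-contact structures.

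There is no deep obstacle here: the substantive content is entirely absorbed into the Darboux theorem. The only point requiring care is verifying that $\Phi$ transports $\mathcal{V}$ to the canonical (vertical) polarisation, and not merely $\mathcal{D}$ to the Cartan distribution; this is immediate from the coordinate description, but it is precisely what upgrades ``locally diffeomorphic as $k$-contact manifolds'' to ``locally diffeomorphic as polarised $k$-contact manifolds'', thereby realising the fibration of order $k$. It is also worth remarking that it suffices to take the trivial bundle $\R^n\times\R^k\to\R^n$ as the base fibration, the whole statement being purely local.
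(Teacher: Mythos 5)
Your proposal is correct and follows essentially the same route as the paper: both proofs reduce the statement entirely to the Darboux theorem for polarised $k$-contact manifolds (Theorem \ref{Th:DarkMan}) and then read off the jet-bundle structure from the normal form of $\bm\eta$, $\mathcal{V}$ and $\ker\d\bm\eta$. The only difference is presentational: the paper packages the fibration intrinsically via the local quotients $N=M/(\mathcal{V}\oplus\ker\d\bm\eta)$ and $E=M/\mathcal{V}$, whereas you exhibit the explicit coordinate diffeomorphism onto an open subset of the trivial model $J^1(\R^n,\R^n\times\R^k)$ and verify that it carries $\mathcal{D}$ to the Cartan distribution and $\mathcal{V}$ to the vertical polarisation, which is an equally valid (and arguably more concrete) way to conclude.
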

\begin{proof}
The distribution $\mathcal{D}$ admits an associated $k$-contact form $\bm \eta$ on an open neighbourhood $U$ of every point $x$, and the Darboux Theorem for polarised $k$-contact manifolds shows that on an open neighbourhood $U'\subset U$ of $x$, the $k$-contact form $\bm \eta$  and the polarisation take the form \eqref{eq:kContPol}.  More specifically, the polarisation $\mathcal{V}$ is such that  the involutive distribution $\ker \d\bm\eta$ satisfies $\mathcal{V}\cap \ker \d\bm\eta=0$. Moreover, $\mathcal{V}\oplus \ker\d\bm\eta$ is integrable. Then, one can define $N=M/(\mathcal{V}\oplus \ker\d\bm\eta)$ and $E=M/\mathcal{V}$. Naturally, we can assume $M=J^1(N,E)$ and define the projections $\pi^1_0:J^1E\rightarrow E$ and $\pi:J^1E\rightarrow N$. In other words, we have defined a first-order jet bundle structure on $M$. Note that the Cartan distribution is $\mathcal{D} = \ker{\bm\eta}\supset\mathcal{V}$.
\end{proof}

\section{\texorpdfstring{$k$}{}-symplectization of \texorpdfstring{$k$}{}-contact structures}

This section is concerned with analysing the relation of a $k$-contact manifold with a $k$-symplectic manifold of larger dimension and some its potential applications. In a sense, this is one of the possible generalisations of the results of \cite{GG_22} to the $k$-contact setting. Notwithstanding, as $k$-contact manifolds are much more complex than contact manifolds, the extensions to $k$-symplectic manifolds of larger dimension will lack some interesting properties, appearing only in the contact, simpler, case.

\subsection{\texorpdfstring{$k$}{}-symplectic principal bundles}

Let us study a particular kind of $k$-symplectic manifolds that will be interesting in the study of some aspects of $k$-contact manifolds. We hereafter call $\mathbb{R}_\times$ the space of real positive numbers, which is a Lie group with respect to their multiplication. In what follows, $s$ is considered to be the natural pull-back to the Cartesian product of $\mathbb{R}_\times$ with another manifold of the natural variable in $\mathbb{R}_\times$.

\begin{definition}
    A \textit{$k$-symplectic $\R_\times$-principal bundle} $(P,\tau,M,\varphi,\bm \omega)$ is a $\R_\times$-principal bundle 
 $\tau:P\to M$ with an $\R_\times$-action
    $$ \varphi:\R_\times\times P\ni (s,x)\mapsto \varphi_s(x)\in P\,, $$
    endowed with a one-homogeneous $k$-symplectic form $\bm \omega$ on $P$, namely  $\bm\omega$ is a $k$-symplectic form satisfying  
    $ \varphi_s^\ast\bm \omega = s\bm \omega\,$ for every $s\in \mathbb{R}_\times. $
\end{definition}

Every $k$-symplectic $\R_\times$-principal bundle has a distinguished vector field $\Delta\in\X(P)$, called the \textit{Euler vector field}, given by
$$
    \Delta_x = \restr{\frac{\d}{\d s}}{s=0}(\varphi_{e^s}(x)) = \restr{\frac{\d}{\d s}}{s=1}\varphi_s(x)\,,\qquad \forall x\in P\,.
$$
The Euler vector field is the infinitesimal generator of the action $\varphi$ and it is therefore vertical relative to $\tau:P\rightarrow M$ and  does never vanish. One can also define a differential one-form on $P$ taking values in $\mathbb{R}^k$ given by $\bm\theta = \inn{\Delta}\bm\omega$, called the \textit{$k$-symplectic Liouville one-form}. Since $\Delta$ never vanishes and $\ker \bm \omega=0$, one has that $\bm\theta$ never vanishes too. Let us show now, among other interesting results, that $\d \bm\theta=\bm \omega$ and every $k$-symplectic $\mathbb{R}_\times$-principal bundle is therefore always exact. 

\begin{theorem}\label{thm:k-symplectic-bundle}
    If $(P,\tau,M,\varphi,\bm\omega)$ is a $k$-symplectic $\R_\times$-principal bundle, then $\Lie_\Delta\bm\omega = \bm\omega$, the $k$-symplectic Liouville one-form $\bm\theta$ is semi-basic, $\Lie_\Delta\bm\theta=\bm\theta$, and $\bm\omega = \d\bm\theta$. Furthermore, for any local trivialisation $\restr{P}{U}\simeq \R_\times\times U$ for some coordinated open $U\subset M$ of the principal $\mathbb{R}_\times$-principal bundle $\tau:P\to M$, we have $\Delta = s\dparder{}{s}$, and there exists a basic  $\bm{\widetilde{\zeta}}\in \Cinfty(P|_U,\mathbb{R}^k)$  such that
    \begin{equation}\label{eq:local}
        {\bm \omega}= \d s\wedge\bm{\widetilde{\zeta}}+ s\d\bm{\widetilde{\zeta}}\,,\qquad
        \bm\theta= s\bm{\widetilde{\zeta}}\,.
    \end{equation}
    Moreover, $\ker \bm{\widetilde{\zeta}}\cap \ker \d\bm{\widetilde{\zeta}}=\ker \T\tau$. If $\ker \bm\theta$ has corank $k$ and $\ker \bm\theta\cap \ker \T\tau\neq 0$ admits, locally around every point of $P$, some $k$ commuting Lie symmetries $\widetilde{R}_1,\dotsc,\widetilde{R}_k$ on $P|_U$  projectable onto $U$ such that $\widetilde{R}_1\wedge\dotsb\wedge \widetilde{R}_k\neq 0$ with $\langle \widetilde{R}_1,\ldots,\widetilde{R}_k\rangle \cap \ker \T\tau=0$ on $U$, and $\bm \omega$ is $k$-symplectic on the subbundle $\ker \bm \theta\cap \ker \T\tau$, then $\bm{\widetilde{\zeta}}=\tau^*\bm\zeta$ for some unique $\bm \zeta\in \Omega^1(U,\mathbb{R}^k)$ and  $
    \ker \bm \zeta $ is a $k$-contact distribution on $U$. If $(M,\mathcal{D})$ is a $k$-contact manifold and $\mathcal{D}=\ker  \bm \zeta$ for some $\bm\zeta\in \Omega^1(M,\mathbb{R}^k)$, not necessarily a $k$-contact form, the converse also holds. 
\end{theorem}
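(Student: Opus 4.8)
The plan is to prove the five assertions in turn, using only the homogeneity $\varphi_s^\ast\bm\omega=s\bm\omega$ together with the closedness and non-degeneracy of $\bm\omega$. First I would observe that $\varphi_{e^t}$ is the flow of $\Delta$, so that $\Lie_\Delta\bm\omega=\restr{\frac{\d}{\d t}}{t=0}\varphi_{e^t}^\ast\bm\omega=\restr{\frac{\d}{\d t}}{t=0}(e^t\bm\omega)=\bm\omega$. Since $\ker\T\tau=\langle\Delta\rangle$ and $\inn{\Delta}\bm\theta=\inn{\Delta}\inn{\Delta}\bm\omega=0$, the Liouville one-form $\bm\theta=\inn{\Delta}\bm\omega$ is semi-basic; moreover $\Lie_\Delta\bm\theta=\inn{\Delta}\Lie_\Delta\bm\omega=\inn{\Delta}\bm\omega=\bm\theta$, using $\Lie_\Delta\inn{\Delta}=\inn{\Delta}\Lie_\Delta$. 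Finally, Cartan's formula and $\d\bm\omega=0$ give $\bm\omega=\Lie_\Delta\bm\omega=\d\inn{\Delta}\bm\omega+\inn{\Delta}\d\bm\omega=\d\bm\theta$, so the bundle is exact.

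Next I would pass to a trivialisation $\restr{P}{U}\simeq\R_\times\times U$, where the action is fibrewise multiplication, whence $\Delta=s\partial_s$. Semi-basicity gives $\inn{\partial_s}\bm\theta=0$, so $\bm\theta$ has no $\d s$-component, while $\Lie_\Delta\bm\theta=\bm\theta$ forces each of its $\d x^i$-coefficients $a_i$ to satisfy $s\partial_s a_i=a_i$, i.e. to equal $s$ times an $s$-independent function; hence $\bm\theta=s\bm{\widetilde\zeta}$ with $\bm{\widetilde\zeta}$ basic, and $\bm\omega=\d\bm\theta=\d s\wedge\bm{\widetilde\zeta}+s\,\d\bm{\widetilde\zeta}$. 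For the identity $\ker\bm{\widetilde\zeta}\cap\ker\d\bm{\widetilde\zeta}=\ker\T\tau$, the inclusion $\supseteq$ is clear because $\bm{\widetilde\zeta}$ and $\d\bm{\widetilde\zeta}$ are basic. For $\subseteq$, writing $v=c\partial_s+h$ with $h$ horizontal, a direct contraction gives $\inn{v}\bm\omega=c\,\bm{\widetilde\zeta}-\bm{\widetilde\zeta}(h)\,\d s+s\,\inn{h}\d\bm{\widetilde\zeta}$; if $v\in\ker\bm{\widetilde\zeta}\cap\ker\d\bm{\widetilde\zeta}$ then its horizontal part $h$ lies in the same intersection, and for such a horizontal $h$ one computes $\inn{h}\bm\omega=0$, so $h=0$ by non-degeneracy of $\bm\omega$ and hence $v\in\ker\T\tau$.

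For the $k$-contact conclusion I would first record that, being basic, $\bm{\widetilde\zeta}=\tau^\ast\bm\zeta$ for a unique $\bm\zeta\in\Omega^1(U,\R^k)$, and that $\ker\bm\zeta$ has corank $k$ because $s>0$ makes $\ker\bm\theta=\ker\bm{\widetilde\zeta}$ project onto it. The hypotheses then feed exactly into the characterisation of $k$-contact distributions of Theorem \ref{Prop:LocalEqu}: the projectability of $\widetilde R_1,\dotsc,\widetilde R_k$ together with $\langle\widetilde R_\alpha\rangle\cap\ker\T\tau=0$ yields $k$ commuting Lie symmetries $R_1,\dotsc,R_k$ of $\ker\bm\zeta$ spanning a supplementary distribution, while the hypothesis that $\bm\omega$ be $k$-symplectic on the relevant subbundle of $\ker\bm\theta$ amounts, after quotienting out the vertical $\langle\Delta\rangle$, to $\d\bm\zeta$ being non-degenerate on $\ker\bm\zeta$, i.e. to $\ker\bm\zeta$ being maximally non-integrable in the sense of \eqref{eq:max-non-int-result}. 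Theorem \ref{Prop:LocalEqu} then certifies that $\ker\bm\zeta$ is a $k$-contact distribution on $U$.

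For the converse I would take $P=M\times\R_\times$ with $\tau$ the projection and $\varphi_\lambda(x,s)=(x,\lambda s)$, and set $\bm\theta=s\,\tau^\ast\bm\zeta$, $\bm\omega=\d\bm\theta$. Homogeneity is immediate from $\varphi_\lambda^\ast\bm\theta=\lambda\bm\theta$, and $\bm\omega$ is closed by construction; non-degeneracy follows from the contraction formula above, since $\inn{v}\bm\omega=0$ forces the projection $\bar h$ of the horizontal part of $v$ to satisfy $\bar h\in\ker\bm\zeta$ and $\d\bm\zeta(\bar h,\cdot)|_{\ker\bm\zeta}=0$, which gives $\bar h=0$ because $\mathcal{D}=\ker\bm\zeta$ is maximally non-integrable, and then $c=0$. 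Lifting horizontally the local commuting Lie symmetries of $\mathcal{D}$ supplied by Corollary \ref{Cor:SymkCon} produces the required $\widetilde R_\alpha$ with $\bm{\widetilde\zeta}=\tau^\ast\bm\zeta$, so the constructed bundle is of the type treated in the direct part. The hard part will be the third step: making precise how the $k$-symplecticity and symmetry hypotheses on $P$ translate into maximal non-integrability of $\ker\bm\zeta$ and the supplementary property of the projected symmetries, since the vertical direction $\langle\Delta\rangle$ already sits inside $\ker\bm\theta$, and one must consistently pass between the $(\dim M-k)$-dimensional horizontal part of $\ker\bm\theta$ upstairs and $\ker\bm\zeta$ downstairs.
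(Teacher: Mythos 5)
Your proposal is correct and follows essentially the same route as the paper's proof: the flow computation for $\Lie_\Delta\bm\omega=\bm\omega$, Cartan's formula for $\bm\omega=\d\bm\theta$, the ODE $s\partial_s\mu=\mu$ in a trivialisation to get $\bm\theta=s\bm{\widetilde\zeta}$, the vertical-plus-horizontal decomposition for $\ker\bm{\widetilde\zeta}\cap\ker\d\bm{\widetilde\zeta}=\ker\T\tau$, and the reduction of the $k$-contact claim to the characterisation of Theorem \ref{Prop:LocalEqu} via projected symmetries and non-degeneracy of $\d\bm\zeta$ on $\ker\bm\zeta$. The only difference is cosmetic: you spell out the converse (which the paper dismisses as straightforward) and flag the upstairs/downstairs bookkeeping as the delicate point, which is exactly where the paper's own argument concentrates its effort.
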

\begin{proof}
    Note that
    $$ 
        \Lie_\Delta\bm \omega = \restr{\frac{\d}{\d s}}{s=0} \varphi_{e^s}^\ast \bm\omega = \restr{\frac{\d}{\d s}}{s=0} e^s\bm\omega = \bm\omega\,.
    $$
    Moreover, $\d\bm\theta = \d\inn{\Delta}\bm\omega = \Lie_\Delta\bm\omega - \inn{\Delta}\d\bm\omega = \bm\omega$, while $\inn{\Delta}\bm\theta = \inn{\Delta}\inn{\Delta}\bm\omega = 0$, which means that $\bm\theta$ is semi-basic relative to $\tau$, and $\Lie_\Delta\bm\theta = \Lie_\Delta\inn{\Delta}\bm\omega = \inn{\Delta}\Lie_\Delta\bm\omega = \inn{\Delta}\bm\omega =\bm \theta$.

    In a local trivialisation $\restr{P}{U}\simeq \R_\times\times U$ with local adapted coordinates $(s, x^i)$, we have
    $$
       \varphi_s(t,x) = (s\cdot t, x)\,, \qquad \forall s\in \mathbb{R}_\times\,,\qquad \forall t\in \mathbb{R}_\times\,,
    $$
    where $x = (x^i)$, which implies
    $$ 
        \Delta_{(s,x)} = \restr{\frac{\d}{\d t}}{t = 0} \varphi_{e^t}(s, x) = s\parder{}{s}\,,\qquad \forall (s,x)\in \mathbb{R}_\times\times U\,. 
    $$
    Moreover, $\bm\theta$ is semi-basic and we can write  $\bm\theta =\sum_{i=1}^m\sum_{\alpha=1}^k \mu^\alpha_i(s,x)\d x^i\otimes e_\alpha$, where we recall that we assume $m=\dim M$ and $\mu^\alpha_i(s,x)$ are certain uniquely defined functions on $\mathbb{R}_\times\times U$. Since $\ker \bm \theta$ has corank $k$, it follows that $\theta^1\wedge\dotsb\wedge \theta^k$ are non-vanishing.
    
    Let us prove \eqref{eq:local} on $\tau^{-1}(U)$. Using that $\Lie_\Delta\bm\theta  = \bm\theta $, we have that there exist functions $f_i^\alpha(x)$ and $C^\alpha_i(x)$ such that, for every $i=1,\ldots,m$ and $\alpha=1,\ldots,k$, one gets\footnote{Note that the case of  zeros of the $\mu^\alpha_i$ has been taking into account in the last expression, while remaining ones assume non-vanishing $\mu^\alpha_i$.}
    $$
        s\parder{\mu^\alpha_i}{s} = \mu^\alpha_i\quad\Leftrightarrow\quad \frac{\partial \ln \abs{\mu^\alpha_i}}{\partial \ln s} = 1\quad\Rightarrow\quad \ln \abs{\mu^\alpha_i} = \ln s + f^\alpha_i(x)\quad\Rightarrow\quad \mu^\alpha_i = sC^\alpha_i(x)\,.
    $$
      
The variables $\{s,x^i\}$ are defined on a $\tau^{-1}(U)$ for a trivialisable open $U\subset M$. Hence, $\bm\theta(s,x)=s\bm{\widetilde{\zeta}}(x)$, where ${\widetilde{\bm \zeta}}$ is a basic differential one-form in $\mathbb{R}_\times \times U\rightarrow U$ taking values in $\mathbb{R}^k$.  It follows that
$$
\bm\omega =\d (s\bm {\widetilde\zeta}) = \d s\wedge \bm{\widetilde{\zeta}}+s\d \bm{\widetilde{\zeta}}\,.
$$
Note that if $v_p\in \ker {\widetilde{\bm\zeta}}\cap\ker \d{\widetilde{\bm\zeta}}$, then $v_p'=v_p-(\iota_{v_p}\d s)\Delta_p/s\in \ker \T_p \tau$, then $v_p'\in \ker\bm\omega$ and $v_p'=0$. In other words, $\ker \T\tau\supset \ker {\widetilde{\bm\zeta}}\cap \ker\d \widetilde{\bm\zeta}$. Since $\ker \T\tau\subset \ker  {\widetilde{\bm\zeta}}\cap \ker\d\bm{\widetilde\zeta}$, it follows that $\ker \bm{\widetilde{\zeta}}\cap\ker \d\bm{\widetilde{\zeta}}=\ker \T\tau$.

Let us now assume that $\bm\omega$ is $k$-symplectic on the subbundle $\ker \bm\theta\cap \ker \T\tau$.  But the restriction of $\bm \omega$ to $\ker \bm{{\theta}} \cap \ker \T\tau$ is equal to the restriction of $\d\bm{\widetilde{\zeta}}$ to $\ker \bm{\widetilde{\zeta}}\cap \ker \T\tau$. Since ${\widetilde{\bm\zeta}}$ is basic and $\ker \bm \theta$ has corank $k$, it follows that  $\bm \zeta\in \Omega^1(U,\mathbb{R}^k)$ uniquely given by $\tau^*\bm\zeta=\widetilde{\bm \zeta}$ is such that $\ker \bm\zeta$ has corank $k$ and $\d\bm\zeta$ is non-degenerate on the subbundle $\ker \bm\zeta$. Since $\ker\bm\eta\cap\ker \T\tau\neq 0$, one has that $\ker \bm \zeta$ is a  maximally non-integrable distribution of corank $k$. On the other hand, the Lie symmetries of $\ker \bm \theta$ are projectable by assumption, and their projections are linearly independent since $\langle \widetilde{R}_1,\ldots,\widetilde{R}_k\rangle \cap \ker \T \tau=0$, and give rise to Lie symmetries of $\ker\bm \zeta$ that are supplementary to $\ker \bm \zeta$.  
Hence, $\ker \bm \zeta$ is a $k$-contact distribution. 

The converse is straightforward.
\end{proof}

Note that the last paragraph  in the previous proof concerned $\bm \zeta$, which is defined on $U$, but $\bm \zeta$ is not globally defined in general. It does not need to be a $k$-contact form either. If we apply Theorem \ref{thm:k-symplectic-bundle} to two trivialisations on $U_1,U_2$ with $U_1\cap U_2\neq \emptyset$, then ${\bm \theta}_1={\bm \theta}_2$ on $\tau^{-1}(U_1\cap U_2)$ because they are defined in terms of geometric objects. But the images of  a point $p\in \tau^{-1}(U)$ under the trivialisations are $(s_1,\tau(p))$ and $(t(\tau(p))s_1,\tau(p))$ for a certain function $t:U_1\cap U_2\rightarrow \mathbb{R}_\times$. Consequently, $ {\bm \zeta}_2=t {\bm \zeta}_1$. 

The converse in Theorem \ref{thm:k-symplectic-bundle} is such that the $\bm \omega$ on $P$ depends on $\bm\zeta$. 
In the case of one-contact manifolds $(M,\mathcal{D})$, one can always construct a symplectic cover on a line bundle $P$ constructing $\bm\omega$ on each open $\mathbb{R}_\times\times U_a$ for a locally finite open cover $\{U_a\}$ of $M$ such that $\mathcal{D}|_{U_a}=\ker \bm \eta_a$ and then glue all $\restr{\bm\omega}{U}$ together using the fact that all contact forms associated with $\mathcal{D}$ are conformally proportional in their common domain of definition. Nevertheless, this cannot be done in the $k$-contact case because the $\bm\eta_a\in \Omega^1(U_a,\mathbb{R}^k)$ are not conformally proportional. One could consider another extension to $\mathbb{R}^k_0\times M$, with $\mathbb{R}^k_0=\mathbb{R}^k\backslash{0}$, solving this particular problem, but new ones would appear (e.g. it is not a principal bundle anymore or there will be problems with lifts of other geometric structures from $M$ to $\mathbb{R}^k_0\times M$ as shown posteriourly in this work). Moreover, we prefer in this work to focus on the properties of geometric structures on the manifold where they are defined, and we use extensions or lifts to other manifolds when they seem to be adequate to study particular, although quite interesting, problems, e.g. a $k$-contact problem can be effectively studied through a better known geometric structure. This is the standard approach in differential geometry from the very first works by Riemann and Gauss.

Previous result suggests the following definition. 

\begin{definition} A {\it $k$-symplectic cover} of a $k$-contact manifold $(M,\mathcal{D})$ is a homogeneous $k$-symplectic manifold $(P,{\bm \omega})$ satisfying the conditions of Theorem \ref{thm:k-symplectic-bundle}  whose induced one-forms on $M$ with values in $\mathbb{R}^k$ are associated with $\mathcal{D}$. Given a  $k$-contact manifold $(M,\mathcal{D})$, with $\mathcal{D}=\ker \bm \zeta$, the {\it canonical $k$-symplectic cover} of $(M,{\bm \zeta})$ is the $k$-symplectic manifold $(\mathbb{R}_\times\times M,\bm\omega=\d s\wedge \tau^*\bm\zeta+s\tau^*\d\bm \zeta)$. 
\end{definition}

\begin{theorem}
    Every $k$-contact manifold $(M,\D)$ such that $\mathcal{D}=\ker \bm \zeta$ gives rise to a $k$-symplectic cover given by the line-bundle $L^\times$ spanned by the section $(\zeta^1,\ldots,\zeta^k)$ in $\bigoplus^k\T^*M$ with respect to the canonical $k$-symplectic $\bm\omega_M$ form on $\bigoplus^k\T^*M$ and the natural multiplication by positive scalars in $L^\times$. 
\end{theorem}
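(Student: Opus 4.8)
The plan is to realise the line bundle $L^\times$ explicitly as the total space of the canonical $k$-symplectic cover, and then to read off that it satisfies the hypotheses of the converse part of Theorem \ref{thm:k-symplectic-bundle}. First I would note that, since $\mathcal{D}=\ker\bm\zeta$ has corank $k$, the components $\zeta^1,\ldots,\zeta^k$ are pointwise linearly independent, so the section $\sigma=(\zeta^1,\ldots,\zeta^k)\colon M\to\boplus^k\cT M$ is nowhere vanishing. Consequently $L^\times=\{\,s\cdot\sigma(x): s\in\R_\times,\ x\in M\,\}$ is a (trivial) $\R_\times$-principal bundle $\tau\colon L^\times\to M$, the $\R_\times$-action $\varphi_s$ being the restriction of fibrewise scalar multiplication $m_s$ on $\boplus^k\cT M$; the map $\iota\colon\R_\times\times M\to\boplus^k\cT M$, $\iota(s,x)=s\cdot\sigma(x)$, is an $\R_\times$-equivariant embedding onto $L^\times$ (it satisfies $\iota\circ\varphi_s=m_s\circ\iota$), and its Euler vector field reads $\Delta=s\,\partial_s$.

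Next I would set $\bm\omega=\iota^\ast\bm\omega_M$ and compute it through the tautological property of the Liouville one-form. Writing $\bm\theta_M=\sum_\alpha\pi^{\alpha\ast}\theta_\circ\otimes e_\alpha$ for the $\R^k$-valued tautological form on $\boplus^k\cT M$, with the convention $\bm\omega_M=\d\bm\theta_M$ for the canonical $k$-symplectic form of Example \ref{Ex:kCotan}, the defining property of $\theta_\circ$ gives $\iota^\ast\bm\theta_M=s\,\tau^\ast\bm\zeta$. Hence $\bm\omega=\d(s\,\tau^\ast\bm\zeta)=\d s\wedge\tau^\ast\bm\zeta+s\,\tau^\ast\d\bm\zeta$, which is exactly the form defining the canonical $k$-symplectic cover (the opposite sign convention for $\bm\omega_M$ only changes $\bm\omega$ by an overall sign, which is immaterial for the $k$-symplectic property and for homogeneity). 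One-homogeneity is then immediate: since $m_s^\ast\bm\theta_M=s\bm\theta_M$, one has $m_s^\ast\bm\omega_M=s\bm\omega_M$, so $\varphi_s^\ast\bm\omega=\iota^\ast m_s^\ast\bm\omega_M=s\,\iota^\ast\bm\omega_M=s\bm\omega$, and $\bm\omega$ is a closed, one-homogeneous, $\R^k$-valued two-form.

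The key step, where the $k$-contact hypothesis enters, is to verify $\ker\bm\omega=0$. Take $v=a\,\partial_s+w$ at a point $(s_0,x_0)$ with $w\in\T_{x_0}M$. Using $\inn{\partial_s}\tau^\ast\zeta^\alpha=0$ and $\inn{\partial_s}\tau^\ast\d\zeta^\alpha=0$, one finds $\inn{v}\omega^\alpha=a\,\tau^\ast\zeta^\alpha-\zeta^\alpha(w)\,\d s+s_0\,\tau^\ast(\inn{w}\d\zeta^\alpha)$. Vanishing of the $\d s$-component for all $\alpha$ forces $w\in\ker\bm\zeta_{x_0}=\mathcal{D}_{x_0}$; contracting the remaining (pulled-back) covector with any $u\in\mathcal{D}_{x_0}$ and using $\zeta^\alpha(u)=0$ yields $\d\zeta^\alpha(w,u)=0$ for all $u\in\mathcal{D}_{x_0}$ and all $\alpha$, so $w$ lies in the kernel of $\restr{\d\bm\zeta}{\mathcal{D}\times\mathcal{D}}$. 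Since $\mathcal{D}$ is a $k$-contact distribution it is maximally non-integrable (Corollary following Proposition \ref{Prop:MaxNonkCon}), and by the proposition characterising maximal non-integrability via associated one-forms this restriction is non-degenerate for \emph{every} one-form annihilating $\mathcal{D}$, in particular for our global $\bm\zeta$; hence $w=0$. Then $a\,\tau^\ast\zeta^\alpha=0$ with $\zeta^\alpha\neq0$ forces $a=0$, so $v=0$ and $\bm\omega$ is $k$-symplectic.

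Finally I would collect the remaining hypotheses of Theorem \ref{thm:k-symplectic-bundle}: from $\bm\theta=\iota^\ast\bm\theta_M=s\,\tau^\ast\bm\zeta$ one reads $\ker\bm\theta=(\T\tau)^{-1}(\mathcal{D})$, of corank $k$, with $\ker\bm\theta\cap\ker\T\tau=\langle\Delta\rangle\neq0$; the $k$ commuting projectable Lie symmetries required on each trivialising chart are obtained by lifting the local Reeb vector fields of a $k$-contact form associated with $\mathcal{D}$, whose projections span a supplementary to $\mathcal{D}$ on $U$. As $\bm{\widetilde{\zeta}}=\tau^\ast\bm\zeta$ is basic with $\ker\bm\zeta=\mathcal{D}$, all the hypotheses of the converse of Theorem \ref{thm:k-symplectic-bundle} hold, so $(L^\times,\bm\omega)$ is a $k$-symplectic cover of $(M,\mathcal{D})$, $\R_\times$-equivariantly isomorphic via $\iota$ to the canonical $k$-symplectic cover $(\R_\times\times M,\ \d s\wedge\tau^\ast\bm\zeta+s\,\tau^\ast\d\bm\zeta)$. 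The main obstacle is the non-degeneracy step: one must reduce the vanishing of $\ker\bm\omega$ to non-degeneracy of $\d\bm\zeta$ \emph{on $\mathcal{D}$} rather than on all of $\T M$ (the latter is false, since $\bm\zeta$ need not be a $k$-contact form), and this is precisely what maximal non-integrability of $\mathcal{D}$ supplies; everything else is bookkeeping once the tautological identity $\iota^\ast\bm\theta_M=s\,\tau^\ast\bm\zeta$ is established.
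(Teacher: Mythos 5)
Your proposal is correct and follows essentially the same route as the paper: both identify $L^\times$ with $\R_\times\times M$ via $(s,x)\mapsto s\bm\zeta(x)$, pull back the tautological form to get $\bm\omega=\d s\wedge\tau^\ast\bm\zeta+s\,\tau^\ast\d\bm\zeta$, and obtain one-homogeneity from equivariance of the embedding under the $\R_\times$-action. The one place you go beyond the paper is the explicit verification that $\ker\bm\omega=0$ via maximal non-integrability of $\mathcal{D}$ (reducing to non-degeneracy of $\d\bm\zeta$ on $\mathcal{D}$ rather than on $\T M$); the paper delegates exactly this point to the converse part of Theorem \ref{thm:k-symplectic-bundle}, whose proof it declares ``straightforward,'' so your computation is a welcome filling-in of that gap rather than a deviation.
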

\begin{proof}
    Consider the canonical $k$-symplectic form on $\boplus^k\cT M$, which admits the canonical $k$-symplectic Liouville one-form $\bm\theta_M$. Since $\bm\zeta$ never vanishes, one has  the diffeomorphism $\phi:(s,x)\in \mathbb{R}_\times \times M\mapsto s\bm \zeta(x) \in L^\times$. Let $\jmath:L^\times \hookrightarrow \bigoplus^k\cT M$ be the canonical embedding. Then,  one obtains
$$
    \phi^*(\jmath^*\bm\omega_M) = \phi^*\jmath^*(\d\bm\theta_M) = \d(\phi^* \jmath^*\bm\theta_M) = \d(s\bm\zeta) = \d s\wedge \bm \zeta + s\d\bm \zeta\,\in \Omega^2(\mathbb{R}_\times \times M,\mathbb{R}^k).
$$
Since $(\mathbb{R}_\times\times M,\phi^*\jmath^*\bm\omega_M)$ is a $k$-symplectic manifold by the converse part of Theorem \ref{thm:k-symplectic-bundle}, so is $L^\times$, which has dimension $\dim M+1$, relative to $\jmath^*{\bm\omega}_M$. Note that $(L^\times,\jmath^*\bm\omega_M)$ is homogeneous relative to the restriction to $L^\times$ of the flow of the Euler vector field of $\bigoplus^k\cT M$, let us say  $\Phi:\mathbb{R} \times \bigoplus^k\cT M\rightarrow \bigoplus^k\cT M$, which is denoted in the same manner. In fact, $L^\times$ is invariant relative to the $\Phi_s$, with $s\in \mathbb{R}$, which act freely and transitively on $L^\times$. Note that $\jmath$ is equivariant relative to the Lie group action of $\mathbb{R}_\times$, namely $\{\Phi_{e^{s}}\}_{s\in \mathbb{R}}$, on $\jmath(L^\times)$ and $L^\times$, therefore $\jmath^*\bm\omega_M$ is such that 
$$
    \Phi_s^*\jmath^*\bm\omega_M = \jmath^*\Phi_s^*\bm\omega_M = \jmath^*(e^s\bm\omega_M) = e^s\jmath^*\bm\omega_M\,,
$$
which finishes our proof.

\end{proof}

\subsection{Submanifolds of \texorpdfstring{$k$}{}-contact manifolds}
\label{Sec:kConSub}

Let us extend the isotropic and Legendrian submanifolds in contact geometry to $k$-contact manifolds. This is accomplished by relating $k$-contact manifolds to $k$-symplectic covers and their submanifolds (see \cite{LV_13} for an analysis of different types of submanifolds in $k$-symplectic manifolds).

\begin{definition}\label{Def:IsoLeg}
    Consider a $k$-contact manifold $(M,\mathcal{D})$ and let $\restr{\mathcal{D}}{U} = \ker \bm \zeta$ for some $\bm \zeta\in \Omega^1(U,\mathbb{R}^k)$ on an open subset $U\subset M$. Given a subspace $F_x\subset \mathcal{D}_x$, with $x\in U\subset M$, its $k$-\textit{contact orthogonal} is defined as
    $$ F_x^{\perp_{\mathcal{D}}} = \{ v\in \mathcal{D}_x \mid \d\bm\zeta(v,w)= 0\ \text{ for every }\ w\in F_x\}\,. 
    $$
    
    Then, we say that $F_x$ is
    \begin{enumerate}[(i)]
        \item \textit{isotropic} if $F_x\subset F_x^{\perp_{\mathcal{D}}}$,        
        \item \textit{Legendrian} if $F_x$ is isotropic and it admits a supplementary $W_x$ such that $\restr{{\d\bm \zeta}_x}{W_x\times W_x} = 0$.
    \end{enumerate}
    A submanifold $N\subset M$ is isotropic (resp. Legendrian) if $\T_xN$ is isotropic (resp. Legendrian) for every $x\in N$.
\qeddiamond\end{definition}
Note that 
$$
\d\bm\zeta_x(v_x,w_x)=\d\bm\zeta_x(X_x,Y_x)=X_x\inn{Y}\bm\zeta-Y_x\iota_X\bm\zeta-\bm\zeta_x([X,Y]_x)=-\bm\zeta_x([X,Y]_x)\,,\qquad \forall v_x,w_x\in \mathcal{D}_x\,,
$$
where $X,Y$ are vector fields taking values in $\ker \bm\zeta$ such that $X_x=v_x,Y_x=w_x$. Hence, if $\d\bm\zeta_x(v_x,w_x)=0$, i.e. $v_x,w_x$ are {\it $k$-contact orthogonal} relative to $\bm \zeta$, this implies that for all possible vector fields $X,Y$ under given conditions,  their Lie brackets belong to $\mathcal{D}_x$ and vice versa. Hence, $v_x,w_x$ will be also $k$-contact orthogonal for any other $\bm\zeta'\in\Omega^1(U',\mathbb{R}^k)$ such that $\ker \bm \zeta'=\mathcal{D}|_{U'}$ on an open neighbourhood $U'$ of $x$ and the notion is geometric. Indeed,  Definition \ref{Def:IsoLeg} is independent of the associated $\bm\zeta$ because the $k$-orthogonality could be defined in terms of $\rho:\mathcal{D}\times_M \mathcal{D}\rightarrow \T M/\mathcal{D}$. On the other hand, it follows that $F_x$ is isotropic if, and only if, it is contained in $\mathcal{D}$ and  the Lie brackets of vector fields in $\mathcal{D}$ whose values at $x$ are in $F_x$ have Lie brackets whose values are contained in $\mathcal{D}_x$.

\begin{theorem}\label{Th:RelkSymSub}
    Let $(P = \R_\times\times M,\tau,M,\varphi,\bm\omega)$ be the $k$-symplectic cover of a co-oriented $k$-contact manifold $(M,\bm\eta)$. Consider the submanifold $\tau^{-1}(N)=\R_\times\times N\subset P$. Then,
    \begin{enumerate}[(i)]
        \item $N$ is an isotropic submanifold of $M$ if, and only if, $\tau^{-1}(N)$ is an isotropic submanifold of $P$.
        \item $N$ is a Legendrian submanifold of $M$ if, and only if, $\tau^{-1}(N)$ is a Lagrangian submanifold of $P$.
    \end{enumerate}
\end{theorem}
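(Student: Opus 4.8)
The plan is to run everything through the $k$-symplectic Liouville one-form of the cover, $\bm\theta=\iota_\Delta\bm\omega=s\,\tau^*\bm\eta$, which satisfies $\bm\omega=\d\bm\theta$ by Theorem \ref{thm:k-symplectic-bundle}, and then to pass to pointwise linear algebra on $\T_{(s,x)}P=\langle\partial_s\rangle\oplus\T_xM$ using the canonical form $\bm\omega=\d s\wedge\tau^*\bm\eta+s\,\tau^*\d\bm\eta$ of the cover. Writing $\jmath\colon\tau^{-1}(N)\hookrightarrow P$, $\jmath_N\colon N\hookrightarrow M$ and $\pi_N\colon\tau^{-1}(N)=\R_\times\times N\to N$, a direct pullback gives $\jmath^*\bm\theta=s\,\pi_N^*(\jmath_N^*\bm\eta)$, hence $\jmath^*\bm\omega=\d\jmath^*\bm\theta=\d s\wedge\pi_N^*\jmath_N^*\bm\eta+s\,\pi_N^*\jmath_N^*\d\bm\eta$. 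Since the two summands differ in $\d s$-degree and $s\neq0$, the vanishing of $\jmath^*\bm\omega$ is equivalent to $\jmath_N^*\bm\eta=0$ together with $\jmath_N^*\d\bm\eta=0$. But $\jmath_N^*\bm\eta=0$ means precisely $\T N\subset\ker\bm\eta=\mathcal{D}$, and then $\jmath_N^*\d\bm\eta=\d\jmath_N^*\bm\eta=0$ automatically; conversely $N$ isotropic forces $\T N\subset\mathcal{D}$ and $\d\bm\eta|_{\T N}=0$. This proves (i): $N$ is isotropic in $M$ if and only if $\tau^{-1}(N)$ is isotropic in $P$ (and incidentally that, for submanifolds, isotropic is the same as being tangent to $\mathcal{D}$).

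For (ii) I would first record the pointwise structure at $p=(s,x)$. The identity $\bm\omega(a\partial_s+\xi,\,b\partial_s+\xi')=a\,\bm\eta(\xi')-b\,\bm\eta(\xi)+s\,\d\bm\eta(\xi,\xi')$, valid for $\xi,\xi'\in\T_xM$, shows that $\T_pP$ splits $\bm\omega$-orthogonally as $\mathcal{D}_x\perp H$, where $H=\langle\partial_s,R_1,\dots,R_k\rangle$, with $\bm\omega|_{\mathcal{D}_x}=s\,\d\bm\eta|_{\mathcal{D}_x}$ non-degenerate by Proposition \ref{Prop:MaxNonkCon}, and with the perfect pairing $\bm\omega(\partial_s,R_\alpha)=e_\alpha$ between $\partial_s$ and the Reeb directions. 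Under this splitting the tangent space of $\tau^{-1}(N)$ is $V:=\langle\partial_s\rangle\oplus\T_xN$ with $\T_xN\subset\mathcal{D}_x$. The forward implication is then immediate: if $N$ is Legendrian, choose an isotropic complement $W_x$ of $\T_xN$ inside $\mathcal{D}_x$; then $\widetilde W:=W_x\oplus\langle R_1,\dots,R_k\rangle$ is isotropic (the Reeb directions lie in $\ker\d\bm\eta$ and are $\bm\omega$-orthogonal to $\mathcal{D}_x$) and satisfies $V\oplus\widetilde W=\T_pP$, so $\tau^{-1}(N)$ is Lagrangian.

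The converse of (ii) is the main obstacle: from a Lagrangian $\tau^{-1}(N)$ one must manufacture an isotropic complement of $\T_xN$ inside $\mathcal{D}_x$. Given an isotropic complement $\widetilde W$ of $V$ in $\T_pP$, the naive projection $\pi_{\mathcal{D}_x}(\widetilde W)$ is isotropic only when the $H$-parts of $\widetilde W$ are themselves isotropic in $H$, which is not automatic. The plan is therefore to replace $\widetilde W$ by an \emph{adapted} isotropic complement containing the Reeb plane $\langle R_1,\dots,R_k\rangle$: since $\langle R_\alpha\rangle$ is isotropic, meets $V$ trivially, and pairs perfectly with $\partial_s\in V$ via $\bm\omega(\partial_s,R_\alpha)=e_\alpha$, one can use the non-degeneracy of $\bm\omega$ on $\mathcal{D}_x$ together with the orthogonal splitting to adjust $\widetilde W$ so that $\langle R_\alpha\rangle\subset\widetilde W$ while preserving isotropy and complementarity. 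For such an adapted complement $\pi_{\mathcal{D}_x}(\widetilde W)$ is isotropic, and a dimension count ($\dim\widetilde W=m-\dim N$, of which the $k$ Reeb directions project to zero) yields an isotropic $W_x\subset\mathcal{D}_x$ with $\mathcal{D}_x=\T_xN\oplus W_x$, i.e. $N$ is Legendrian.

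I expect the delicate point to be exactly this adjustment step, where the absence of a fixed dimension for Lagrangian subspaces in the $k$-symplectic setting must be controlled; it is here that one genuinely exploits both the homogeneity of the cover (which produces the distinguished pairing of $\partial_s$ with the Reeb directions) and the non-degeneracy of $\bm\omega$ restricted to $\mathcal{D}_x$. Should the direct adjustment prove awkward, an alternative is to argue by contraposition, showing that if $\T_xN$ admits no isotropic complement in $\mathcal{D}_x$ then no isotropic complement of $V$ can exist, again reducing to the linear algebra of the orthogonal decomposition $\mathcal{D}_x\perp H$.
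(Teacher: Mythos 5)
Your part (i) and the forward implication of (ii) are correct and follow essentially the same route as the paper: pull back $\bm\omega=\d s\wedge\tau^*\bm\eta+s\,\tau^*\d\bm\eta$ to $\R_\times\times N$ and separate the $\d s$-component, respectively enlarge an isotropic complement $W_x\subset\mathcal{D}_x$ by the Reeb directions. Your remark that for submanifolds isotropy amounts to tangency to $\mathcal{D}$ (since $\jmath_N^*\d\bm\eta=\d\,\jmath_N^*\bm\eta$) is a correct and useful simplification, and your two readings of ``Legendrian'' (complement inside $\mathcal{D}_x$ versus complement inside $\T_xM$ killed by $\d\bm\eta$, the one the paper actually uses) are equivalent --- intersect with $\mathcal{D}_x$ in one direction, add the Reeb directions in the other --- but this should be said explicitly.

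The genuine gap is the converse of (ii): the ``adjustment'' of an isotropic complement $\widetilde W$ so that it contains the Reeb plane is precisely the step that needs an argument, and you never supply the mechanism. You cannot appeal to generic symplectic linear algebra here, because in the $k$-symplectic setting isotropic complements have no fixed dimension and the usual ``move a Lagrangian to contain a given isotropic'' lemmas fail; moreover, producing an adapted complement containing $\langle R_1,\dots,R_k\rangle$ essentially presupposes that one has already found a $\d\bm\eta$-isotropic complement of $\T_xN$ in $\mathcal{D}_x$, which is what one is trying to prove. The adjustment is in fact unnecessary. Let $W$ be any isotropic complement of $V=\langle\partial_s\rangle\oplus\T_xN$ and set $W_1=W\cap\ker(\tau^*\bm\eta)_p=W\cap\left(\langle\partial_s\rangle\oplus\mathcal{D}_x\right)$. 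Since $V\subset\ker(\tau^*\bm\eta)_p$, which has codimension $k$, and $V\cap W=0$, a dimension count gives $\ker(\tau^*\bm\eta)_p=V\oplus W_1$ with $\dim W_1=\dim W-k$. For $w,w'\in W_1$ the terms $a\,\bm\eta(\xi')-b\,\bm\eta(\xi)$ in your pointwise formula vanish, so isotropy of $W$ forces $\d\bm\eta(\tau_*w,\tau_*w')=0$; furthermore $\tau_*|_{W_1}$ is injective (its kernel lies in $W\cap V$), $\tau_*W_1\subset\mathcal{D}_x$, and $\tau_*W_1\cap\T_xN=0$ (a preimage of a vector of $\T_xN$ would lie in $V\cap W$). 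Counting dimensions, $\tau_*W_1$ is a $\d\bm\eta$-isotropic complement of $\T_xN$ in $\mathcal{D}_x$, so $N$ is Legendrian. With this replacement your argument closes; as written, the converse of (ii) is only a plan, not a proof.
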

\begin{proof} Assume that $N$ is isotropic. Then $\tau^{-1}(N)\simeq \mathbb{R}_\times \times N$ satisfies that, given the embedding $\jmath:\tau^{-1}(N)\rightarrow \mathbb{R}_\times\times P$, one has that 
$$
    \jmath^*\bm\omega =\jmath^*(\d s\wedge \tau^*\bm \eta+s\tau^*\d \bm \eta)=s\jmath^*\tau^*\d {\bm \eta}=0\,,
$$
and the lift, $\tau^{-1}(N)$, is isotropic relative to the $k$-symplectic cover. 

Let us prove the converse. If $\tau^{-1}(N)$ is isotropic relative to the restriction of $\bm\omega$ to $\T (\tau^{-1}(N))$, then $\restr{\bm\omega}{\T\tau^{-1}(N)\times_N\T\tau^{-1}(N)}=0$. Since $\partial/\partial s$ is tangent to $\tau^{-1}(N)$ and $\bm \omega = \d s\wedge\tau^*\bm \eta + s\tau^*\d\bm\eta$, then $\T\tau^{-1}(N)\subset \T\mathbb{R}_\times \times \restr{\mathcal{D}}{N}$. Analysing again the restriction of $\bm \omega$ to $\T\mathbb{R}_\times \times \restr{\mathcal{D}}{N}$, we see that $\restr{\d\bm\eta}{\mathcal{D}\times_N \mathcal{D}}=0$, which implies that $\T N$ is isotropic relative to $\bm \eta$.

Assume that $N$ is Legendrian. Then, $N$ is an isotropic submanifold and $\tau^{-1}(N)$ is isotropic relative to $\d(s\tau^{*}\bm\eta)$. Moreover, there exists for every $x\in N$ a subspace $S_x\subset \T_xM$ such that $\T_xN\oplus S_x=\T_xM$ and $\d\bm \eta|_{S_x\times S_x}=0$. Then, $S_x$ can be considered as a subspace $S_{(s,x)}$ in $\T_{(s,x)}(\mathbb{R}_\times\times M)$ in the natural way, namely $\T \tau(S_{(s,x)})=0$. Hence, $\restr{\bm \omega}{S_{(s,x)}\times S_{(s,x)}} = s\restr{\d\bm\eta}{S_{(s,x)}\times S_{(s,x)}}$. Hence, $S_{(s,x)}$ is isotropic relative to $\bm \omega$ and $\tau^{-1}(N)$ is a Lagrangian submanifold of $(P,\bm\omega)$. The converse follows after noting that the supplementary $W_{(s,x)}$ to $\T_s\mathbb{R}_\times\oplus \T_xN$  has to have at least rank $k$ and belong to $\ker \d s$, as otherwise, $W_{(s,x)}\subset \T_s\mathbb{R}_\times\oplus \ker \bm \eta$ and it will not give rise to a supplementary to $\ker \tau^*\bm \eta$, which is a contradiction. Then, the character of $\d(s\tau^*\bm\eta)$ on $W_{(s,x)}$ is the same as $\tau_*W_{(s,x)}$ relative to $\d\bm \eta$, namely isotropic.

\end{proof}

Since every $k$-contact manifold $(M,\mathcal{D})$ admits a locally associates $k$-contact form, it is immediate that Theorem \ref{Th:RelkSymSub} can also be applied locally to every $k$-contact manifold.

\section{Dynamics on co-oriented \texorpdfstring{$k$}{}-contact manifolds}
\label{Sec:Dynamics}

This section studies Hamiltonian $k$-contact $k$-vector fields and Hamiltonian $k$-contact vector fields. $k$-Contact $k$-vector fields appear in the theory of partial differential equations \cite{GGMRR_20}. This section explains their relation to the $k$-symplectic cover of a co-oriented $k$-contact manifold. We also study Hamilton--De Donder--Weyl equations on $k$-contact Lie groups. Next, we introduce a new notion: the $k$-contact Hamiltonian vector field. It is shown that a co-oriented $k$-contact manifold gives rise to a presymplectic manifold of larger dimension, which is called a presymplectic cover. We prove that a $k$-contact Hamiltonian vector field gives rise to a Hamiltonian presymplectic vector field of its presymplectic cover projecting onto it. We apply our theory  to the study of Lie symmetries of first-order PDEs and their characteristics. Several applications of our techniques are developed.


\begin{definition}
    Let $(P,\tau,M,\varphi,\bm\omega)$ be a  $\mathbb{R}_\times$-principal symplectic manifold. Then, a \textit{$k$-symplectic homogeneous Hamiltonian} is a $k$-contact Hamiltonian function $H:P\to\R$ of a $k$-contact Hamiltonian $k$-vector field that is homogeneous, namely  $H\circ \varphi_s = sH$ for every $s \in \R_\times$.
\qeddiamond\end{definition}

\begin{theorem}\label{thm:k-vector-field-projectable}
Let $(P,\tau,M,\varphi,\bm \omega)$ be the $k$-symplectic cover of a co-oriented $k$-contact manifold $(M,\bm\eta)$ and let $\bm X$ be a $k$-contact Hamiltonian $k$-vector field. If
\begin{equation}
\label{thm:conditions}
(\mathcal{D}_\alpha)_x=\left(\bigcap_{1\leq \beta\neq \alpha\leq k}\ker \d\eta_x^\beta\right)\nsubseteq \ker \d\eta_x^\alpha,\qquad \alpha=1,\ldots,k,\qquad \forall x\in M,
\end{equation}
there exists a unique Hamiltonian $k$-vector field $\bfX_{\bm \omega}$ on $P$ related to the $k$-symplectic form $\bm \omega$ that projects onto ${\bm X}$ via  $\tau$. Moreover, every locally defined Hamiltonian function $h$ is homogeneous.
\end{theorem}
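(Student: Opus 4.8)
The plan is to exhibit the lift explicitly, read off its Hamiltonian, check homogeneity, and then use the non-degeneracy encoded in \eqref{thm:conditions} to rule out any competing lift. I work on the canonical cover $P=\R_\times\times M$ with $\bm\omega=\d s\wedge\tau^*\bm\eta+s\,\tau^*\d\bm\eta$ and Euler field $\Delta=s\,\partial_s$; the general case follows from the same computation in a local trivialisation together with the gluing already used for $\bm\zeta$ in Theorem \ref{thm:k-symplectic-bundle}.

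First I would write any $k$-vector field projecting onto $\bm X=(X_1,\dots,X_k)$ as $Y_\alpha=\widetilde{X}_\alpha+g_\alpha\,\partial_s$, where $\widetilde{X}_\alpha$ is the lift of $X_\alpha$ with $s$-independent coefficients and $g_\alpha\in\Cinfty(P)$. Using $\iota_{\partial_s}\tau^*\eta^\alpha=\iota_{\partial_s}\tau^*\d\eta^\alpha=0$ and $\iota_{\widetilde{X}_\alpha}\d s=0$, a direct contraction gives $\iota_{Y_\alpha}\omega^\alpha=-\tau^*(\iota_{X_\alpha}\eta^\alpha)\,\d s+g_\alpha\,\tau^*\eta^\alpha+s\,\tau^*(\iota_{X_\alpha}\d\eta^\alpha)$. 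Summing over $\alpha$ and inserting the $k$-contact Hamilton equations \eqref{eq:contact-Ham-equations-trivial-1}, namely $\sum_\alpha\iota_{X_\alpha}\eta^\alpha=-h$ and $\sum_\alpha\iota_{X_\alpha}\d\eta^\alpha=\d h-\sum_\alpha(R_\alpha h)\eta^\alpha$ with $h=-\sum_\beta\iota_{X_\beta}\eta^\beta$, yields $\sum_\alpha\iota_{Y_\alpha}\omega^\alpha=\d(s\,\tau^*h)+\sum_\alpha\big(g_\alpha-s\,\tau^*(R_\alpha h)\big)\tau^*\eta^\alpha$. Choosing $g_\alpha=s\,\tau^*(R_\alpha h)$ kills the last sum, so $\bfX_{\bm\omega}:=(Y_1,\dots,Y_k)$ is $k$-symplectic Hamiltonian with Hamiltonian $H=s\,\tau^*h$ and clearly projects onto $\bm X$. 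Since $\Delta H=s\,\partial_s(s\,\tau^*h)=s\,\tau^*h=H$, the function $H$ is homogeneous of degree one; as $\bfX_{\bm\omega}$ fixes $\d H$ only up to a constant and $\Delta(H+c)=H+c$ fails unless $c=0$, the homogeneous Hamiltonian is unique and equal to $s\,\tau^*h$, which settles the last assertion.

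For uniqueness I would take a second lift $\bm Y'$ with the same properties; the difference is vertical, $Y_\alpha-Y_\alpha'=f_\alpha\,\partial_s$, and since $\iota_{\partial_s}\omega^\alpha=\tau^*\eta^\alpha$ the Hamiltonian equations give $\sum_\alpha f_\alpha\,\tau^*\eta^\alpha=\d G$, with $G$ the difference of the two Hamiltonians. Contracting with $\partial_s$ shows $G$ is basic, $G=\tau^*\phi$; then $\{\eta^\alpha_x\}$ being a coframe of $(\ker\bm\eta)^\circ$ forces each $f_\alpha=\tau^*v_\alpha$ to be basic with $\sum_\alpha v_\alpha\eta^\alpha=\d\phi$ on $M$, and contracting with the Reeb fields gives $v_\alpha=R_\alpha\phi$. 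Differentiating $\d\phi=\sum_\alpha(R_\alpha\phi)\eta^\alpha$ and restricting the resulting two-form to $\ker\bm\eta$ (where every $\eta^\alpha$ vanishes) leaves $\sum_\alpha(R_\alpha\phi)\,\d\eta^\alpha\big|_{\ker\bm\eta\times\ker\bm\eta}=0$. Now fix $\gamma$: hypothesis \eqref{thm:conditions} provides $w\in\mathcal{D}_\gamma=\bigcap_{\beta\neq\gamma}\ker\d\eta^\beta$ with $\iota_w\d\eta^\gamma\neq0$; subtracting its Reeb component (the Reeb fields lie in every $\mathcal{D}_\gamma$ and annihilate every $\d\eta^\beta$) I obtain $w_\gamma\in\mathcal{D}_\gamma\cap\ker\bm\eta$ with $\iota_{w_\gamma}\d\eta^\gamma\neq0$, and since this one-form kills all Reeb fields it is nonzero on $\ker\bm\eta$, so $\d\eta^\gamma(w_\gamma,Y)\neq0$ for some $Y\in\ker\bm\eta$. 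Evaluating the restricted identity on $(w_\gamma,Y)$ retains only the $\gamma$-term (because $w_\gamma\in\ker\d\eta^\beta$ for $\beta\neq\gamma$), whence $(R_\gamma\phi)\,\d\eta^\gamma(w_\gamma,Y)=0$ and $R_\gamma\phi=0$. Ranging over $\gamma$ and over all points gives $v_\alpha\equiv0$, so $f_\alpha\equiv0$ and $\bm Y=\bm Y'$.

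The main obstacle is exactly this uniqueness step and the precise role of \eqref{thm:conditions}: maximal non-integrability (Proposition \ref{Prop:MaxNonkCon}) says only that the full family $\{\d\eta^\alpha\}$ is jointly non-degenerate on $\ker\bm\eta$, which does not prevent a single combination $\sum_\alpha c_\alpha\,\d\eta^\alpha$ from degenerating there; what \eqref{thm:conditions} supplies is a direction $w_\gamma$ inside $\ker\bm\eta$ that is paired nontrivially by $\d\eta^\gamma$ yet annihilated by every other $\d\eta^\beta$, and this is precisely what isolates the single coefficient $R_\gamma\phi$. The delicate point is the reduction of $w$ into $\ker\bm\eta$: it uses the splitting $\T M=\ker\bm\eta\oplus\ker\d\bm\eta$ together with $\iota_{R_\delta}\d\eta^\alpha=0$, so that deleting Reeb components alters neither membership in $\mathcal{D}_\gamma$ nor the value of $\iota_w\d\eta^\gamma$.
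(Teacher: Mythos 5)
Your proof is correct and follows essentially the same route as the paper: the same ansatz $Y_\alpha=\widetilde X_\alpha+g_\alpha\partial_s$, the same identification $g_\alpha=s\,\tau^*(R_\alpha h)$ forced by the $k$-contact Hamilton equations, and the same use of condition \eqref{thm:conditions} to pick a vector in $\mathcal{D}_\gamma\setminus\ker\d\eta^\gamma$ that isolates the coefficient of $\d\eta^\gamma$. Your split into an explicit existence step followed by a difference argument for uniqueness, with the careful reduction of $w$ into $\ker\bm\eta$, is a slightly cleaner organisation of the paper's single computation but not a different method.
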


\begin{proof}
We will hereafter write  $\widehat{\phantom{m}}$ over the mathematical symbol of a structure on $M$ to represent its natural lift to $P$. Let $\bfX$ be a $k$-contact Hamiltonian  $k$-vector field. 
Let us find the conditions determining the Hamiltonian $k$-vector field ${\bf  {X}}_{\bm \omega}$ relative to the $k$-symplectic form ${\bm \omega}$. 

   A $k$-vector field ${\bf\widetilde{X}}$ on $P$ projects onto $\bm X$ on $M$ if, and only if,
    $$
        \widetilde{X}_\alpha = P_\alpha\frac{\partial}{\partial s} + X_\alpha, \qquad \alpha=1,\ldots,k\,,
    $$
    for some functions $P_\alpha\in \Cinfty(P)$ with $\alpha=1,\ldots,k$. Since $\bm \omega = \d(s\cdot \widehat{\bm{\eta}} ) = \d s\wedge\widehat{\bm{\eta}} + s\d\widehat{\bm{\eta}} \,, $ the $k$-vector field ${\bf\widetilde{X}}$ is Hamiltonian relative to $\bm \omega$ if, and only if,
    \begin{equation}\label{eq:NewHam}
        \sum_{\alpha=1}^k\inn{\widetilde{X}^\alpha}\omega^\alpha = \sum_{\alpha=1}^k(P_\alpha\widehat{{\eta}}^\alpha + s\inn{X^\alpha}\d\widehat{\eta}^\alpha )+\widehat{h}\d s = \sum_{\alpha=1}^k (P_\alpha \widehat{\eta}^\alpha + s(- \widehat{(R_\alpha h)}\widehat{\eta}^\alpha) )+ s\d \widehat{h}+ \widehat{h}\d s = \d f,
    \end{equation}
    for some $f\in \Cinfty(P)$ and $\widetilde{\bm X}$ is locally Hamiltonian if
    $$    \sum_{\alpha=1}^k\d\inn{\widetilde{X}^\alpha}\omega^\alpha = \sum_{\alpha=1}^k((\d P_\alpha)\wedge \widehat{\eta}^\alpha + P_\alpha \d\widehat{\eta}^\alpha - \d(s\widehat{R_\alpha h})\wedge  \widehat{\eta}^\alpha - s\widehat{R_\alpha h}\d\widehat{\eta}^\alpha) + \d^2(s\widehat{h}) = 0\,.
    $$
    In other words,
    \begin{equation}\label{eq:Exp1}
    \sum_{\alpha=1}^k[\d(P_\alpha-s\widehat{R_\alpha h})\wedge  \widehat{\eta}^\alpha + (P_\alpha - s\widehat{R_\alpha h}) \d \widehat{\eta}^\alpha] = 0\,.
    \end{equation}
    Contracting with a Reeb vector field $\widehat{R}_{\beta}$, one obtains
    \begin{equation}\label{eq:Reeb_contraction_beta2}
         \sum_{\alpha=1}^k\widehat{R}_\beta(P_\alpha - s\widehat{R_\alpha h}) \widehat{\eta}^\alpha - \d(P_\beta - s\widehat{R_\beta h}) = 0\,, \qquad \beta= 1,\ldots,k\,,
    \end{equation}
    and the successive contraction with $\widehat{R}_\mu$ gives
    \begin{equation} \label{eq:Reeb_contraction_mu}
        \widehat{R}_\beta(P_\mu - s\widehat{R_\mu h})- \widehat{R}_\mu(P_\beta - s\widehat{R_\beta h}) = \widehat{R}_\beta P_\mu- \widehat{ R}_\mu P_\beta = 0\,,\,\qquad \beta,\mu=1,\ldots,k.
    \end{equation}
    Then, using \eqref{eq:Reeb_contraction_beta2} and \eqref{eq:Reeb_contraction_mu} in \eqref{eq:Exp1}, one gets 
    $$
        \sum_{\alpha,\mu=1}^k \widehat{R}_\alpha(P_\mu - s\widehat{R_\mu h}) \widehat{\eta}^\mu\wedge  \widehat{\eta}^\alpha + \sum_{\alpha=1}^k(P_\alpha - s\widehat{R_\alpha h}) \d \widehat{\eta}^\alpha = \sum_{\alpha=1}^{k} (P_\alpha - s\widehat{R_\alpha h}) \d \widehat{\eta}^\alpha = 0 \,.
    $$
    Thus,
    $$
       \sum_{\alpha=1}^k (P_\alpha - s\widehat{R_\alpha h})\d {\eta}^\alpha = 0\,.    $$
   If the distributions $\mathcal{D}_\alpha=\left(\bigcap_{\beta\neq \alpha} \ker \d\eta^\beta\right)\nsubseteq \ker \d\eta^\alpha$  for $\alpha=1,\ldots,k$, (which happens for polarised $k$-contact manifolds as follows from Proposition \ref{Th:DarkMan}), then taking a vector field $X_\mu$ taking values in  $\mathcal{D}_\alpha\backslash\ker \d\eta^\alpha\neq 0$, one gets
    $$
        (P_\alpha - sR_\alpha h)\inn{X_\mu}\d {\eta}^\alpha = 0\quad\Longrightarrow\quad P_\mu - s\widehat{R_\mu h}  = 0\,,\qquad \mu=1,\ldots,k\,.
    $$
    Using the value of $P_\alpha$ in \eqref{eq:NewHam}, it follows that $f=\widehat{h}s$ is a homogeneous function for the $\bm\omega$-Hamiltonian $k$-vector field $\widetilde{\bm X}={\bm X}_{\bm\omega}$. Moreover, $\widetilde{\bm X}$ is unique.
    
\end{proof}
Note that if the conditions \eqref{thm:conditions} are skipped, the $k$-symplectic Hamiltonian extension exists by fixing $P_\alpha=s\widehat{R_\alpha h}$, but it does not need to be unique.

\subsection{Hamilton--De Donder--Weyl equations on \texorpdfstring{$k$}{}-contact Lie groups}

In most works in the literature, one is concerned with Hamilton--De Donder--Weyl  equations on polarised manifolds. Nevertheless, one can consider the not polarised case, which is more general and only briefly studied \cite{GGMRR_20}. In particular, we here study Hamilton--De Donder--Weyl equations on the hereafter called $k$-contact Lie groups. But our  theory, in any case, can be straightforwardly extended to many other cases.

Consider a Lie group $G$ and a basis of left-invariant vector fields $X^L_1,\ldots,X^L_r$ on it. Let $\eta^1_L,\ldots,\eta^r_L$ be the associated basis of left-invariant differential one-forms on $G$. Assume that one can consider a case where $\bm \eta_L=\sum_{\alpha=1}^k\eta^\alpha_L\otimes e_\alpha$ is a $k$-contact form. Examples \ref{Ex:SU3} and \ref{Ex:U2} are particular instances of this. In general, a Lie group with a left-invariant $k$-contact form is hereafter called a \textit{$k$-contact Lie group}. In this case, $\ker \bm \eta_L$ is spanned by a family of $r-k$ left-invariant vector fields on $G$.  Reeb vector fields are univocally determined by a family of equations whose solutions are invariant relative to left-invariant translations. Hence, Reeb vector fields are left-invariant vector fields. Since the Reeb vector fields are dual to $\eta_L^1,\ldots,\eta_L^k$, one can always redefine $\eta_L^{k+1},\ldots,\eta_L^r$ so as to ensure that the Reeb vector fields are $X^L_1,\ldots,X^L_k$ for a basis $X^L_1,\ldots,X^L_r$ dual to the new family $\eta^1_L,\ldots,\eta^r_L$. This happens in Examples \ref{Ex:SU3} and \ref{Ex:U2}, although this is not necessarily the case for a general basis of left-invariant one-forms. We consider anyway this case hereafter. Every $k$-vector field on $G$ can then be written as
$$
X_\alpha=\sum_{\beta=1}^rf_\alpha^\beta X_\beta^L\,,\qquad \alpha=1,\ldots,k\,,
$$
for certain functions $f_\alpha^\beta\in \Cinfty(G)$, with $\alpha=1,\ldots,k,\beta=1,\ldots,r$. Then, the Hamilton--De Donder--Weyl equations read
$$
    \sum_{\alpha=1}^k\inn{X_\alpha}\d\eta^\alpha_L = \d h-\sum_{\alpha=1}^k(R_\alpha h)\eta^\alpha_L\,,\qquad \sum_{\alpha=1}^k\iota_{X_\alpha}\eta^\alpha_L=-h\,.
$$
Since $\d h=\sum_{\alpha=1}^r (X^L_\alpha  h)\eta^\alpha_L$, one has that
$$
    \sum_{\alpha=1}^k\inn{X_\alpha}\d\eta_L^\alpha = \sum_{\alpha=1}^r(X_\alpha^Lh)\eta^\alpha_L-\sum_{\alpha=1}^k(R_\alpha h)\eta_L^\alpha = \sum_{\alpha=k+1}^r(X_\alpha^Lh)\eta_L^\alpha\,.
$$
On the other hand, $\d\eta_L^\alpha=-\frac12 \sum_{\beta,\gamma=1}^rc^\alpha_{\beta\gamma}\eta_L^\beta\wedge\eta_L^\gamma$ with $\alpha=1,\ldots,k$ for some structure constants $c^\alpha_{\beta\gamma}$ with $ \beta,\gamma=1,\ldots,r$ and $\alpha=1,\ldots,k$.
In consequence, 
$$
    -\sum_{\gamma=1}^r\sum_{\alpha=1}^k\sum_{\beta=1}^rc^\alpha_{\beta\gamma}f_\alpha^\beta\eta_L^\gamma = \sum_{\gamma=k+1}^r (X_\gamma^Lh)\eta_L^\gamma\,.
    $$
In particular, it is worth noting that one has the algebraic equations 
\begin{equation} \label{eq:alg_eq}\sum_{\alpha=1}^k\sum_{\beta=1}^rc^\alpha_{\beta\gamma}f^\beta_\alpha=0\,,\qquad \gamma=1,\ldots,k\,.
\end{equation}
Moreover, 
$$
    \sum_{\alpha=1}^k\inn{X_\alpha}\eta_L^\alpha =  \sum_{\alpha=1}^kf_\alpha^\alpha\,,
$$
and hence we obtain the linear system of partial differential equations
\begin{equation}\label{Eq:CondAlgeHDW}
\sum_{\alpha=1}^k\sum_{\beta=1}^rc^\alpha_{\beta\gamma}f^\beta_\alpha=X_\gamma^L\sum_{\alpha=1}^kf_{\alpha}^\alpha\,,\qquad \gamma=k+1,\ldots,r\,.
 \end{equation}
 At the very end, one has to obtain the integral sections of ${\bm X}=(X_1,\ldots,X_k)$. Hence, it is interesting to require their Lie brackets to be equal to zero. This implies that
 $$
    [X_\alpha,X_\beta] = \sum_{\nu=1}^r(f^\mu_\alpha X^L_\mu f^\nu_\beta -f^\mu_\beta X^L_\mu f^\nu_\alpha+(f^\kappa_\alpha f^\pi_\beta-f_\alpha^\pi f_\beta^\kappa) c_{\kappa\pi}^\nu )X^L_\nu = 0\,,\qquad 1\leq \alpha<\beta\leq k\,.
 $$
Let us apply the above theory to a particular example.  
 
Consider the matrix Lie group of the form
$$
    \mathbb{R}_\times\times H_3 = \left\{\begin{pmatrix}
        \lambda_1 & \lambda_2& \lambda_3\\
        0 & \lambda_1 &\lambda_4\\
        0 & 0 & \lambda_1
    \end{pmatrix}\ \Bigg\vert\ \lambda_1\in \mathbb{R}_\times\,,\quad  \lambda_2, \lambda_3, \lambda_4\in \mathbb{R}\right\}\,. 
 $$
 The associated Lie algebra of left-invariant vector fields admits a basis $X_1^L,\ldots,X_4^L$, whose values at ${\rm Id}$ are given by
 $$
 \begin{gathered}
 X_1^L({\rm Id})=\left(\begin{array}{ccc}1&0&0\\0&1&0\\0&0&1\end{array}\right)\,,  \quad X_2^L({\rm Id})=\left(\begin{array}{ccc}0&1&0\\0&0&0\\0&0&0\end{array}\right)\,,\\
  X_3^L({\rm Id})=\left(\begin{array}{ccc}0&0&1\\0&0&0\\0&0&0\end{array}\right)
,\quad  X_4^L({\rm Id})=\left(\begin{array}{ccc}0&0&0\\0&0&1\\0&0&0\end{array}\right)\,.
\end{gathered}
 $$
 that satisfy the non-vanishing commutation relations
$$
    [ X^L_2,X_4^L] = X^L_3\,.
$$
 Hence, a left-invariant two-contact form is given by
$$
    \bm\eta = \eta_L^1\otimes e_1+\eta_L^3\otimes e_2\,.
$$
The Reeb vector fields $X^L_1,X^L_3$ commute between themselves. On the other hand, the Hamilton--De Donder--Weyl equations in \eqref{Eq:CondAlgeHDW} in this case read
$$
    X_2^Lh=f^{4}_{3}\,,\qquad X_4^Lh=-f^{2}_{3}\,,
$$
while the algebraic part \eqref{eq:alg_eq} is trivially satisfied because $X_1^L,X_3^L$ belong to the centre of the Lie algebra of $\mathbb{R}_\times\times H_3$. Finally, one has the integrability condition
$$
    \sum_{\mu=1}^4(f_1^\mu X^L_\mu f_3^\nu -f_3^\mu X^L_\mu f_1^\nu)+(f_1^2f_3^4-f_1^4f_3^2)\delta_3^\nu=0\,,\qquad \nu=1,\ldots,4\,.
$$

\subsection{Hamiltonian vector fields on \texorpdfstring{$k$}{}-contact manifolds and applications}

Let us show that every $k$-contact form on $M$ can be extended to a presymplectic form on $\mathbb{R}^k\times M$, a so-called presymplectic cover. Then, we develop a Hamiltonian $k$-contact vector field notion and prove that co-oriented $k$-contact manifolds admit a Hamiltonian presymplectic extension relative to its presymplectic cover. As an application of our ideas, we retrieve the theory of characteristics of Lie symmetries of first-order partial differential equations \cite{Olv_93}.

Let us start by introducing the main definitions of this section.

\begin{definition}
\label{def:etaHamiltonianvectorfield}A {\it $k$-contact vector field} relative to a $k$-contact manifold $(M,\mathcal{D})$ is a Lie symmetry $X$ of $\mathcal{D}$, namely $[X,\mathcal{D}]\subset \mathcal{D}$. If additionally $\mathcal{D}=\ker \bm \eta$ for a $k$-contact form $\bm \eta$, then $X$ is called an {\it $\bm\eta$-Hamiltonian vector field} and $-\inn{X}{\bm \eta}$ is its {\it $\bm\eta$-Hamiltonian $k$-function}. 
\qeddiamond\end{definition}

Denote by $\X_\mathcal{D}(M)$ the space of $k$-contact  vector fields on $M$ relative to $(M,\mathcal{D})$ and denote by $\X_{\bm\eta}(M)$ the space of $\bm\eta$-Hamiltonian vector fields, respectively. Meanwhile, $\Cinfty_{\bm\eta}(M,\mathbb{R}^k)$ stands for the space of $\bm\eta$-Hamiltonian $k$-functions relative to a $k$-contact form $\bm\eta$. 

The following proposition explains why $-\inn{X}{\bm \eta}$ plays the role of an $\bm\eta$-Hamiltonian $k$-function for an $\bm\eta$-Hamiltonian vector field $X$. 

\begin{proposition}
\label{prop:eta-Hamiltonian-vector-field}
A vector field $X$ on $M$ is an $\bm\eta$-Hamiltonian vector field relative to $(M,\mathcal{D}=\ker\bm\eta)$ if, and only if, 
\begin{equation}\label{eq:ConHam}
    \inn{X}\eta^\alpha=-h^\alpha\,,\qquad \inn{X}\d\eta^\alpha=\d h^\alpha-\sum_{\beta=1}^k(R_\beta h^\alpha)\eta^\beta\,,\qquad \alpha=1,\ldots,k\,,
\end{equation}
for some $k$-function ${\bm h} = \sum_{\alpha=1}^k h^\alpha\otimes e_\alpha\in\Cinfty(M,\R^k)$.
\end{proposition}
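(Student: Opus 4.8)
The plan is to collapse both sides of the asserted equivalence onto a single intermediate condition on the Lie derivatives $\Lie_X\eta^\alpha$, and to use the Reeb vector fields as a frame dual to $\eta^1,\ldots,\eta^k$ transverse to $\mathcal{D}$. Throughout I would set $h^\alpha := -\inn{X}\eta^\alpha$, so that the first equation in \eqref{eq:ConHam} holds tautologically and merely records the definition of the $\bm\eta$-Hamiltonian $k$-function; all the content lies in the second equation.

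First I would apply Cartan's formula $\Lie_X = \d\inn{X} + \inn{X}\d$ to each component $\eta^\alpha$. Since $\inn{X}\eta^\alpha = -h^\alpha$, this gives $\Lie_X\eta^\alpha = -\d h^\alpha + \inn{X}\d\eta^\alpha$, so the second equation in \eqref{eq:ConHam} is equivalent to the single identity
\[
    \Lie_X\eta^\alpha = -\sum_{\beta=1}^k (R_\beta h^\alpha)\,\eta^\beta\,,\qquad \alpha=1,\ldots,k\,.
\]
It therefore suffices to prove that $X$ is a Lie symmetry of $\mathcal{D}=\ker\bm\eta$ if, and only if, this identity holds. I would then characterise the Lie symmetry condition directly: for $Y\in\Gamma(\ker\bm\eta)$ one has $\inn{Y}\eta^\alpha = 0$, so the derivation identity $\Lie_X(\inn{Y}\eta^\alpha) = \inn{[X,Y]}\eta^\alpha + \inn{Y}\Lie_X\eta^\alpha$ collapses to $\inn{[X,Y]}\eta^\alpha = -\inn{Y}\Lie_X\eta^\alpha$. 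Hence $[X,\ker\bm\eta]\subset\ker\bm\eta$ precisely when each $\Lie_X\eta^\alpha$ annihilates $\ker\bm\eta$; since $\mathcal{D}$ has corank $k$ and $\eta^1,\ldots,\eta^k$ locally span its annihilator, this means $\Lie_X\eta^\alpha = \sum_{\beta} g^\alpha_\beta\,\eta^\beta$ for some functions $g^\alpha_\beta$.

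To pin down the coefficients I would contract with a Reeb vector field $R_\gamma$. Using $\inn{R_\gamma}\eta^\beta = \delta^\beta_\gamma$ gives $g^\alpha_\gamma = \inn{R_\gamma}\Lie_X\eta^\alpha$, and because $\inn{R_\gamma}\eta^\alpha$ is constant one has $\inn{R_\gamma}\Lie_X\eta^\alpha = \inn{[R_\gamma,X]}\eta^\alpha$. Invoking $\Lie_{R_\gamma}\eta^\alpha = 0$ from the corollary following Theorem~\ref{thm:k-contact-Reeb}, I then compute $\inn{[R_\gamma,X]}\eta^\alpha = \Lie_{R_\gamma}(\inn{X}\eta^\alpha) = R_\gamma(-h^\alpha)$, so that $g^\alpha_\gamma = -R_\gamma h^\alpha$. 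This reproduces the intermediate identity and proves the forward implication; reading the chain in reverse gives the converse, since the intermediate identity exhibits $\Lie_X\eta^\alpha$ as a combination of the $\eta^\beta$, whence $\inn{[X,Y]}\eta^\alpha = -\inn{Y}\Lie_X\eta^\alpha = 0$ for every $Y\in\Gamma(\ker\bm\eta)$.

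The computation is essentially bookkeeping with Cartan calculus, so the only genuine subtlety I anticipate concerns the use of the Reeb frame. One must remember that the Reeb vector fields exist globally precisely because $\bm\eta$ is an honest $k$-contact form (Theorem~\ref{thm:k-contact-Reeb}), that they are dual to the $\eta^\alpha$, and that the relation $\Lie_{R_\gamma}\eta^\alpha = 0$ is what converts the contraction $\inn{R_\gamma}\Lie_X\eta^\alpha$ into $-R_\gamma h^\alpha$ with no leftover term. I would also flag that the regularity and corank-$k$ hypotheses on $\ker\bm\eta$ are exactly what guarantee that a one-form vanishing on $\mathcal{D}$ is a combination of the $\eta^\alpha$; this is the one place where the $k$-contact structure, rather than pure algebra, is used.
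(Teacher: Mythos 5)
Your proposal is correct and follows essentially the same route as the paper: translate the Lie symmetry condition into $\Lie_X\eta^\alpha=\sum_\beta f^\alpha_\beta\eta^\beta$ using that $\eta^1,\ldots,\eta^k$ span the annihilator of $\mathcal{D}$, apply Cartan's formula with $h^\alpha=-\inn{X}\eta^\alpha$, and contract with the Reeb vector fields to identify $f^\alpha_\gamma=-R_\gamma h^\alpha$. The only cosmetic difference is that you extract the coefficient via $\Lie_{R_\gamma}\eta^\alpha=0$ whereas the paper contracts $\inn{X}\d\eta^\alpha$ with $R_\gamma$ and uses $\inn{R_\gamma}\d\eta^\alpha=0$; these are equivalent given $\inn{R_\gamma}\eta^\alpha=\delta^\alpha_\gamma$.
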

\begin{proof}
    If $X$ is an $\bm\eta$-Hamiltonian vector field, then $[X,\mathcal{D}]\subset \mathcal{D}$. This implies that $\Lie_X\eta^\alpha=\sum_{\beta=1}^kf^\alpha_\beta\eta^\beta$ for certain functions $f^\alpha_\beta\in \Cinfty(M)$ and $\alpha,\beta=1,\ldots,k$. Then, fixing $h^\alpha=-\iota_X\eta^\alpha$ for $\alpha=1,\ldots,k$, one has
    $$
        (\d \iota_X+\iota_X\d)\eta^\alpha=\sum_{\beta=1}^kf^\alpha_\beta\eta^\beta, \quad \alpha=1,\ldots,k \quad \Rightarrow \quad \iota_X\d \eta^\alpha=\d h^\alpha +\sum_{\beta=1}^kf^\alpha_\beta\eta^\beta,\qquad \alpha=1,\ldots,k\,.
    $$
    Contracting with $R_\gamma$, one obtains
    $$
        0=\inn{R_\gamma}\inn{X}\d \eta^\alpha=R_\gamma h^\alpha+f^\alpha_\gamma,\qquad \alpha,\gamma=1,\ldots,k\,,
    $$
    which implies that $f_\gamma^\alpha = -R_\gamma h^\alpha$, for $\alpha,\gamma=1,\ldots,k$, and thus
    $$
        \inn{X}\d\eta^\alpha=\d h^\alpha -\sum_{\beta=1}^k(R_\beta h^\alpha) \eta^\beta,\qquad \alpha=1,\ldots,k\,.
    $$
    The converse is immediate.
\end{proof}

The expressions \eqref{eq:ConHam} allow one to devise a series of results about $\bm\eta$-Hamiltonian vector fields. Note that if $X_1,X_2$ are $\bm\eta$-Hamiltonian vector fields relative to $(M,\bm \eta)$ with the same $\bm\eta$-Hamiltonian $k$-function, then the right-hand sides of \eqref{eq:ConHam} are the same for $X_1,X_2$, and  then $X_1-X_2$ takes values in  $\ker \bm\eta\cap \ker\d\bm\eta=0$. Hence, $X_1=X_2$ and an  $\bm\eta$-Hamiltonian $k$-function $\bm h$ has a unique associated $\bm \eta$-Hamiltonian vector field denoted by $X_{\bm h}$. 

\begin{proposition}\label{Prop:LieAlgebraContact}

    One has that $\mathfrak{X}_{\bm \eta}(M)$ is a Lie algebra and, for $ {\bm h}, {\bm g}\in \Cinfty_{\bm \eta}(M,\mathbb{R}^{k})$, one has that $[X_{\bm h},X_{\bm g}]$ is the $\bm \eta$-Hamiltonian vector field related to the $\bm \eta$-Hamiltonian $k$-function $-\inn{[X_{\bm h},X_{\bm g}]}\bm\eta$.
\end{proposition}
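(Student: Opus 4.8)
The plan is to reduce the whole statement to one structural observation: by Definition \ref{def:etaHamiltonianvectorfield}, an $\bm\eta$-Hamiltonian vector field is exactly a Lie symmetry of the $k$-contact distribution $\mathcal{D}=\ker\bm\eta$, so that $\mathfrak{X}_{\bm\eta}(M)=\X_\mathcal{D}(M)$, and the Lie symmetries of any distribution form a Lie algebra. First I would note that $\mathfrak{X}_{\bm\eta}(M)$ is an $\mathbb{R}$-vector space, since $[aX+bY,Z]=a[X,Z]+b[Y,Z]\in\Gamma(\mathcal{D})$ for constants $a,b$ and $Z\in\Gamma(\mathcal{D})$. The only point needing an argument is closure under the Lie bracket.

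For closure, I would take $X,Y\in\mathfrak{X}_{\bm\eta}(M)$ and an arbitrary $Z\in\Gamma(\mathcal{D})$, and apply the Jacobi identity
\[
[[X,Y],Z]=[X,[Y,Z]]-[Y,[X,Z]].
\]
Since $Y$ is a Lie symmetry, $[Y,Z]\in\Gamma(\mathcal{D})$, and since $X$ is a Lie symmetry, $[X,[Y,Z]]\in\Gamma(\mathcal{D})$; the same reasoning applies to the second summand. Hence $[[X,Y],Z]\in\Gamma(\mathcal{D})$ for every $Z\in\Gamma(\mathcal{D})$, so $[X,Y]$ is again a Lie symmetry of $\mathcal{D}$, i.e. $[X,Y]\in\mathfrak{X}_{\bm\eta}(M)$. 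Bilinearity, antisymmetry and the Jacobi identity for the bracket are inherited from $\X(M)$, so $\mathfrak{X}_{\bm\eta}(M)$ is a Lie algebra.

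For the second assertion, I would observe that $X_{\bm h},X_{\bm g}\in\mathfrak{X}_{\bm\eta}(M)$ by construction, so the closure just established gives $[X_{\bm h},X_{\bm g}]\in\mathfrak{X}_{\bm\eta}(M)$; in particular it is an $\bm\eta$-Hamiltonian vector field. By Definition \ref{def:etaHamiltonianvectorfield}, the $\bm\eta$-Hamiltonian $k$-function of any $\bm\eta$-Hamiltonian vector field $X$ is $-\inn{X}\bm\eta$, so the $k$-function attached to $[X_{\bm h},X_{\bm g}]$ is precisely $-\inn{[X_{\bm h},X_{\bm g}]}\bm\eta$. Finally, invoking the uniqueness of the correspondence $\bm h\mapsto X_{\bm h}$ recalled right before the proposition (two $\bm\eta$-Hamiltonian vector fields sharing a $k$-function differ by a field in $\ker\bm\eta\cap\ker\d\bm\eta=0$), I conclude that $[X_{\bm h},X_{\bm g}]$ is indeed the $\bm\eta$-Hamiltonian vector field associated with $-\inn{[X_{\bm h},X_{\bm g}]}\bm\eta$.

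I do not expect a genuine obstacle: the statement is essentially algebraic, and the Hamiltonian equations \eqref{eq:ConHam} are never needed because the $k$-function of a given $\bm\eta$-Hamiltonian field is read off directly by contraction with $\bm\eta$. The one place to stay careful is to keep the bracket argument at the level of Lie symmetries of $\mathcal{D}$ rather than attempting to express $-\inn{[X_{\bm h},X_{\bm g}]}\bm\eta$ explicitly through $\bm h$ and $\bm g$, which the statement does not require; should one want such an explicit bracket formula, that would be the genuinely laborious part and would rely on \eqref{eq:ConHam} together with the commutation relations of the Reeb vector fields.
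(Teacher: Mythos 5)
Your proposal is correct and follows essentially the same route as the paper: the paper also establishes closure of $\X_{\bm\eta}(M)$ under the Lie bracket via the Jacobi identity applied to $[[X_1,X_2],Y]$ for $Y\in\Gamma(\ker\bm\eta)$, and then identifies the $\bm\eta$-Hamiltonian $k$-function of the bracket as $-\inn{[X_1,X_2]}\bm\eta$. The only cosmetic difference is that the paper cites \eqref{eq:ConHam} to justify this last identification, whereas you read it directly off Definition \ref{def:etaHamiltonianvectorfield}; both are valid.
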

\begin{proof}

   Note that $\X_{\bm\eta}(M)$ is a Lie algebra because the $\bm \eta$-Hamiltonian vector fields are the vector fields that satisfy $[X, \ker \bm \eta ]\subset \ker \bm \eta $. In particular, this implies that $\mathfrak{X}_{\bm \eta}(M)$ is a linear space. Moreover, given two $\bm \eta$-Hamiltonian vector fields $X_1,X_2$, one has that, for $Y\in \Gamma(\ker \bm \eta)$, it follows that
    $$
        [[X_1,X_2],Y]=[X_1,[X_2,Y]]-[X_2,[X_1,Y]]\in \Gamma(\ker \bm \eta)\,.
    $$
    Hence, $[X_1,X_2]$ is an $\bm \eta$-Hamiltonian vector field and its $\bm \eta$-Hamiltonian $k$-function is $-\iota_{[X_1,X_2]}\bm \eta$ in view of equations \eqref{eq:ConHam}. 
\end{proof}

The following definition will be quite useful to make following results more elegant and clear. 

\begin{definition} Given a co-oriented $k$-contact manifold $(M,\bm \eta)$ every $k$-contact Hamiltonian $k$-function ${\bm h}\in \Cinfty_{\bm\eta}(M,\mathbb{R}^k)$ is related to its {\it Reeb derivation}, namely the vector field on $M$ of the form
$$
R_{\bm h}=\sum_{\alpha=1}^kh^\alpha R_\alpha.
$$
Moreover, let us define the so-called 
{\it Reeb tensor field} $\mathfrak{R}_{\bm\eta}:\cT M\rightarrow \cT M$ of the form
$$
\mathfrak{R}_{\bm \eta}\theta=\sum_{\alpha=1}^k\eta^\alpha \iota_{R_\alpha} \theta.
$$
\end{definition}

The following proposition is quite useful.

\begin{proposition}\label{Prop:Calculation}
Let $X_{\bm f}$ be the $\bm\eta$-Hamiltonian vector field of  $\bm f\in \Cinfty_{\bm\eta}(M,\mathbb{R}^k)$. Then,
\begin{enumerate}[(i)]
    \item $\Lie_{X_{\bm f }}\bm\eta  = -\sum_{\alpha,\beta=1}^{k}(R_\beta f^\alpha)\eta^\beta\otimes e_\alpha=-\mathfrak{R}_{\bm \eta}\d \bm f$,
    \item $X_{\bm f}\bm f = -\sum_{\alpha,\beta=1}^k(R_\beta f^\alpha)f^\beta\otimes e_\alpha=-R_{\bm f}\bm f$,
    \item $[X_{\bm f},R_\beta]=-X_{\bm g}, \text{where}\; \bm g= {\sum_{\alpha=1}^{k} R_\beta f^\alpha\otimes e_\alpha=R_\beta \bm f}$.
\end{enumerate}
\end{proposition}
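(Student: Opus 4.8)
The plan is to verify each of the three identities by direct computation using the defining equations \eqref{eq:ConHam} for the $\bm\eta$-Hamiltonian vector field $X_{\bm f}$, the Reeb conditions $\inn{R_\alpha}\eta^\beta=\delta_\alpha^\beta$ and $\inn{R_\alpha}\d\eta^\beta=0$ from Theorem \ref{thm:k-contact-Reeb}, together with the Cartan magic formula $\Lie_X=\d\inn{X}+\inn{X}\d$. First I would establish (i). Recall from \eqref{eq:ConHam} that $\inn{X_{\bm f}}\eta^\alpha=-f^\alpha$ and $\inn{X_{\bm f}}\d\eta^\alpha=\d f^\alpha-\sum_{\beta=1}^k(R_\beta f^\alpha)\eta^\beta$. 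Applying Cartan's formula componentwise gives
$$
\Lie_{X_{\bm f}}\eta^\alpha=\d\inn{X_{\bm f}}\eta^\alpha+\inn{X_{\bm f}}\d\eta^\alpha=\d(-f^\alpha)+\d f^\alpha-\sum_{\beta=1}^k(R_\beta f^\alpha)\eta^\beta=-\sum_{\beta=1}^k(R_\beta f^\alpha)\eta^\beta\,,
$$
so that $\Lie_{X_{\bm f}}\bm\eta=-\sum_{\alpha,\beta=1}^k(R_\beta f^\alpha)\eta^\beta\otimes e_\alpha$. To match the right-hand side I would then unwind the definition of the Reeb tensor field: since $\d\bm f=\sum_{\alpha=1}^k\d f^\alpha\otimes e_\alpha$ and $\mathfrak{R}_{\bm\eta}\theta=\sum_{\beta=1}^k\eta^\beta\,\inn{R_\beta}\theta$ acts on the one-form part, one has $\mathfrak{R}_{\bm\eta}\d\bm f=\sum_{\alpha,\beta=1}^k\eta^\beta(R_\beta f^\alpha)\otimes e_\alpha$, which is exactly the negative of the expression just obtained.

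For (ii), I would simply contract (i) with $X_{\bm f}$, or equivalently use $\inn{X_{\bm f}}\eta^\beta=-f^\beta$ directly: evaluating the $k$-function $X_{\bm f}\bm f$ means differentiating each component, and from the structure of the computation $X_{\bm f}f^\alpha=\inn{X_{\bm f}}\d f^\alpha$. Combining $\inn{X_{\bm f}}\d\eta^\alpha=\d f^\alpha-\sum_\beta(R_\beta f^\alpha)\eta^\beta$ with the antisymmetry that forces $\inn{X_{\bm f}}\inn{X_{\bm f}}\d\eta^\alpha=0$ yields
$$
0=\inn{X_{\bm f}}\d f^\alpha-\sum_{\beta=1}^k(R_\beta f^\alpha)\inn{X_{\bm f}}\eta^\beta=X_{\bm f}f^\alpha+\sum_{\beta=1}^k(R_\beta f^\alpha)f^\beta\,,
$$
so $X_{\bm f}f^\alpha=-\sum_{\beta=1}^k(R_\beta f^\alpha)f^\beta$; assembling the components and recognising $R_{\bm f}=\sum_\beta f^\beta R_\beta$ gives $X_{\bm f}\bm f=-R_{\bm f}\bm f$, as claimed.

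For (iii), the strategy is to show that $[X_{\bm f},R_\beta]$ is again an $\bm\eta$-Hamiltonian vector field and to read off its $\bm\eta$-Hamiltonian $k$-function via the uniqueness established right after Proposition \ref{prop:eta-Hamiltonian-vector-field}. Since $X_{\bm f}$ is a Lie symmetry of $\ker\bm\eta$ and each Reeb vector field $R_\beta$ is likewise a Lie symmetry of $\ker\bm\eta$ (by the Corollary following Theorem \ref{thm:k-contact-Reeb}), and $\X_{\bm\eta}(M)$ is a Lie algebra by Proposition \ref{Prop:LieAlgebraContact}, the bracket $[X_{\bm f},R_\beta]$ lies in $\X_{\bm\eta}(M)$. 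Its $k$-function is $-\inn{[X_{\bm f},R_\beta]}\bm\eta$, which I would compute as $-(\Lie_{X_{\bm f}}\inn{R_\beta}-\inn{R_\beta}\Lie_{X_{\bm f}})\bm\eta$. The first term vanishes since $\inn{R_\beta}\bm\eta$ has constant components $\delta_\beta^\alpha$, and using (i) the second term gives $\inn{R_\beta}\Lie_{X_{\bm f}}\bm\eta=-\sum_{\alpha,\gamma}(R_\gamma f^\alpha)\delta_\beta^\gamma\otimes e_\alpha=-\sum_\alpha(R_\beta f^\alpha)\otimes e_\alpha=-R_\beta\bm f$. Hence the $k$-function of $[X_{\bm f},R_\beta]$ is $R_\beta\bm f=\bm g$, so by uniqueness $[X_{\bm f},R_\beta]=X_{\bm g}$; tracking the sign convention (the $k$-function is $-\inn{X}\bm\eta$) yields the stated $[X_{\bm f},R_\beta]=-X_{\bm g}$. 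The only delicate point, and the main thing to get right, is the bookkeeping of signs and the constant-versus-variable status of $\inn{R_\beta}\eta^\alpha$; once the Reeb relations are applied carefully all three identities follow from routine application of Cartan's formula, so I anticipate no genuine obstacle beyond this sign discipline.
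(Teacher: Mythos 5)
Your proofs of (i) and (ii) are exactly the paper's: Cartan's formula applied to the defining relations \eqref{eq:ConHam} for (i), and contraction of $\inn{X_{\bm f}}\d\bm\eta$ with $X_{\bm f}$ for (ii). For (iii) you take a mildly different and somewhat slicker route: instead of directly verifying both conditions of Proposition \ref{prop:eta-Hamiltonian-vector-field} for $[X_{\bm f},R_\beta]$ (which is what the paper does, computing $\inn{[X_{\bm f},R_\beta]}\eta^\alpha$ and then $\iota_{[X_{\bm f},R_\beta]}\d\eta^\alpha$ separately), you invoke the already-established fact that $\X_{\bm\eta}(M)$ is a Lie algebra closed under brackets with Hamiltonian function $-\inn{[\cdot,\cdot]}\bm\eta$, note that $R_\beta$ is itself $\bm\eta$-Hamiltonian, and then read off the $k$-function using part (i). Since Proposition \ref{Prop:LieAlgebraContact} precedes this result in the paper, that shortcut is legitimate and saves the second half of the paper's computation. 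One small bookkeeping slip: you first announce that ``the $k$-function of $[X_{\bm f},R_\beta]$ is $R_\beta\bm f$'' when what you have actually computed is $\inn{[X_{\bm f},R_\beta]}\bm\eta=R_\beta\bm f$, so the $k$-function (defined as $-\inn{X}\bm\eta$) is $-\bm g$ and hence $[X_{\bm f},R_\beta]=X_{-\bm g}=-X_{\bm g}$ by linearity; you do correct this at the end, so the conclusion stands, but the intermediate sentence should be fixed.
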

\begin{proof} To prove \textit{(i)}, it is enough to note that, by Proposition \ref{prop:eta-Hamiltonian-vector-field}, one has 
$$
\Lie_{X_{\bm f}}\bm\eta=\inn{X_{\bm f}} \d \bm\eta+\d\inn{X_{\bm f}}\bm\eta=\d \bm f-\sum_{\alpha,\beta=1}^k(R_{\beta}f^{\alpha})\eta^{\beta}\otimes e_\alpha-\d\bm f=-\sum_{\alpha,\beta=1}^k\eta^\beta\iota_{R_{\beta}}\d f^{\alpha}\otimes e_\alpha=-\mathfrak{R}_{\bm \eta}\d \bm f.
$$
Meanwhile, \textit{(ii)} follows from the fact that
$$
\inn{X_{\bm f}}\d\bm \eta= \d \bm f - \sum_{\alpha,\beta=1}^k(R_\beta f^\alpha)\eta^\beta \otimes e_\alpha,
$$
and contracting both sides with $X_{\bm f}$, one gets
$$ 
0=\inn{X_{\bm f}}\left(\sum_{\alpha=1}^k\d f^{\alpha}\otimes e_\alpha\right)-\sum_{\alpha,\beta=1}^k(R_{\beta}f^{\alpha})\inn{X_{\bm f}}\eta^{\beta}\otimes e_\alpha 
$$
and
$$
0=\sum_{\alpha=1}^{k} \left(X_{\bm f}f^{\alpha} + \sum_{\beta=1}^k(R_{\beta}f^{\alpha})f^{\beta}\right) \otimes e_\alpha\,,$$
which gives $X_{\bm f}{\bm f} = -\sum_{\alpha,\beta=1}^k(R_\beta f^\alpha)f^\beta\otimes e_\alpha=-R_{\bm f}\bm f$.

To prove \textit{(iii)}, one computes the contraction of $[X_{\bm f},R_\beta]$ with $\bm\eta$ and its differential. In particular,
$$
\inn{[X_{\bm f},R_\beta]}\eta^\alpha=\left(\Lie_{X_{\bm f}}\inn{R_{\beta}}-\inn{R_{\beta}}\Lie_{X_{\bm f}}\right)\eta^{\alpha}=R _\beta f^\alpha\,,\qquad \alpha,\beta=1,\ldots,k\,.
$$
Moreover, one has that 
$$
\Lie_{[X_{\bm f},R_\beta]}\eta^\alpha=(\Lie_{X_f}\Lie_{R_\beta}-\Lie_{R_\beta}\Lie_{X_{\bm f}})\eta^\alpha=-\Lie_{R_\beta}\Lie_{X_{\bm f}}\eta^\alpha\,,\qquad \alpha,\beta=1,\ldots,k,
$$
and
$$
\Lie_{[X_{\bm f},R_\beta]}\eta^\alpha=\Lie_{R_\beta}\left(\sum_{\gamma=1}^{k}(R_\gamma f^\alpha)\eta^\gamma\right)=\sum_{\gamma=1}^{k}R_{\beta}(R_\gamma f^\alpha)\eta^\gamma\,,\qquad \alpha,\beta=1,\ldots,k.
$$
Then,
$$
\iota_{[X_{\bm f},R_\beta]}\d\eta^\alpha=\sum_{\gamma=1}^{k}(R_\gamma R_{\beta}f^\alpha)\eta^\gamma-\d (R_\beta f^\alpha)\,,\qquad \alpha,\beta=1,\ldots,k.
$$
Hence, 
$$\iota_{[X_{\bm f},R_\beta]}\d\eta^\alpha=\d (-R_\beta f^\alpha)-\sum_{\gamma=1}^{k}R_\gamma(-R_\beta f^\alpha)\eta^\gamma\,,\qquad \alpha,\beta=1,\ldots,k.
$$
Thus, the vector field $[X_{\bm f},R_\beta]$ is $\bm\eta$-Hamiltonian and its corresponding $\bm\eta$-Hamiltonian $k$-function is $-\sum_{\alpha=1}^kR_\beta f^\alpha\otimes e_\alpha=-R_\beta \bm f$.
\end{proof}

\begin{example} Note that the Reeb vector fields of a $k$-contact form $\bm \eta$ are $\bm\eta$-Hamiltonian. Indeed, the Reeb vector field $R_\alpha$ is $\bm\eta$-Hamiltonian with an $\bm\eta$-Hamiltonian $k$-function $-e_\alpha$. In this manner, one can see that the result (iii) in Proposition \ref{Prop:Calculation} is natural, as $[X_{\bm f},R_\beta]$ is the commutator of two $\bm \eta$-Hamiltonian vector fields. 
\end{example}

\begin{definition}
    Given a co-oriented $k$-contact manifold $(M,\bm \eta)$, we define a bracket on $\Cinfty_{\bm \eta}(M,\mathbb{R}^{k})$ of the form \begin{equation}\label{eq:kcontactbracket}
        \{\bm h_1,\bm h_2\}_{\bm\eta} = \bm\eta([X_{\bm h_1},X_{\bm h_2}])\,,\qquad \forall \bm h_1,\bm h_2\in \Cinfty_{\bm \eta}(M)\,.
\end{equation}\qeddiamond\end{definition}
\begin{corollary}\label{Cor:MorLieBrackett}
    The bracket introduced in the previous definition induces a Lie algebra isomorphism $\bm h\in \Cinfty_{\bm \eta}(M,\mathbb{R}^{k})\mapsto -X_{\bm h}\in\X_{\bm \eta}(M)$. 
\end{corollary}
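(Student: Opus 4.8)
The plan is to show that the assignment $\Psi\colon \bm h\mapsto -X_{\bm h}$ is a linear bijection between $\Cinfty_{\bm\eta}(M,\mathbb{R}^k)$ and $\X_{\bm\eta}(M)$ that carries the bracket $\{\cdot,\cdot\}_{\bm\eta}$ to the commutator of vector fields. Since $\X_{\bm\eta}(M)$ is already a Lie algebra by Proposition \ref{Prop:LieAlgebraContact}, establishing these two facts simultaneously shows that $\{\cdot,\cdot\}_{\bm\eta}$ is a genuine Lie bracket on $\Cinfty_{\bm\eta}(M,\mathbb{R}^k)$ (by transport of structure) and that $\Psi$ is a Lie algebra isomorphism.

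First I would check that $\bm h\mapsto X_{\bm h}$ is a linear bijection. Injectivity follows from the relation $\bm h=-\inn{X_{\bm h}}\bm\eta$ of Definition \ref{def:etaHamiltonianvectorfield}: two Hamiltonian $k$-functions producing the same vector field must coincide. Surjectivity holds because every $\bm\eta$-Hamiltonian vector field $X$ is $X=X_{\bm h}$ for $\bm h=-\inn{X}\bm\eta$, using the uniqueness of the Hamiltonian vector field attached to a $k$-function recorded just before Proposition \ref{Prop:LieAlgebraContact}. Linearity is a consequence of the linearity of the defining equations \eqref{eq:ConHam} in the pair $(X,\bm h)$: if $X_{\bm h_1}$ and $X_{\bm h_2}$ solve \eqref{eq:ConHam} for $\bm h_1,\bm h_2$, then $aX_{\bm h_1}+bX_{\bm h_2}$ solves them for $a\bm h_1+b\bm h_2$, whence $X_{a\bm h_1+b\bm h_2}=aX_{\bm h_1}+bX_{\bm h_2}$ by uniqueness. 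Thus $\Psi=-X_{(\cdot)}$ is a linear bijection as well.

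The central step is the bracket identity. By Proposition \ref{Prop:LieAlgebraContact}, the commutator $[X_{\bm h_1},X_{\bm h_2}]$ is itself $\bm\eta$-Hamiltonian with $\bm\eta$-Hamiltonian $k$-function $-\inn{[X_{\bm h_1},X_{\bm h_2}]}\bm\eta$; this simultaneously guarantees that $\{\bm h_1,\bm h_2\}_{\bm\eta}=\bm\eta([X_{\bm h_1},X_{\bm h_2}])$ indeed lands in $\Cinfty_{\bm\eta}(M,\mathbb{R}^k)$, so the bracket is well defined. The Hamiltonian $k$-function of $[X_{\bm h_1},X_{\bm h_2}]$ is therefore $-\{\bm h_1,\bm h_2\}_{\bm\eta}$, and by uniqueness $[X_{\bm h_1},X_{\bm h_2}]=X_{-\{\bm h_1,\bm h_2\}_{\bm\eta}}=-X_{\{\bm h_1,\bm h_2\}_{\bm\eta}}$. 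Rewriting this,
$$
\Psi(\{\bm h_1,\bm h_2\}_{\bm\eta})=-X_{\{\bm h_1,\bm h_2\}_{\bm\eta}}=[X_{\bm h_1},X_{\bm h_2}]=[-X_{\bm h_1},-X_{\bm h_2}]=[\Psi(\bm h_1),\Psi(\bm h_2)],
$$
which is precisely the required compatibility.

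Finally, since $\Psi$ is a linear bijection onto the Lie algebra $\X_{\bm\eta}(M)$ and intertwines $\{\cdot,\cdot\}_{\bm\eta}$ with the commutator, the bracket $\{\cdot,\cdot\}_{\bm\eta}$ inherits bilinearity, antisymmetry and the Jacobi identity via $\Psi^{-1}$, and $\Psi$ is a Lie algebra isomorphism. There is no serious obstacle here; the only point demanding care is the consistent tracking of signs in the correspondence $\bm h\mapsto X_{\bm h}$ and in the definition $\{\bm h_1,\bm h_2\}_{\bm\eta}=\bm\eta([X_{\bm h_1},X_{\bm h_2}])$. These signs are exactly balanced by the minus in $\Psi=-X_{(\cdot)}$, which is the reason the isomorphism is stated for $\bm h\mapsto -X_{\bm h}$ rather than for $\bm h\mapsto X_{\bm h}$.
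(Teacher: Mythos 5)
Your proposal is correct and follows essentially the same route as the paper: both establish the linear bijection from the mutual uniqueness of the Hamiltonian vector field/$k$-function correspondence and the linearity of the defining equations \eqref{eq:ConHam}, and both derive the bracket compatibility from Proposition \ref{Prop:LieAlgebraContact}, with the Jacobi identity for $\{\cdot,\cdot\}_{\bm\eta}$ ultimately reduced to the Jacobi identity for the commutator of vector fields. The only cosmetic difference is that you obtain the Jacobi identity by transport of structure while the paper writes out the cyclic sum explicitly; your sign bookkeeping for $\Psi=-X_{(\cdot)}$ matches the paper's conventions.
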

\begin{proof}
    Recall that every $\bm \eta$-Hamiltonian $k$-function determines a unique $\bm \eta$-Hamiltonian vector field, so the mapping is well-defined. Moreover, every $\bm \eta$-Hamiltonian vector field determines uniquely its $\bm \eta$-Hamiltonian $k$-function. Hence, two different $\bm \eta$-Hamiltonian $k$-functions cannot be associated with the same $\bm \eta$-Hamiltonian vector field. Thus, the mapping is injective, and a surjection by definition of $\mathfrak{X}_{\bm\eta}(M)$. Hence, the mapping is a bijection. Moreover, expressions \eqref{eq:ConHam} shows that the mapping is also linear.

Let us prove that the mapping is also a Lie algebra morphism.    Given two $\bm\eta$-Hamiltonian $k$-functions $\bm h,\bm g$, its bracket is the $\bm\eta$-Hamiltonian $k$-function, $\{\bm h,\bm g\}_{\bm\eta}$, of  the $\bm\eta$-Hamiltonian vector field $-[X_{\bm h},X_{\bm g}]$. Then, 
    $$
        \{\bm f,\{\bm g,\bm h\}_{\bm \eta}\}_{\bm \eta}+\{\bm g,\{\bm h,\bm f\}_{\bm \eta}\}_{\bm \eta}+\{\bm h,\{\bm f,\bm g\}_{\bm \eta}\}_{\bm \eta}
    $$
    is the $\bm\eta$-Hamiltonian $k$-function of
    $$
        [X_{\bm f},[X_{\bm g},X_{\bm h}]]+[X_{\bm g},[X_{\bm h},X_{\bm f}]]+[X_{\bm h},[X_{\bm f},X_{\bm g}]]=0\,.
    $$
    Hence,
    $$
        \{\bm f,\{\bm g,\bm h\}_{\bm \eta}\}_{\bm \eta}+\{\bm g,\{\bm h,\bm f\}_{\bm \eta}\}_{\bm \eta}+\{\bm h,\{\bm f,\bm g\}_{\bm \eta}\}_{\bm \eta}=0\,,
    $$
    and $\{\cdot,\cdot\}_{\bm\eta}$ is a Lie bracket. Hence, the mapping is a Lie algebra isomorphism and $\dim M>1$.
\end{proof}

For practical purposes, it is worth rewrite the Lie bracket \eqref{eq:kcontactbracket} as follows
\begin{multline*}
    \{\bm h_1,\bm h_2\}_{\bm\eta}=(\Lie_{X_{\bm h_1}}\iota_{X_{\bm h_2}}-\iota_{X_{\bm h_2}}\Lie_{X_{\bm h_1}})\bm\eta\\=-\Lie_{X_{\bm h_1}}\bm h_2+\iota_{X_{\bm h_2}}\left[\sum_{\alpha,\beta=1}^k(R_\beta h_1^\alpha)\eta^\beta\otimes e_\alpha\right]=-X_{\bm h_1}\bm h_2-\sum_{\alpha,\beta=1}^k(R_\beta h_1^\alpha)h_2^\beta\otimes e_\alpha\,.
\end{multline*}
And finally
$$
\{{\bm h}_1,{\bm h}_2\}_{\bm \eta}=-\Lie_{X_{{\bm h}_1}}{\bm h_2}-R_{{\bm h}_2}{\bm h}_1,\qquad \forall {\bm h}_1,{\bm h}_2\in \Cinfty_{\bm \eta}(M,\mathbb{R}^k).
$$

For the sake of clarity, let us resume previous results in the following corollary, whose proof is immediate.
\begin{corollary}\label{Prop:MorphismkContact}One has the following exact sequence of Lie algebra morphisms
$$\label{morph-sequence}
0\longrightarrow \Cinfty_{\bm\eta}(M,\mathbb{R}^k)\stackrel{\phi}{\longrightarrow} \X_{\bm \eta}(M)\longrightarrow 0\,,
$$
where $\Cinfty_{\bm\eta}(M,\mathbb{R}^k)$ is endowed with the Lie bracket \eqref{eq:kcontactbracket} and $\phi({\bm f})=-X_{\bm f}$. 
\end{corollary}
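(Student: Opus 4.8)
The plan is to observe that exactness of the four-term sequence $0\longrightarrow \Cinfty_{\bm\eta}(M,\mathbb{R}^k)\stackrel{\phi}{\longrightarrow}\X_{\bm\eta}(M)\longrightarrow 0$ carries no content beyond the statement that $\phi$ is a Lie algebra isomorphism. Indeed, exactness at $\Cinfty_{\bm\eta}(M,\mathbb{R}^k)$ demands $\ker\phi=0$, i.e. injectivity, and exactness at $\X_{\bm\eta}(M)$ demands $\operatorname{Im}\phi=\X_{\bm\eta}(M)$, i.e. surjectivity. Both of these, together with the fact that $\phi$ respects the brackets, are precisely the assertions of Corollary \ref{Cor:MorLieBrackett}. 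So my first move is to reduce the claim to that corollary: since $\bm h\mapsto -X_{\bm h}$ is shown there to be a bijective Lie algebra morphism between $\Cinfty_{\bm\eta}(M,\mathbb{R}^k)$ with the bracket \eqref{eq:kcontactbracket} and $\X_{\bm\eta}(M)$, and $\phi(\bm f)=-X_{\bm f}$ is exactly this map, the sequence is exact by definition.

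For completeness I would re-assemble the three needed facts, each of which is already available. Injectivity and surjectivity follow from the uniqueness of the $\bm\eta$-Hamiltonian vector field attached to a $k$-function (discussed after Proposition \ref{prop:eta-Hamiltonian-vector-field}, where the difference of two $\bm\eta$-Hamiltonian vector fields with the same $k$-function lies in $\ker\bm\eta\cap\ker\d\bm\eta=0$) together with the surjectivity built into the definition of $\X_{\bm\eta}(M)$ as the image of $\bm h\mapsto X_{\bm h}$; linearity of $\phi$ is immediate from the linear dependence of the defining equations \eqref{eq:ConHam} on $\bm h$. The only step carrying genuine algebra is the morphism property $\phi(\{\bm h_1,\bm h_2\}_{\bm\eta})=[\phi(\bm h_1),\phi(\bm h_2)]$, and I would derive it directly from Proposition \ref{Prop:LieAlgebraContact}: that proposition identifies $[X_{\bm h_1},X_{\bm h_2}]$ as the $\bm\eta$-Hamiltonian vector field whose $\bm\eta$-Hamiltonian $k$-function is $-\iota_{[X_{\bm h_1},X_{\bm h_2}]}\bm\eta=-\{\bm h_1,\bm h_2\}_{\bm\eta}$, whence $X_{\{\bm h_1,\bm h_2\}_{\bm\eta}}=-[X_{\bm h_1},X_{\bm h_2}]$ by linearity and uniqueness, so that $\phi(\{\bm h_1,\bm h_2\}_{\bm\eta})=-X_{\{\bm h_1,\bm h_2\}_{\bm\eta}}=[X_{\bm h_1},X_{\bm h_2}]=[-X_{\bm h_1},-X_{\bm h_2}]=[\phi(\bm h_1),\phi(\bm h_2)]$.

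The proof is therefore essentially a restatement, and there is no serious obstacle; the only point requiring care is the sign bookkeeping. The minus sign in $\phi(\bm f)=-X_{\bm f}$ is not cosmetic: it is exactly what compensates for the sign convention $\inn{X}\eta^\alpha=-h^\alpha$ in \eqref{eq:ConHam} and the definition $\{\bm h_1,\bm h_2\}_{\bm\eta}=\bm\eta([X_{\bm h_1},X_{\bm h_2}])$, so that the two minus signs in $[-X_{\bm h_1},-X_{\bm h_2}]$ cancel and the bracket on $k$-functions maps to the commutator of vector fields without an extraneous sign. I would verify this cancellation explicitly once, as above, and then simply invoke Corollary \ref{Cor:MorLieBrackett} to close the argument.
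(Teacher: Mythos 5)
Your proposal is correct and matches the paper's intent exactly: the paper declares the proof of this corollary immediate, since the exactness of the sequence is just a restatement of the injectivity, surjectivity, and bracket-preservation of $\bm h\mapsto -X_{\bm h}$ established in Corollary \ref{Cor:MorLieBrackett}, which is precisely the reduction you make. Your explicit check of the sign cancellation in $\phi(\{\bm h_1,\bm h_2\}_{\bm\eta})=[\phi(\bm h_1),\phi(\bm h_2)]$ is consistent with Proposition \ref{Prop:LieAlgebraContact} and the definition \eqref{eq:kcontactbracket}, and is a worthwhile addition given that the paper leaves it implicit.
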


It is remarkable that in the contact case, a dissipated quantity (see \cite{GGMRR_20a} and references therein) is a function such that $X_hf=-f(Rh)$. Hence, the expression above can be used to define an analogue of dissipated quantities in $k$-contact geometry.
\begin{definition} A {\it dissipated $k$-contact $k$-function} relative to a $k$-contact Hamiltonian system $(M,\bm \eta,\bm h)$ is an  ${\bm \eta}$-Hamiltonian $k$-function ${\bm f}\in \Cinfty_{\bm \eta}(M,\mathbb{R}^k)$ such that
$$
X_{\bm h}{\bm f}=-R_{\bm f}{\bm h}.
$$
\end{definition}
In particular every ${\bm h}\in \Cinfty_{\bm \eta}(M,\mathbb{R}^k)$ is dissipated relative to itself, namely $X_{
\bm h}{\bm h}=-R_{\bm h}{\bm h}$. Moreover, 
$$
\{{\bm h},{\bm f}\}=0\qquad \Longleftrightarrow   \qquad 
X_{\bm h}\bm f=-R_{\bm f}{\bm h}.
$$
In other words, dissipated $k$-contact $k$-functions quantities are $\bm \eta$-Hamiltonian $k$-functions that $\bm \eta$-commute with ${\bm h}$. Moreover, a $\bm \eta$-Hamiltonian $k$-function is dissipated if and only if its associated $\bm\eta$-Hamiltonian vector field is a Lie symmetry of $X_{\bm h}$.

Co-oriented $k$-contact manifolds can be lifted in different manners to other types of differential geometric structures. In each case, the relation allows one to study the properties of some aspects of the initial $k$-contact manifold and their related $\bm \eta$-Hamiltonian vector fields via geometric structures of other nature. For instance, contact manifolds can be studied by means of symplectic manifolds in a manifold of larger dimension \cite{Arn_89,GG_22}. In the case of $\bm \eta$-Hamiltonian vector fields, the following proposition is a key to study them via presymplectic techniques.
\begin{theorem}
\label{thm:presymplecticmanifold}
    Every $k$-contact manifold $(M,\mathcal{D})$ gives rise to a presymplectic manifold $(E,\omega)$ where $\tau:E\rightarrow M$ is a vector bundle over $M$ of rank $k$ such that, there exists a family of trivializations $\tau^{-1}(U_a)\simeq \mathbb{R}^k\times U_a$ for a cover $\bigcup_{a}U_{a}=P$ so that $\omega|_{U_a}=\d\big(\sum_{\alpha=1}^kz_\alpha \widehat{\eta}^\alpha_a\big)$ for some $k$-contact form $\bm\eta_a$ associated with $\mathcal{D}$ on $U_a$.  
\end{theorem}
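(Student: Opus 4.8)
The plan is to take for the total space the annihilator bundle $E:=\D^\circ\subset\cT M$ and to obtain $\omega$ by restricting the canonical symplectic structure of $\cT M$, so that all gluing is automatic. Since $\D$ is a regular distribution of corank $k$, its annihilator $\D^\circ=\bigsqcup_{x\in M}\D^\circ_x$ is a smooth rank-$k$ subbundle of $\cT M\to M$; I set $\tau:=\restr{\pi_M}{\D^\circ}:\D^\circ\to M$, where $\pi_M:\cT M\to M$ is the cotangent projection. Writing $\jmath:\D^\circ\hookrightarrow\cT M$ for the inclusion and $\theta_M\in\Omega^1(\cT M)$ for the tautological (Liouville) one-form, so that $\omega_M=\d\theta_M$ is the canonical symplectic form on $\cT M$ (cf. Example \ref{Ex:kCotan}), I define
$$
\theta:=\jmath^{*}\theta_M\in\Omega^1(E)\,,\qquad \omega:=\d\theta=\jmath^{*}\omega_M\in\Omega^2(E)\,.
$$
As $\omega$ is exact it is closed, hence $(E,\omega)$ is a presymplectic manifold, and no cocycle computation on overlaps is required because $\theta$ and $\omega$ are globally defined as pullbacks of globally defined forms on $\cT M$.

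For the trivialisations, around each point I pick, by Definition \ref{def:k-contact-distribution}, an open set $U_a$ and a $k$-contact form $\bm\eta_a=\sum_{\alpha=1}^k\eta_a^\alpha\otimes e_\alpha$ with $\restr{\D}{U_a}=\ker\bm\eta_a$. Then $\eta_a^1,\dots,\eta_a^k$ is a frame of $\restr{\D^\circ}{U_a}$, giving a trivialisation $\tau^{-1}(U_a)\simeq\R^k\times U_a$ in which a covector $\xi\in\D^\circ_x$ reads $\xi=\sum_{\alpha}z_\alpha\,\restr{\eta_a^\alpha}{x}$, with fibre coordinates $(z_1,\dots,z_k)$. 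The local expression then follows directly from the definition of $\theta_M$: for $v\in\T_\xi\D^\circ$,
$$
(\jmath^{*}\theta_M)_\xi(v)=\theta_M|_\xi(\T_\xi\jmath(v))=\big\langle \xi,\T_\xi\pi_M(\T_\xi\jmath(v))\big\rangle=\Big\langle \sum_{\alpha}z_\alpha\,\restr{\eta_a^\alpha}{x},\,\T_\xi\tau(v)\Big\rangle=\sum_{\alpha}z_\alpha\,\widehat{\eta}^\alpha_a(v)\,,
$$
so that $\restr{\theta}{\tau^{-1}(U_a)}=\sum_{\alpha}z_\alpha\widehat{\eta}^\alpha_a$ and therefore $\restr{\omega}{\tau^{-1}(U_a)}=\d\big(\sum_{\alpha}z_\alpha\widehat{\eta}^\alpha_a\big)$, which is exactly the claimed formula.

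The computation is routine once the correct model is identified, so the genuine point is the conceptual choice of $\D^\circ$ together with its tautological one-form, which makes the form insensitive to the $GL(\R^k)$-valued transition functions relating the local frames $\eta_a^\alpha$ and $\eta_b^\alpha$ on $U_a\cap U_b$. The one thing I would flag is the meaning of \emph{presymplectic}: a short contraction in the above coordinates shows that at $\xi=(x,z)$ the kernel $\ker\omega_\xi$ is the horizontal lift of $\{u\in\D_x:\inn{u}(\sum_\alpha z_\alpha\,\d\eta_a^\alpha)|_{\D_x}=0\}$, which has dimension $\rk\D$ along the zero section but drops for generic $z$ by the maximal non-integrability of $\D$ (Proposition \ref{Prop:MaxNonkCon}); thus $\omega$ is closed but not of constant rank, and the statement is to be read with \emph{presymplectic} meaning a closed two-form.
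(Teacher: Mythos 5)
Your proof is correct, and it reaches the same object as the paper by a genuinely cleaner route. The paper builds the bundle $\tau:E\to M$ by hand: it fixes the local frames $\eta_a^\alpha$, writes the change of frame $\eta_a^\alpha=\sum_\beta (A_{ab})^\alpha_\beta\,\eta_b^\beta$ on overlaps, declares the contragredient transition maps for the fibre coordinates $z_\alpha$, and then verifies explicitly that the local one-forms $\theta_a=\sum_\alpha z_\alpha\eta_a^\alpha$ agree on overlaps, so that $\theta$ and $\omega=\d\theta$ are globally defined. You instead recognise that this abstractly glued bundle is nothing but the annihilator $\mathcal{D}^\circ\subset\cT M$ (smooth of rank $k$ because $\mathcal{D}$ is regular of corank $k$), and that the glued one-form is the restriction of the tautological one-form of $\cT M$; the cocycle verification then becomes vacuous, since $\theta=\jmath^*\theta_M$ is global by construction and the identity $\restr{\theta}{\tau^{-1}(U_a)}=\sum_\alpha z_\alpha\widehat{\eta}^\alpha_a$ is the standard computation of $\theta_M$ in the fibre coordinates induced by a local frame. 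What your approach buys is an invariant description of $(E,\omega)$ and immunity from index bookkeeping; what the paper's approach buys is explicitness of the transition functions $A_{ab}$, which is what one would want if one later studied the bundle $E$ itself (e.g.\ its characteristic classes or co-orientability). Your closing remark on the kernel of $\omega$ is also sound and in fact sharper than the paper's one-line observation that $\omega$ "may be degenerate"; the only caution is that maximal non-integrability of $\mathcal{D}$ guarantees that for each nonzero $u\in\mathcal{D}_x$ \emph{some} $z$ removes $u$ from the kernel, which justifies "drops for some $z$" directly, while "for generic $z$" requires the additional (easy) observation that the degeneracy locus is an algebraic condition on $z$.
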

\begin{proof}
    Consider a  cover of $M$ given by open sets $U_a$ where $\restr{\mathcal{D}}{U_a} = \ker \bm \eta_a$ for some $k$-contact form $\bm \eta_a$ on $U_a$. 
    On each open $U_a$, define $\theta_a=\sum_{\beta=1}^kz_\beta {\eta}_{a}^\beta$. On the overlaps, $U_{ab}=U_a\cap U_b\neq \emptyset$, one has $\eta_{a}^\alpha=\sum_{\beta=1}^k(A_{ab})^\alpha_\beta \eta _{b}^\beta$ for some functions $(A_{ab})^\alpha_\beta\in \Cinfty(U_{ab})$, with $\alpha,\beta=1,\ldots,k$ giving rise to an invertible $k\times k$ matrix $A_{ab}$. Let us construct the fibre bundle $\tau:P\rightarrow M$ by considering the transition maps $z^a_\alpha=\sum_{\beta=1}^k(A_{ab}^{-1})_\alpha^\beta z^b_\beta$ for $\alpha=1,\ldots,k$. Hence, $$
    \theta_a=\sum_{\alpha=1}^kz^a_\alpha{\eta}_a^\alpha=\sum_{\alpha,\beta=1}^k(A^{-1}_{ab})_\alpha^\beta z^b_\beta{\eta}_a^\alpha=\sum_{\alpha,\beta,\gamma=1}^k(A^{-1}_{ab})_\alpha^\beta z^b_\beta(A_{ab})^\alpha_\gamma{\eta}_b^\gamma=\sum_{\beta=1}^kz^b_\beta {\eta}_b^\beta=\theta_b$$ on $U_{ab}$. This gives rise to a well-defined vector bundle over $M$ of rank $k$ with a globally defined differential one-form $\theta$ such that $\restr{\theta}{U_a} = \theta_a$ for every $U_a$ of the cover. The differential $\d\theta$ is the presymplectic form we are willing to define.

    Moreover, $\omega$ may be degenerate since the dimension of $\mathbb{R}^k\times M$ is not even in general. 
\end{proof}

\begin{proposition}\label{Prop:LiftkHamVectorField}
    Every co-oriented $k$-contact manifold $(M,\bm\eta)$ gives rise to a presymplectic manifold $\left(\mathbb{R}^k\times M,\omega=\d(\sum_{\alpha=1}^kz_\alpha\widehat{\eta}^\alpha)\right)$. Moreover, if $X$ is an $\bm\eta$-Hamiltonian vector field with the $\bm\eta$-Hamiltonian $k$-function $\bm h$, then the vector fields on $\mathbb{R}^k\times M$ that project onto $X$ and are locally Hamiltonian relative to $\omega$ are given by
    $$
        Y=\sum_{\alpha=1}^k\Bigg(\sum_{\beta=1}^k z_\beta \widehat{(R_\alpha h^\beta)}+ \widehat{(R_\alpha f)}\Bigg)\frac{\partial}{\partial z_\alpha}+X\,,
    $$
    where   $f\in \Cinfty(M)$ is any function such that $R_1 f,\dots ,R_k f\in \Cinfty(M)$ are first integrals of the vector fields on $M$ taking values in $\ker\bm \eta$ and $\sum_{\alpha=1}^k(R_\alpha f)\d \eta^\alpha=0$. In particular, if $\ker \bm \eta$ admits a polarisation, then $R_1f=\cdots=R_kf=0$ and $Y$ is uniquely defined.
\end{proposition}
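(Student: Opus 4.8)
The plan is to establish the two assertions in turn. The presymplectic statement is immediate from Theorem \ref{thm:presymplecticmanifold} specialised to the co-oriented situation: since $\bm\eta$ is defined on all of $M$, the single chart already gives a globally defined one-form $\theta=\sum_{\alpha=1}^k z_\alpha\widehat\eta^\alpha$ on $\mathbb{R}^k\times M$, and $\omega=\d\theta$ is a closed (in general degenerate) two-form, i.e. a presymplectic form. Throughout, $\widehat{\,\cdot\,}$ denotes the pullback to $\mathbb{R}^k\times M$ along $\tau\colon\mathbb{R}^k\times M\to M$, as in Theorem \ref{thm:k-vector-field-projectable}, and I record that $\omega=\sum_\alpha\d z_\alpha\wedge\widehat\eta^\alpha+\sum_\alpha z_\alpha\,\d\widehat\eta^\alpha$.

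For the second assertion I would write a general vector field projecting onto $X$ as $Y=\widehat X+\sum_{\alpha=1}^k G_\alpha\,\partial/\partial z_\alpha$ with $G_\alpha\in\Cinfty(\mathbb{R}^k\times M)$, where $\widehat X$ is the natural lift of $X$. The central computation is $\inn{Y}\omega$. Using $\inn{\widehat X}\widehat\eta^\alpha=\widehat{\inn{X}\eta^\alpha}=-\widehat{h^\alpha}$, $\inn{\partial/\partial z_\alpha}\widehat\eta^\beta=0$, and Proposition \ref{prop:eta-Hamiltonian-vector-field} in the form $\inn{X}\d\eta^\alpha=\d h^\alpha-\sum_\beta(R_\beta h^\alpha)\eta^\beta$, I expect to obtain, after collecting terms and relabelling indices,
$$
\inn{Y}\omega=\d\!\left(\sum_{\alpha=1}^k z_\alpha\widehat{h^\alpha}\right)+\sum_{\alpha=1}^k K_\alpha\,\widehat\eta^\alpha,\qquad K_\alpha=G_\alpha-\sum_{\beta=1}^k z_\beta\,\widehat{(R_\alpha h^\beta)}.
$$
As $\omega$ is closed, $Y$ is locally Hamiltonian iff $\d\big(\sum_\alpha K_\alpha\widehat\eta^\alpha\big)=0$. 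Contracting this with $\partial/\partial z_\gamma$ and using the pointwise linear independence of $\widehat\eta^1,\dots,\widehat\eta^k$ forces $\partial K_\alpha/\partial z_\gamma=0$, so $K_\alpha=\widehat{k_\alpha}$ for functions $k_\alpha\in\Cinfty(M)$; equivalently $G_\alpha=\sum_\beta z_\beta\widehat{(R_\alpha h^\beta)}+\widehat{k_\alpha}$. The remaining condition then collapses to the closedness on $M$ of the one-form $\beta:=\sum_\alpha k_\alpha\eta^\alpha$.

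The crux is to translate $\d\beta=0$ into the two conditions of the statement. I would evaluate $\d\beta$ on the adapted frame made of sections $Z$ of $\ker\bm\eta$ together with the Reeb fields $R_\gamma$. A short bracket computation, using $\eta^\alpha(Z)=0$, $[R_\gamma,R_\delta]=0$ and $\inn{R_\gamma}\d\eta^\alpha=0$ (Theorem \ref{thm:k-contact-Reeb}), and the fact that each $R_\gamma$ is a Lie symmetry of $\ker\bm\eta$, whence $[Z,R_\gamma]\in\Gamma(\ker\bm\eta)$ (Corollary \ref{Cor:SymkCon}), yields $\d\beta(Z,W)=\big(\sum_\alpha k_\alpha\d\eta^\alpha\big)(Z,W)$ for $Z,W\in\ker\bm\eta$, together with $\d\beta(Z,R_\gamma)=Z(k_\gamma)$ and $\d\beta(R_\gamma,R_\delta)=R_\gamma k_\delta-R_\delta k_\gamma$; moreover $\sum_\alpha k_\alpha\d\eta^\alpha$ vanishes automatically on every pair involving an $R_\gamma$. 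Hence $\d\beta=0$ is equivalent to the three conditions: (i) each $k_\gamma$ is a first integral of every vector field in $\ker\bm\eta$, (ii) $\sum_\alpha k_\alpha\d\eta^\alpha=0$, and (iii) $R_\gamma k_\delta=R_\delta k_\gamma$. Condition (iii) is precisely the integrability condition for the system $R_\alpha f=k_\alpha$ (the $R_\alpha$ commute and are independent), so locally there exists $f$ with $R_\alpha f=k_\alpha$, turning (i)–(ii) into the two conditions in the statement. Conversely, if $f$ satisfies them, then (i) gives $\d(R_\alpha f)=\sum_\beta(R_\beta R_\alpha f)\eta^\beta$, whose coefficients are symmetric in $\alpha,\beta$ because $R_\beta R_\alpha f=R_\alpha R_\beta f$; this symmetry makes $\sum_\alpha\d(R_\alpha f)\wedge\eta^\alpha=0$, which with (ii) gives $\d\beta=0$. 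This pins down the displayed family of $Y$.

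For the polarised case I would show $k_\alpha\equiv0$. Since a polarised $k$-contact manifold satisfies $[\ker\bm\eta,\ker\bm\eta]=\T M$ (the remark after Proposition \ref{prop:ker_eta}), a first integral of every section of $\ker\bm\eta$ is also a first integral of their Lie brackets, hence of a spanning family of $\T M$; thus each $k_\alpha=R_\alpha f$ is locally, and by connectedness globally, constant. Then condition (ii), namely $\sum_\alpha k_\alpha\d\eta^\alpha=0$, combined with the independence of the $\d\eta^\alpha$ encoded in \eqref{thm:conditions} (which holds under a polarisation), forces every constant $k_\alpha$ to vanish, so $R_1f=\dots=R_kf=0$ and $Y$ is unique. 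The main obstacle I anticipate is the middle step: organising the frame-wise evaluation of $\d\beta$ so that the author's precise two conditions emerge cleanly, and correctly invoking the commuting and Lie-symmetry properties of the Reeb fields that make the "Reeb Hessian" symmetric; one must also note that $f$ is produced only locally, which is harmless here since the statement concerns local Hamiltonicity.
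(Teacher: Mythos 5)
Your proposal is correct and follows essentially the same route as the paper: the same decomposition $Y=\widehat X+\sum_\alpha G_\alpha\,\partial/\partial z_\alpha$, the same computation yielding $\inn{Y}\omega=\d\big(\sum_\alpha z_\alpha\widehat{h^\alpha}\big)+\sum_\alpha K_\alpha\widehat{\eta}^\alpha$, the same extraction of $z$-independence of the $K_\alpha$, and the same translation of $\d\big(\sum_\alpha k_\alpha\eta^\alpha\big)=0$ into the first-integral condition, $\sum_\alpha k_\alpha\d\eta^\alpha=0$, and the symmetry $R_\gamma k_\delta=R_\delta k_\gamma$ guaranteeing a (local) potential $f$. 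The only divergence is cosmetic and confined to the polarised endgame, where you pass through constancy of the $k_\alpha$ via $[\ker\bm\eta,\ker\bm\eta]=\T M$ before invoking independence of the $\d\eta^\alpha$, whereas the paper reads off $k_\alpha=0$ directly from the pointwise independence of the $\d\eta^\alpha$ in Darboux coordinates.
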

\begin{proof} The extension of the co-oriented $k$-contact manifold is an application of Theorem \ref{thm:presymplecticmanifold}.  
    Let us show that there exist vector fields $Y$ on $\mathbb{R}^k\times M$ that project onto $X$ and are Hamiltonian relative to $\omega$. As in previous results, a hat on a mathematical structure on $M$ stands for its natural lift  to $\mathbb{R}^k\times M$. If $Y$ projects onto $X$, then it is of the form
    $$
        Y=\sum_{\beta=1}^kf_\beta\frac{\partial}{\partial z_\beta}+X\,,
    $$
    for some functions $f_1,\ldots,f_k\in \Cinfty(\mathbb{R}^k\times M)$ and
    \begin{multline*}
        \iota_Y\omega=\iota_Y\sum_{\alpha=1}^k(\d z_\alpha\wedge \widehat{\eta}^\alpha+z_\alpha\d \widehat{\eta}^\alpha)= \sum_{\alpha=1}^k(f_\alpha\widehat{\eta}^\alpha+\widehat{h}^\alpha \d z_\alpha +z_\alpha\iota_X\d \widehat{\eta}^\alpha)\\
        =\sum_{\alpha=1}^k(f_\alpha\widehat{\eta}^\alpha+\widehat{h}^\alpha \d z_\alpha +z_\alpha\d \widehat{h}^\alpha)-\sum_{\alpha,\beta=1}^kz_\alpha\widehat{(R_\beta h^\alpha )}\widehat{\eta}^\beta\,.
    \end{multline*}
    In other words,
    $$
        \iota_Y\omega=\d\Bigg(\sum_{\alpha=1}^kz_\alpha \widehat{h}^\alpha \Bigg)+\sum_{\alpha=1}^k\Bigg(f_\alpha-\sum_{\beta=1}^kz_\beta\widehat{(R_\alpha h^\beta)}\Bigg)\widehat{\eta}^\alpha\,.
    $$
    Then, $\iota_Y\omega$ is locally Hamiltonian if, and only if, 
    $$
        \d\left( \sum_{\alpha=1}^k\Bigg(f_\alpha-\sum_{\beta=1}^kz_\beta \widehat{(R_\alpha  h^\beta)}\Bigg)\widehat{\eta}^\alpha\right) = 0\,.
    $$
    Denoting $g_\alpha= f_\alpha-\sum_{\beta=1}^kz_\beta \widehat{(R_\alpha h^\beta)}$ for $\alpha=1,\ldots,k$, we obtain that $0=\sum_{\alpha=1}^k[\d g_\alpha\wedge \widehat{\eta}^\alpha+g_\alpha\d \widehat{\eta}^\alpha]$ implies that
    $$
        \iota_{\widehat{R}_\gamma}\iota_{\partial_{z_\beta }}\d\left(\sum_{\alpha=1}^kg_\alpha\widehat{\eta}^\alpha\right)=\parder{g_\gamma}{z_\beta} = 0\,,\qquad \gamma,\beta=1,\ldots,k\,.
    $$
    These expressions give
    $$
        \frac{\partial f_\gamma}{\partial z_\beta}=\widehat{R_\gamma h^\beta}, \qquad \gamma,\beta=1,\ldots,k\,,\quad\Longrightarrow\quad f_\gamma=\sum_{\beta=1}^k z_\beta \widehat{(R_\gamma h^\beta)}+\widehat{\zeta_\gamma}\,, \qquad \forall \gamma,\beta=1,\ldots,k\,,
    $$
where $\partial \widehat{\zeta}_\gamma /\partial z_\alpha=0$ for $\alpha,\gamma=1,\ldots, k$. Therefore, $0=\sum_{\alpha=1}^k[\d \zeta_\alpha\wedge \eta^\alpha+\zeta_\alpha\d \eta^\alpha]$. Contracting the last expression with two Reeb vector fields, it follows that $R_\beta \zeta_\gamma=R_\gamma\zeta_
\beta$ for $\beta, \gamma = 1,\ldots,k$. The latter implies that there exists a function $f\in \Cinfty(M)$ such that $\zeta_\alpha=R_\alpha f$ for $\alpha=1,\ldots,k$. Moreover, $\sum_{\alpha=1}^k(R_\gamma\zeta_\alpha)\eta^\alpha-\d \zeta_\gamma=0$ for $\gamma=1,\ldots,k$. Then, the contractions of the last expression with vector fields taking values in  $\ker \bm\eta$ show that $\zeta_1,\ldots,\zeta_k$ are first integrals for the elements of $\Gamma(\ker \bm \eta)$. Moreover, $0=\sum_{\alpha=1}^k[\d \zeta_\alpha\wedge \eta^\alpha+\zeta_\alpha\d \eta^\alpha]=\sum_{\alpha,\beta=1}^{k}(R_{\alpha}\zeta_{\beta})\eta^{\beta}\wedge\eta^{\alpha}+\sum_{\alpha=1}^{k}\zeta_{\alpha}\d\eta^{\alpha}=\sum_{\alpha=1}^{k}\zeta_{\alpha}\d\eta^{\alpha}$. Finally, $\sum_{\alpha=1}^k(R_\alpha f)\eta^\alpha=\d f$ because $\d(\zeta_{\alpha}\eta^{\alpha})=0$.

Note that if $\ker \bm \eta$ has a polarisation, then the Darboux theorem shows that $\d \eta^1,\ldots,\d\eta^k$ are linearly independent and $R_1 f=\dotsb =R_kf=0$. 
\end{proof}

\subsection{Characteristics of Lie symmetries of  \texorpdfstring{$k$}{}-contact Hamiltonian \texorpdfstring{$k$}{}-functions and applications}

Let us recall that $J^1(M,E)$ is a polarised $k$-contact manifold relative to its Cartan distribution. Our above theory allows us to describe the theory of characteristics of Lie symmetries \cite{Olv_93} as a $k$-contact Hamiltonian theory, at least locally.  

\begin{proposition}\label{prop:HamLift}
    Consider a vector field $X$ on the total space $E$ of a fibre bundle $p:E\rightarrow M$ of rank $k$ over an $m$-dimensional manifold $M$ given in adapted coordinates $\{x^i,y^\alpha\}$ on $U\subset E$ by
    $$
        X=\sum_{i=1}^m\xi^i\partial_i+\sum_{\alpha=1}^k\zeta^\alpha\partial_\alpha\,,
    $$
    for some functions $\xi^1,\ldots,\xi^m,\zeta^1,\ldots,\zeta^k\in\Cinfty(U)$.
    Then, there exists a unique $k$-contact Hamiltonian vector field $\pr X$ on $(p^{1}_{0})^{-1}(U)\subset J^{1}(M,E)$ projecting onto $X$ via $p^1_0:J^1(M,E)\rightarrow E$. In adapted coordinates $\{x^i,y^\alpha,y^\alpha_i\}$ in $J^1(M,E)$, the vector field $\pr  X$, the so-called {\it prolongation} of $X$ on $E$ to $J^1(M,E)$, takes the local form
    $$
        \pr X = -\sum_{i=1}^m\sum_{\alpha=1}^k\Bigg[\bigg(\partial_{x^{i}}+\sum_{\beta=1}^ky^\beta_i\partial_{y^{\beta}}\bigg)h^\alpha\Bigg]\partial_{y^\alpha_i}+X\,,
    $$
    having locally an $\bm \eta$-Hamiltonian $k$-function given by
    \begin{equation}\label{Eq:CharHAm}
        \bm h=\sum_{\alpha=1}^k\bigg(-\zeta^\alpha+\sum_{i=1}^my^\alpha_i\xi^i\bigg)\otimes e_\alpha\,.
    \end{equation}
\end{proposition}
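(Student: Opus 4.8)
The plan is to reduce the statement to a coordinate computation on $(p^1_0)^{-1}(U)$, where by Example \ref{Ex:1JetkCon2} the Cartan distribution $\mathcal{C}$ coincides with $\ker\bm\eta$ for the standard $k$-contact form $\bm\eta=\sum_{\alpha=1}^k(\d y^\alpha-\sum_{i=1}^m y^\alpha_i\,\d x^i)\otimes e_\alpha$. First I would observe that any vector field projecting onto $X$ via $p^1_0$ must take the form $\pr X = X + \sum_{i=1}^m\sum_{\alpha=1}^k V^\alpha_i\,\partial_{y^\alpha_i}$ for some functions $V^\alpha_i$, since $p^1_0$ in adapted coordinates forgets the fibre coordinates $y^\alpha_i$. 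Being a $k$-contact vector field means being a Lie symmetry of $\mathcal{C}=\ker\bm\eta$, which by Proposition \ref{prop:eta-Hamiltonian-vector-field} is equivalent to the two conditions \eqref{eq:ConHam} holding for $\bm h = -\inn{\pr X}\bm\eta$. Thus existence and uniqueness of $\pr X$ amount to showing that these conditions determine the $V^\alpha_i$ uniquely.

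Next I would extract the $\bm\eta$-Hamiltonian $k$-function. Contracting $\pr X$ with $\eta^\alpha$ gives $\inn{\pr X}\eta^\alpha = \zeta^\alpha - \sum_{i=1}^m y^\alpha_i\xi^i$, independently of the undetermined $V^\alpha_i$; hence $h^\alpha = -\inn{\pr X}\eta^\alpha$ is forced to be \eqref{Eq:CharHAm}. This already fixes $\bm h$ and, together with the remark following Proposition \ref{prop:eta-Hamiltonian-vector-field} that each $\bm\eta$-Hamiltonian $k$-function has a unique associated $\bm\eta$-Hamiltonian vector field, will yield uniqueness once existence is verified.

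The core computation is the second equation of \eqref{eq:ConHam}. Using $\d\eta^\alpha = \sum_{i=1}^m \d x^i\wedge\d y^\alpha_i$, one has $\inn{\pr X}\d\eta^\alpha = \sum_{i=1}^m(\xi^i\,\d y^\alpha_i - V^\alpha_i\,\d x^i)$, while the right-hand side $\d h^\alpha - \sum_{\beta=1}^k(R_\beta h^\alpha)\eta^\beta$ decomposes into its $\d y^\beta$, $\d y^\alpha_i$ and $\d x^j$ components. The plan is to compare these coefficient by coefficient. The $\d y^\beta$ components cancel automatically, because $R_\beta h^\alpha = \partial_{y^\beta}h^\alpha$ equals the $\d y^\beta$-coefficient of $\d h^\alpha$ while $\eta^\beta$ contributes $\d y^\beta$ with unit coefficient; the $\d y^\alpha_i$ components match automatically, both equal to $\xi^i$. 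Matching the $\d x^j$ coefficients then forces $V^\alpha_j = -D_j h^\alpha$, where $D_j = \partial_{x^j} + \sum_{\beta=1}^k y^\beta_j\,\partial_{y^\beta}$ is the horizontal field spanning $\mathcal{C}$. Since $\zeta^\alpha,\xi^i$ are pulled back from $E$ and $D_j$ carries no $\partial_{y^\alpha_i}$ term, one computes $D_j h^\alpha = -D_j\zeta^\alpha + \sum_i y^\alpha_i\,D_j\xi^i$, so that $-D_j h^\alpha$ reproduces exactly the vertical components in the stated formula for $\pr X$.

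The only delicate point is the bookkeeping of the three families of one-forms $\{\d x^j, \d y^\beta, \d y^\alpha_i\}$ appearing in $\d h^\alpha$; the crucial sanity check is the automatic cancellation of the $\d y^\beta$ terms, which confirms that the candidate $\bm h$ is genuinely an $\bm\eta$-Hamiltonian $k$-function and that no residual constraint obstructs the construction. Existence then follows by setting $V^\alpha_i = -D_i h^\alpha$, and uniqueness follows because these components are forced by the $\d x^j$ matching, equivalently from $\ker\bm\eta\cap\ker\d\bm\eta = 0$.
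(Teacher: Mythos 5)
Your proposal is correct and follows essentially the same route as the paper's proof: fix the standard $k$-contact form in adapted coordinates, read off $\bm h=-\inn{\pr X}\bm\eta$ from the contraction with $\eta^\alpha$, and determine the vertical components $V^\alpha_i=-\bigl(\partial_{x^i}+\sum_\beta y^\beta_i\partial_{y^\beta}\bigr)h^\alpha$ from the second condition in \eqref{eq:ConHam}. Your coefficient-by-coefficient check of the $\d y^\beta$ and $\d y^\alpha_i$ components is slightly more explicit than the paper's (which only records the $\partial_{x^i}$ contraction), but the argument is the same.
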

\begin{proof}
    One finds that  $\pr X$ has to satisfy that
    \begin{equation}\label{eq:Conprol}
        \inn{\pr  X}\eta^\alpha=-h^\alpha,\qquad \inn{\pr X}\d\eta^\alpha=\d h^\alpha-\sum_{\beta=1}^{k} (R_\beta h^\alpha)\eta^\beta\,,\qquad \alpha=1,\ldots,k\,.
    \end{equation} 
  Since $J^1$ is polarised, one can choose  Darboux coordinates  $\eta^\alpha=\d y^\alpha-\sum_{i=1}^my^\alpha_i\d x^i$ and $\d \eta^\alpha=\sum_{i=1}^m\d x^i\wedge \d y^\alpha_i$ for $\alpha=1,\ldots,k$ given by adapted coordinates. This gives
    $$
        {\pr}\,X=\sum_{\alpha=1}^k\sum_{i=1}^m\phi^\alpha_i\partial_{y^\alpha_i}+\sum_{i=1}^m\xi^i\partial_{x^{i}}+\sum_{\alpha=1}^k\zeta^\alpha\partial_{y^{\alpha}}\,.
    $$
    Hence, the above conditions show that $h^\alpha=-\zeta^\alpha+\sum_{i=1}^m y^\alpha_i\xi^i$, with $\alpha=1,\ldots,k$, and
    \begin{multline}
        \inn{\partial_{x^{i}}}\iota_{\pr  X}\d\eta^\alpha=-\phi^\alpha_i=\partial_{x^{i}}h^\alpha+\sum_{\beta=1}^k(R_\beta h^\alpha)y^\beta_i =\\
        =\left(\partial_{x^{i}}+\sum_{\beta=1}^k y^\beta_i\partial_{y^{\beta}}\right)h^\alpha\,,\qquad \alpha=1,\ldots,k\,,\qquad i=1,\ldots,m\,.
    \end{multline}
    These conditions ensure that expressions \eqref{eq:Conprol} are locally satisfied. 
    
    Note that above considerations are local, but ${\rm pr}\,X$ is uniquely defined. Hence, it must coincide in the intersection of domains of different adapted coordinates, which imply that ${\rm pr}\,X$ is globally defined. Meanwhile, $\bm \eta$ is defined only locally and different adapted coordinates give different $\bm\eta$. Hence, $\bm h$ is defined only locally.
\end{proof}

Note that the $\bm\eta$-Hamiltonian $k$-function for $\pr  X$ is the characteristic function of $\pr  X$ occurring in the theory of Lie symmetries of differential equations \cite{Olv_93}. It is defined locally, as it is related to adapted coordinates.

As a consequence of the above, one has that the zero set of ${\bm h}$ is an invariant of the flow of $\pr X$. Let us study this in the following proposition.

\begin{proposition}\label{Prop:Inv}
    Given a vector field $X$ on the total space of the fibre bundle  $E\rightarrow M$, the vector field $\pr X$ on $J^1(M,E)$ is tangent to the zero level set of its $\bm\eta$-Hamiltonian function.
\end{proposition}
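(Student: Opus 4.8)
The plan is to read the statement directly off Proposition \ref{Prop:Calculation}(ii). By Proposition \ref{prop:HamLift}, on the domain of any adapted chart the vector field $\pr X$ is precisely the $\bm\eta$-Hamiltonian vector field $X_{\bm h}$ of its own $\bm\eta$-Hamiltonian $k$-function $\bm h=\sum_{\alpha=1}^k h^\alpha\otimes e_\alpha$, with $h^\alpha=-\inn{\pr X}\eta^\alpha$. Applying item (ii) of Proposition \ref{Prop:Calculation} with $\bm f=\bm h$ then gives
$$
\pr X(\bm h)=X_{\bm h}\bm h=-R_{\bm h}\bm h=-\sum_{\alpha,\beta=1}^k (R_\beta h^\alpha)\,h^\beta\otimes e_\alpha\,,
$$
that is, componentwise, $\pr X(h^\alpha)=-\sum_{\beta=1}^k (R_\beta h^\alpha)\,h^\beta$ for $\alpha=1,\ldots,k$. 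This identity is the whole engine of the proof.

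Next I would set $N=\{\bm h=0\}=\{x: h^\alpha(x)=0,\ \alpha=1,\ldots,k\}$, the zero level set of $\bm h$. It is worth noting at this point that, since $h^\alpha=-\inn{\pr X}\eta^\alpha$ and $\ker\bm\eta$ is the Cartan distribution $\mathcal{C}$, one has $N=\{x:\pr X_x\in \mathcal{C}_x\}$; hence, although $\bm h$ itself is only locally defined (cf. Proposition \ref{prop:HamLift}), the set $N$ is chart-independent and globally well defined. On $N$ every summand on the right-hand side of the displayed identity carries a factor $h^\beta$ that vanishes, so $\pr X(h^\alpha)\vert_N=0$ for every $\alpha$, i.e. $\d h^\alpha(\pr X)=0$ along $N$.

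Finally I would turn the vanishing of $\d h^\alpha(\pr X)$ on $N$ into tangency. If $\d h^1,\ldots,\d h^k$ are independent along $N$, so that $N$ is a regular submanifold with $\T_xN=\bigcap_{\alpha}\ker \d h^\alpha_x$, then $\pr X(h^\alpha)\vert_N=0$ already says $\pr X_x\in \T_xN$ for every $x\in N$, which is the asserted tangency. To dispense with any regularity assumption I prefer the flow formulation: along any integral curve $\gamma$ of $\pr X$, the functions $u^\alpha:=h^\alpha\circ\gamma$ obey the linear homogeneous system
$$
\dot u^\alpha(t)=-\sum_{\beta=1}^k (R_\beta h^\alpha)(\gamma(t))\,u^\beta(t)\,,\qquad \alpha=1,\ldots,k\,,
$$
so $u^\alpha(0)=0$ for all $\alpha$ forces $u^\alpha\equiv 0$ by uniqueness of solutions; thus the flow of $\pr X$ preserves $N$, which is exactly the statement that $\pr X$ is tangent to $N$. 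The computation is immediate once Proposition \ref{Prop:Calculation}(ii) is in hand, so there is no real obstacle; the only subtlety deserving care is the meaning of ``tangent to the zero level set'' when $N$ is singular, and that is precisely why I would phrase the conclusion through flow-invariance rather than through $\T_xN$.
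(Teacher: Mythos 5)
Your proof is correct and follows essentially the same route as the paper's: the key identity $\pr X\, h^\alpha=-\sum_{\beta}(R_\beta h^\alpha)h^\beta$ is exactly what the paper obtains by contracting $\iota_{\pr X}\d\eta^\alpha$ with $\pr X$, and you simply import it from Proposition \ref{Prop:Calculation}(ii), whose proof is that very computation. Your two refinements — the flow-invariance/ODE-uniqueness argument for tangency when the level set is singular, and the observation that $N=\{x:\pr X_x\in\mathcal{C}_x\}$ is chart-independent even though $\bm h$ is only locally defined — are both sound and tighten steps the paper leaves implicit.
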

\begin{proof}
    Since $\bm h$ is the $\bm\eta$-Hamiltonian $k$-function of $\pr  X$, then
    $$
        0 = \inn{\pr  X} \inn{\pr  X} \d \eta^\beta = \inn{\pr  X} \bigg(\d h^\beta-\sum_{\alpha=1}^k(R_\alpha h^\beta)\eta^\alpha\bigg) \qquad \Longrightarrow \qquad \pr  X h^\beta = -\sum_{\alpha=1}^k(R_\alpha h^\beta)h^\alpha\,,
    $$
    for $\beta=1,\ldots,k$. Hence, $\pr X$ is tangent to the subset ${\bm h}^{-1}(0,\ldots,0)$.
\end{proof}

\begin{example}
    Denote $x_0 = t, x_1 = x, x_2 = y, x_3 = z$ the space-time variables, which play the role of independent variables. Let us consider the first-order partial differential equations given by a simple Hamilton--Jacobi equation on $\mathbb{R}\times \mathbb{R}^4
    \to \mathbb{R}^4$ of the form
    \begin{equation}\label{eq:HJ}
        u_0 + u^2_1 + u^2_2 + u^2_3=0\,,
    \end{equation}
    where $u$ is the dependent variable and $u_i = \tparder{u}{x_i}$ for $i=0,2,3,$. For simplicity, we write $\partial_0,\ldots,\partial_3$ for the time-space derivatives and $\partial_{u_0},\ldots,\partial_{u_3}$ for the partial derivatives relative to the induced variables in $J^1(\mathbb{R}^4,
    \mathbb{R}^5)$. The Lie symmetries of the Hamilton--Jacobi equations \eqref{eq:HJ} where studied in \cite{BP_76,Yeh_00}. Consider for instance the lifts to $J^1(\mathbb{R}^4,\mathbb{R}^5)$ of the Lie symmetries of \eqref{eq:HJ} given by
    \begin{gather}
        P_0 = \partial_0\,,\qquad P_j = \partial_j\,,\qquad P_u = \partial_u\,,\qquad J_{kj} = x_k\partial_j - x_j\partial_k\,, \\
        D^{(1)} = x_{0}\partial_0 + \frac12 \sum_{i=1}^3x_i\partial_i\,,\qquad D^{(2)}=\frac{1}{2}\sum_{i=1}^{3}x_{i}\partial_{i}+u\partial_{u}\\
        G_j^{(1)} = x_{0}\partial_j + \frac12 x_j\partial_u\,,\qquad G_j^{(2)} = \frac12 ux_j\partial_0+u\partial_j\,, \\
        A^{(1)} = x_{0}^2\partial_0 + x_{0}\sum_{i=1}^3x_i\partial_i + \frac14 \sum_{i=1}^3x_i^2\partial_u\,,\qquad
        A^{(2)}=\frac{1}{4}\sum_{i=1}^3x_i^2\partial_{0}+u\sum_{i=1}^3x_i\partial_{i}+u^2\partial_{u}\,, \\
        K_{j}=\frac{1}{2}x_{0} x_{j}\partial_{0}+\sum_{i=1}^{3}\left(\frac{1}{2}x_{j}x_{i}+\left(x_{0}u-\frac{1}{4}\sum_{i=1}^{3}x_{i}^{2}\right)\delta_{ij}\right)\partial_{i}+\frac{1}{2}x_{j}u\partial_{u}\,.
    \end{gather}
    for $k,j=1,2,3.$
    By our results, which retrieve the standard theory in \cite{Olv_93}, they read
    {\small \begin{gather*}
        \pr P_0 = \partial_0\,,\qquad \pr P_j = \partial_j\,,\qquad
        \pr P_u = \partial_u\,, \qquad
        \pr J_{kj} = x_k\partial_j - x_j\partial_k - u_j\partial_{u_k} + u_k\partial_{u_j}\,,\\
        \pr D^{(1)} = x_0\partial_0 + \frac12 \sum_{i=1}^3x_i\partial_i  -u_t\partial_{u_0} - \frac12\sum_{i=1}^3 u_i\partial_{u_i}\,,\\
        \pr D^{(2)}=\frac{1}{2}\sum_{i=1}^{3}x^{i}\partial_{i}+u\partial_{u}+u_0\partial_{u_{0}}+\frac{1}{2}\sum_{i=1}^{3}u_{i}\partial_{u_{i}}\,,
        \\
        \pr G_j^{(1)} = x_{0}\partial_j + \frac12 x_j\partial_u - u_j\partial_{u_0} -\frac12\partial_{u_j} \,,\qquad
        \pr G_j^{(2)} =\frac12 ux_j\partial_0 + u\partial_j  - u_0u_j\partial_{u_0} - \sum_{i=1}^3 u_iu_j\partial_{u_i}\,, 
        \\
         \pr A^{(1)} = x_{0}^2\partial_0 + x_{0}\sum_{i=1}^3x_i\partial_i + \frac14 \sum_{i=1}^3x_i^2\partial_u - \left(2x_{0}u_0 + \sum_{i=1}^3 x_iu_{i}\right)\partial_{u_0} + \sum_{i=1}^{3}\left(\frac12x_i - x_{0}u_i\right)\partial_{u_i}\,, \\
         \pr A^{(2)}=\frac{1}{4}\sum_{i=1}^3x_i^2\partial_{0}+u\sum_{i=1}^3x_i\partial_{i}+u^2\partial_{u}+2uu_{0}\partial_{u_{0}}+\sum_{i=1}^{3}\left(2uu_{i}-\frac{u_{0}x_{i}}{2}\right)\partial_{u_{i}}\,, \\
         \pr K_{j}\!=\!\frac{x_{0} x_{j}}{2}\partial_{0}\!+\!\sum_{i=1}^{3}\!\left(\!\frac{x_{j}x_{i}}{2}+\!\left(\!x_{0}u-\frac{1}{4}\sum_{i=1}^{3}x_{i}^{2}\right)\!\delta_{ij}\!\right)\!\partial_{i}-\frac{x_{j}u_{0}}{2}\partial_{u_{0}}+\sum_{i=1}^{3}\left((-1)^{\delta_{ij}}\frac{u_{j}x_{i}}{2}-\frac{x_{0} u_{0}}{2}\delta_{ij}\right)\partial_{u_{i}}.
    \end{gather*}}
    In fact, since one has the contact one-form $\d u-u_t\d t-u_x\d x-u_y\d y-u_z\d x$, their characteristics are
    \begin{gather}
        h_{P_0} = u_{0}\,,\qquad
        h_{P_j} = u_j\,,\qquad
        h_{P_u} = -1\,,\qquad
        h_{J_{kj}} = u_jx_k-x_ku_j\,,\\
        h_{D^{(1)}} = x_0 u_0+\frac{1}{2}\sum_{i=1}^{3}u_{i}x_{i}\,,\qquad
        h_{D^{(2)}} = -u+\frac{1}{2}\sum_{i=1}^{k}u_{i}x_{i}\,,\\
        h_{G^{(1)}_{j}} = x_0u_{j}-\frac{x_{j}}{2}\,, \qquad h_{G^{(2)}_j} = u u_j+u_0\frac{u_i}{2}\,,\\
        h_{A^{(1)}} = x_0^2u_0+x_0\sum_{i=1}^3u_ix_i-\frac 14\sum_{i=1}^3x_i^2\,,\quad h_{A^{(2)}}=-u^{2}+\sum_{i=1}^{3}u_{i}^{2}+\frac{1}{4}u_{0}\sum_{i=1}^{3}x_{i}^{2}\,, \\
        h_{K_{j}} = -\frac{u_{j}}{2}+\frac{x_{0}u_{0}x_{j}}{2}+\frac{1}{2}\sum_{i=1}^{3}\left(1-\delta_{ij}\right)u_{i}x_{i}x_{j}+u_{j}\left(x_{0}u+\frac{x_{j}^2}{2}-\frac{1}{4}\sum_{i=1}^{3}x_{i}^{2}\right)\,.
    \end{gather}
    which shows that the integral submanifolds given by Proposition \ref{Prop:Inv} are in fact invariant. Moreover, one sees that previous characteristics are the Hamiltonian functions given by \eqref{Eq:CharHAm}.
    \finish
\end{example}

\begin{example} Let us study the Dirac equation
\begin{equation}\label{eq:DiracEquation} \Bigg(i\hbar \sum_{\mu=0}^3\gamma^\mu\partial_\mu-mc\Bigg)\psi=0\,,
\end{equation}
on $\mathbb{R}^4$, where $\gamma^0,\ldots,\gamma^3$ are the Dirac matrices, $\hbar$ is the Planck constant, $\partial_0=\partial_t$, $\partial_1=\partial_x$, $\partial_2=\partial_y$, $\partial_3=\partial_z$, and the solutions take the form
$$
    \psi:x\in \mathbb{R}^4\longmapsto \big(\psi_1(x),\ldots,\psi_4(x)\big)\in  \mathbb{C}^4\,.
$$
The Dirac equation \eqref{eq:DiracEquation}  can be written as a first-order system of PDEs on eight dependent variables $\psi_a^R,\psi_a^I$, where $\psi_a=\psi_a^R+i\psi_a^I$, and four independent variables $\{x^0=t,x^1=x,x^2=y,x^3=z\}$. Geometrically, the problem can be studied in the first-order jet bundle $J^1(\mathbb{R}^4,\mathbb{R}^4\times \mathbb{R}^8)$ with the globally defined eight-contact form
$$
\bm\eta=\sum_{a=1}^4\left(\d \psi^{R}_{a}-\sum_{i=0}^3\left(\psi^{R}_{a}\right)_{i}\d x^i\right)\otimes e^R_a+\sum_{a=1}^4\left(\d \psi^{I}_{a}-\sum_{i=0}^3\left(\psi^{I}_{a}\right)_{i}\d x^i\right)\otimes e^I_a\,.
$$
The Dirac equation \eqref{eq:DiracEquation} in complex form can be rewritten as follows
$$
    \begin{pmatrix}
        i\hbar\partial_t - m & 0 & i\hbar\partial_z & i\hbar\partial_x + \hbar\partial_y \\
        0 & i\hbar\partial_t - m & i\hbar\partial_x - \hbar\partial_y & -i\hbar\partial_z \\
        -i\hbar\partial_z & -i\hbar\partial_x - \hbar\partial_y & -i\hbar\partial_t - m & 0 \\
        -i\hbar\partial_x + \hbar\partial_y & i\hbar\partial_z & 0 & -i\hbar\partial_t - m
    \end{pmatrix}\begin{pmatrix}
        \psi_1 \\ \psi_2 \\ \psi_3 \\ \psi_4
    \end{pmatrix} = \begin{pmatrix} 0 \\ 0 \\ 0 \\ 0 \end{pmatrix}\,,
$$
where the matrix multiplication is understood as the application of each element of the matrix as an operator on the coordinates of $\psi$. 

One can see that $\partial_0,\ldots,\partial_3$ on $\mathbb{R}^{4+8}$ can be  lifted to $J^1(\mathbb{R}^4,\mathbb{R}^{4+8})$. Their lifts are Lie symmetries of \eqref{eq:DiracEquation}. By Proposition \ref{prop:HamLift}, they are $\bm \eta$-Hamiltonian vector fields. Then, their $\bm \eta$-Hamiltonian $k$-functions are
$$
    {\bm h}_i=\sum_{a=1}^4\left(\left(\psi^{R}_a\right)_i\otimes e^R_a+\left(\psi^{I}_{a}\right)_i\otimes e^I_a\right)\,,\qquad i=0,\ldots,3\,.
$$
By Proposition \ref{Prop:Inv}, the vector field $\partial_a$ is tangent to the submanifold ${\bm h}_a^{-1}(0,\ldots,0)$ for $a=1,\dotsc,4$.
\finish
\end{example}

\section{An additional \texorpdfstring{$k$}{}-symplectification}\label{Sec:kSym}

This section proposes an additional manner of extending a $k$-contact manifold to a $k$-symplectic one. In particular, a $k$-contact form  gives rise to $k$-symplectic forms on $\R^k_+\times M$ in a natural and analogous manner, but with somehow different properties than in previous approaches. 

Let us first define the following projections
$$
\begin{gathered}
{\rm pr}_k:(z^1,\ldots,z^k;x)\in \mathbb{R}_\times^k\times M\longmapsto x\in M\,,\\
{\rm pr}^1_k:(z^1,\ldots,z^k;x)\in \mathbb{R}_\times^k\times M\longmapsto  (z^1,\ldots,z^k) \in \mathbb{R}^k_\times\,,\\
{\rm pr}:(z,x)\in \mathbb{R}_\times \times M\longmapsto x\in M\,.\\
\end{gathered}
$$

Note that every vector field on $M$ gives rise to a unique vector field  lifted to $\mathbb{R}_\times$ (resp. $\mathbb{R}_\times^k)$ vanishing on functions of the form ${\rm pr}^*f$ (resp. ${\rm pr}_k^*f$)  with $f\in \Cinfty(M)$. These lifts will be intensively used hereafter.

\begin{proposition}\label{prop:symplectization}
    If $(M,{\bm\eta})$ is a $k$-contact manifold , then 
    \begin{equation}\label{eq:Ext}
    \left(\widetilde M = \R^k_\times\times M,\bm\omega = \sum_{\alpha=1}^{k}\d(z^{\alpha}{\rm pr}_k^*\eta^{\alpha})\otimes e_\alpha\right)
    \end{equation} 
is a $k$-symplectic manifold. Conversely, if \eqref{eq:Ext} is a $k$-symplectic manifold for some $\bm \eta \in \Omega^{1}(M,\mathbb{R}^{k})$, then $\ker \d{\bm\eta}\cap \ker {\bm\eta}=0$. If additionally  $\rank \ker \d{\bm\eta}=k$, then $(M,\bm\eta)$ is a $k$-contact manifold.
\end{proposition}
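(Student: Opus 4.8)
The plan is to prove both implications by computing $\ker\bm\omega$ explicitly, exploiting the pointwise direct sum decomposition $T^*_{(z,x)}\widetilde M = {\rm pr}_k^*(T^*_xM)\oplus\langle\d z^1,\ldots,\d z^k\rangle$. Suppressing the pullback ${\rm pr}_k^*$ in the notation, each component reads $\omega^\alpha = \d z^\alpha\wedge\eta^\alpha + z^\alpha\,\d\eta^\alpha$. Decomposing an arbitrary tangent vector as $w = \sum_{\alpha=1}^k a^\alpha\partial_{z^\alpha} + w_M$ with $w_M\in T_xM$, a direct contraction gives
\[
\iota_w\omega^\alpha = \big(a^\alpha\eta^\alpha + z^\alpha\iota_{w_M}\d\eta^\alpha\big) - \eta^\alpha(w_M)\,\d z^\alpha .
\]
Since the bracketed part is semibasic (a pullback from $M$) while the last summand lies in $\langle\d z^\alpha\rangle$, the equation $\iota_w\omega^\alpha=0$ splits, for each $\alpha$, into $\eta^\alpha(w_M)=0$ and $a^\alpha\eta^\alpha + z^\alpha\iota_{w_M}\d\eta^\alpha=0$.

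For the forward implication, $\bm\omega$ is exact, hence closed, so only non-degeneracy is at issue. The first family of equations says $w_M\in\ker\bm\eta_x$. Contracting the second family with a Reeb vector field $R_\beta$ and invoking $\iota_{R_\beta}\eta^\alpha=\delta^\alpha_\beta$ and $\iota_{R_\beta}\d\eta^\alpha=0$ from Theorem \ref{thm:k-contact-Reeb} forces $a^\alpha=0$ for every $\alpha$; since $z^\alpha\neq 0$ (as $z^\alpha\in\R_\times$), the second family then yields $\iota_{w_M}\d\eta^\alpha=0$, i.e. $w_M\in\ker\d\bm\eta_x$. Because $\bm\eta$ is a $k$-contact form, $\ker\bm\eta_x\cap\ker\d\bm\eta_x=0$, whence $w_M=0$ and $w=0$. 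Thus $\ker\bm\omega=0$ and $(\widetilde M,\bm\omega)$ is $k$-symplectic.

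For the converse, assume $\ker\bm\omega=0$. Given $v\in\ker\bm\eta_x\cap\ker\d\bm\eta_x$, the vector $w=v$ (all $a^\alpha=0$) satisfies both families trivially, so $v\in\ker\bm\omega=0$; hence $\ker\bm\eta\cap\ker\d\bm\eta=0$. For the last claim, add the hypothesis $\rk\ker\d\bm\eta=k$. The trivial intersection makes the natural map $\ker\d\bm\eta_x\hookrightarrow T_xM/\ker\bm\eta_x$ injective, so $\corank\ker\bm\eta\ge k$; as the corank of $\ker\bm\eta$ equals the pointwise rank of $\{\eta^1,\ldots,\eta^k\}$, which is at most $k$, it equals $k$ at every point. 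Therefore $\ker\bm\eta$ is regular of corank $k$, $\ker\d\bm\eta$ is regular of rank $k$, and the two are transverse, so $\bm\eta$ is a $k$-contact form.

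The delicate point is this last step of the converse: non-degeneracy of $\bm\omega$ does not by itself bound the pointwise rank of the $\eta^\alpha$, and it is precisely the rank/corank count through $\ker\d\bm\eta\hookrightarrow TM/\ker\bm\eta$ that pins $\corank\ker\bm\eta$ to exactly $k$. I would also record the boundary case $\dim M=k$, where $\ker\bm\eta=0$ is possible (e.g. $\bm\eta$ a coframe with each $\d\eta^\alpha=0$), making $\bm\omega$ $k$-symplectic while $\bm\eta$ violates the non-zero-kernel requirement of a $k$-contact form; the stated conclusion is valid whenever $\dim M>k$, which is the relevant range.
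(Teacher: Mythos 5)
Your proof is correct and follows essentially the same route as the paper: both compute $\ker\bm\omega$ directly from the product splitting of $\T_{(z,x)}\widetilde M$, kill the $\partial_{z^\alpha}$-components and the Reeb components by contracting with the lifted Reeb vector fields and with $\partial_{z^\alpha}$, and then use $z^\alpha\neq 0$ to reduce the remaining condition to $w_M\in\ker\bm\eta\cap\ker\d\bm\eta$; your rank count $\corank\ker\bm\eta\geq k$ via the injection $\ker\d\bm\eta_x\hookrightarrow \T_xM/\ker\bm\eta_x$ is just a more carefully worded version of the paper's terse "otherwise the corank would be larger" step. Your closing remark about the boundary case $\dim M=k$ (where $\bm\omega$ can be $k$-symplectic with $\d\bm\eta=0$ and $\ker\bm\eta=0$, so that $\bm\eta$ violates the non-zero-kernel axiom of Definition \ref{dfn:k-contact-manifold}) is a genuine refinement: the paper's proof of the converse establishes regularity, corank $k$, rank $k$ and transversality but never checks $\ker\bm\eta\neq 0$, so the stated converse implicitly requires $\dim M>k$.
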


\begin{proof}Recall first that we intensively use the standard isomorphism $\T_{(z,x)}\widetilde{M}\simeq \T_z\mathbb{R}^k_\times\oplus \T_xM$ to understand elements of $\T_z\mathbb{R}^k_\times$ and $\T_xM$ as elements of $\T_{(z,x)}\widetilde{M}$ and conversely. This will simplify the notation of the proof.
    
Let us now prove the direct part. A tangent vector $X_{\tilde{x}}\in \T_{\tilde{x}}\widetilde{M}$, with $\tilde{x}=(z,x)\in \mathbb{R}^k_\times\times M$, can be decomposed in a unique manner as
$$
    X_{\tilde{x}} = \sum_{\alpha=1}^k\left(d^\alpha\parder{}{z^\alpha}\bigg|_{\tilde{x}} +c^\alpha \widetilde{R}_\alpha\bigg|_{\tilde{x}}\right)+Y_{\tilde{x}}\,,
$$
 for some constants $c^1,\ldots,c^k,d^1,\ldots,d^k\in \mathbb{R}$, where $Y_{\tilde{x}}\in \ker{\rm pr}_k^*\bm\eta_{x}\cap \ker \T_{\tilde{x}}{\rm pr}^1_k$ and $\widetilde{R}_{1},\ldots,\widetilde{R}_{k}$ are the Reeb vector fields of $\bm \eta$ on $M$ understood as vector fields in $\widetilde{M}$. Let us prove that, if $X_{\tilde{x}}\in \ker
 \bm \omega_{\tilde{x}}$, then $X_{\tilde{x}}=0$. In particular, if
$
\bomega_{\tilde{x}}(X_{\tilde{x}},\cdot)=0
$, and since
\[
\bm\omega=\sum^{k}_{\alpha=1}\d(z^\alpha{\rm pr}_k^{*} \eta^{\alpha}) \otimes e_{\alpha}=\sum^{k}_{\alpha=1}(z^\alpha\d ({\rm pr}_k^{*}\eta^{\alpha})+\d z^\alpha \wedge {\rm pr}_k^{*}\eta^{\alpha}) \otimes e_{\alpha},
\]
then
$$
0=\bomega_{\tilde{x}}(X_{\tilde{x}},(\widetilde{R}_{\alpha})_{\tilde{x}})=d^\alpha\otimes e_\alpha, \qquad 0=\bomega_{\tilde{x}}((\partial/\partial z^\alpha)_{\tilde{x}},X_{\tilde{x}})=c^\alpha\otimes e_\alpha\,, \qquad \alpha=1,\ldots,k\,,
$$
which leads to $X_{\tilde{x}}=Y_{\tilde{x}}$. Then, 
$$  
\inn{X_{\tilde{x}}}\bomega_{\tilde{x}}=\iota_{Y_{\tilde{x}}}\bomega_{\tilde{x}}=\sum_{\alpha=1}^{k}z^\alpha\iota_{Y_{\tilde{x}}}\d({\rm pr}_k^{*}\eta^\alpha)_{\tilde{x}}\otimes e_\alpha=0\,,
$$
and, since $(z^{1},\ldots,z^{k}) \in \mathbb{R}^{k}_{\times}$, the tangent vector $Y_{\tilde{x}}$ takes values in $\ker\d({\rm pr}^{*}_{k}\bm\eta)_{\tilde{x}}$, and amounts to a tangent vector in $\ker(\d\bm\eta)_x$ by definition. Since $\bm\eta$ is a $k$-contact form, then $Y_{\tilde{x}}=0$ and, then, $X_{\tilde{x}} = 0$.
Thus, $\ker\bm\omega = \{0\}$, and since $\bm\omega$ 
is closed, then  $\bm \omega $ is an $\mathbb{R}^k$-valued closed two-form defining a $k$-symplectic manifold on $\widetilde M$. 

Let us prove the converse. First, let us show that if $\bm\omega$ is $k$-symplectic, then $\ker \bm \eta\cap\ker \d\bm \eta=0$. To prove this, let us use the reciprocal statement: if there is a non-zero tangent vector at  $x \in M$ taking values $\ker \bm\eta_{x}\cap \ker\d\bm\eta_{x}\subset \T M$, then $\bm \omega$ has a non-trivial kernel.  

Consider the lift $Y_{\tilde{x}}$ to $\tilde{x} \in \mathbb{R}^{k}_{\times} \times M$ of a non-zero tangent vector $Y_x$ taking values in $\ker \bm\eta_{x} \cap \ker\d\bm\eta_{x}$. Then,
\begin{equation*}
    \sum_{\alpha=1}^k\iota_{Y_{\tilde{x}}}\omega^{\alpha}_{\tilde{x}}\otimes e_{\alpha}=\sum_{\alpha=1}^k\iota_{Y_{\tilde{x}}}(z^\alpha\d({\rm pr}_k^{*} \eta^{\alpha})_{\tilde{x}}+(\d z^\alpha)_{\tilde{x}} \wedge ({\rm pr}_k^{*}\eta^{\alpha})_{\tilde{x}}) \otimes e_{\alpha}=0,
\end{equation*}
but $\ker \bm \omega_{\tilde{x}}={0}$ and this is contradiction. Hence, $\ker\bm \eta_x\cap \ker \d\bm\eta_x=0$. 

Finally, using the fact that $\ker \d \bm\eta_{x}$ has rank $k$ at every $x\in M$ by assumption, one has that $\ker\bm\eta_{x}$ has to have corank $k$, as otherwise its corank would be larger and $\ker\d\bm\eta_{x}\cap\ker\bm\eta_{x}\neq \{0\}$. 

\end{proof}

It turns out that the condition $\rank\ker\d\bm\eta = k$ is necessary for $\bm\eta$ to give rise to a $k$-contact form on $M$. The following example illustrates this point.

\begin{example} We want to find a $k$-symplectic form on $\mathbb{R}^{k}_{\times} \times M$ given by
$$
\bm\omega=\sum_{\alpha=1}^k[z^\alpha \d({\rm pr}_k^{*}\eta^\alpha)+\d z^\alpha\wedge {\rm pr}_k^{*}\eta^\alpha]\otimes e_\alpha\,,
$$
where $\ker \d\bm\eta\cap \ker \bm\eta=0$, but $\bm\eta$ is not a $k$-contact form on $M$.  
Consider the manifold $\bigoplus_{\alpha=1}^k\cT\mathbb{R}^n$ with adapted coordinates $\{x^i,p^\alpha_i\}$ and define 
$$
{\bm \eta} =\left( \d p_1^{2}-\sum_{i=1}^{n}p^1_i\d x^i\right)\otimes e_1+\dotsb+\left(\d p_1^{k}-\sum_{i=1}^{n}p^{k-1}_i\d x^i\right)\otimes e_{k-1}+\left(\d p_1^{1}-\sum_{i=1}^{n}p^{k}_i\d x^i\right)\otimes e_{k}.
$$
Hence,
$$
\d\bm\eta= \sum_{\alpha=1}^k\sum_{i=1}^n\d x^i\wedge \d p^\alpha_i\otimes e_\alpha
$$
and $\ker \d\bm\eta=0$. Assume that $\bm\omega_{\tilde{x}}(X_{\tilde{x}},\cdot)=0$ for a tangent vector $X_{\tilde{x}}$ at  $\tilde{x}\in \mathbb{R}^k_\times\times \bigoplus_{\alpha=1}^k\cT\mathbb{R}^n$. Then, $0=\bm\omega_{\tilde{x}}(X_{\tilde{x}},\partial/\partial z_\alpha)=-\iota_{X_{\tilde{x}}}{\rm pr}_k^*\eta^\alpha \otimes\,e_{\alpha}$ for $\alpha=1,\ldots,k$ and $X_{\tilde{x}}$ takes values in  $\ker{\rm pr}^*_k\bm\eta_{\tilde{x}}$. We can write $X_{\tilde{x}}=\sum_{\alpha=1}^{k}f^\alpha (\partial/\partial z^\alpha)_{\tilde{x}}+Y_{\tilde{x}}$, where $Y_{\tilde{x}}$ is a tangent vector on $\ker({\rm pr}^*_k\bm \eta)_{\widetilde{x}}$ understood as a tangent vector in $\mathbb{R}_\times^k\times M$. Therefore,
$$
0=\iota_{X_{\tilde{x}}}\bm\omega=\sum_{\alpha=1}^{k}(f^\alpha{\rm pr}_k^*\eta^\alpha+z^\alpha\iota_{Y_{\widetilde{x}}}\d {\rm pr}_k^*\eta^\alpha)\otimes e_\alpha
$$
Since $\eta^\alpha$ never belongs to the image of $({\d\eta^\alpha})^\flat:\T M\rightarrow \T^*M$ and $z^{\alpha}\neq 0$ for $\alpha=1,\ldots,k$, one has $\iota_{Y_{\widetilde{x}}}\d{\rm pr}_k^*\bm\eta=0$ and $f^1=\ldots=f^k=0$. Hence, $\bm\omega_{\tilde{x}}$ is $k$-symplectic but $\bm \eta$ is not a $k$-contact form because $\ker \d\bm\eta$ has not  rank $k$.
\finish
\end{example}

It is immediate that the notion of a polarised co-oriented $k$-contact manifold $(M,\bm \eta,\mathcal{V})$, where  $\dim M=n+k+nk$ and  $\mathcal{V}$ is a polarisation of rank $nk$ gives rise to a polarised $k$-symplectic manifold $\mathbb{R}_\times\times M$ of dimension $(n+1)(k+1)$ with a polarisation of rank $(n+1)k$. In adapted Darboux coordinates $\{y^\alpha,x^i,p_i^\alpha\}$ for $M$, one has $\bm \eta=\sum_{\alpha=1}^k(\d y^\alpha-p_i^\alpha \d x^i )\otimes e_\alpha$, while in the $k$-symplectification  to $\mathbb{R}_\times\times M$, one has that $s,x^i$ can be considered as `base coordinates' and its associated momenta are the $y^\alpha$ and $sp_i^\alpha$, respectively,  with $\bm\omega=\d(s{\rm pr}^*\bm \eta)\sum_{\alpha=1}^k[\d s\wedge \d y^\alpha+\d x^i\wedge \d (sp_i^\alpha)]\otimes e_\alpha$, respectively.

Apart from the fact that $\mathbb{R}_\times\times M$ is simpler than $\mathbb{R}^k_\times \times M$, the following proposition shows another reason to prefer the first to the second. In particular, the extension of a $k$-symplectic Hamiltonian vector field is only possible under a very restrictive conditions of the Hamiltonian $k$-function of the vector field.

\begin{proposition}
\label{prop:k-symplectizated-hamiltonian}
Let $X$ be the $\bm\eta$-Hamiltonian vector field relative to  $(M,\bm \eta)$ with $\bm\eta$-Hamiltonian $k$-function $\bm h = \sum_{\alpha=1}^k h^\alpha\otimes e_\alpha$ such that $R_{\beta}h^{\alpha}=0$ for $\alpha\neq\beta$. Then, the vector field
\begin{equation}\label{eq:LiftkHam}
\widetilde X = \sum_{\alpha,\beta=1}^k z^\beta \pr_{k}^{*}(R_\beta h^\alpha)\parder{}{z^\alpha} + X
\end{equation}
is the unique $\bm\omega$-Hamiltonian vector field on $\widetilde{M}=\mathbb{R}_{\times }^{k} \times M$ that projects onto $X$ via the natural projection ${\rm pr}_k:\widetilde M\to M$. Moreover, $\widetilde{X}$ has an $\bm\omega$-Hamiltonian $k$-function $\widetilde{\bm h}=\sum_{\alpha=1}^{k}z^\alpha {\rm pr}_k^*h^\alpha\otimes e_\alpha$.
\end{proposition}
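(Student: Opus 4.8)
The plan is to work through the componentwise form of $\bm\omega$-Hamiltonicity: a vector field $\widetilde X$ on $\widetilde M=\R_\times^k\times M$ is $\bm\omega$-Hamiltonian with $k$-function $\widetilde{\bm h}=\sum_\alpha\widetilde h^\alpha\otimes e_\alpha$ precisely when $\inn{\widetilde X}\omega^\alpha=\d\widetilde h^\alpha$ for $\alpha=1,\ldots,k$. Since $\omega^\alpha=\d(z^\alpha{\rm pr}_k^*\eta^\alpha)=\d z^\alpha\wedge{\rm pr}_k^*\eta^\alpha+z^\alpha{\rm pr}_k^*\d\eta^\alpha$, I would first record the two contractions carrying all the geometric input. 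Because $X$ is projectable and the $\partial_{z^\alpha}$ annihilate pullbacks from $M$, one has $\inn{\widetilde X}{\rm pr}_k^*\eta^\alpha=\inn X{\rm pr}_k^*\eta^\alpha=-{\rm pr}_k^*h^\alpha$, and by Proposition \ref{prop:eta-Hamiltonian-vector-field}, $\inn X{\rm pr}_k^*\d\eta^\alpha={\rm pr}_k^*\big(\d h^\alpha-\sum_{\beta}(R_\beta h^\alpha)\eta^\beta\big)$. These are the only facts about $X$ that the computation needs.

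For existence, I substitute the proposed $\widetilde X$, whose $\partial_{z^\alpha}$-component is $g^\alpha=\sum_\beta z^\beta{\rm pr}_k^*(R_\beta h^\alpha)$, into $\inn{\widetilde X}\omega^\alpha$. The piece ${\rm pr}_k^*h^\alpha\,\d z^\alpha+z^\alpha{\rm pr}_k^*\d h^\alpha$ assembles into $\d(z^\alpha{\rm pr}_k^*h^\alpha)$, so the claim reduces to the vanishing of the leftover one-form $g^\alpha{\rm pr}_k^*\eta^\alpha-z^\alpha\sum_\beta{\rm pr}_k^*(R_\beta h^\alpha){\rm pr}_k^*\eta^\beta$. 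This is exactly where the hypothesis $R_\beta h^\alpha=0$ for $\alpha\neq\beta$ is decisive: it collapses both the sum defining $g^\alpha$ and the sum over $\beta$ to their diagonal $\beta=\alpha$ terms, which then cancel. Hence $\inn{\widetilde X}\omega^\alpha=\d(z^\alpha{\rm pr}_k^*h^\alpha)$, showing that $\widetilde X$ is $\bm\omega$-Hamiltonian with $\widetilde{\bm h}=\sum_\alpha z^\alpha{\rm pr}_k^*h^\alpha\otimes e_\alpha$ and, by construction, projects onto $X$.

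For uniqueness, any lift of $X$ differs from $\widetilde X$ by a vertical field $Z=\sum_\alpha a^\alpha\partial_{z^\alpha}$, which must itself be $\bm\omega$-Hamiltonian. Since $Z$ annihilates ${\rm pr}_k^*\eta^\alpha$ and ${\rm pr}_k^*\d\eta^\alpha$, one gets $\inn Z\omega^\alpha=a^\alpha{\rm pr}_k^*\eta^\alpha$, which must be closed. Contracting $\d(a^\alpha{\rm pr}_k^*\eta^\alpha)$ with $\partial_{z^\gamma}$ gives $\partial_{z^\gamma}a^\alpha=0$, so $a^\alpha={\rm pr}_k^*\bar a^\alpha$ is basic and $\d(\bar a^\alpha\eta^\alpha)=0$ on $M$. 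Contracting this relation with the Reeb vector fields $R_\gamma$ then yields $R_\gamma\bar a^\alpha=0$ for $\gamma\neq\alpha$ and $\d\bar a^\alpha=(R_\alpha\bar a^\alpha)\eta^\alpha$; feeding the latter back into closedness leaves $\bar a^\alpha\,\d\eta^\alpha=0$.

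The main obstacle is this last step: deducing $\bar a^\alpha\equiv 0$ from $\bar a^\alpha\,\d\eta^\alpha=0$ together with $\d\bar a^\alpha\parallel\eta^\alpha$. Wherever $\d\eta^\alpha$ is nonvanishing this forces $\bar a^\alpha=0$, hence $Z=0$; in particular, for a polarised $k$-contact manifold Theorem \ref{Th:DarkMan} makes $\d\eta^1,\ldots,\d\eta^k$ linearly independent and the conclusion is immediate. I would therefore carry the uniqueness argument through in this setting, and emphasise that it is the homogeneity of $\widetilde{\bm h}=\sum_\alpha z^\alpha{\rm pr}_k^*h^\alpha\otimes e_\alpha$ in the fibre coordinates that singles out the lift, ruling out the fibre-independent Hamiltonians that a locally closed component $\eta^\alpha$ could otherwise admit.
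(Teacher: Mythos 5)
Your computation follows the paper's proof almost line for line: the existence part is identical (contract $\widetilde X$ into $\omega^\alpha=\d z^\alpha\wedge{\rm pr}_k^*\eta^\alpha+z^\alpha{\rm pr}_k^*\d\eta^\alpha$, use $\iota_X\eta^\alpha=-h^\alpha$ and $\iota_X\d\eta^\alpha=\d h^\alpha-\sum_\beta(R_\beta h^\alpha)\eta^\beta$, and let the hypothesis $R_\beta h^\alpha=0$ for $\beta\neq\alpha$ cancel the two off-diagonal sums), and the uniqueness part reduces, exactly as in the paper, to showing that a basic function $\bar a^\alpha$ with $\d(\bar a^\alpha\eta^\alpha)=0$ must vanish, which after contraction with the Reeb vector fields leaves $\bar a^\alpha\,\d\eta^\alpha=0$.

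The one place you diverge is the final step of uniqueness, and there you are in fact more careful than the paper, which simply asserts $Z^\alpha=z^\alpha{\rm pr}_k^*(R_\alpha h^\alpha)$ after the Reeb contractions. Your observation that $\bar a^\alpha\,\d\eta^\alpha=0$ only forces $\bar a^\alpha=0$ where $\d\eta^\alpha$ is nonvanishing is a genuine issue with the statement as given: a $k$-contact form can have a closed, even exact, component (e.g. $\bm\eta=(\d z-y\,\d x)\otimes e_1+\d v\otimes e_2$ on $\mathbb{R}^4$ is a two-contact form, and for $X=0$, $\bm h=0$ the vertical field $\partial_{z^2}$ is $\bm\omega$-Hamiltonian with $k$-function $v\otimes e_2$ and projects onto $X$, so unconditional uniqueness fails). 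Restricting to the polarised case, where the Darboux form makes each $\d\eta^\alpha$ pointwise nonzero, is the right repair and is consistent with the remark the paper itself makes in the analogous Proposition \ref{Prop:LiftkHamVectorField}. So: same route, correct existence, and a legitimate sharpening of the uniqueness hypothesis rather than a gap on your side.
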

\begin{proof}
Under given assumptions, one has
{\small \begin{align*}
\iota_{\widetilde{X}}\bomega &= \sum^{k}_{\alpha=1}\iota_{\widetilde{X}}(z^\alpha \d (\pr_{k}^{*}\eta^{\alpha})+\d z^\alpha \wedge \pr_{k}^{*}\eta^{\alpha}) \otimes  e_{\alpha}  \\
&=\sum^{k}_{\alpha=1}\left( z^\alpha \iota_{\widetilde{X}}\d(\pr_{k}^{*}\eta^\alpha)+\sum_{\beta=1}^kz^\beta \pr_{k}^{*}(R_\beta h^\alpha) \pr_{k}^{*}\eta^\alpha+\pr_{k}^{*}h^\alpha \d z^\alpha \right)\otimes e_{\alpha}\\
&=\sum^{k}_{\alpha=1}\left( z^\alpha \left[\d(\pr_{k}^{*}h^\alpha)-\sum_{\beta=1}^k{\rm pr}^*_k((R_\beta h^\alpha)\eta^\beta)\right]+\sum_{\beta=1}^kz^\beta \pr_{k}^{*}(R_\beta h^\alpha) \pr_{k}^{*}\eta^\alpha+(\pr_{k}^{*}h^\alpha) \d z^\alpha \right)\otimes e_{\alpha}\\
&=\sum^{k}_{\alpha=1}\d(z^\alpha \pr_{k}^{*}h^{\alpha})\otimes e_\alpha.
\end{align*}}
It is very important to stress that we have used $R_\beta h^\alpha=0$ for $\alpha\neq \beta$ in the last step, as this will turn to be a key point in the possibility of making the lift of the $\bm\eta$-Hamiltonian vector field. Indeed, under this assumption,  $\widetilde{X}$ is a Hamiltonian vector field relative to the $k$-symplectic form $\bm\omega$. Moreover, it projects onto $X$ relative to ${\rm pr}_{k*}$, as required.

On the other hand, assume that $\widetilde X = Z+X$ is $\bm\omega$-Hamiltonian. Then, 
\begin{align*}
0=\d\iota_{\widetilde{X}}\bomega &= \d\sum^{k}_{\alpha=1}\iota_{\widetilde{X}}(z^\alpha \d {\rm pr}_k^*\eta^{\alpha}+\d z^\alpha \wedge {\rm pr}_k^*\eta^{\alpha}) \otimes e_{\alpha}
\\
&=\d\sum^{k}_{\alpha=1}\left( z^\alpha \iota_{X}\d{\rm pr}_k^*\eta^\alpha+Z^\alpha {\rm pr}_k^*  \eta^\alpha+{\rm pr}_k^*h^\alpha \d z^\alpha \right)\otimes e_{\alpha}\\
&=\d\sum^{k}_{\alpha=1}\left( z^\alpha \d{\rm pr}_k^*h^\alpha-z^\alpha\sum_{\beta=1}^k {\rm pr}_k^*(R_\beta h^\alpha){\rm pr}^*_k\eta^\beta+Z^\alpha {\rm pr}_k^*  \eta^\alpha+{\rm pr}_k^*h^\alpha \d z^\alpha \right)\otimes e_{\alpha}\\
&= \d \sum^{k}_{\alpha=1}\left( \left(Z^\alpha- z^\alpha  {\rm pr}_k^*(R_\alpha h^\alpha)\right) {\rm pr}^*_k\eta^\alpha-z^\alpha\sum_{1\leq \beta\neq \alpha\leq k}{\rm pr}^*_k(R_\beta h^\alpha){\rm pr}^*_k\eta^\beta \right)\otimes e_{\alpha}.
\end{align*}
Under given assumptions of the symmetries of $\bm h$, the third term above vanishes. 
And finally,
$$
0= \sum^{k}_{\alpha=1} \left[\d\left(Z^\alpha-z^\alpha  {\rm pr}_k^*(R_\alpha h_\alpha))\right)\wedge {\rm pr}^*_k\eta^\alpha+\left(Z^\alpha-\sum_{\beta=1}^kz^\beta{\rm pr}_k^*(R_\beta h_\alpha)\right){\rm pr}^*_k\d\eta^\alpha \right] \otimes e_{\alpha},
$$
which implies, by contracting with Reeb vector fields of $\bm \eta$ understood as elements in $\widetilde{M}$ that $Z^\alpha=z^\alpha {\rm pr}_k^*R_\alpha h^\alpha$ for $\alpha=1,\ldots,k$ and any locally Hamiltonian vector field relative to $\bm\omega$ must be of the form \eqref{eq:LiftkHam}, which characterises it.
\end{proof}
Note that without the restrictive symmetries of $\bm \eta$-Hamiltonian $k$-functions given in the previous proposition, the extension to $\widetilde{X}$ does not work. In general, this work shows that there seem to be no ideal extension of these vector fields to other immediate geometric structures. Of course, the condition on the $\bm \eta$-Hamiltonian $k$-functions disappear for $k=1$, but this could be expected: contact geometry has many immediate approaches via other geometric structures.


\section{Conclusions and outlook}
\label{Sec:Conclusions}

This work introduces a novel approach to $k$-contact geometry by studying $k$-contact distributions, which are the distributions given locally by the kernels of $k$-contact forms. Our new perspective reveals that $k$-contact forms are specific instances of the more comprehensive $k$-contact distributions, leading to a richer and more extensive formalism. In fact, a $k$-contact distribution can be described by differential forms taking values in $\mathbb{R}^k$ that are not $k$-contact forms, which can be more practical in certain instances, e.g. to describe isotropic or Legendrian submanifolds, or the intrinsic properties of $k$-contact Hamiltonian vector fields. In any case, the study of $k$-contact distributions defined by the kernel of a globally defined $k$-contact form are also interesting and studied via new distributional techniques. Moreover, the distributional approach links for the first time $k$-contact geometry with the study of Goursat, Engel and other types of distributions, which have many applications in control theory, classical mechanics, Riemann geometry, et cetera. 

Our work starts initiates the study of several research topics generalising previous contact fields of research to new, more complex, realms. $k$-Contact compact manifolds, topological properties of  co-oriented contact manifolds, the study of submanifolds, $k$-contact Lie groups,  Hamiltonian vector fields on $k$-symplectic manifolds with periodic solutions and their relation to $k$-contact manifolds, the Weinstein conjecture in the $k$-contact setting, extension of $k$-contact manifolds to presymplectic and $k$-symplectic manifolds of particular types, dynamical systems studied with $k$-contact manifolds, and so on.

It is also remarkable the description of the theory of characteristics of Lie symmetries for first-order differential equations on fibre bundles of arbitrary rank via $k$-contact Hamiltonian methods. It is worth noting that our notion of a $k$-Hamiltonian vector field is rooted in the distributional approach, as we found that the previous approach for $k$-contact forms did not lead to a proper generalisation.

We also suggest potential applications of $k$-contact geometry in fields such as mathematical physics and dynamical systems. In particular, we illustrated the use of our techniques with Hamilton--Jacobi equations, control systems, and simple instances of Dirac equations. Our new formalism could enhance the understanding of the geometric structures underlying various physical phenomena.

In conclusion, our research advances $k$-contact geometry by shifting focus to distributions, providing a comprehensive framework that opens new research avenues and applications, particularly in the study of differential equations and geometric structures. Our study paves the way for numerous potential new research areas. In the future, we aim to study the existence of closed integral submanifolds of Reeb distributions, co-oriented $k$-contact compact manifolds, topological conditions for the existence of $k$-contact forms, $k$-contact Lie groups, the Weinstein $k$-contact conjecture, the extension of Floer homology to co-oriented $k$-contact manifolds, Morse theory in $k$-contact geometry, the physical and mathematical applications and properties of Engel and Goursat distributions, and the theory of reduction of $k$-contact manifolds. A deeper analysis of the theory of characteristics of Lie symmetries is also planned. As an application, we plan to study Dirac equations with particular types of potentials via $k$-contact geometry. Additionally, we plan to utilise our findings in the analysis of Lie systems, which refer to $t$-dependent systems of ordinary differential equations with significant physical and mathematical relevance.

\addcontentsline{toc}{section}{Acknowledgements}
\section*{Acknowledgements}

This work is devoted to the memory of our colleague and friend, Miguel C. Mu\~noz--Lecanda, who passed away on the Christmas's Eve of 2023. He was, and will always be, a source of inspiration for us.

We would like to extend our gratitude to S. Vilariño and B.M. Zawora for taking the time to read this paper and providing comments and suggestions that contributed to improving it. We also thank several comments during the conferences ``XXVI Encuentro de Invierno: Geometr\'{i}a, Din\'amica y Teoría de Campos" and ``RSME's 7th Congress of Young Researchers" by M. de Le\'on, E. Mart\'inez, A. Ibort, among others.

X. Rivas acknowledges partial financial support from a mikrogrant funded by the IDUB program of the Faculty of Physics (University of Warsaw) for his research stay. X. Rivas also acknowledges partial financial support from the Spanish Ministry of Science and Innovation, grants  PID2021-125515NB-C21, and RED2022-134301-T of AEI, and Ministry of Research and Universities of the Catalan Government, project 2021 SGR 00603.

\bibliographystyle{abbrv}
\bibliography{references.bib}

\end{document}